\documentclass[10pt,reqno]{amsart}
\usepackage{bbm}
\usepackage{amsmath,amsthm,amsfonts,amssymb,amscd,mathrsfs}
\usepackage{amssymb}
\usepackage{psfrag,graphicx}
\usepackage{subfigure}
\usepackage{epic,eepic}
\usepackage{color}
\usepackage{ebezier}
\usepackage{tikz-cd}

\usepackage{hyperref}
\hypersetup{
    colorlinks=true,
    linkcolor=blue,
    filecolor=magenta,
    urlcolor=cyan,
    pdftitle={Sharelatex Example},
}

\newcommand{\RN}[1]{%
  \textup{\uppercase\expandafter{\romannumeral#1}}%
}

\evensidemargin 0cm
\oddsidemargin 0cm
\textwidth 15.5cm

\newtheorem{thm}{Theorem}[section]
\newtheorem{thmx}{Theorem}[section]

\newtheorem{defn}{Definition}[section]
\newtheorem{lem}{Lemma}[section]
\newtheorem{cor}{Corollary}[section]
\newtheorem{corx}{Corollary}[section]

\newtheorem{prop}{Proposition}[section]

\newtheorem{rem}{Remark}[section]

\numberwithin{equation}{section}

\def\B{\mathbb{B}}

\def\E{\mathbb{E}}
\def\N{\mathbb{N}}
\def\P{\mathbb{P}}

\def\R{\mathbb{R}}

\def\AAa{\mathcal{A}}
\def\BB{\mathcal{B}}

\def\EE{\mathcal{E}}

\def\FF{\mathcal{F}}

\def\VV{\mathcal{V}}
\def\NN{\mathcal{N}}

\def\UU{\mathcal{U}}
\def\OO{\mathcal{O}}

\def\RR{\mathcal{R}}
\def\MM{\mathcal{M}}
\def\KK{\mathcal{K}}
\def\WW{\mathcal{W}}
\def\ZZ{\mathcal{Z}}

\def\andd{\,\,\text{and}\,\,}

\def\range{{\rm ran}\,}

\def\dist{{\rm dist}}
\def\graph{{\rm graph}\,}
\def\dim{{\rm dim}\,}

\def\lan{\langle}
\def\ran{{\rangle}}

\def\ol{\overline}

\def\pa{\partial}
\def\stst{\subset\subset}
\def\sm{\setminus}

\def\al{\alpha}
\def\be{\beta}
\def\ep{\epsilon}
\def\ka{\kappa}

\def\la{\lambda}
\def\La{\Lambda}
\def\si{\sigma}

\def\om{\omega}
\def\Om{\Omega}
\def\de{\delta}

\def\ga{\gamma}
\def\Ga{\Gamma}
\def\vp{\varphi}


\begin{document}

\title[Quasi-potential of stationary and quasi-stationary densities]{Quasi-potential of stationary and quasi-stationary densities: existence, regularity, and applications}

\author{Chenchen Mou}
\address{Department of Mathematics, City University of Hong Kong, China - Hong Kong SAR}
\email{chencmou@cityu.edu.hk}
\thanks{C.M. was partially supported by port of the Hong Kong Research Grants Council (RGC) under Grants No. GRF 11311422 and GRF 11303223. W.Q. was partially supported by a start-up grant from Academy of Mathematics and Systems Science. Z.S. was partially supported by a start-up grant from the University of Alberta, NSERC RGPIN-2018-04371, and NSERC RGPIN-2024-04938. Y.Y. was partially supported by NSERC RGPIN-2020-04451 and a Scholarship from Jilin University.}

\author{Weiwei Qi}
\address{State Key Laboratory of Mathematical Sciences (SKLMS), Academy of Mathematics and Systems Science, Chinese Academy of Sciences, Beijing 100190, China}
\email{wwqi@amss.ac.cn}

\author{Zhongwei Shen}
\address{Department of Mathematical and Statistical Sciences, University of Alberta, Edmonton, AB T6G 2G1, Canada}
\email{zhongwei@ualberta.ca}

\author{Yingfei Yi}
\address{Department of Mathematical and Statistical Sciences, University of Alberta, Edmonton, AB T6G 2G1, Canada, and School of Mathematics, Jilin University, Changchun 130012, PRC}
\email{yingfei@ualberta.ca}


\begin{abstract}
The present paper is devoted to the large deviation principle (LDP), with particular emphasis on the regularity of the quasi-potential for densities of stationary and quasi-stationary distributions of randomly perturbed dynamical systems. Our framework is set up within a positively invariant set contained in the basin of attraction of a maximal attractor of the unperturbed system. Such a setting with a general maximal attractor is anticipated in many applications. 

We begin by establishing the LDP, under the almost necessary condition that the maximal attractor is an equivalence class in the sense of Freidlin-Wentzell. The LDP gives rise to the quasi-potential that is non-negative, vanishes only on the maximal attractor, has a variational representation, and is a locally Lipschitz viscosity solution of a first-order Hamilton-Jacobi equation (HJE). 

Our main purpose is to investigate the regularity of the quasi-potential. Assuming the maximal attractor is a normally contracting invariant manifold, we achieve the regularity of the quasi-potential, along with some interesting properties such as positive definiteness of its Hessian in the normal direction to the maximal attractor and uniqueness of minimizers in its variational representation. This geometric and dynamical condition on the maximal attractor is a natural but highly nontrivial generalization of a linearly stable equilibrium -- the only case in which the regularity of the quasi-potential is previously known -- and is believed to be a sharp sufficient condition for regularity. Despite introducing complex dynamical structures of the Hamiltonian system associated with the HJE, it enables us to construct a regular solution of the HJE that is nonnegative and vanishes only on the maximal attractor by means of the invariant manifold theory of this Hamiltonian system. Even without the equivalence class assumption on the maximal attractor, this regular solution exists; under the equivalence condition, it coincides with the quasi-potential, thereby establishing its regularity.

Our results have many applications including a rigorous justification of the macroscopic fluctuation theory of non-equilibrium thermodynamic systems, a theoretical foundation for the recent potential landscape and flux framework for describing emergent behaviors, and a solid foundation for further investigating the asymptotic of the prefactor, paving the way for more refined mathematical frameworks and advanced applications.

\end{abstract}

\subjclass[2010]{Primary 60F10, 60J60, 60J70, 34F05, 82C05; secondary 92D25, 60H10}



\keywords{randomly perturbed dynamical system, stationary distribution, quasi-stationary distribution, large deviation principle, quasi-potential, regularity, Hamilton-Jacobi equation, Hamiltonian system, unstable manifold, nonequilibrium, stochastic thermodynamics}

\maketitle

\tableofcontents


\section{\bf Introduction}

Randomly perturbed dynamical systems of the form
\begin{equation}\label{SDE-introduction}
    \dot{x}=b(x)+\ep\si(x)\dot{W}_{t},\quad x\in\R^{d}
\end{equation}
have been widely used in stochastic thermodynamics to study especially nonequilibrium systems arsing from many scientific areas \cite{Graham87,Risken1989,Gardiner2009}. Such a system \eqref{SDE-introduction} describes the evolution of macroscopic variables $x_i$, $i=1,\dots,d$ fluctuating around their means due to thermodynamic fluctuations and are often derived as a mesoscopic or diffusive limit of microscopic systems consisting of a large number of particles represented by $\frac{1}{\ep^{2}}$. Governing the large time asymptotic dynamics, the stationary distribution $\mu_\ep$, assumed to be unique and have a nice and positive density $u_{\ep}$, is of particular interest. For an equilibrium system that satisfies detailed balance, the stationary density follows a Gibbs density given by $u_{\ep}=\frac{1}{K_{\ep}}e^{-\frac{2}{\ep^{2}}V_{\rm eq}}$ for some potential function $V_{\rm eq}$ and normalization constant $K_{\ep}$. In this scenario, extensive theories regarding entropy production, fluctuation, phase transition, etc. have been established in terms of $V_{\rm eq}$ and its characteristics. In contrast, nonequilibrium systems, exhibiting more complex and intriguing behaviours, are much less known, due partly to the absence of a natural potential function. Establishing the large deviation principle (LDP) of the stationary distribution $\mu_{\ep}$ or its density $u_{\ep}$ to define a potential-like function for a nonequilibrium system is one of the foremost problems in stochastic thermodynamics. Whenever it is determined by
\begin{equation}\label{LDP-stationary-distrbution}
    -\inf_{B^{o}}V\leq\liminf_{\ep\to0}\frac{\ep^2}{2}\ln\mu_{\ep}(B)\leq\limsup_{\ep\to0}\frac{\ep^2}{2}\ln\mu_{\ep}(B)\leq -\inf_{\ol{B}}V,\quad\forall B\in\BB(\R^{d}),
\end{equation}
or
\begin{equation}\label{LDP-introduction}
V:=-\lim_{\ep\to0}\frac{\ep^2}{2}\ln u_{\ep},    
\end{equation}
the quasi-potential (or macroscopic potential, nonequilibrium potential) $V$ and its characteristics lay the solid foundation for understanding nonequilibrium systems (see e.g. \cite{Graham87,Graham95,HQ20,HQ21}). It also plays a crucial role in studying the noise-induced exit problem (see e.g. \cite{Day87,Day99,FW12,HS18}), provides insight for the issue of stochastic stability and bifurcations (see e.g. \cite{Graham87,Zeeman88,Graham95}), and is the building block of the recent potential landscape and flux framework for describing emergent behaviors that applies to nonequilibrium networks, ecological systems, cell biology, and so on (see e.g. \cite{WXW08,Wang15,FKLW2019,XPSLW21}).

However, establishing the LDP \eqref{LDP-stationary-distrbution} or \eqref{LDP-introduction} and investigating key properties of the quasi-potential $V$ such as regularity and variational principle are highly nontrivial, as they depend sensitively on dynamical properties of the flow $\varphi^{t}$ generated by the unperturbed system $\dot{x}=b(x)$, and when it exists, $V$ necessarily satisfies the first-order Hamilton-Jacobi equation $\nabla V^{\top} A\nabla V+b\cdot\nabla V=0$ in the sense of a viscosity solution, where $A=\si\si^{\top}$. Despite extensive studies and discussions in physics literature (see e.g. \cite{Graham87,Graham95}) regarding these issues, rigorous mathematical results are only known in a few cases. The LDP \eqref{LDP-stationary-distrbution} has been established in the case when $\varphi^{t}$ admits finitely many ``indecomposable" compact invariant sets containing all the $\omega$-limit sets (see \cite[Theorem 4.3, Chapter 6]{FW12}). The LDP \eqref{LDP-introduction} and the regularity of $V$ are unfortunately only known in the case when the flow $\varphi^{t}$ admits a singleton set global attractor $\AAa=\{x_{*}\}$, that is, $x_{*}$ is the globally asymptotically stable equilibrium \cite{Sheu86,DD85,Day87,BB09}. More precisely, the existence of the limit \eqref{LDP-introduction} in this case is established in \cite{Sheu86,Day87,BB09}. The regularity of $V$ and many other properties are obtained in \cite{DD85} under the additional non-degeneracy assumption on the equilibrium $x_{*}$.

We note that the LDP \eqref{LDP-introduction} for the stationary density $u_{\ep}$ is typically much stronger than the LDP \eqref{LDP-stationary-distrbution} for the stationary distribution $\mu_{\ep}$ in compact subsets. Such a strong result is necessary in many situations especially those involving the consideration of the noise-vanishing limit $\ep\to0$, and therefore, lays the solid foundation for some further developments that we discuss in Subsection \ref{sub-application}. 

The main purpose of the present paper is to extend previous work on the LDP \eqref{LDP-introduction} and regularity of the quasi-potential $V$ from the setting of a singleton set global attractor to that of a more general global attractor. Such a far-reaching generalization turns to be anticipated in many applications. For example, while a limit cycle is commonly used to model simple biological oscillations, more complex oscillatory behaviors require quasi-periodic or even chaotic dynamics. Our framework also encompasses quasi-stationary distributions and restricted stationary distributions, making it applicable to systems with multiple local attractors. To the best of our knowledge, this is the first time that the LDP \eqref{LDP-introduction}, the regularity of $V$ and related results are established at this level of generality.

In Subsection \ref{problem-description}, we introduce the mathematical framework and state our main results. Subsection \ref{sub-comment-approach} provides insights into our approach, highlighting key ideas and novel contributions. Finally, in Subsection \ref{sub-application}, we briefly discuss potential applications of our findings.


\subsection{Setup and main results}\label{problem-description}

Let $\Om\subset\R^{d}$ be open and connected, and $b\in C^{k}(\Om;\R^{d})$ for $k>2$. Denote by $\vp^{t}$ the local flow generated by solutions of the following ODE
    \begin{equation}\label{main-ode}
    \dot{x}=b(x),\quad x\in\Om.
\end{equation}

\medskip

\begin{itemize}
    \item[\bf(H1)] Assume that $\Om$ is positively invariant under $\vp^{t}$, namely, $\vp^{t}(\Om)\subset\Om$ for all $t\geq0$, and that $\vp^{t}$ admits the maximal attractor $\AAa$ in $\Om$, that is,  $\AAa\subset\Om$ is compact, $\vp^{t}$-invariant, and satisfies $\lim_{t\to\infty}\dist_{H}(\vp^{t}(\OO),\AAa)=0$ for all $\OO\stst\Om$.
\end{itemize}

\medskip

In {\bf(H1)}, $\dist_{H}$ denotes the Hausdorff semi-distance. Some basics of maximal attractors are collected in Appendix \ref{appendix-attractor}.

\medskip

Let $A=(a^{ij})\in C^{k+1}(\Om;\mathbf{S}^{d}_{+})$, where $\mathbf{S}^{d}_{+}$ is the set of symmetric and positive definite $d\times d$ matrices, and consider for each $0<\ep\ll1$ the following eigenvalue problem for $(u,\la)$:
\begin{equation}\label{main-problem}
    \begin{cases}
        L_{\ep}^{*}u:=\frac{\ep^{2}}{2}\nabla\cdot(\nabla\cdot(Au))-\nabla\cdot(bu)=-\la u\quad\text{in}\quad \Om,\\
        u>0\quad\text{in}\quad\Om,\quad\displaystyle\int_{\Om}u=1,\\
        \la\geq0,
    \end{cases}
\end{equation}
where $\nabla\cdot(Au)$ is the row-wise divergence. Note that $L^{*}$ is just the Fokker-Planck operator associated with the SDE \eqref{SDE-introduction} when $A=\si\si^{\top}$, and no properties of $u$ on the boundary $\partial\Om$ is required.  

\medskip

\begin{itemize}
    \item[\bf(H2)] 
Assume that for each $\ep$, \eqref{main-problem} admits a classical solution pair $(u_{\ep},\la_{\ep})$, and that the family of probability measures $\{\mu_{\ep}\}_{\ep}$ on $\Om$ with densities $\{u_{\ep}\}_{\ep}$ is tight. 
\end{itemize} 

\medskip

Under {\bf(H1)} and {\bf(H2)}, there must hold $\limsup_{\ep\to0}\frac{\ep^{2}}{2}\ln\la_{\ep}<0$ with the convention $\ln0=-\infty$ (see e.g. \cite{Friedman72/73}). Moreover, limiting points of $\{\mu_{\ep}\}_{\ep}$ as $\ep\to 0$ under the topology of weak convergence exist, are invariant measures of $\vp^{t}$, and must be supported on $\AAa$ (see e.g. \cite{Kifer88}). Note that in {\bf(H2)} we do not require the problem \eqref{main-problem} to have a unique classical solution pair. Such a setting is helpful in the application to quasi-stationary distributions, whose non-uniqueness is known in many situations (see e.g. \cite{MS94,LS00,CMS13,Yamato22}) due to singularities of $b(x)$ and $A(x)$ as $x\to\partial\Om$. We also point out that no restrictions on behaviors of $u_{\ep}$ near $\partial\Om$ is imposed except the tightness of $\{u_{\ep}\}_{\ep}$.

For $x,y\in\Omega$, we set
\begin{equation*}\label{def-V(x,y)}
\VV(y,x):=\inf_{t<0}\inf_{\phi\in \Phi_{t,y,x}}\frac{1}{4}\int_t^0 \left[b(\phi)-\dot{\phi}\right]^{\top}A^{-1}(\phi) \left[b(\phi)-\dot{\phi}\right],
\end{equation*}
where $\Phi_{t,y,x}:=\left\{\phi\in AC([t,0];\Om):\dot{\phi}\in L^2([t,0];\R^{d}), \,\, \phi(t)=y,\,\,\phi(0)=x\right\}$. It is known that $\VV:\Om\times\Om:\to[0,\infty)$ is locally Lipschitz continuous \cite{FW12}. We follow Freidlin and Wentzell \cite{FW12} to define \emph{equivalence class} and \emph{quasi-potential function}.

\begin{defn}\label{def-equiv-class}
    A set $\La\subset\Om$ is called an \emph{equivalence class} if $\VV(x,y)=\VV(y,x)=0$ for all $x,y\in\La$. 
\end{defn}

\medskip

\begin{itemize}
    \item[\bf(H3)] The maximal attractor $\AAa$ is an equivalence class.
\end{itemize}

\medskip

\begin{rem}\label{rem-indecomposibility}
Note that {\bf(H3)} is independent of $A\in C^{k+1}(\Om;\mathbf{S}^{d}_{+})$, and even, $A\in C(\Om;\mathbf{S}^{d}_{+})$. Moreover, {\bf(H3)} implies that $\AAa$ is chain-transitive, equivalent to that $\AAa$ does not contain a smaller maximal attractor (see Proposition \ref{app-prop-indecomposable-attractor}). The converse does not hold in general (it does not seem easy to construct a counterexample), but can be established under additional conditions (see Remark \ref{app-rem-indecomposable-attractor}). There exist substantial chain-transitive maximal attractors that are also equivalence classes (see e.g. \cite[page 147]{FW12}). 
\end{rem}

\medskip

The \emph{quasi-potential function} $V$ is defined by
\begin{equation}\label{def-quasi-potential-V-FW-sense}
    V(x):=\min_{y\in \AAa}\VV(y,x)=\inf_{\phi\in\Phi_{x}}I(\phi)=\inf_{\phi\in\Phi_{x}}\frac{1}{4}\int_{-\infty}^0\left[b(\phi)-\dot{\phi}\right]^{\top}A^{-1}(\phi)\left[b(\phi)-\dot{\phi}\right], \quad \forall x\in \Om,
\end{equation}
where 
$$
\Phi_{x}:=\left\{\phi\in AC((-\infty,0];\Om):\dot{\phi}\in L^2_{loc}((-\infty,0];\R^{d}), \,\,\phi(0)=x,\,\,\lim_{t\to -\infty}\dist(\phi(t),\AAa)=0\right\}.
$$
Set  $\rho_V:=\liminf_{x\to \pa\Om} V(x)$ and $\Om^V:=\left\{x\in\Om: V(x)<\rho_V\right\}$, where the exact meaning of $x\to \pa\Om$ is explained at the beginning of Subsection \ref{subsec-Variational-representation}. Denote by $\Om_{\rho}^{V}$ the $\rho$-sublevel set of $V$ in $\Om$ for each $\rho\in (0,\rho_V]$, namely, $\Om_{\rho}^{V}:=\left\{x\in\Om: V(x)<\rho\right\}$.

Let the Hamiltonian $H:\Om\times\R^{d}\to\R$ be defined by $H(x,p)=p^{\top}A(x)p+b(x)\cdot p$ and consider the first-order Hamilton-Jacobi equation (HJE) 
    \begin{equation}\label{eqn-HJE}
    H(x,\nabla W(x))=0.    
    \end{equation}

\begin{lem}\label{lem-quasi-potential-basic-results}
    Assume {\bf(H1)} and {\bf(H3)}. Then, the following hold.
    \begin{itemize}
        \item $V$ is locally Lipschitz continuous, non-negative, and vanishes only on $\AAa$.
    
\item For each $\rho\in (0,\rho_V]$, $\Om^{V}_{\rho}$ is connected. 

\item For each $x\in\Om^{V}$, $V(x)=\min_{\phi\in\Phi_{x}}I(\phi)$.

        \item $V$ is a viscosity solution of the HJE \eqref{eqn-HJE} in $\Om^{V}$, and satisfies \eqref{eqn-HJE} whenever $V$ is differentiable at $x\in\Om^{V}$.
    \end{itemize}
\end{lem}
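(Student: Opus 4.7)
The plan is to address the four bullets in turn, using as main tools the known local Lipschitz regularity of $\VV$ on $\Om\times\Om$, the dynamic programming principle (DPP) for the action $I$, and the characterization $\AAa=\bigcap_{t\geq s}\vp^{t}(U)$ of the maximal attractor for neighborhoods $U$ of $\AAa$.

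For the first bullet, local Lipschitz regularity of $V$ follows at once from $V(x)=\min_{y\in\AAa}\VV(y,x)$ together with compactness of $\AAa$, while non-negativity is immediate from $I(\phi)\geq 0$. That $V$ vanishes on $\AAa$ is a direct consequence of \textbf{(H3)}: for $x \in \AAa$ and any $y\in\AAa$ one has $\VV(y,x)=0$, hence $V(x)=0$. The converse — $V(x)=0$ forcing $x\in\AAa$ — is the main obstacle and relies on the minimizer existence established in the third bullet. Granted such $\phi\in\Phi_x$ with $I(\phi)=0$, the structure of the integrand $\tfrac14(v-b)^{\top}A^{-1}(v-b)$ forces $\dot\phi=b(\phi)$ almost everywhere, so $\phi$ is an entire backward trajectory of $\vp^{t}$ with $\dist(\phi(s),\AAa)\to 0$ as $s\to-\infty$. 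Fixing any $U\stst\Om$ with $\AAa\subset U$, one has $\phi(-s)\in U$ for large $s$, and since $\phi$ solves the ODE, $x=\vp^{s}(\phi(-s))\in\vp^{s}(U)$ for all large $s$; intersecting over $s$ yields $x\in\AAa$.

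For the second bullet, I rely on the monotonicity $V(\phi(s_1))\leq V(\phi(s_2))$ for $s_1\leq s_2\leq 0$ along any near-optimal $\phi\in\Phi_{\phi(0)}$, which follows from additivity of $I$ under concatenation. Given $x_1,x_2\in\Om^{V}_\rho$, I take near-optimal trajectories in $\Phi_{x_i}$ (each staying inside $\Om^{V}_\rho$ by the above monotonicity) and truncate them at some large negative time so that their endpoints lie in a small neighborhood of $\AAa$ contained in $\Om^{V}_\rho$ (possible by continuity of $V$ and $V|_{\AAa}=0$); since $\AAa$ is chain-transitive by Remark \ref{rem-indecomposibility} and compact hence connected, the two truncation endpoints can be joined by a continuous curve through a connected neighborhood of $\AAa$ inside $\Om^{V}_\rho$. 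For the third bullet, I use the direct method: a minimizing sequence $\phi_n\in\Phi_x$ with $I(\phi_n)<\rho_V$ cannot accumulate at $\pa\Om$ (by the very definition of $\rho_V$ together with the monotonicity above), so on each finite window $[{-}T,0]$ the $\phi_n$ stay in a common compact $K\stst\Om$; uniform bounds on $A^{-1}$ over $K$ give weak $L^{2}$ bounds on $\dot\phi_n$, and Arzelà–Ascoli plus lower semicontinuity of the convex action functional yield a limit minimizer on each window. A diagonal extraction in $T$ together with \textbf{(H3)} (allowing the tail to be appended through near-zero-action excursions inside the equivalence class $\AAa$) produces $\phi\in\Phi_x$ with $I(\phi)=V(x)$.

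Finally, for the viscosity solution property, I proceed via the DPP in its standard guise: for $\psi\in C^1(\Om^V)$ touching $V$ from above (resp.\ below) at $x$, I test the DPP against short-time perturbations $\phi(s)=x+sv$ and pass to the scaling limit, obtaining $p\cdot v-L(x,v)\leq 0$ for all $v$ (resp.\ $p\cdot v-L(x,v)\geq 0$ for some $v$) with $p=\nabla\psi(x)$. Since the Legendre transform of $L(x,\cdot)$ is exactly $H(x,p)=p^{\top}A(x)p+b(x)\cdot p$, this yields the required viscosity inequalities, and at a differentiability point of $V$ the two combine to give the pointwise identity $H(x,\nabla V(x))=0$. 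The hardest step of the whole plan remains the one indicated in the first paragraph: turning a zero-action minimizer into membership in $\AAa$, which is where the variational representation and the invariant-set characterization of the maximal attractor must be played against each other.
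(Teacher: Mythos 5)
Your proposal takes a genuinely different route from the paper: you work directly from the variational definition of $V$, using local Lipschitz continuity of $\VV$, the direct method, the dynamic programming principle, and the invariant-set characterization of $\AAa$. The paper, by contrast, obtains these properties of $V$ as by-products of the proof of Theorem~\ref{thm-main-result}: it works with $V_{\ep}=-\tfrac{\ep^2}{2}\ln u_{\ep}$, passes the PDE \eqref{eq:Aepsilon0} to a viscosity limit, proves $V^{x_0}>0$ off $\AAa$ via the probabilistic concentration estimate of \cite{JSY19}, and extracts the variational formula via a control-theoretic argument (Lemmas \ref{representation-of-V}--\ref{lem:lim-1}). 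That route uses {\bf(H2)}, whereas your approach only needs {\bf(H1)}+{\bf(H3)}, which is what the lemma as stated assumes; in that sense your plan is actually more in line with the hypotheses (and with the cited \cite{VF70,DD85}).

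There is, however, a circularity in your first bullet that needs to be resolved. You prove ``$V(x)=0\Rightarrow x\in\AAa$'' by invoking the minimizer from the third bullet and deducing that a zero-action $\phi\in\Phi_x$ is a backward flow orbit. But minimizer existence is proved (correctly, via the direct method) only for $x\in\Om^{V}$, i.e.\ under $V(x)<\rho_V$, and so your argument silently requires $\rho_V>0$. Establishing $\rho_V=\liminf_{x\to\pa\Om}V(x)>0$ is itself essentially the statement that $V$ is uniformly bounded away from zero outside any compact neighborhood of $\AAa$ --- which is precisely what you are trying to prove. The cycle must be broken by a direct, deterministic estimate: fix compact positively invariant neighborhoods $\AAa\subset U'\stst U\stst\Om$ and show that the cost for any trajectory to traverse the shell $\ol{U}\setminus U'$ is bounded below by a positive constant (this is \cite[Lemma~3.1]{F78}, which the paper itself invokes in Step 3 of the proof of Lemma~\ref{lem:lim-1}, or the Freidlin--Wentzell estimate for paths spending a long time in a compact set containing no invariant set). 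This gives $\rho_V>0$ and $V>0$ on $\Om\setminus\AAa$ without appealing to minimizer existence, after which your remaining arguments go through. Relatedly, your handling of the backward-tail convergence in the third bullet (``allowing the tail to be appended through near-zero-action excursions inside the equivalence class $\AAa$'') conflates the construction of near-minimizers with the convergence of the diagonal limit; the correct argument is again a contradiction via \cite[Lemma~3.1]{F78}: if $V(\phi(t))\to\underline{\rho}>0$, the trajectory stays in a compact set disjoint from $\AAa$ for infinite time and so accumulates infinite action. Finally, your use of chain-transitivity in the connectedness argument is unnecessary: connectedness of $\AAa$ (which follows from {\bf(H1)} and connectedness of $\Om$) together with $V|_{\AAa}=0$ and the monotonicity of $V$ along near-optimal trajectories suffice, exactly as in Step~4 of the proof of Lemma~\ref{lem:lim-1}.
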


The conclusions in Lemma \ref{lem-quasi-potential-basic-results} are more or less well-known. Their proofs are scattered in literature (see e.g. \cite{VF70,DD85}) and are also contained in the proof of Theorem \ref{thm-main-result}.

\medskip

Our first result establishes the large deviation principle (LDP) for the densities $\{u_{\ep}\}_{\ep}$ given in {\bf(H2)}.

\begin{thmx}[Large deviation principle]\label{thm-main-result}
Assume {\bf(H1)}-{\bf(H3)}. Then, for any $\al\in (0,1)$, 
$$
\lim_{\ep\to0}\frac{\ep^{2}}{2}\ln u_{\ep}=-V\quad \text{in}\quad  C^{\al}(\Om^{V}).
$$
\end{thmx}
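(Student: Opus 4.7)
The plan is to introduce $V_{\ep}:=-\tfrac{\ep^{2}}{2}\ln u_{\ep}$, transform \eqref{main-problem} into a perturbed Hamilton-Jacobi equation for $V_{\ep}$, prove $\ep$-uniform local Lipschitz estimates, and identify the limit with $V$ by combining viscosity-solution stability with the variational characterization in Lemma \ref{lem-quasi-potential-basic-results}. Inserting $u_{\ep}=e^{-2V_{\ep}/\ep^{2}}$ into the eigenvalue equation and expanding the row-wise divergences yields
$$H(x,\nabla V_{\ep})=\tfrac{\ep^{2}}{2}\mathrm{tr}(A\,D^{2}V_{\ep})+\tfrac{\ep^{2}}{2}g(x,\nabla V_{\ep})-\tfrac{\ep^{2}}{2}\la_{\ep},$$
where $g$ is affine in $\nabla V_{\ep}$ with coefficients bounded in terms of $b$, $A$ and their first derivatives, and $\tfrac{\ep^{2}}{2}\la_{\ep}$ decays exponentially as $\ep\to 0$ by the consequence of \textbf{(H2)} recalled in the text.

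The step I anticipate as the main obstacle is an $\ep$-uniform local Lipschitz estimate for $V_{\ep}$ on compact subsets of $\Om^{V}$: since the drift term $b\cdot\nabla V_{\ep}$ in $H$ is not scaled by $\ep$, the uniformly-elliptic Harnack constant for $u_{\ep}$ degenerates as $\ep\to 0$ and naive elliptic estimates fail. I would run a Bernstein-type argument on $w=|\nabla V_{\ep}|^{2}$: differentiating the equation above produces a linear second-order inequality for $w$ whose principal part $\tfrac{\ep^{2}}{2}\mathrm{tr}(A\,D^{2}w)$ is uniformly elliptic on compacta, and whose non-$\ep$-scaled first-order part is absorbed by the quadratic gradient structure of $H$; a standard cut-off then controls $\sup w$ on $K\stst\Om^{V}$ by $\sup V_{\ep}$ on a slightly larger set. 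The $L^{\infty}$ bounds on $V_{\ep}$ needed to close this argument are obtained from above by evaluating a Fleming-type stochastic-control representation of $V_{\ep}$ along admissible paths $\phi\in\Phi_{x}$ from $\AAa$ to $x$, and from below by combining the tightness of $\{\mu_{\ep}\}$ in \textbf{(H2)} with a local Harnack inequality; the equivalence-class hypothesis \textbf{(H3)} enters here to force this lower bound to hold uniformly over all of $\AAa$ via a Harnack chain along zero-action trajectories, rather than only at a single point.

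With the Lipschitz bound in place, Arzelà-Ascoli extracts subsequential local uniform limits $V_{*}$, and the standard stability of viscosity solutions under the vanishing viscous perturbation $\tfrac{\ep^{2}}{2}\mathrm{tr}(A\,D^{2})$ gives that $V_{*}$ is a nonnegative locally Lipschitz viscosity solution of \eqref{eqn-HJE} on $\Om^{V}$; the same Harnack-chain argument as above yields $V_{*}\equiv 0$ on $\AAa$. To identify $V_{*}=V$, I would establish two matching inequalities. For $V_{*}\le V$: apply the Fenchel-Young inequality $p\cdot v\le H(x,p)+\tfrac{1}{4}(v-b(x))^{\top}A^{-1}(x)(v-b(x))$ with $(x,p,v)=(\phi(s),\nabla V_{\ep}(\phi(s)),\dot\phi(s))$ for any $\phi\in\Phi_{x}$, integrate on $[t,0]$, use the perturbed HJE to bound $\int_{t}^{0}H(\phi,\nabla V_{\ep})\,ds=o_{\ep}(1)$ at fixed $t$ (via integration-by-parts along $\phi$ to move derivatives off $V_{\ep}$ before estimating), send $\ep\to 0$ along the subsequence, then $t\to -\infty$ using $V_{*}(\phi(t))\to 0$, and take the infimum over $\phi$. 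For $V_{*}\ge V$: a standard optimal-control verification argument for nonnegative viscosity supersolutions of \eqref{eqn-HJE} vanishing on $\AAa$ yields $V_{*}(x)\ge\inf_{\phi\in\Phi_{x}}I(\phi)=V(x)$. Uniqueness of the limit then promotes subsequential to full convergence.

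Finally, the $C^{\al}$ upgrade is a direct interpolation consequence: uniform $L^{\infty}$ convergence $V_{\ep}\to V$ together with the $\ep$-uniform Lipschitz bound on each compact $K\subset\Om^{V}$ gives $\|V_{\ep}-V\|_{C^{\al}(K)}\le C\|V_{\ep}-V\|_{L^{\infty}(K)}^{1-\al}$, yielding convergence in $C^{\al}(\Om^{V})$ for every $\al\in(0,1)$.
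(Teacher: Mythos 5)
Your outline shares the paper's skeleton — logarithmic change of variables, $\ep$-uniform local Lipschitz estimates for $V_\ep$, Arzel\`a–Ascoli and viscosity stability to get subsequential limits, and identification with the Freidlin–Wentzell $V$ — and your final interpolation step to upgrade locally uniform convergence to $C^{\al}$ is fine. But there are two gaps in the identification step that the paper's argument is specifically designed to avoid.

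First, the upper bound. You propose to plug $(x,p,v)=(\phi(s),\nabla V_\ep(\phi(s)),\dot\phi(s))$ into Fenchel--Young, integrate in $s$, and then bound $\int_t^0 H(\phi,\nabla V_\ep)\,ds=o_\ep(1)$ ``via integration-by-parts along $\phi$.'' From the equation for $V_\ep$, $H(x,\nabla V_\ep(x))=\tfrac{\ep^2}{2}\mathrm{tr}(A D^2V_\ep)+O(\ep^2)$, so you must control $\tfrac{\ep^2}{2}\int_t^0\mathrm{tr}\bigl(A(\phi)D^2V_\ep(\phi)\bigr)\,ds$. The Bernstein-type gradient bound gives $\|\nabla V_\ep\|_{L^\infty(K)}\lesssim 1$ but says nothing $\ep$-uniform about $D^2V_\ep$, and one cannot expect $\ep^2 D^2V_\ep\to 0$ pointwise (the viscous term concentrates). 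Integration by parts along the one-dimensional curve $\phi$ only produces $\tfrac{d}{ds}\nabla V_\ep(\phi)=D^2V_\ep(\phi)\dot\phi$, which is the Hessian applied to $\dot\phi$ — not the full trace $\mathrm{tr}(AD^2V_\ep)$. So this step does not close. The paper avoids it entirely: it passes to the viscosity limit first, then invokes the uniqueness theorem for the finite-horizon Hamilton--Jacobi--Bellman problem (Crandall--Lions, cited as \cite{CL87}) to obtain a dynamic programming representation for the limit $V^{x_0}$ (Lemma \ref{representation-of-V}), and only then compares with the Freidlin--Wentzell functional. The Fenchel--Young manipulation, in effect, is carried out at the $\ep=0$ level where there is no viscous remainder.

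Second, the localization near $\partial\Om$. Under \textbf{(H2)} the only control near $\partial\Om$ is tightness; no behavior of $V_\ep$ there is prescribed. Your verification argument for $V_*\ge V$ and your path argument for $V_*\le V$ both implicitly allow trajectories or test paths to approach $\partial\Om$, where the $\ep$-uniform Lipschitz bound and the dynamic programming principle are unavailable. The paper handles this by working in nested domains $\OO\in\OO_{\AAa,\Om}$, tracking exit times $\tau_{\al,x,\OO}$, proving the variational representation first on the sublevel sets $\OO_{\rho_\OO}$ (Lemma \ref{lem:lim-1}), and then passing to $\Om_*$ (Proposition \ref{lem:lim}). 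Moreover, to convert subsequential into full convergence on $\Om^V$, one must show $V\ge\rho_*$ on $\Om\setminus\Om_*$ so that $\Om_*=\Om^V$ is independent of the subsequence (the argument around \eqref{claim-july-09}); your proposal asserts ``uniqueness of the limit'' without addressing this. A minor further caveat: the ``local Harnack inequality'' you invoke for the lower $L^\infty$ bound on $V_\ep$ has an $\ep$-dependent constant for the operator $L_\ep^*$ (the drift is $O(\ep^{-2})$ after rescaling), so it cannot be used as stated; the paper instead uses the tightness of $\{\mu_\ep\}$ to pin down $V_{\ep_n}(x_0)\to 0$ at a single density point $x_0\in\AAa$ of the limit measure and then propagates via the gradient bound, and it is \textbf{(H3)} — via explicit small-action connecting paths, not Harnack chains — that gives $V^{x_0}\equiv 0$ on all of $\AAa$ (Lemma \ref{lem-vanishing-on-M}).
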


The assumption {\bf(H3)} guarantees $\lim_{\ep\to0}\frac{\ep^{2}}{2}\ln u_{\ep}=0$ on $\AAa$. The next result almost gives an inverse to this. See Remark \ref{rem-indecomposibility} for the tricky implication relationships between {\bf(H3)} and $\AAa$ being chain-transitive.

\begin{thmx}
Assume {\bf(H1)} and {\bf(H2)}. If $\lim_{\ep\to0}\frac{\ep^{2}}{2}\ln u_{\ep}=0$ on $\AAa$, then $\AAa$ is chain-transitive.  
\end{thmx}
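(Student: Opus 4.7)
The plan is to prove the contrapositive: assuming $\AAa$ is not chain-transitive, I will exhibit a point $x_{*}\in\AAa$ at which $\limsup_{\ep\to 0}\tfrac{\ep^{2}}{2}\ln u_{\ep}(x_{*})$ is strictly negative, contradicting the hypothesis.

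The first step is structural. By Remark \ref{rem-indecomposibility} and Appendix \ref{appendix-attractor}, non-chain-transitivity of $\AAa$ yields a strictly smaller maximal attractor $\AAa_{0}\subsetneq\AAa$. Standard Lyapunov/Conley theory produces a forward-invariant open neighborhood $U_{0}\subset\Om$ of $\AAa_{0}$ in its basin, together with the dual repeller $\RR:=\AAa\setminus\bigcup_{t\geq 0}\varphi^{-t}(U_{0})$, which is a nonempty, compact, $\varphi^{t}$-invariant subset of $\AAa$. I fix any $x_{*}\in\RR$. Since $\ol{U_{0}}$ is forward-invariant and $x_{*}\notin\ol{U_{0}}$, every absolutely continuous path from $\AAa_{0}$ to $x_{*}$ must cross $\partial U_{0}$ against the flow and hence carries strictly positive action. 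Consequently the quasi-potential based at $\AAa_{0}$,
\begin{equation*}
V_{\AAa_{0}}(x_{*}):=\inf_{y\in\AAa_{0}}\VV(y,x_{*})=:\kappa,
\end{equation*}
is strictly positive, and by the local Lipschitz continuity of $\VV$ there exists $r_{0}>0$ with $V_{\AAa_{0}}\geq \kappa/2$ on $\ol{B_{r_{0}}(x_{*})}$.

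The second step is a Freidlin-Wentzell large-deviation upper bound for $\mu_{\ep}$ near $x_{*}$. Treating $\AAa_{0}$ and $\RR$ as a two-class decomposition of $\AAa$ and inserting the weights $\VV(\RR,\AAa_{0})=0$ and $\VV(\AAa_{0},\RR)\geq\kappa$ into the tree formula of \cite[Theorem 4.3, Chapter 6]{FW12}, one obtains
\begin{equation*}
\limsup_{\ep\to 0}\frac{\ep^{2}}{2}\ln\mu_{\ep}(B_{r}(x_{*}))\leq -\kappa/2,\qquad 0<r\leq r_{0}.
\end{equation*}
Heuristically, the residence time of the stationary process near $\AAa_{0}$ is $\sim e^{\kappa/(\ep^{2}/2)}$ while near $\RR$ it is only $O(1)$, so the stationary mass of $B_{r}(x_{*})$ is exponentially small. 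The third step upgrades this measure bound to a pointwise bound on $u_{\ep}(x_{*})$ via a local Harnack inequality. Writing the eigenvalue equation in non-divergence form as $\tfrac{\ep^{2}}{2}a^{ij}\partial_{i}\partial_{j}u_{\ep}+\tilde b^{j}_{\ep}\partial_{j}u_{\ep}+\tilde c_{\ep}u_{\ep}=0$, the Moser-Harnack constant on $B_{r}(x_{*})$ is controlled by the ratio $r\|\tilde b_{\ep}\|_{\infty}/(\tfrac{\ep^{2}}{2}\|a\|_{\infty})\sim r/\ep^{2}$. Choosing $r=\ep^{2}$ keeps this constant uniform in $\ep$, so that $u_{\ep}(x_{*})\leq C\ep^{-2d}\mu_{\ep}(B_{\ep^{2}}(x_{*}))$, whence
\begin{equation*}
\limsup_{\ep\to 0}\frac{\ep^{2}}{2}\ln u_{\ep}(x_{*})\leq -\kappa/2<0,
\end{equation*}
contradicting the hypothesis.

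The main obstacle I anticipate is the $\ep$-uniform Harnack bound at the scale $r=\ep^{2}$: while classical Moser-type estimates yield such a bound in principle, tracking the dependence on the coefficients as the ball shrinks requires care. A cleaner alternative is to establish a uniform Bernstein-type gradient estimate $\ep^{2}|\nabla\ln u_{\ep}|\leq C$ on a neighborhood of $x_{*}$, which would give $u_{\ep}(x_{*})\leq e^{Cr/\ep^{2}}u_{\ep}(y)$ for $|x_{*}-y|\leq r$, and the passage from measure to pointwise bound then proceeds on a fixed small ball. Such gradient estimates for the log-density of $u_{\ep}$ fit naturally within the Hamilton-Jacobi framework developed earlier in the paper for Theorem \ref{thm-main-result}.
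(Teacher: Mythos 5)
Your proof begins exactly as the paper's does, by passing from non-chain-transitivity to a strictly smaller maximal attractor $\AAa_0\subsetneq\AAa$ via Proposition \ref{app-prop-indecomposable-attractor}. After that, however, the two arguments diverge significantly, and yours has genuine gaps. First, you pick a test point $x_*$ in the dual repeller $\RR$ and try to control $\mu_\ep(B_r(x_*))$ via the tree formula of \cite[Theorem 4.3, Chapter 6]{FW12}. But that theorem is formulated under the assumption that the flow has finitely many \emph{indecomposable} compact invariant sets (equivalence classes in the sense of Definition \ref{def-equiv-class}) containing all $\omega$-limit sets; neither $\AAa_0$ nor $\RR$ is shown to be an equivalence class (indeed under only \textbf{(H1)}--\textbf{(H2)} there is no reason for this to hold), so the ``two-class decomposition'' is not licensed. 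Moreover, that FW theorem concerns stationary distributions of diffusions on $\R^d$, whereas here $u_\ep$ solves the eigenvalue problem \eqref{main-problem} on a possibly proper subdomain $\Om$ with $\la_\ep\geq 0$ (covering quasi-stationary densities with no boundary control except tightness), so invoking it directly is another unjustified step. Second, the passage from a measure bound to a pointwise bound on $u_\ep(x_*)$ — the $\ep$-uniform Harnack estimate at scale $r\sim\ep^2$ — is left as a sketch that you yourself flag as delicate. Your ``cleaner alternative,'' a uniform Bernstein bound on $\ep^2|\nabla\ln u_\ep|$, is in fact already in the paper as Lemma \ref{prop-uniform-gradient-V_ep}, but you do not actually cite it or carry the argument through.

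The paper's route is much shorter and avoids all of this. Having obtained $\tilde{\AAa}\subsetneq\AAa$, it observes that since $\AAa$ is \emph{connected}, the set $\bigl(B(\tilde{\AAa})\setminus\tilde{\AAa}\bigr)\cap\AAa$ is nonempty (a connected compact invariant set cannot be split by the open basin $B(\tilde\AAa)$ without meeting $B(\tilde\AAa)\setminus\tilde\AAa$), and then directly cites the pointwise concentration estimate of \cite[Theorem A (2)]{JSY19}: for any $\OO\stst B(\tilde\AAa)\setminus\tilde\AAa$ one already has $\sup_\OO u_\ep\leq e^{-\ga_\OO/\ep^2}$, which contradicts $\lim_{\ep\to 0}\tfrac{\ep^2}{2}\ln u_\ep=0$ on $\AAa$ at such a point. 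In other words, the work you are trying to reconstruct (FW measure estimate plus Harnack) is exactly what is packaged in \cite{JSY19}, and there the test point lies in the basin of the smaller attractor, not in the repeller. The missing idea on your side is precisely this use of connectedness of $\AAa$ to obtain a test point in $B(\tilde\AAa)\setminus\tilde\AAa$ that still lies on $\AAa$; once you have that point, no two-class FW decomposition or fine-scale Harnack argument is needed.
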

\begin{proof}
Suppose on the contrary that $\AAa$ is not chain-transitive. Then, it must contain a smaller maximal attractor $\tilde{\AAa}$ (see Proposition \ref{app-prop-indecomposable-attractor}), and since $\AAa$ is connected, there holds $\left(B(\tilde{\AAa})\setminus\tilde{\AAa}\right)\cap\AAa\neq\emptyset$, where $B(\tilde{\AAa})$ is the basin of attraction of $\tilde{\AAa}$. Since $\limsup_{\ep\to0}\frac{\ep^2}{2}\ln u_{\ep}(x)<0$ for $x\in B(\tilde{\AAa})\setminus\tilde{\AAa}$ (see e.g. \cite[Theorem A (2)]{JSY19}), we arrive at a contradiction by examining such an $x$ belonging to $\AAa$.
\end{proof}

\begin{rem}\label{rem-importance-of-equivalence-class}
Some comments on Theorem \ref{thm-main-result} follow.
\begin{itemize}
\item[(1)] {\bf(H3)} does not ensure that limiting measures of $\{\mu_{\ep}\}_{\ep}$ have the whole attractor $\AAa$ as their support. For example, if $\AAa$ consists of one fixed point and one homoclinic orbit, then $\AAa$ satisfies {\bf(H3)} but limiting measures of $\{\mu_{\ep}\}_{\ep}$ are supported on the fixed point. The asymptotic of the prefactor $R_{\ep}:=K_{\ep}u_{\ep}e^{\frac{\ep^2}{2}V}$, where $K_{\ep}$ is the normalization constant, needs to be investigated to determine exact concentrations in general.

\item[(2)] Even if $\lim_{\ep\to0}\frac{\ep^{2}}{2}\ln u_{\ep}$ exists on $\Om\setminus\Om^{V}$, it is unlike that the limit is given by $-V$. The behaviors of $b$ and $A$ on the boundary $\partial\Om$ play a role in determining the limit.
\end{itemize}
\end{rem}


For stationary distributions, Theorem \ref{thm-main-result} is well-known when the maximal attractor $\AAa$ is a singleton set \cite{Sheu86,Day87,BB09}, that is, $\AAa=\{x_{*}\}$ is an asymptotically stable equilibrium. Assuming further that the equilibrium $x_{*}$ is linearly stable, it is shown in \cite{DD85} that there is a connected, open, and dense subset $G\subset\Om^{V}$ containing a neighborhood of $x_{*}$ such that $V\in C^{k}(G)$. The regularity of $V$ in the whole domain $\Om^{V}$ fails in general \cite{GT85,Day87}. Such a regularity result of $V$ has many applications that we mention a few in Subsection \ref{sub-application}.  

The main purpose of the present paper is to establish the regularity of the quasi-potential function $V$. While a counterexample is not known yet, the regularity as in the case of a non-degenerate singleton set attractor is believed to fail in general when $\AAa$ has a strange structure given that $V$ must be non-negative, vanish only on $\AAa$, and solve the HJE \eqref{eqn-HJE}. Therefore, we impose proper geometric and dynamical conditions on $\AAa$ without excluding possible complex dynamics. More precisely, we require $\AAa$ to be a normally contracting invariant manifold of $\vp^{t}$, which is a natural but highly non-trivial generalization of a linearly stable equilibrium. A typical example is a linearly stable limit cycle. 

\medskip

\begin{itemize}
    \item[\bf(H4)] The maximal attractor $\AAa$ is a $C^{k}$-submanifold with $\dim\AAa<d$ and there are constants $C>0$, $0<\be_0<\be_*$ satisfying $\frac{\be_0}{\be_*-\be_0}<\frac{1}{k'}$ for some $2\leq k'\leq k-1$ such that for each $x\in\AAa$,
\begin{itemize}
\item $T_x\R^d=S(x)\oplus T_x \MM$,
\item $D\varphi^{t}(x)S(x)=S(\varphi^{t}(x))$, $t\in\R$,
\item $\|D\varphi^{t}(x)|_{S(x)}\|\leq Ce^{-\be_* t}$, $t\geq 0$, 
\item $\|D\varphi^{t}(x)|_{T_x\MM}\|\leq  Ce^{\be_0 |t|}$, $t\in \R$.
\end{itemize}
\end{itemize}

\medskip

Consider the following Hamiltonian system associated with the HJE \eqref{eqn-HJE}:
\begin{equation}\label{hamiltonian-system-intro}
    \begin{cases}
        \dot x=2 A(x)p+b(x),\\
        \dot p=-p^{\top}\nabla A(x) p-\nabla b^{\top}(x)p
    \end{cases}
    \quad\text{in}\quad\Om\times\R^{d},
\end{equation}
where $p^{\top}\nabla A(x)p=\left(p^{\top}\partial_{1}A(x)p,\dots,p^{\top}\partial_{d}A(x)p^{\top}\right)^{\top}$, $\nabla b$ denotes the Jacobian of $b$, and $\nabla b^{\top}$ is its transpose. Denote by $\Phi^{t}_{H}$ the local flow generated by \eqref{hamiltonian-system-intro}. Obviously, $\AAa\times\{0\}$ is an invariant set of $\Phi^{t}_{H}$. The following result establishes in particular the regularity of $V$ in a neighborhood of $\AAa$ and its connection to the local dynamics of $\Phi^{t}_{H}$ near $\AAa\times\{0\}$.

\begin{thmx}\label{thm-regularity-V}
Assume {\bf(H1)}-{\bf(H4)}. Then, there are open sets $\AAa\subset\OO_{2}\subset\OO_1\stst\OO_{0}\stst\Om$ such that the following hold.
\begin{itemize}
    \item (Local unstable manifold of $\AAa\times\{0\}$) There are a $d$-dimensional $C^{k'}$ Lagrangian submanifold $W^{u}\subset\Om\times\R^{d}$ and a vector field $F\in C^{k'}(\OO_{0};\R^{d})$ such that
\begin{itemize}
    \item $\Phi_{H}^{t}\WW^{u}\subset \WW^{u}$ for all $t\leq0$, and for any $\be\in(0,\be_*)$, there is $C>0$ such that 
    \begin{equation*}
    \sup_{(x,p)\in \WW^{u}}\text{\rm dist}\left(\Phi_{H}^t(x,p),\AAa\times\{0\}\right)\leq C e^{\be t},\quad \forall t\leq 0;
    \end{equation*}
    \item $\graph F=\WW^{u}\cap (\OO_0\times\R^{d})$;
    \item $F$ satisfies $H(x,F(x))=0$ for all $x\in \OO_0$, and is conservative on $\OO_1$.
\end{itemize}

    \item (Local regularity) $V\in C^{k'+1}(\OO_2)$ satisfies $\nabla V=F$ on $\OO_2$, and for each $x\in \AAa$, $\text{\rm Hess}(V)|_x:T_x\R^d \otimes T_x\R^d\to \R$ is semi-positive definite and satisfies
    \begin{equation*}
        \text{\rm Hess}(V)|_x(y,z)
        \begin{cases}
            =0,&\quad \text{if \quad $y$ or $z\in T_x\AAa$},\\
            >0,&\quad \text{if \quad $y=z\in T^{\bot}_x\AAa$}, 
        \end{cases}
    \end{equation*}
where $T^{\bot}_x\AAa\subset T_x\R^d$ denotes the orthogonal complement of $T_x\AAa$.

\item (Uniqueness of minimizers) For each $x\in\OO_{2}$, there is a unique $\phi\in\Phi_{x}$ such that $V(x)=I(\phi)$, where $I$ is given in \eqref{def-quasi-potential-V-FW-sense}. Moreover, $\phi\in C^{1}((-\infty,0];\OO_2)$ satisfies $\dot{\phi}=2A(\phi)\nabla V(\phi)+b(\phi)$, and $(\phi,F(\phi))$ solves the Hamiltonian system \eqref{hamiltonian-system-intro}.

\end{itemize}
\end{thmx}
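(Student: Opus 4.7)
The overall plan is to construct $\WW^{u}$ as an invariant manifold of $\AAa\times\{0\}$ inside the Hamiltonian flow $\Phi_{H}^{t}$, extract $F$ from its graph representation over $\Om$, take a primitive $\tilde V$, and identify $\tilde V$ with $V$ via a completing-the-square computation in the Freidlin-Wentzell action functional. First I would linearize \eqref{hamiltonian-system-intro} at $(x_{0},0)\in\AAa\times\{0\}$ to obtain the block upper-triangular operator
\[
\mathbb{H}(x_{0})=\begin{pmatrix}\nabla b(x_{0}) & 2A(x_{0})\\ 0 & -\nabla b^{\top}(x_{0})\end{pmatrix}.
\]
By \textbf{(H4)}, $\nabla b(x_{0})|_{S(x_{0})}$ has spectrum in $\{\mathrm{Re}\,\lambda\leq-\be_{*}\}$ and $\nabla b(x_{0})|_{T_{x_{0}}\AAa}$ in $\{|\mathrm{Re}\,\lambda|\leq\be_{0}\}$, so $\mathbb{H}(x_{0})$ has a strongly stable subspace $E^{s}$ and a strongly unstable subspace $E^{u}$, each of dimension $\dim S(x_{0})$, together with center-tangent directions $T_{x_{0}}\AAa\times\{0\}$ and center-normal directions in the $p$-fiber (the dual of $T_{x_{0}}\AAa$). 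Uniformity of the gap $\be_{0}/\be_{*}<1/k'$ on compact $\AAa$ lets one invoke an invariant manifold theorem of Hirsch-Pugh-Shub/Fenichel type to obtain the unique $d$-dimensional $C^{k'}$ unstable manifold $\WW^{u}$ tangent to $T(\AAa\times\{0\})\oplus E^{u}$ along $\AAa\times\{0\}$, with the asserted exponential decay.

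The Lagrangian property of $\WW^{u}$ follows from isotropy of $\AAa\times\{0\}$ (on which the canonical symplectic form $\omega=\sum dp_{i}\wedge dx_{i}$ vanishes since $p\equiv 0$) combined with symplectic invariance of $\Phi_{H}^{t}$ and the exponential contraction of $E^{u}$-components under $D\Phi_{H}^{-t}$. Writing $\pi_{x}:(x,p)\mapsto x$, I would verify that $\pi_{x}|_{T_{(x_{0},0)}\WW^{u}}$ is an isomorphism onto $T_{x_{0}}\R^{d}$: injectivity on $E^{u}$ comes from positivity of $A$ via the eigenvector relation $\xi=-(\nabla b(x_{0})-\mu I)^{-1}2A(x_{0})\eta$, while the transversality $T_{x_{0}}\AAa\cap\pi_{x}(E^{u})=\{0\}$ follows from invariance of the splitting $T\AAa\oplus S$ under $\nabla b(x_{0})$ combined with positive definiteness of $A$, which forces the $S$-component of $A\eta$ to be nonzero whenever $\eta\in S\setminus\{0\}$. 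The inverse function theorem and compactness of $\AAa$ then give $\WW^{u}\cap(\OO_{0}\times\R^{d})=\graph F$ for some $F\in C^{k'}(\OO_{0};\R^{d})$ on a neighborhood $\AAa\subset\OO_{0}\stst\Om$; the Lagrangian condition translates to $\pa_{i}F_{j}=\pa_{j}F_{i}$, so $F$ is conservative on any simply-connected $\OO_{1}\stst\OO_{0}$ and admits a primitive $\tilde V\in C^{k'+1}(\OO_{1})$ with $\nabla\tilde V=F$, $\tilde V|_{\AAa}=0$, and $H(\cdot,\nabla\tilde V)\equiv 0$ (since $\WW^{u}\subset\{H=0\}$).

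To identify $\tilde V$ with $V$ on some $\OO_{2}\stst\OO_{1}$, the key pointwise identity
\[
\tfrac{1}{4}(b-\dot\psi)^{\top}A^{-1}(b-\dot\psi)=F(\psi)\cdot\dot\psi+\tfrac{1}{4}(\dot\psi-2AF(\psi)-b)^{\top}A^{-1}(\dot\psi-2AF(\psi)-b),
\]
obtained by completing the square and using $H(\cdot,F)=F^{\top}AF+b\cdot F=0$, integrates over $\psi\in\Phi_{x}$ with image in $\OO_{1}$ to yield $I(\psi)=\tilde V(x)+\tfrac{1}{4}\int_{-\infty}^{0}(\dot\psi-2AF(\psi)-b)^{\top}A^{-1}(\dot\psi-2AF(\psi)-b)\,dt\geq\tilde V(x)$, with equality if and only if $\dot\psi=2A(\psi)F(\psi)+b(\psi)$, i.e., $\psi$ is the $x$-projection of the backward Hamiltonian orbit on $\WW^{u}$ through $(x,F(x))$. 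Choosing $\OO_{2}$ small enough so that this Hamiltonian orbit remains in $\OO_{1}$ (by invariance of $\WW^{u}$ and the exponential decay), a confinement argument on sublevel sets of $\tilde V$ rules out competitors exiting $\OO_{1}$, giving $V=\tilde V\in C^{k'+1}(\OO_{2})$, uniqueness of the minimizer, and the stated ODE. The Hessian statement follows upon identifying $\mathrm{Hess}(V)|_{x}$ at $x\in\AAa$ with the symmetric operator $P(x)$ whose graph is $T_{(x,0)}\WW^{u}$: clearly $T_{x}\AAa\subset\ker P(x)$, while on $T_{x}^{\bot}\AAa$, $P(x)$ is the stabilizing positive definite solution of the algebraic Riccati equation $2PA(x)P+P\nabla b(x)+\nabla b^{\top}(x)P=0$ on the normal subbundle, existence and positivity coming from classical linear-quadratic optimal control theory under $\mathrm{Re}\,\sigma(\nabla b(x)|_{S(x)})\leq-\be_{*}$ and $A(x)>0$.

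The principal obstacles I anticipate are twofold. First, $\AAa\times\{0\}$ is \emph{not} strictly normally hyperbolic for $\Phi_{H}^{t}$ due to the center-normal directions in the $p$-fiber, so the invariant manifold theorem invoked must produce exactly the $d$-dimensional $\WW^{u}$ (rather than a thicker center-unstable manifold) with sharp $C^{k'}$ regularity dictated by the gap $\be_{0}/\be_{*}<1/k'$ uniformly along $\AAa$, and must guarantee its uniqueness. Second, the confinement step in the identification $V=\tilde V$ must control paths exiting $\OO_{1}$ without prior global regularity of $V$; this is handled by truncating such paths at their first exit and re-entry times and combining continuity of $\tilde V$ on $\pa\OO_{1}$ with the lower semicontinuity and additivity of the action functional $I$.
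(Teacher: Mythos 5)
Your overall architecture matches the paper's: construct the $d$-dimensional $C^{k'}$ unstable manifold $\WW^{u}$ of $\AAa\times\{0\}$ under $\Phi_{H}^{t}$, represent it as $\graph F$, obtain a primitive, and identify that primitive with $V$ by completing the square. You also correctly flag the two main dangers (center directions, confinement). But there is one genuine gap that would sink the argument as written.

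You deduce that $F$ is a gradient field from the Lagrangian symmetry $\pa_{i}F_{j}=\pa_{j}F_{i}$ plus the assumption that $\OO_{1}$ is simply connected. The domain $\OO_{1}$, however, must contain $\AAa$, and for the very examples the theorem is aimed at (a limit cycle, an invariant torus, or any non-simply-connected normally contracting manifold), \emph{no} neighborhood of $\AAa$ inside its basin is simply connected; for a limit cycle in $\R^{2}$ or $\R^{3}$, $\pi_{1}(\OO_{1})\cong\Z$. So ``$dF=0$ on a simply-connected set'' never applies, and without an extra argument you only obtain a multivalued primitive with possibly nonzero periods around loops homotopic to $\AAa$. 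The paper closes exactly this hole with a dynamical argument (Theorem \ref{conservativeness-F}): for any closed curve $\Ga$ in $\OO_{1}$, flow it backward to $\Ga_{t}:=\Phi_{H}^{t}(\Ga,F(\Ga))|_{x}$, use the symmetry of $\nabla F$ to show $\int_{\Ga_{t}}F\cdot d\ga_{t}$ is $t$-independent, and then show that $|F|\lesssim e^{\be t}$ along $\Ga_{t}$ while $|\pa_{u}\ga_{t}|\lesssim e^{\be_{0}'|t|}$ with $\be_{0}'<\be_{*}$ (this is where the gap $\be_{0}/(\be_{*}-\be_{0})<1/k'$ is used a second time), so the line integral tends to $0$ as $t\to-\infty$. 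The circulation of $F$ around a non-contractible loop therefore vanishes. Your sketch must be supplemented by some such argument; simple connectivity cannot be salvaged.

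Two smaller remarks. First, your pointwise eigenvector argument for the isomorphism $\pi_{x}|_{T\WW^{u}}\to T_{x}\R^{d}$ (the relation $\xi=-(\nabla b-\mu I)^{-1}2A\eta$) presumes diagonalizability and does not obviously establish transversality of $\pi_{x}(E^{u})$ with $T_{x}\AAa$ uniformly along $\AAa$; the paper instead builds the positive-definite symmetric operator $Q(x)=2\int_{-\infty}^{0}\Psi(x\cdot s,-s)A(x\cdot s)(\Psi^{\top})^{-1}(x,s)P^{\top}(x)\,ds$ on the annihilator $E(x)$ of $T_{x}\AAa$, proving positivity by an integral lower bound (Lemma \ref{lem-Q(x)}), which gives the graph property and the Hessian statement in one shot; your algebraic Riccati formulation of the Hessian is an equivalent repackaging of the same object. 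Second, your identification $V=\tilde V$ via completing the square is the right idea and is essentially Lemma \ref{lem-unique-minimizer} in the paper, but the confinement and equality case need the local variational representation $V(x)=\min_{\phi\in\Phi_{x,\OO}}I(\phi)$ (Corollary \ref{cor-local-variational-representation}) and a backward shooting argument to produce the minimizer satisfying $\dot\phi=2A\nabla\tilde V+b$; your mention of ``lower semicontinuity and additivity of $I$'' gestures at this but would need to be made precise.
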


The reader is referred to Section \ref{sec-regular-sol-HJE} for more and finer properties of $\WW^{u}$ and $F$. In particular, $\WW^{u}$ is foliated by submanifolds $\WW^{u}_{x}$ with asymptotic phase $(x,0)\in\AAa\times\{0\}$.

The next result addresses the global regularity of $V$. 

\begin{thmx}[Global regularity]\label{thm-global-regularity}
Assume {\bf(H1)}-{\bf(H4)}. Let $\WW^{u}$ and $\OO_{2}$ be as in Theorem \ref{thm-regularity-V}. Then, there exists a connected, open, and dense subset $G\subset\Om^{V}$ containing $\OO_2$ such that $V\in C^{k'+1}(G)$. Moreover, for each $x\in G$, the following hold.
\begin{itemize}
    \item There is a unique $\phi\in\Phi_{x}$ such that $V(x)=I(\phi)$.

\item $\phi\in C^{1}((-\infty,0];G)$ satisfies $\dot{\phi}=2A(\phi)\nabla V(\phi)+b(\phi)$.

\item  $(\phi,\nabla V(\phi))$ satisfies the Hamiltonian system \eqref{hamiltonian-system-intro} and there is $T_{x}>0$ such that $(\phi(t),\nabla V(\phi(t)))\in \WW^{u}$ for all $t\leq-T_{x}$.
\end{itemize}
\end{thmx}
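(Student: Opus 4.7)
The plan is to propagate the local regular solution from Theorem \ref{thm-regularity-V} along the Hamiltonian flow and define $G$ as the subset of $\Om^{V}$ reached from $\OO_{2}$ by non-caustic characteristics on the forward orbit of $\WW^{u}$. Form the forward-invariant Lagrangian immersion
\[
\wt\WW:=\bigcup_{t\geq 0}\Phi_{H}^{t}\bigl(\WW^{u}\cap(\OO_{0}\times\R^{d})\bigr),
\]
which, by symplecticity of $\Phi_{H}^{t}$ and the Lagrangian property of $\WW^{u}$, is a (possibly singular) $d$-dimensional $C^{k'}$ null level set of $H$. Let $\NN\subset\wt\WW$ be the open set on which the base projection $\pi:\wt\WW\to\Om$ is a local diffeomorphism, and let $G$ be the connected component of $\pi(\NN)\cap\Om^{V}$ that contains $\OO_{2}$. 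Openness of $G$ is immediate from openness of $\NN$ and continuity of $\Phi_{H}^{t}$, and $\OO_{2}\subset G$ by Theorem \ref{thm-regularity-V}.

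For $x\in G$ there is a characteristic $(\phi,p)$ of \eqref{hamiltonian-system-intro} with $\phi(0)=x$, $(\phi(s),p(s))\in\NN$ for $s\in[-T_{x},0]$, and $(\phi(s),p(s))\in\WW^{u}\cap(\OO_{2}\times\R^{d})$ for $s\leq -T_{x}$. Define
\[
\wt V(x):=V(\phi(-T_{x}))+\int_{-T_{x}}^{0}p(s)^{\top}A(\phi(s))p(s)\,ds.
\]
Using $\dot\phi=2Ap+b$ and $H(\phi,p)\equiv 0$, one checks that $p^{\top}Ap=\tfrac14[\dot\phi-b(\phi)]^{\top}A^{-1}(\phi)[\dot\phi-b(\phi)]$, so $\wt V(x)$ equals the action of an admissible path in $\Phi_{x}$ (its backward continuation on $\WW^{u}\cap(\OO_{2}\times\R^{d})$ to $\AAa$ being admissible by Theorem \ref{thm-regularity-V}), whence $V(x)\leq\wt V(x)$. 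Well-definedness of $\wt V$ (independence of $T_{x}$) follows from closedness of the Liouville $1$-form $p\,dx$ on the Lagrangian $\wt\WW$, and $\wt V\in C^{k'+1}(G)$ with $\nabla\wt V(\phi(s))=p(s)$ follows from $C^{k'}$-dependence of $\Phi_{H}^{t}$ on initial data together with the local-diffeomorphism property of $\pi$ on $\NN$.

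The pivotal step is the equality $V=\wt V$ on $G$. Both $V$ (by Lemma \ref{lem-quasi-potential-basic-results}) and $\wt V$ (by construction) solve $H(x,\nabla W)=0$ classically on $G$, and coincide on $\OO_{2}\subset G$. For $x\in G$, the backward characteristic of $(x,\nabla\wt V(x))$ stays in $\NN$ and enters $\OO_{2}$ at time $-T_{x}$; along it, $\frac{d}{ds}\wt V(\phi(s))=p^{\top}Ap$, while $V$ is locally Lipschitz and, at every differentiability point along $\phi$ where $\nabla V(\phi(s))=p(s)$, satisfies the same identity. Characteristic uniqueness at non-caustic points combined with the match $\nabla V=F=p$ at $\phi(-T_{x})\in\OO_{2}$ from Theorem \ref{thm-regularity-V} forces $\nabla V=p$ all along $\phi$ and therefore $V(x)=\wt V(x)$. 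Consequently $V\in C^{k'+1}(G)$ with $\nabla V=p$, and the three bulleted dynamical properties follow because any minimizer $\phi\in\Phi_{x}$ satisfies the Hamilton equations with costate $\nabla V(\phi)$ and, by non-caustic uniqueness, must coincide with the characteristic just constructed.

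Finally, every $x\in\Om^{V}$ lies in $\pi(\wt\WW)$: by Lemma \ref{lem-quasi-potential-basic-results} the infimum is attained by some $\phi\in\Phi_{x}$, which for $t\ll 0$ enters $\OO_{2}$ and is a characteristic on $\WW^{u}$ by Theorem \ref{thm-regularity-V}, whose forward evolution lands at $x$. The caustic projection $\pi(\wt\WW\sm\NN)$ has empty interior in $\Om$ by Sard's theorem applied to $\pi|_{\wt\WW}$ (valid since $k'\geq 2$), so $G$ is dense in $\Om^{V}$; connectedness is built into the definition. The principal obstacle, in my view, is the characteristic-uniqueness step propagating $\nabla V=p$ from $\OO_{2}$ out to all of $G$: a priori the viscosity solution $V$ could admit an alternative minimizer at some $x\in G$ that does not lie on $\WW^{u}$, and ruling this out requires exploiting both the normal-contraction geometry of $\AAa$ afforded by {\bf(H4)} and the classical regularity of $\wt V$ to preclude competing limiting trajectories.
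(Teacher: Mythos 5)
Your overall plan---forward-propagate the Lagrangian manifold $\WW^{u}$ by $\Phi_{H}^{t}$, read off the action to define a candidate $\wt V$, and use the Lagrangian/non-caustic structure for regularity and Sard's theorem for density---is in the same Day--Darden spirit as the paper, and several steps (the identity $p^{\top}Ap=\tfrac14[\dot\phi-b]^{\top}A^{-1}[\dot\phi-b]$ along null characteristics, closedness of $p\,dx$ on $\wt\WW$, Sard for the caustic set) are fine. However, you have correctly diagnosed a genuine gap, and the step you flag as ``the principal obstacle'' is not a minor technicality: your proof of $V=\wt V$ on $G$ is circular. You write that ``$V$ \dots at every differentiability point along $\phi$ where $\nabla V(\phi(s))=p(s)$ satisfies the same identity'' and then conclude $\nabla V=p$ by ``characteristic uniqueness at non-caustic points'', but the hypothesis $\nabla V(\phi(s))=p(s)$ is exactly what has to be established, and for a merely locally Lipschitz viscosity solution $V$ there is no a priori characteristic uniqueness to invoke. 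Without showing that a minimizer for $V(x)$ actually has its backward tail on $\WW^{u}$ (rather than, say, on a center-manifold trajectory of $\AAa\times\{0\}$), the comparison $V\leq\wt V$ cannot be upgraded to equality.

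The paper closes this gap with Lemma~\ref{lem-minimizer-on-unstable-manifold}, whose short proof is the missing ingredient: given any minimizer $x(t)$ for $V(x_{0})$ with costate $p(t)=\partial_{v}L(x(t),\dot x(t))$, Lemma~\ref{thm-extremal-orbit} shows $(x(t),p(t))$ solves the Hamiltonian system and $x(t)\to\AAa$ as $t\to-\infty$, so there is $T$ with $x(-T)\in\OO_{2}$; by dynamic programming the shifted curve $x(\cdot-T)$ is itself a minimizer for $V(x(-T))$; uniqueness of minimizers in $\OO_{2}$ (third bullet of Theorem~\ref{thm-regularity-V}) then forces $\dot x(-T)=2A\nabla V+b$ at $x(-T)$, and comparing with the Hamiltonian $\dot x$-equation and using positive definiteness of $A$ gives $p(-T)=\nabla V(x(-T))=F(x(-T))$, i.e.\ $(x(-T),p(-T))\in\WW^{u}$, hence $(x(t),p(t))\in\WW^{u}$ for all $t\leq-T$. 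This is precisely the dynamic-programming-plus-local-uniqueness argument your proposal lacks; note that you do gesture at it in your final paragraph (``the infimum is attained by some $\phi\in\Phi_{x}$, which \dots is a characteristic on $\WW^{u}$ by Theorem~\ref{thm-regularity-V}''), but you use it only to justify surjectivity of $\pi|_{\wt\WW}$ onto $\Om^{V}$ and not where it is actually needed, namely to anchor $V=\wt V$. Once the lemma is in hand, the rest of the argument (including $C^{k'+1}$ regularity near $x(t)$ for $t<0$, density and connectedness of $G$, and the three bulleted properties) follows \cite[Section~5]{DD85} essentially as you outline.

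Two smaller points. First, defining $G$ as a connected component of $\pi(\NN)\cap\Om^{V}$ does not by itself make $G$ dense: the paper instead takes $G$ to be the union of all open subsets of $\Om^{V}$ on which $V\in C^{k'+1}$, and proves density by observing that every minimizer trajectory $x(t)$, $t<0$, lies in $G$. Second, closedness of $p\,dx$ on the Lagrangian immersion $\wt\WW$ makes $\wt V$ locally well defined but does not a priori rule out multivaluedness globally; the paper avoids this by never constructing a global $\wt V$ and instead propagating regularity of $V$ itself along minimizers.
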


The following result contains in particular a Helmholtz-type decomposition of the vector field $b$ that has fruitful consequences.

\begin{corx}\label{cor-macro-potential-flux}
    Assume {\bf(H1)}-{\bf(H4)}. Set $V_{\ep}:=-\frac{\ep^2}{2}\ln u_{\ep}$ and $\ga_{\ep}:=b-\frac{\ep^2}{2}\frac{\nabla\cdot (Au_{\ep})}{u_{\ep}}$. Let $G$ be as in Theorem \ref{thm-global-regularity}. The following hold.
    \begin{itemize}
        \item[(1)] $\lim_{\ep\to0}V_{\ep}=V$ in $C^{\al}(\Om^{V})$ for any $\al\in (0,1)$.
        \item[(2)] $\lim_{\ep\to0}\ga_{\ep}=\ga$ in the weak sense, where $\ga=b+A\nabla V$.
        \item[(3)] $\nabla V\cdot\ga=0$ in $G$.
        \item[(4)] $\|b\|^{2}_{A^{-1}}=\|A\nabla V\|^{2}_{A^{-1}}+\|\ga\|^{2}_{A^{-1}}$ in $G$, where $\|\ell\|_{A^{-1}}=\sqrt{\ell\cdot A^{-1}\ell}$ for a vector field $\ell$ on $\Om$.
        \item[(5)] $b\cdot\nabla V=-\|A\nabla V\|^{2}_{A^{-1}}$ in $G$. 
    \end{itemize}
\end{corx}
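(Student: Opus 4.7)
The plan is to dispatch the five items in turn: (1) is an immediate rephrasing of Theorem~\ref{thm-main-result}; (2) follows from a direct algebraic rewriting of $\ga_{\ep}$ combined with the distributional convergence of gradients implied by (1); and (3)--(5) are purely algebraic consequences of the pointwise Hamilton--Jacobi identity on $G$ furnished by Theorem~\ref{thm-global-regularity}. Accordingly, I expect no substantial analytical obstacle here; the hard work has already been done in Theorems~\ref{thm-main-result}--\ref{thm-global-regularity}.

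For (1), by definition $V_{\ep}=-\frac{\ep^{2}}{2}\ln u_{\ep}$, so Theorem~\ref{thm-main-result} reads verbatim as $V_{\ep}\to V$ in $C^{\al}(\Om^{V})$ for every $\al\in(0,1)$.

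For (2), the first step is to express $\ga_{\ep}$ through $V_{\ep}$. Using the row-wise identity $\nabla\cdot(Au_{\ep})=u_{\ep}(\nabla\cdot A)+A\nabla u_{\ep}$ together with $\nabla V_{\ep}=-\frac{\ep^{2}}{2}\,\nabla u_{\ep}/u_{\ep}$, one gets
$$
\ga_{\ep}=b-\frac{\ep^{2}}{2}(\nabla\cdot A)+A\nabla V_{\ep}\quad\text{in}\quad\Om.
$$
By (1), $V_{\ep}\to V$ uniformly on compact subsets of $\Om^{V}$, and hence $\nabla V_{\ep}\to\nabla V$ in the sense of distributions on $\Om^{V}$. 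Since $A\in C^{k+1}(\Om;\mathbf{S}^{d}_{+})$ and $\frac{\ep^{2}}{2}\nabla\cdot A\to 0$ uniformly on compact subsets, it follows that $\ga_{\ep}\to b+A\nabla V=\ga$ in the distributional sense on $\Om^{V}$, which is the claimed weak limit.

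Finally, for (3)--(5), Theorem~\ref{thm-global-regularity} guarantees $V\in C^{k'+1}(G)$ with $\nabla V$ satisfying the HJE $(\nabla V)^{\top}A\nabla V+b\cdot\nabla V=0$ pointwise on $G$. Pairing $\ga=b+A\nabla V$ with $\nabla V$ directly gives $\nabla V\cdot\ga=b\cdot\nabla V+(\nabla V)^{\top}A\nabla V=0$, which is (3). Since $A$ is symmetric and positive definite, $(\nabla V)^{\top}A\nabla V=(A\nabla V)^{\top}A^{-1}(A\nabla V)=\|A\nabla V\|_{A^{-1}}^{2}$, so the HJE rewrites as $b\cdot\nabla V=-\|A\nabla V\|_{A^{-1}}^{2}$, which is (5). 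Expanding
$$
\|\ga\|_{A^{-1}}^{2}=\|b\|_{A^{-1}}^{2}+2\,b\cdot\nabla V+\|A\nabla V\|_{A^{-1}}^{2}
$$
and substituting (5) into the middle term yields $\|b\|_{A^{-1}}^{2}=\|A\nabla V\|_{A^{-1}}^{2}+\|\ga\|_{A^{-1}}^{2}$, which is (4).
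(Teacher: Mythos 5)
Your proof is correct and follows essentially the same route as the paper's: (1) is Theorem \ref{thm-main-result}, (2) comes from the algebraic identity $\ga_{\ep}=b-\frac{\ep^2}{2}\nabla\cdot A+A\nabla V_{\ep}$ plus distributional convergence of $\nabla V_{\ep}$, and (3)--(5) are the pointwise HJE on $G$ unwound. The only stylistic difference is in (4), where the paper invokes the Pythagorean theorem for the $A^{-1}$ inner product using the orthogonality from (3), while you expand $\|\ga\|_{A^{-1}}^{2}$ directly and substitute (5) to kill the cross term — algebraically the same content.
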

\begin{proof}
(1) is exactly Theorem \ref{thm-main-result}. Note that $\ga_{\ep}=b-\frac{\ep^2}{2}\nabla \cdot A +A\nabla V_{\ep}$. By (1), it is easy to see that $\lim_{\ep\to 0}\nabla V_{\ep}=\nabla V$ in the weak sense. This yields (2). The orthogonality in (3) is just the HJE $H(x,V(x))=0$, which holds for $x\in G$ thanks to Lemma \ref{lem-quasi-potential-basic-results} and Theorems \ref{thm-main-result} and \ref{thm-regularity-V}. For (4), we note that (3) implies $A\nabla V$ and $\ga$ are orthogonal with respect to the inner product $\lan\ell,\tilde{\ell}\ran_{A^{-1}}:=\ell\cdot A^{-1}\tilde{\ell}$ for vector fields $\ell,\tilde{\ell}$ on $\Om$. Since $b=-A\nabla V+\ga$, the conclusion follows from the Pythagorean theorem. The conclusion in (5) is an immediate consequence of (3).
\end{proof}




\subsection{Comments on approaches}\label{sub-comment-approach}

Given Theorem \ref{thm-regularity-V}, the proof of Theorem \ref{thm-global-regularity} follows directly from the strategy laid out in \cite[Section 5]{DD85}. Below, we briefly comment on strategies for proving Theorems \ref{thm-main-result} and \ref{thm-regularity-V} as well as related results and approaches.

\medskip

\paragraph{\bf Comments on the proof of Theorem \ref{thm-main-result}} Previously, the LDP as in Theorem \ref{thm-main-result} was only established for stationary densities in the case of $\Om=\R^{d}$ and a singleton set maximal attractor $\AAa$ \cite{Sheu86,Day87,BB09} and for quasi-stationary densities of a class of one-dimensional singular diffusions \cite{QSY2024}. In \cite{Sheu86}, the author considered a special class of systems whose vector field $b$ admitting the decomposition $b=-A\nabla I+\ell$ with $\ell\cdot\nabla I=0$. An extension was made in \cite{Day87}, in which the key idea is to pass the LDP for transition probability densities $p_{\ep}(t,x,y)$ to that for stationary densities as $t\to\infty$, relying on the LDP for stationary distribution as in \eqref{LDP-stationary-distrbution}. The required LDP for $p_{\ep}(t,x,y)$ reads that for each $t>0$ the limit $\lim_{\ep\to0}\frac{\ep^2}{2}\ln p_{\ep}(t,x,y)$ exists locally uniformly in $x,y\in\R^{d}$. In \cite{BB09}, the authors took a control theoretic approach, formulating an optimization problem for $V_{\ep}:=-\frac{\ep^2}{2}\ln u_{\ep}$ and defining a control problem to capture its limit, to re-establish the result in \cite{Day87}, albeit under stronger dissipative conditions. These conditions ensure that the resulting quasi-potential $V$ must satisfy $\lim_{|x|\to\infty}V(x)=\infty$, playing a technical role in arguments. The approach in \cite{QSY2024} for treating densities of quasi-stationary distributions is elementary thanks to the one-dimensional nature, and therefore, can not be generalized to higher-dimensional problems. 

Both the probabilistic approach in \cite{Day87} and the control theoretic one in \cite{BB09} are by no means restrictive to the singleton set attractor case. In consideration of the absence of the LDP for transition probability densities and possible technical troubles to establish them due to factors including a general domain $\Omega$, a possible degenerate and singular diffusion matrix $A$ on $\partial\Omega$, and no restrictions on behaviors of $\{u_{\ep}\}_{\ep}$ near $\partial\Omega$ except the tightness, we choose to leverage the control theoretic approach in \cite{BB09} that fits better into our setting. Given weak conditions near $\partial\Omega$, we must adapt the global technique in \cite{BB09} to develop a localization argument that is much more technical. In particular, we have to work on bounded domains, deal with exit events, and pass to the domain limit.

\medskip

\paragraph{\bf Comments on the proof of Theorem \ref{thm-regularity-V}}

Only in the case of a linearly stable equilibrium attractor $\AAa=\{x_*\}$ is the regularity of the quasi-potential $V$ previously known \cite{DD85}. In this case, $(x_*,0)$ is a hyperbolic fixed point of the Hamiltonian system \eqref{hamiltonian-system-intro}. Then, Hamiltonian trajectories corresponding to curves minimizing $V$ (see Lemma \ref{thm-extremal-orbit}) must eventually (for negative time) stay on the local unstable manifold of $(x_*,0)$, giving rise to a natural connection between the local unstable manifold and the quasi-potential $V$, from which the regularity of $V$ follows readily. 

The situation in our setting is entirely different. The invariant manifold $\AAa\times\{0\}$ is not normally hyperbolic but admits a center direction with unclear dynamics on the associated center manifold, which is non-unique in general. As a result, Hamiltonian trajectories corresponding to curves minimizing $V$ may eventually stay on the center manifold instead of transitioning to the local unstable manifold of $\AAa\times\{0\}$, and hence, the connection between the local unstable manifold and $V$ remains unsolved.  

To circumvent this issue, we take an approach that builds on a local uniqueness result of the HJE \eqref{eqn-HJE} (see Lemma \ref{lem-unique-minimizer}) asserting that if $\OO\subset\Om$ is an open set containing $\AAa$ and $W\in C^{1}(\OO)$ satisfies
\begin{equation}\label{eqn-HJE-BVP}
    \begin{cases}
        H(x,\nabla W(x))=0,\,\, x\in\OO,\\
        W=0\,\,\text{on}\,\,\AAa,\,\,\text{and}\,\, W>0\,\,\text{on}\,\,\OO\setminus\AAa,
    \end{cases}
\end{equation}
then $W=V$ in a smaller open set $\AAa\subset\OO'\subset\OO$, reducing to the construction of such a $W$ that is done within two steps. Note that if $\AAa$ is a non-characteristic hypersurface, then the method of characteristics can be applied. However, this is never the case since $\AAa$ seldom has dimension $d-1$, and even if it does, it must be characteristic because $\AAa$ is an invariant manifold of $\vp^{t}$ in this case and the diffusion matrix $A$ is non-degenerate near $\AAa$. 

First, we consider the Hamiltonian system \eqref{hamiltonian-system-intro} and construct an $d$-dimensional local unstable manifold of the invariant manifold $\AAa\times\{0\}$, which is actually a Lagrangian submanifold and given by the graph of a $C^{k'}$ vector field $F$ obeying $H(x,F(x))=0$. A tricky ingredient is to identify the unstable direction of $\AAa\times\{0\}$ and determine that it is not perpendicular to the $x$-space so that (i) the invariant splitting of the linearization of \eqref{hamiltonian-system-intro} along $\AAa\times\{0\}$ can be established, (ii) the classical invariant manifold theory (see Appendix \ref{app-Fenichel-theory}) can be applied to yield the existence of the local unstable manifold of $\AAa\times\{0\}$, and (iii) the local unstable manifold, naturally constructed as the graph of some function of the unstable direction, can be represented as the graph of a function $F$ of $x$.

It is then shown that $F$ is conservative, and hence, is a gradient field given by a $C^{k'+1}$ function $\hat{V}$ defined by $\hat{V}(x)=\int_{-\infty}^0 L(X_{x,F(x)}, \dot X_{x,F(x)})$, where $L(x,v)=\frac{1}{4}\left(b(x)-v\right)^{\top}A^{-1}(x)\left(b(x)-v\right)$ is the Lagrangian associated with the Hamiltonian $H$, and $X_{x,F(x)}$ is the $x$-component of the unique solution of \eqref{hamiltonian-system-intro} on $(-\infty,0]$ with initial condition $(x,F(x))$. Hence, $W:=\hat{V}$ satisfies required conditions. 

The reader is referred to Remarks \ref{rem-general} and \ref{rem-key-ideas} for more technical comments on our approach. It is worthwhile to point out that constructing such an $W$ does not require $\AAa$ to be an equivalence class, which however is needed for the local uniqueness result of \eqref{eqn-HJE-BVP}. Actually, we directly work with a normally contracting invariant manifold $\MM$ when constructing $W$ that solves \eqref{eqn-HJE-BVP} with $\AAa$ replaced by $\MM$. Such a setting is of independent interest. 

After proving Theorem \ref{thm-regularity-V}, we manage to show that Hamiltonian trajectories corresponding to curves minimizing $V$ transition to the local unstable manifold of $\AAa\times\{0\}$ (see Lemma \ref{lem-minimizer-on-unstable-manifold}). This result plays a crucial role in the proof of the global regularity of $V$, namely, Theorem \ref{thm-global-regularity}.


\subsection{Applications}\label{sub-application}

Our framework applies directly to stationary distributions (and its restriction to subdomains) and quasi-stationary distributions (QSDs) of randomly perturbed dynamical systems. The existence and uniqueness of stationary distributions have been extensively studied. In contrast, QSDs on unbounded domains often exist without uniqueness unless strong dissipative conditions are imposed. However, the tightness could be a serious issue especially for stationary distributions restricted to subdomains and QSDs. They are discussed in Subsections \ref{subsec-application-sd} and \ref{subsec-application-qsd}. 

These results for stationary and quasi-stationary distributions rigorously justify the macroscopic fluctuation theory of nonequilibrium thermodynamic systems described by randomly perturbed dynamical systems. This theory gives rise to the theoretical foundation for the recent potential landscape and flux framework for describing emergent behaviors in for instance living systems. Details are presented in Subsection \ref{subsec-macro-potential-fluctuation}. 

The LDP in Theorem \ref{thm-main-result} determines the leading exponential asymptotic of $u_{\ep}$.  Examining the asymptotic of the prefactor $R_{\ep}:=K_{\ep}u_{\ep}e^{\frac{\ep^2}{2}V}$, where $K_{\ep}$ is the normalization constant, turns out to be necessary in many applications. The higher regularity of $V$ ensures the well-behavedness of coefficients appearing in the equation satisfied by $R_{\ep}$, and therefore, lays the solid foundation for further investigating the asymptotic of $R_{\ep}$. Details are discussed in Subsection \ref{subsec-subexponential-LDP}.


\subsection{Organization of the rest of the paper} 

In Section \ref{sec-LDP}, we study the LDP for $\{u_{\ep}\}_{\ep}$ and prove Theorem \ref{thm-main-result}. Section \ref{sec-regular-sol-HJE} is devoted to the construction of a regular solution of the HJE \eqref{eqn-HJE} in a neighborhood of a normally contracting invariant manifold $\MM$ of $\vp^{t}$ that is non-negative and vanishes only on $\MM$. Theorems \ref{thm-regularity-V} and \ref{thm-global-regularity} are proven in Section \ref{sec-regularity-quasi-potential}. Applications are discussed in Section \ref{sec-application}. Appendices \ref{appendix-attractor}-\ref{sec-app-energy-balance-equation} are included as supplements to the main context.


\section{\bf Large deviation principle}\label{sec-LDP}

This section is devoted to studying the large deviation principle (LDP) of the densities $\{u_{\ep}\}_{\ep}$. In particular, we prove Theorem \ref{thm-main-result}. Through this section, we assume {\bf(H1)}-{\bf(H3)}.

In Subsection \ref{subsec-Logarithmic transformation}, we study the limit of $V_\epsilon:=-\frac{\epsilon^{2}}{2}\ln u_{\epsilon}$ as $\ep\to0$ along subsequences, resulting in a particular limiting function $V^{x_{0}}$. Subsection \ref{subsec-Variational-representation} is devoted to establishing a variational representation for $V^{x_{0}}$. In Subsection \ref{subsec-proof-LDP}, we identify $V^{x_{0}}$ with the quasi-potential function $V$ defined in \eqref{def-quasi-potential-V-FW-sense}, and hence, complete the proof of Theorem \ref{thm-main-result}.

\subsection{Logarithmic transformation}\label{subsec-Logarithmic transformation}

Recall that $u_{\ep}$ and $\la_{\ep}$ solve \eqref{main-problem}. The equation $L_{\ep}^{*}u_{\ep}=-\la_{\ep}u_{\ep}$ can be rewritten as 
\begin{equation}\label{eq:HJBuepsilon0}
    \frac{\epsilon^2}{2}\sum_{i,j=1}^d a^{ij}\partial_{ij}^2u_{\epsilon}- b_{\epsilon}\cdot\nabla u_{\epsilon}+c_\epsilon u_{\epsilon}=0,
\end{equation}
where $\partial_{i}:=\partial_{x_{i}}$, $\partial_{ij}^{2}:=\partial^{2}_{x_{i}x_{j}}$, $b_{\epsilon}:=-\ep^{2}\nabla\cdot A+b$
and $c_\epsilon:=\frac{\epsilon^2}{2}\sum_{i,j=1}^d\partial_{ij}^2a^{ij}+\lambda_\epsilon-\nabla\cdot b$. It is straightforward to verify that the logarithmically transformed function $V_\epsilon:=-\frac{\epsilon^{2}}{2}\ln u_{\epsilon}$ satisfies
\begin{equation}\label{eq:Aepsilon0}
    \frac{\epsilon^2}{2}\sum_{i,j=1}^{d}a^{ij}\partial_{ij}^{2}V_{\epsilon}-b_{\epsilon}\cdot\nabla V_{\epsilon}-\nabla V_{\epsilon}^{\top}A\nabla V_{\epsilon}-\frac{\epsilon^2}{2}c_{\epsilon}=0.
\end{equation}

The local uniform boundedness of $\{\nabla V_{\ep}\}_{\ep}$ is established in the following lemma.

\begin{lem}\label{prop-uniform-gradient-V_ep}
For each open $\OO\stst \Om$, there exists $C=C(\OO,\Om)>0$ such that $\sup_{\ep}\|\nabla V_{\epsilon}\|_{L^\infty(\OO)}\leq C$.
\end{lem}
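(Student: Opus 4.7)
The plan is to establish this uniform gradient estimate by a Bernstein-type argument applied directly to the PDE \eqref{eq:Aepsilon0}. The key observation is that \eqref{eq:Aepsilon0} is a viscous Hamilton--Jacobi equation of the form
\begin{equation*}
\tfrac{\ep^2}{2}a^{ij}\pa_{ij}^2 V_\ep \;=\; b_\ep\cdot\nabla V_\ep + \nabla V_\ep^{\top} A\nabla V_\ep + \tfrac{\ep^2}{2}c_\ep,
\end{equation*}
whose Hamiltonian is convex, coercive, and has quadratic growth in $\nabla V_\ep$. Crucially, the quadratic term carries no factor of $\ep$, which is exactly what enables a bound on $|\nabla V_\ep|$ that is uniform in $\ep$.

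To set up, I fix $\OO\stst\OO'\stst\Om$ and a cutoff $\eta\in C^\infty_c(\OO')$ with $\eta\equiv 1$ on $\OO$ and $0\le\eta\le 1$. Since $\la_\ep\to 0$ as $\ep\to 0$ (a consequence of $\limsup_\ep\tfrac{\ep^2}{2}\ln\la_\ep<0$ noted just after {\bf(H2)}), the coefficients $b_\ep=-\ep^2\nabla\cdot A+b$ and $c_\ep=\tfrac{\ep^2}{2}\pa_{ij}^2a^{ij}+\la_\ep-\nabla\cdot b$ are bounded in $C^1(\OO')$ and $L^\infty(\OO')$ respectively, uniformly in $\ep$; the smallest eigenvalue of $A$ on $\overline{\OO'}$ is bounded below by some $\nu=\nu(\OO')>0$.

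I then set $w:=\tfrac12|\nabla V_\ep|^2$ and $\Psi:=\eta^2 w$. Since $\Psi$ is continuous on $\overline{\OO'}$ and vanishes on $\pa\OO'$, it attains its maximum at some interior point $x_\ep$. Differentiating \eqref{eq:Aepsilon0} in $x_\ell$, multiplying by $\pa_\ell V_\ep$, and summing over $\ell$ produces a second-order PDE for $w$; evaluating at $x_\ep$, the first-order condition $\nabla\Psi(x_\ep)=0$ rewrites $\nabla w(x_\ep)$ as a multiple of $\nabla\eta(x_\ep)$, while $D^2\Psi(x_\ep)\le 0$ controls $a^{ij}\pa_{ij}^2w(x_\ep)$ by $Cw(x_\ep)/\eta^2(x_\ep)$. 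Using the coercivity $a^{ij}\pa_{i\ell}^2V_\ep\,\pa_{j\ell}^2V_\ep\ge\nu|D^2V_\ep|^2$ together with Young's inequality to absorb the lower-order remainder leads to
\begin{equation*}
\ep^2\,|D^2V_\ep(x_\ep)|^2\,\eta^4(x_\ep)\;\le\; C\bigl(\Psi(x_\ep)^{3/2}+\Psi(x_\ep)+1\bigr),
\end{equation*}
with $C=C(\OO,\OO',\Om)$ independent of $\ep$. On the other hand, reading off \eqref{eq:Aepsilon0} at $x_\ep$ and using the lower bound on $A$ gives $2\nu\,\Psi(x_\ep)\le C\ep^2|D^2V_\ep(x_\ep)|\eta^2(x_\ep)+C\sqrt{\Psi(x_\ep)}+C\ep^2$. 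Combining these two bounds and using $0<\ep\ll 1$ yields a self-improving inequality $\Psi(x_\ep)\le C(\Psi(x_\ep)^{3/4}+1)$, whence $\Psi(x_\ep)\le C$ independent of $\ep$. Since $\eta\equiv 1$ on $\OO$, this gives $\|\nabla V_\ep\|_{L^\infty(\OO)}^2\le 2\Psi(x_\ep)\le C$, which is what we wanted.

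The main obstacle is the algebraic bookkeeping in the Bernstein step: one has to carefully differentiate \eqref{eq:Aepsilon0}, isolate the ``good'' coercive terms (the quadratic-in-$\nabla V_\ep$ contribution from the Hamiltonian, and the quadratic-in-$D^2V_\ep$ contribution produced after differentiation and multiplication by $\pa_\ell V_\ep$), and then use Young's inequality so that all remaining terms are absorbed as lower order. A subsidiary concern is that the second-order term in \eqref{eq:Aepsilon0} has ellipticity constant only of order $\ep^2$, which is small, but this smallness is exactly compensated by the $\ep$-independent coercive Hamiltonian, making the scheme robust uniformly in $\ep$.
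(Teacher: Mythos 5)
Your proof is correct and takes a genuinely different, more self-contained route than the paper's. The paper works with $w_\ep:=\ln u_\ep=-\tfrac{2}{\ep^2}V_\ep$, whose equation $\sum a^{ij}\pa^2_{ij}w_\ep+\nabla w_\ep^{\top} A\nabla w_\ep-H_\ep\cdot\nabla w_\ep=G_\ep$ (with $H_\ep=\tfrac{2}{\ep^2}b_\ep$ and $G_\ep=-\tfrac{2}{\ep^2}c_\ep$) has an $\ep$-independent leading part but drift and source of size $\ep^{-2}$, and it then simply invokes the gradient estimate of \cite[Lemma 5.1]{MPR05} -- itself a Bernstein-type bound of the shape $\|\nabla w_\ep\|_{L^\infty(\OO)}\le C\sup_{\OO'}\bigl(1+|H_\ep|+|D^2H_\ep|+|G_\ep|+|DG_\ep|\bigr)\le C\ep^{-2}$ -- and undoes the scaling. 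You instead run the Bernstein computation directly on \eqref{eq:Aepsilon0}: the viscosity coefficient is $O(\ep^2)$, but the convex quadratic term $\nabla V_\ep^{\top} A\nabla V_\ep$ is $\ep$-free, and the self-improving combination you describe (reading $\ep^2|D^2V_\ep(x_\ep)|$ from below out of \eqref{eq:Aepsilon0} itself and from above out of the differentiated identity at the interior maximum of $\eta^2|\nabla V_\ep|^2$) does close to $\Psi(x_\ep)\le C\bigl(\Psi(x_\ep)^{3/4}+1\bigr)$ for $\ep\le1$, giving the bound. Both arguments are Bernstein at heart; yours makes the compensation mechanism explicit and is self-contained, while the paper's is shorter but relies on \cite{MPR05} and needs a remark that the global hypotheses there can be localized. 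One minor imprecision: you state that $c_\ep$ is bounded uniformly in $L^\infty(\OO')$, but the differentiated identity contains the term $\tfrac{\ep^2}{2}\nabla V_\ep\cdot\nabla c_\ep$, so you in fact need a uniform $C^1(\OO')$ bound on $c_\ep$; this does hold under the paper's hypotheses ($A\in C^{k+1}$, $b\in C^k$ with $k>2$, and $\la_\ep$ a bounded constant), so there is no gap, only an under-stated regularity requirement.
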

\begin{proof}
Given the regularity assumptions on $b$ and $a^{ij}$, the classical elliptic theory ensures that $u_{\ep}\in C^3(\Om)$. Since $u_{\ep}>0$ in $\Om$, it follows that $w_{\ep}:=\ln u_{\ep}\in C^3(\Om)$. Noting that $w_{\ep}=-\frac{2}{\ep^2}V_{\ep}$, we see from \eqref{eq:Aepsilon0} that $w_{\ep}$ satisfies
\begin{equation*}
    \sum_{i,j=1}^d a^{ij}\pa^2_{ij}w_{\ep}+\nabla w_{\ep}^{\top} A\nabla w_{\ep}-H_{\ep}\cdot \nabla w_{\ep}=G_{\ep},
\end{equation*}
where $H_{\ep}=\frac{2}{\ep^2}b_{\ep}$ and $G_{\ep}=-\frac{2}{\ep^2}c_{\ep}$. For each open $\OO\subset\subset\Om$, we apply \cite[Lemma 5.1]{MPR05} to find a domain $\OO'$ with $\OO\stst \OO'\stst\Om$ and a positive constant $C_1$, depending only on $\la_{\OO'}$ -- the minimum eigenvalue of $(a^{ij})$ in $\ol{\OO'}$ -- and $\|a^{ij}\|_{C_b^2(\OO')}$ such that  
\begin{equation*}
    \|\nabla w_{\ep}\|_{L^{\infty}(\OO)}\leq C_1\sup_{\OO'}\left( 1+|H_{\ep}|+|D^2 H_{\ep}|+|G_{\ep}|+|DG_{\ep}|\right).
\end{equation*}
We remark that the original statement of  \cite[Lemma 5.1]{MPR05} requires $(a^{ij})$ to be uniformly elliptic in $\R^d$, $a^{ij}\in C^2_b(\R^d)$, $H_{\ep}\in C^2(\R^d)$ and $G_{\ep}\in C^1(\R^d)$, but the proof only depends on local properties of the coefficients. So a straightforward adaptation of the proof leads to the above inequality.

Thus, there exists $C_2>0$ depending on $\la_{\OO'}$, $\|a^{ij}\|_{C^3_b(\OO')}$ and $\|b^i\|_{C^2_b(\OO')}$ such that $\|\nabla w_{\ep}\|_{L^{\infty}(\OO)}\leq \frac{C_2}{\ep^2}$. The desired result follows readily from the fact that $V_{\ep}=-\frac{\ep^2}{2}w_{\ep}$. 
\end{proof}

In the next result, we study the limiting behaviors of $V_{\ep}$ as $\ep\to0$.

\begin{prop}\label{prop-Convergent-subsequence-V^x_0}
Each sequence $\{\ep_n\}_n$ with $\lim_{n\to \infty}\ep_n=0$ has a subsequence, still denoted by $\{\ep_n\}_n$, such that the following hold: there is a point $x_0\in \AAa$ and a locally Lipschitz continuous function $V^{x_0}:\Om\to\R$ such that 
\begin{enumerate}
\item $\lim_{n\to\infty}V_{\ep_n}=V^{x_0}$ in $C^{\al}(\Om)$ for any $\al\in (0,1)$, 

\item $V^{x_0}(x_0)=0$, $V^{x_0}\geq0$ on $\AAa$ and $V^{x_0}>0$ in $\Om\sm \AAa$,

\item $V^{x_0}$ is a viscosity solution of the HJE \eqref{eqn-HJE}.    
\end{enumerate}    
\end{prop}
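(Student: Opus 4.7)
The argument splits naturally into extraction of a subsequential limit via compactness, passage to the Hamilton--Jacobi equation, and identification of qualitative features of the limit; the delicate step is strict positivity off $\AAa$. For extraction, Lemma~\ref{prop-uniform-gradient-V_ep} already supplies a locally uniform Lipschitz bound on $\{V_\ep\}$, so what is missing for Arzel\`a--Ascoli is a local $L^\infty$ anchor. I would obtain it from \textbf{(H2)}: by tightness a compact $K\stst\Om$ captures at least half the mass, $\mu_\ep(K)\ge\tfrac12$, so some $\tilde x_\ep\in K$ satisfies $u_\ep(\tilde x_\ep)\ge\tfrac{1}{2|K|}$, equivalently $V_\ep(\tilde x_\ep)\le\tfrac{\ep^2}{2}\ln(2|K|)\to 0$. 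Together with equi-Lipschitz this gives local equi-boundedness, and a diagonal Arzel\`a--Ascoli argument over a compact exhaustion of $\Om$ yields a subsequence $\{\ep_n\}$ and a locally Lipschitz $V^{x_0}:\Om\to\R$ with $V_{\ep_n}\to V^{x_0}$ locally uniformly; interpolating Lipschitz regularity with uniform convergence upgrades this to $C^{\al}_{\mathrm{loc}}(\Om)$ for every $\al\in(0,1)$, which is (1).

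For (3), I would rewrite \eqref{eq:Aepsilon0} as
\[
\nabla V_\ep^{\top}A\,\nabla V_\ep+b_\ep\cdot\nabla V_\ep \;=\; \tfrac{\ep^2}{2}\sum_{i,j}a^{ij}\partial^2_{ij}V_\ep-\tfrac{\ep^2}{2}c_\ep,
\]
observe that $b_\ep\to b$ locally uniformly while $\tfrac{\ep^2}{2}c_\ep\to 0$ locally uniformly (since $\limsup_\ep\tfrac{\ep^2}{2}\ln\la_\ep<0$ keeps $\la_\ep$ bounded, and $\partial^2_{ij}a^{ij}$, $\nabla\cdot b$ are locally bounded), and invoke the standard stability of viscosity solutions under locally uniform convergence to conclude $H(x,\nabla V^{x_0})=0$ in the viscosity sense. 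For the softer pieces of (2): if $V^{x_0}$ were strictly negative on some open ball $B$, then $u_{\ep_n}\ge e^{c/\ep_n^2}$ on $B$ would overwhelm $\int u_{\ep_n}=1$, forcing $V^{x_0}\ge 0$ everywhere; and because every weak-$*$ limit of $\{\mu_\ep\}$ is a $\vp^t$-invariant measure supported on $\AAa$ (\cite{Kifer88}), $\mu_\ep(U)\to 1$ for every open $U\supset\AAa$ with $\ol U\stst\Om$, so $\min_{\ol U}V^{x_0}=0$. Applying this to a shrinking family $U_k\downarrow\AAa$ and using Cantor's intersection theorem on the nested nonempty compact zero sets $\{V^{x_0}=0\}\cap\ol{U_k}$ deposits a zero $x_0\in\AAa$.

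The hardest piece is $V^{x_0}>0$ on $\Om\sm\AAa$, and my plan is to identify $V^{x_0}$ with the Freidlin--Wentzell quasi-potential $V$ from \eqref{def-quasi-potential-V-FW-sense} by coupling the viscosity sub- and super-solution properties. On the one hand, the subsolution inequality $V^{x_0}(y)\le V^{x_0}(z)+\VV(z,y)$ taken with $z=x_0$, combined with \textbf{(H3)} and the triangle inequality for $\VV$, gives $V^{x_0}(y)\le\VV(x_0,y)=V(y)$ for every $y\in\Om$; specializing to $y\in\AAa$ and using $V^{x_0}\ge 0$ promotes $V^{x_0}\equiv 0$ on $\AAa$. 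For the matching lower bound, I would use the supersolution dynamic programming inequality $V^{x_0}(y)\ge V^{x_0}(\phi(-T))+\int_{-T}^0 L(\phi,\dot\phi)$ along near-optimal backward paths, iterate as $T\to\infty$, and steer the terminal endpoint into a small neighborhood of $\AAa$ where $V^{x_0}$ is already known to vanish, thereby recovering $V^{x_0}(y)\ge V(y)$. Together these give $V^{x_0}=V$ on $\Om$, and since $V>0$ on $\Om\sm\AAa$ by Lemma~\ref{lem-quasi-potential-basic-results}, (c) follows. The principal obstacle is the supersolution step: near-optimal backward paths may leave every compact subset of $\Om$, and forcing their terminal endpoints to sit in a compact neighborhood of $\AAa$ within $\Om$ requires the same localization mechanism the authors introduce to replace the global dissipativity of \cite{BB09}, as flagged in Subsection~\ref{sub-comment-approach}.
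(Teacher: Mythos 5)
Your extraction argument (anchoring the equi-Lipschitz family at an $\ep$-dependent point in a tight compact, then diagonalizing with Arzel\`a--Ascoli) and your stability argument for the HJE are sound and in the same spirit as the paper, which instead fixes one $x_0\in\AAa$ carrying positive $\mu_0$-mass in every neighbourhood and shows $V_{\ep_n}(x_0)\to 0$ by a two-sided contradiction; both give (1), (3), and $V^{x_0}\ge 0$. Your Cantor-intersection step for producing a zero of $V^{x_0}$ on $\AAa$ is also a valid alternative to the paper's choice of $x_0$.

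The genuine gap is in the proof of $V^{x_0}>0$ on $\Om\sm\AAa$. You attempt to deduce it by first proving $V^{x_0}=V$ everywhere via sub/supersolution comparison with the Freidlin--Wentzell functional, and you explicitly flag that the supersolution half (controlling near-optimal backward paths so they do not escape $\Om$) does not close and ``requires the same localization mechanism the authors introduce.'' That is not a small remaining detail: the localization mechanism \emph{is} the content of Proposition~\ref{lem:lim} and Lemmas~\ref{representation-of-V}--\ref{lem:lim-1}, which the paper develops \emph{after} Proposition~\ref{prop-Convergent-subsequence-V^x_0} precisely because that proposition is the input to the variational representation, not a consequence of it. So your route is circular in the structure of the argument, and incomplete on its own terms. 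The paper instead proves positivity off $\AAa$ directly and cheaply: it invokes \cite[Theorem A~(2)]{JSY19}, which gives, for any open $\OO\stst\Om\sm\AAa$, constants $\ga_\OO>0$ and $\ep_\OO$ with $\sup_\OO u_\ep\le e^{-\ga_\OO/\ep^2}$ for $\ep<\ep_\OO$ (the arguments there carry over since $\la_\ep\to 0$). Translating to $V_\ep=-\tfrac{\ep^2}{2}\ln u_\ep$ immediately yields $\liminf_n V_{\ep_n}\ge\ga_\OO>0$ on $\OO$, hence $V^{x_0}>0$ on $\Om\sm\AAa$, with no need to touch the variational representation. You should either cite such a concentration estimate or, if you insist on the variational route, defer the positivity claim until after the representation in Proposition~\ref{lem:lim} is available.
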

\begin{proof}
Since $\{\mu_{\ep}\}_{\ep}$ is tight by assumption, each sequence $\{\ep_n\}_n$ satisfying $\lim_{n\to \infty}\ep_n=0$ has a subsequence, still denoted by $\{\ep_n\}_n$, such that $\mu_{\ep_n}$ weakly converges to some probability measure $\mu_0$ on $\Om$. It is well-known (see e.g. \cite{Kifer88}) that $\mu_0$ is an invariant measure of \eqref{main-ode} and supported on $\AAa$.     

The proof is broken into three steps.

\medskip

\paragraph{\bf Step 1} There is $x_0\in\AAa$ such that $\mu_0(\NN)>0$ for any open neighbourhood $\NN$ of $x_0$.

Suppose on the contrary that for each $x\in \AAa$, there exists an open neighbourhood $\NN_x$ of $x$ such that $\mu_0(\NN_x)=0$. Obviously, $\{\NN_x\}_{x\in \AAa}$ forms an open cover of $\AAa$. The compactness of $\AAa$ ensures the existence of a finite open cover $\{\NN_{x_i}\}_{i=1}^N$. Then, $1=\mu_0(\AAa)\leq \sum_{i=1}^N \mu_0(\NN_{x_i})=0$, leading to a contradiction.

\medskip

\paragraph{\bf Step 2} We show that up to extracting a subsequence,  $\lim_{n\to \infty}V_{\ep_n}(x_0)=0$.

Up to extracting a subsequence, we may assume without loss of generality that $\ell:=\lim_{n\to \infty}V_{\ep_n}(x_0)\in[-\infty,\infty]$ exists. We claim that
\begin{itemize}
    \item if $\ell<0$, then there is $r_{0}>0$ such that
\begin{equation}\label{claim-june-19}
V_{\ep_n}(x)\leq \frac{1}{2}\max\{-1,\ell\},\quad\forall x\in B_{r_0}(x_0)\andd n\gg 1;  
\end{equation}

\item if $\ell>0$, then there is $r'_0>0$ such that
\begin{equation}\label{claim-june-19-1}
    V_{\ep_n}(x)>\frac{1}{2}\min\{1,\ell\},\quad\forall x\in B_{r'_0}(x_0)\andd n\gg 1.
\end{equation}
\end{itemize}
We finish the proof assuming the above claim. Since $u_{\ep_n}=e^{-\frac{2}{\ep_n^2}V_{\ep_n}}$, we see that if $\ell<0$, then \eqref{claim-june-19} yields that $1\geq\int_{B_{r_0}(x_0)}u_{\ep_n}\geq e^{-\frac{1}{\ep_n^2}\max\{-1,\ell\}}|B_{r_0}(x_0)|\to \infty$ as $n\to \infty$, leading to a contradiction. Similarly, if $\ell>0$, then \eqref{claim-june-19-1} implies that $\int_{B_{r'_0}(x_0)}u_{\ep_n}\leq e^{-\frac{1}{\ep_n^2}\min\{1,\ell\}}|B_{r'_0}(x_0)|\to 0$ as $n\to \infty$, which contradicts {\bf Step 1} and the weak convergence $\lim_{n\to\infty}\mu_{\ep_{n}}=\mu_0$. As a result, $\ell=0$.

It remains to show \eqref{claim-june-19} and \eqref{claim-june-19-1}. We here only prove \eqref{claim-june-19}; the proof of \eqref{claim-june-19-1} follows similarly.
\begin{itemize}
    \item If $-\infty<\ell<0$, we see from Lemma \ref{prop-uniform-gradient-V_ep} that  there exists $0<r_1\ll 1$ such that 
    \begin{equation*}
    \sup_{x\in B_{r_1}(x_0)}|V_{\ep_n}(x)-V_{\ep_n}(x_0)|\leq r_1 \sup_n \|\nabla V_{\ep_n}\|_{L^{\infty}(B_{r_1}(x_0))}\leq \frac{|\ell|}{4}.
    \end{equation*}
Since $\lim_{n\to \infty}V_{\ep_n}(x_0)=\ell$, we find $|V_{\ep_n}(x_0)-\ell|\leq \frac{|\ell|}{4}$ for all $n\gg1$, and thus, 
\begin{equation*}
\begin{split}
V_{\ep_n}(x)&\leq \ell+ \left|V_{\ep_n}(x)-V_{\ep_n}(x_0) +V_{\ep_n}(x_0)-\ell\right|\leq \ell+\frac{|\ell|}{2}=\frac{\ell}{2}, \quad\forall x\in B_{r_1}(x_0)\andd n\gg 1.
\end{split}
\end{equation*}

\item If $\ell=-\infty$, we see from Lemma \ref{prop-uniform-gradient-V_ep} that 
$\sup_{x\in B_{r_2}(x_0)}|V_{\ep_n}(x)-V_{\ep_n}(x_0)|\leq \frac{1}{2}$ for some $0<r_2\ll 1$. Since $\lim_{n\to \infty}V_{\ep_n}(x_0)=\ell$, there holds $V_{\ep_n}(x_0)\leq -1$ for all $n\gg1$, and thus, 
$$
V_{\ep_n}(x)\leq V_{\ep_n}(x_0)+ |V_{\ep_n}(x)-V_{\ep_n}(x_0)|\leq -1+\frac{1}{2}=-\frac{1}{2},\quad\forall x\in B_{r_2}(x_0)\andd n\gg 1.
$$
\end{itemize}
Setting $r_0:=\min\{r_1,r_2\}$, we arrive at \eqref{claim-june-19}. This completes the proof in this step.

\medskip

\paragraph{\bf Step 3} We finish the proof. It follows from $\lim_{n\to \infty}V_{\ep_n}(x_0)=0$ (by {\bf Step 2}) and Lemma \ref{prop-uniform-gradient-V_ep} that for each open $\OO\subset\subset\Om$, $\{V_{\ep_n}\}_{n}$ is uniformly bounded and equicontinuous on $\ol{\OO}$ and there is $C_{\ol{\OO}}>0$ such that 
$\sup_{n}\|V_{\ep_n}\|_{Lip(\ol{\OO})}\leq C_{\ol{\OO}}$. We apply Arzel\'{a}-Ascoli theorem and the standard diagonal argument to find a subsequence, still denoted by $\{V_{\ep_n}\}_n$, and a function $V^{x_0}\in C(\Om)$ such that 
$\lim_{n\to\infty}V_{\ep_n}=V^{x_0}$ locally uniformly in $\Om$. In particular, $V^{x_0}(x_0)=0$ in (2) holds. Since $\OO$ is arbitrary and
$$
\frac{|V^{x_0}(x)-V^{x_0}(y)|}{|x-y|}=\lim_{n\to \infty}\frac{|V_{\ep_n}^{x_0}(x)-V_{\ep_n}^{x_0}(y)|}{|x-y|}\leq C_{\ol{\OO}},\quad x,y\in \ol{\OO},\,\,x\neq y,
$$
we conclude that $V^{x_0}$ is locally Lipschitz continuous, and hence, (1) holds. 

Noting that \eqref{eq:Aepsilon0} is equivalent to
\begin{equation}\label{eq:Aepsilon1}
    -\frac{\epsilon^2}{2}\sum_{i,j=1}^{d}a^{ij}\partial_{ij}^{2}V_{\epsilon}+\max_{\alpha\in \mathbb R^d}\left[(b_{\epsilon}+\alpha)\cdot\nabla V_{\epsilon}-\frac{1}{4}\alpha^{\top} A^{-1}\alpha+\frac{\epsilon^2}{2}c_\epsilon\right]=0,
\end{equation}
we follow standard arguments in the theory of viscosity solutions (see e.g. \cite[Proposition VI.1]{CL83}) to find that $V^{x_0}$ is a viscosity solution of the equation
\begin{equation}\label{steadyHJB}
    \max_{\alpha\in \R^d}\left[(b+\alpha)\cdot\nabla W-\frac{1}{4}\alpha^{\top} A^{-1}\alpha\right]=0\quad \text{in} \quad \Om,
\end{equation}
which is just the Hamilton-Jacobi equation in (3).

Next, we show that $V^{x_0}\geq 0$ on $\AAa$ in (2). Suppose on the contrary that $V^{x_0}(x_*)<0$ for some $x_*\in \AAa$. Then, $\lim_{n\to\infty}V_{\ep_n}(x_*)=V^{x_0}(x_*)$ thanks to (1). Following arguments leading to \eqref{claim-june-19}, we find $r>0$ such that $V_{\ep_n}(x)\leq \frac{1}{2}\max\left\{-1, V^{x_0}(x_*)\right\}<0$ for all $x\in B_r(x_*)$ and $n\gg 1$. This together with $u_{\ep_n}=e^{-\frac{2}{\ep_n^2}V_{\ep_n}}$ results in $1\geq \int_{B_r(x_*)} u_{\ep_n}\geq e^{-\frac{1}{\ep^2_n}\max\{-1,V^{x_0}(x_*)\}}|B_r(x_*)|\to \infty$ as $n\to \infty$, leading to a contradiction.

It remains to show $V^{x_0}>0$ in $\Om\sm \AAa$ in (2). It is actually a simple consequence of \cite[Theorem A (2)]{JSY19}, where the authors show that any $\OO\stst \Om\setminus\AAa$, there are $\ga_{\OO}>0$ and $0<\ep_{\OO}\ll1$ such that
$\sup_{\OO}u_{\ep}\leq e^{-\frac{\ga_{\OO}}{\ep^{2}}}$ for all $\ep\in(0,\ep_{\OO})$. It follows from $V_{\ep}=-\frac{\ep^2}{2}\ln u_{\ep}$ that $\liminf_{n\to \infty}V_{\ep_n}>0$, and hence, $V^{x_0}>0$ in $\Om\setminus\AAa$. We point out that the original estimates were developed for stationary distributions in \cite{JSY19}; however the arguments are still applicable here since $\lim_{\ep\to0}\la_{\ep}=0$. This completes the proof.
\end{proof}

\subsection{Variational representation}\label{subsec-Variational-representation}

In this subsection, we prove a variational representation for the function $V^{x_0}$ derived in Proposition \ref{prop-Convergent-subsequence-V^x_0} by adapting the control theoretical approach proposed in \cite{BB09}, but we have to modify the global technique in \cite{BB09} to develop much more technical localization arguments in order to deal with exit events caused by weak conditions imposed on $\partial\Om$. 

We introduce some notations before stating the result. Set
$$
\rho_*:=\liminf_{x\to \pa\Om}V^{x_0}(x)\quad \andd\quad \Om_*:=\left\{x\in \Om: V^{x_0}(x)< \rho_*\right\}.
$$
The meaning of $x\to \pa\Om$ should be clarified. If $\Om$ is bounded, it is just in the usual sense. Otherwise, we need to use the homeomorphism $h$ from the extended Euclidean space $\R^d\cup \pa \R^d$ to the closed unit ball $\B:=\{x\in \R^d: |x|\leq 1\}$. Here, $\pa\R^d:=\{x_*^{\infty}:x_*\in \B\}$ with $x_*^{\infty}$ denoting the infinity element of the ray through $x_*$. Then, we define $\pa\Om$ as the preimage of $\pa h(\Om)$ and regard $x\to \pa\Om$ if $h(x)\to h(\pa \Om)$.

For each $\rho\in (0,\rho_*]$, we denote by $\Om_{\rho}$ the $\rho$-sublevel set of $V^{x_0}$ in $\Om$, namely,
$$
\Om_{\rho}=\Om_{\rho}(x_0):=\{x\in\Om: V^{x_0}(x)<\rho\}.
$$
In particular, $\Om_*=\Om_{\rho_*}$. Obviously, $x_0\in   \Om_{\rho_{1}}\stst \Om_{\rho_{2}}$ and $\lim_{x\to \pa\Om_{\rho_2}}V^{x_0}(x)=\rho_2$ for all $0<\rho_{1}<\rho_{2}\leq\rho_{*}$. Note that the dependence of $\rho_*$, $\Om_*$ and $\Om_{\rho}$ on $x_0$ are suppressed for the sake of simplicity. 

For $\al\in\Gamma:=L_{loc}^{2}([0,\infty);\R^{d})$ and $x\in\Om$, Carath\'eodory's existence theorem and assumptions on $b$ ensure the existence of a unique absolutely continuous solution $X_{\al,x}$ of the following control system:
\begin{equation}\label{IVP-control}
    \begin{cases}
    \dot X(t)=-b(X(t))-\alpha(t),\quad t>0,\\
    X(0)=x.
    \end{cases}
\end{equation}
Note that $X_{\al,x}(t)$ may not exist for all $t\geq0$, but this shall cause no trouble as restrictions will be imposed on when it is used. Whenever it is well-defined, we set for $t\in(0,\infty]$,
$$
I_{\al,x}(t):=\frac{1}{4}\int_0^{t}\alpha(s)^{\top}A^{-1}(X_{\alpha,x}(s))\alpha(s)ds.
$$



Below is the main result in this subsection. It, in particular, gives the variational representation for $V^{x_0}$ in $\Om_*$. Set 
\begin{equation}\label{eqn-Gamma-x}
\Gamma_x:=\left\{\alpha\in\Gamma:X_{\alpha,x}(t)\in\Om \text{ for all $t\geq 0$ and }\lim_{t\to \infty}{\rm dist}(X_{\alpha,x}(t),\AAa)=0\right\}.    
\end{equation}

\begin{prop}\label{lem:lim}
The following hold.
\begin{itemize}
    \item[(1)]  For each $x\in \Om_*$, $V^{x_0}(x)=\min_{\alpha\in\Gamma_x}I_{\al,x}(\infty)$.

    \item[(2)] For each $\rho\in (0,\rho_*]$, $\Om_{\rho}$ is connected and $\AAa\stst \Om_{\rho}$.
\end{itemize}

\end{prop}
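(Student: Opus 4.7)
\medskip

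\noindent\textbf{Plan of proof.} The strategy adapts the stochastic-control approach of \cite{BB09}, with a careful localization to cope with the absence of boundary conditions on $\partial\Om$. The key observation is that \eqref{eq:Aepsilon1} is the stationary HJB equation for a controlled diffusion $dY_t = -(b_\ep(Y_t)+\alpha_t)\,dt + \ep\si(Y_t)\,dW_t$ with running cost $\tfrac14\alpha^\top A^{-1}\alpha - \tfrac{\ep^2}{2}c_\ep$. Applying Young's inequality to \eqref{eq:Aepsilon1} yields, for any measurable $\alpha$,
\begin{equation*}
(b_\ep+\alpha)\cdot\nabla V_\ep \;\le\; \tfrac{\ep^2}{2}\sum_{i,j=1}^{d}a^{ij}\partial_{ij}^{2}V_\ep + \tfrac14\alpha^\top A^{-1}\alpha - \tfrac{\ep^2}{2}c_\ep,
\end{equation*}
with equality when $\alpha=\alpha_\ep^*:=2A\nabla V_\ep$. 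Combining this with It\^o's formula on $V_\ep(Y_t)$ and a stopping time $\tau_\ep:=T\wedge\inf\{t:Y_t\notin\OO\}$, where $\OO\stst\Om$ is chosen to contain a tubular neighborhood of $X_{\alpha,x}([0,T])$, produces the fundamental one-sided estimate
\begin{equation*}
V_\ep(x) \;\le\; \E\bigl[V_\ep(Y_{\tau_\ep})\bigr] + \tfrac14\,\E\int_0^{\tau_\ep}\alpha^\top A^{-1}(Y_s)\alpha\,ds - \tfrac{\ep^2}{2}\,\E\int_0^{\tau_\ep}c_\ep(Y_s)\,ds,
\end{equation*}
which becomes an equality when $\alpha=\alpha_\ep^*$.

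\noindent\textbf{Proof of (1).} For the upper bound, fix $x\in\Om_*$, $\alpha\in\Gamma_x$ with $I_{\alpha,x}(\infty)<\infty$, and $T>0$. Since $Y_t\to X_{\alpha,x}(t)$ uniformly in probability on $[0,T]$ as $\ep\to 0$, one has $\P(\tau_\ep<T)\to 0$. Using Lemma \ref{prop-uniform-gradient-V_ep} to uniformly bound $V_\ep$ on $\overline{\OO}$ and Proposition \ref{prop-Convergent-subsequence-V^x_0} to pass $\ep\to 0$, the fundamental estimate yields $V^{x_0}(x) \le V^{x_0}(X_{\alpha,x}(T)) + I_{\alpha,x}(T)$. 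To remove the boundary term as $T\to\infty$, invoke {\bf(H3)}: since $X_{\alpha,x}(T)$ approaches $\AAa$ and $\AAa$ is an equivalence class, for any $\delta>0$ one prolongs $\alpha$ on $[T,T+T_\delta]$ by a control steering the trajectory to $x_0$ with additional cost $\le\delta$; as $V^{x_0}(x_0)=0$, letting $\delta\to 0$ and $T\to\infty$ gives $V^{x_0}(x)\le I_{\alpha,x}(\infty)$. For the lower bound and attainment, set $\alpha=\alpha_\ep^*$ to obtain $V_\ep(x) = \E[V_\ep(Y^\ep_T)] + I_{\alpha_\ep^*,x}(T) + o(1)$. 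Lemma \ref{prop-uniform-gradient-V_ep} provides uniform $L^\infty_{\rm loc}$-bounds on $\alpha_\ep^*$, so one extracts a subsequence $\alpha_\ep^*\rightharpoonup\alpha^*$ in $L^2_{\rm loc}([0,\infty);\R^d)$ with the corresponding uniform convergence of trajectories on compact time intervals. Lower semicontinuity of the quadratic cost, monotonicity of $V^{x_0}$ along the limiting trajectory (which keeps it inside a sublevel set $\Om_\rho$ with $V^{x_0}(x)<\rho<\rho_*$), and the dynamic programming principle combine to give $\alpha^*\in\Gamma_x$ and $I_{\alpha^*,x}(\infty)\le V^{x_0}(x)$; together with the upper bound, this is the variational formula with infimum attained.

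\noindent\textbf{Proof of (2).} By (1) together with {\bf(H3)}, for every $y\in\AAa$ the equivalence-class property supplies admissible controls with arbitrarily small cost, so $V^{x_0}(y)\le 0$; combined with $V^{x_0}\ge 0$ on $\AAa$ from Proposition \ref{prop-Convergent-subsequence-V^x_0}, this gives $V^{x_0}\equiv 0$ on $\AAa$, whence $\AAa\stst\Om_\rho$ by compactness. For connectedness of $\Om_\rho$, first note that $\AAa$ itself is connected as a nested intersection of compact connected images $\vp^t(\OO)$ for any connected $\OO\stst\Om$ containing $\AAa$. For each $x\in\Om_\rho$, the optimal trajectory $X_{\alpha^*,x}$ from (1) satisfies $V^{x_0}(X_{\alpha^*,x}(t)) = V^{x_0}(x) - I_{\alpha^*,x}(t) \le V^{x_0}(x) < \rho$ for all $t\ge 0$ (by DPP applied along the optimal path) and $X_{\alpha^*,x}(t)\to\AAa$, providing a continuous path in $\Om_\rho$ joining $x$ to $\AAa$; connectedness of $\Om_\rho$ follows.

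\noindent\textbf{Main obstacle.} The chief technical difficulty is the localization. Because no boundary conditions on $\partial\Om$ are imposed and $V_\ep$ may behave badly near $\partial\Om$, the cutoff domain $\OO$ must be positioned between nested sublevel sets of $V^{x_0}$, and exit-probability estimates must be uniform in $\ep$ and robust enough to accommodate the minimizing sequence $\alpha_\ep^*$. Equally central is the delicate use of {\bf(H3)} to close the upper bound by concatenation with near-zero-cost trajectories through $\AAa$; without it, the limiting boundary term $V^{x_0}(X_{\alpha,x}(T))$ need not tend to zero as $T\to\infty$.
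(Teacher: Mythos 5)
Your outline takes a genuinely different route from the paper. You work directly with the $\ep$-dependent stochastic control problem (It\^o's formula on $V_\ep(Y_t)$ for the controlled diffusion, then $\ep\to 0$), whereas the paper passes to the limit first and works entirely with the \emph{deterministic} control problem attached to the limiting HJB: the DPP in Lemma \ref{representation-of-V} is obtained via the uniqueness of Lipschitz viscosity solutions of the associated time-dependent HJB (quoting [CL87]), and the minimizing sequence in Lemma \ref{lem:lim-1} consists of deterministic controls valued in a compact set $K_\OO$, so Arzel\`a--Ascoli plus weak $L^2$ compactness apply cleanly. That choice lets the paper avoid the awkward step that your lower bound relies on: you set $\alpha=\alpha_\ep^*=2A\nabla V_\ep$, a \emph{feedback} law, and then speak of extracting a weak limit $\alpha_\ep^*\rightharpoonup\alpha^*$ in $L^2_{\mathrm{loc}}([0,\infty);\R^d)$; but the corresponding open-loop control is $t\mapsto\alpha_\ep^*(Y^\ep_t)$, a random process, so the subsequence/tightness machinery needed to pass to a deterministic limiting trajectory is substantially more than what you state, and the equality $V_\ep(x)=\E[V_\ep(Y^\ep_T)]+I_{\alpha_\ep^*,x}(T)+o(1)$ involves an expectation over a random cost.

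There is also a concrete gap in the lower bound that is independent of the feedback/open-loop issue. Showing $\alpha^*\in\Gamma_x$ requires $\lim_{t\to\infty}\dist(X_{\alpha^*,x}(t),\AAa)=0$. You invoke ``monotonicity of $V^{x_0}$ along the limiting trajectory,'' but monotonicity only gives that $V^{x_0}(X_{\alpha^*,x}(t))$ decreases to some limit $\underline\rho\ge 0$; it does not rule out the trajectory being trapped forever in an annulus $\Om_{\rho}\setminus\Om_{\underline\rho}$ with $\underline\rho>0$, away from $\AAa$. Excluding this requires an additional argument: the paper (Step~3 of Lemma \ref{lem:lim-1}) observes that the time-reversed flow $\dot x=-b(x)$ has no invariant set in that annulus and then applies \cite[Lemma 3.1]{F78} to conclude the cost $I_{\alpha_x,x}(\infty)$ would be infinite, a contradiction with $V^{x_0}(x)<\infty$. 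Without this step (or an equivalent one) your proof of (1), and hence the path-connectedness argument in (2) which relies on the optimal trajectory actually reaching $\AAa$, does not close.
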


The rest of this subsection is devoted to the proof of Proposition \ref{lem:lim}. The key step, circumventing difficulties caused by the lack of information about $V^{x_0}$ on $\partial\Om$, is to establish a local version of the variational representation for $V^{x_0}$ (see Lemma \ref{lem:lim-1}). 


We need some notations and preliminary results. Let $\OO_{\AAa,\Om}$ be the set of open and connected sets $\OO$ satisfying $\AAa\subset \OO\stst \Om$. For each $\OO\in\OO_{\AAa,\Om}$, we denote by $\tau_{\al,x,\OO}$ the first time that $X_{\al,x}$ exits from $\OO$, namely, $\tau_{\al,x,\OO}:=\inf\left\{t\geq 0: X_{\al,x}(t)\in \pa\OO\right\}$ with the convention that $\inf \emptyset =\infty$, and set $\rho_{\OO}:=\min_{\pa\OO}V^{x_0}$. Proposition \ref{prop-Convergent-subsequence-V^x_0} ensures that $\rho_{\OO}$ is well-defined and positive. For $\rho\in (0,\rho_{\OO}]$, let $\OO_{\rho}$ be the $\rho$-sublevel set of $V^{x_0}$ in $\OO$, namely, $\OO_{\rho}:=\left\{x\in\OO: V^{x_0}(x)<\rho\right\}$. It is easy to see that $x_0\in\OO_{\rho_{1}}\stst \OO_{\rho_{2}}$, $\lim_{x\to \pa\OO_{\rho_2}}V^{x_0}(x)=\rho_2$ for all $0<\rho_{1}<\rho_{2}\leq\rho_{\OO}$ and $\overline{\OO}_{\rho_{\OO}}\cap\partial\OO\neq\emptyset$.

\begin{lem}\label{representation-of-V}
    Let $\OO\in\OO_{\AAa,\Om}$. Then,
    \begin{equation*}
    V^{x_0}(x)=\inf_{\al\in\Gamma}\left[I_{\al,x}(t\land\tau)+V^{x_0}(X_{\al,x}(t\land \tau))\right],\quad\forall (x,t)\in \ol{\OO}\times[0,\infty),
\end{equation*}
where $\tau=\tau_{\al,x,\OO}$.
\end{lem}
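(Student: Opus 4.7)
The identity is a dynamic programming principle (DPP) for the Bellman form of the HJE \eqref{eqn-HJE}, namely
\[
\sup_{\al \in \R^d}\Bigl[(b+\al)\cdot \nabla W - \tfrac14 \al^\top A^{-1}\al\Bigr] = 0,
\]
of which $V^{x_0}$ is a locally Lipschitz viscosity solution by Proposition \ref{prop-Convergent-subsequence-V^x_0}. The plan is to prove the two inequalities ``$\leq$'' and ``$\geq$'' separately, exploiting respectively the subsolution and supersolution properties of $V^{x_0}$. Since $V^{x_0}$ is only Lipschitz, the chain rule along controlled trajectories is not directly available, and the technical work lies in circumventing this via the standard sup/inf-convolution regularisation.

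For ``$\leq$'', fix $\al \in \Gamma$ and consider $X := X_{\al,x}$ on $[0, t\wedge\tau]$, which stays in $\ol{\OO}$ by the definition of $\tau$. Introduce the sup-convolution $V^{x_0,\eta}(y) := \sup_{z \in \ol{\OO}}\bigl[V^{x_0}(z) - |z-y|^2/(2\eta)\bigr]$; standard viscosity theory gives that $V^{x_0,\eta} \to V^{x_0}$ uniformly on $\ol{\OO}$, is semi-convex, and is a viscosity subsolution of the Bellman PDE on a slightly smaller open set (containing the trajectory for $\eta$ small) up to an $o_\eta(1)$ error. Semi-convexity forces $s \mapsto V^{x_0,\eta}(X(s))$ to be absolutely continuous and differentiable a.e.\ with the classical chain rule. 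Combining this with the subsolution inequality $(b(X(s)) + \al(s))\cdot \nabla V^{x_0,\eta}(X(s)) \leq \tfrac14 \al(s)^\top A^{-1}(X(s))\al(s) + o_\eta(1)$ and using $\dot X = -b - \al$, I would integrate over $[0, t\wedge\tau]$ to obtain
\[
V^{x_0,\eta}(x) \leq V^{x_0,\eta}(X(t\wedge\tau)) + I_{\al,x}(t\wedge\tau) + o_\eta(1)\, t,
\]
then pass $\eta \to 0$ and take the infimum over $\al$.

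For ``$\geq$'', the idea is to construct near-optimal controls from the supersolution property. Replace $V^{x_0}$ by an inf-convolution $V^{x_0}_\eta$, which is semi-concave and an approximate supersolution. At a.e.\ $y$ where $p := \nabla V^{x_0}_\eta(y)$ exists, the inequality $\sup_\al\bigl[(b+\al)\cdot p - \tfrac14 \al^\top A^{-1}\al\bigr] \geq -o_\eta(1)$ is realised at $\al^*(y) = 2A(y)p$. Selecting a Borel measurable version of $\al^*(\cdot)$ and solving the closed-loop ODE $\dot X = -b(X) - \al^*(X)$ from $x$, the chain rule together with integration yields
\[
V^{x_0}_\eta(x) \geq V^{x_0}_\eta(X(t\wedge\tau)) + I_{\al^*,x}(t\wedge\tau) - o_\eta(1)\, t,
\]
and a time-discretisation that replaces $\al^*$ by piecewise-constant open-loop approximations on a fine partition of $[0,t]$, combined with $\eta \to 0$, produces an $\al_* \in \Gamma$ that is $\delta$-optimal for arbitrary $\delta > 0$.

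The main obstacle is the lack of pointwise differentiability of $V^{x_0}$ along prescribed Lipschitz trajectories, which is precisely why the sup/inf-convolution regularisations are introduced; the resulting $o_\eta(1)$ errors are uniform on $\ol{\OO}\times[0,t]$ because $\OO \stst \Om$ renders $V^{x_0}$ and $b$ Lipschitz and $A$ uniformly positive-definite there. A secondary delicate point is the measurable selection and discretisation of the near-optimal feedback used to prove ``$\geq$'', for which I would rely on a standard Berge-type selection together with time-discretisation and the uniform continuity of $V^{x_0}$ on $\ol{\OO}$.
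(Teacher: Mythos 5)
Your strategy is genuinely different from the paper's. The paper's proof is essentially a two-liner: it observes that both $V^{x_0}$ and $\tilde V^{x_0}(x,t):=\inf_{\al\in\Gamma}[I_{\al,x}(t\land\tau)+V^{x_0}(X_{\al,x}(t\land\tau))]$ are Lipschitz viscosity solutions of the terminal/boundary value problem
$$\partial_t W+\sup_{\al\in\R^d}\Bigl[(b+\al)\cdot\nabla W-\tfrac14\al^\top A^{-1}\al\Bigr]=0\quad\text{in }\OO\times(0,\infty),\qquad W=V^{x_0}\text{ on }\left(\pa\OO\times(0,\infty)\right)\cup\left(\ol{\OO}\times\{0\}\right),$$
so that equality follows from the Crandall--Lions comparison/uniqueness theorem for such problems \cite[Theorem VI.1]{CL87}. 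That is, the paper invokes, as a black box, exactly the value-function representation of viscosity solutions to parabolic HJB problems that you are re-deriving from scratch. Your route --- two one-sided inequalities via sup-/inf-convolution --- is a legitimate alternative and has the merit of being self-contained, but it is considerably heavier. The ``$\le$'' half (sup-convolution subsolution, semi-convex chain rule along Lipschitz curves, reachable-gradient argument) is standard and would go through with care. The ``$\ge$'' half is the weak point: the near-optimal feedback $\al^*(y)=2A(y)\nabla V^{x_0}_\eta(y)$ is only defined a.e.\ and generally discontinuous, so the closed-loop ODE $\dot X=-b(X)-\al^*(X)$ need not have Carath\'eodory solutions, and converting the feedback to an admissible \emph{open-loop} control in $\Gamma$ via time-discretisation requires a nontrivial error estimate that you only gesture at. You would also need to track what happens when the trajectory hits $\pa\OO$ before time $t$, which is what the $t\land\tau$ in the statement is handling; in the paper this is absorbed into the boundary data of the comparison problem. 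In short, your approach is correct in spirit but would take a page or two of careful PDE-control estimates, whereas the paper discharges the entire lemma by quoting an off-the-shelf uniqueness theorem --- a worthwhile shortcut you should be aware of.
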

\begin{proof}


Fix $\OO\in\OO_{\AAa,\Om}$ and set
\begin{equation*}
    \tilde V^{x_0}(x,t):=\inf_{\al\in\Gamma}\left[I_{\al,x}(t\land\tau)+V^{x_0}(X_{\al,x}(t\land \tau))\right],\quad \forall (x,t)\in \ol{\OO}\times[0,\infty).
\end{equation*}
Since $V^{x_0}$ is a viscosity solution of \eqref{steadyHJB} and is Lipschitz continuous on $\overline{\OO}$ (see Proposition \ref{prop-Convergent-subsequence-V^x_0}), we see that $\tilde V^{x_0}$ is a Lipschitz viscosity solution (see e.g. \cite{CL83}) of the following problem:
\begin{equation*}\label{tildeV}
\begin{cases}
\partial_t W+\sup_{\alpha\in\R^d} \left[(b+\alpha)\cdot\nabla W-\frac{1}{4}\alpha^{\top}A^{-1}\alpha\right]=0  \quad\text{in}\quad\OO\times(0,\infty),\\
W = V^{x_0}\quad\text{on}\quad \left(\pa\OO\times (0,\infty)\right)\cup \left(\ol{\OO}\times \{0\}\right).    
\end{cases}
\end{equation*} 
Obviously, $V^{x_0}$ is also a viscosity solution of the above problem. By the uniqueness result in \cite[Theorem VI.1]{CL87}, we conclude that $V^{x_0}=\tilde V^{x_0}$. The lemma follows.
\end{proof}

\begin{lem}\label{lem-vanishing-on-M}
$V^{x_0}=0$ on $\AAa$. In particular, $\AAa\stst \OO_{\rho}$ for each $\rho\in (0,\rho_{\OO}]$ and $\OO\in \OO_{\AAa,\Om}$.  
\end{lem}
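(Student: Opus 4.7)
The plan is to use the dynamic programming principle of Lemma \ref{representation-of-V} together with the equivalence class hypothesis {\bf(H3)} to bound $V^{x_0}$ from above on $\AAa$. Since Proposition \ref{prop-Convergent-subsequence-V^x_0}(2) already gives $V^{x_0}\geq 0$ on $\AAa$ and $V^{x_0}(x_0)=0$ with $x_0\in\AAa$, it suffices to establish the reverse inequality $V^{x_0}(x)\leq 0$ for every $x\in\AAa$.

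For a fixed $x\in\AAa$ and $\de>0$, {\bf(H3)} provides $\VV(x_0,x)=0$, and hence some $T>0$ and $\phi\in\Phi_{-T,x_0,x}$ whose action is smaller than $\de$. The key step will be to convert this path into an admissible control for the system \eqref{IVP-control}: setting $X(s):=\phi(-s)$ and $\al(s):=-\dot X(s)-b(X(s))$ on $[0,T]$, extended by zero on $(T,\infty)$, a routine change of variables $s\mapsto -r$ identifies $I_{\al,x}(T)$ with the action of $\phi$, so $I_{\al,x}(T)<\de$. Because $\AAa$ is $\vp^t$-invariant, the uncontrolled backward equation $\dot Y=-b(Y)$ starting from $x_0$ keeps $Y$ inside $\AAa$ for all forward time, so $\al\in\Gamma$ and $X_{\al,x}$ is globally defined, agreeing with $X$ on $[0,T]$ and remaining in $\AAa$ afterwards. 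Choosing any $\OO\in\OO_{\AAa,\Om}$ containing the compact set $\AAa\cup\phi([-T,0])$ makes $\tau_{\al,x,\OO}=\infty$, and Lemma \ref{representation-of-V} applied at time $T$ gives
\begin{equation*}
V^{x_0}(x)\leq I_{\al,x}(T)+V^{x_0}(X_{\al,x}(T))< \de+V^{x_0}(x_0)=\de.
\end{equation*}
Letting $\de\to 0$ yields $V^{x_0}(x)\leq 0$, and the first assertion follows.

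The main subtlety is ensuring the control $\al$ yields a trajectory staying in $\Om$ for all positive time; this is handled cleanly by the $\vp^t$-invariance of the attractor, which lets one extend $X$ after time $T$ by the drift-only flow from $x_0$ inside $\AAa$. For the ``in particular'' part, once $V^{x_0}\equiv 0$ on $\AAa$ is known, the compactness of $\AAa$, the inclusion $\AAa\subset\OO$, and the continuity of $V^{x_0}$ (Proposition \ref{prop-Convergent-subsequence-V^x_0}(1)) immediately give $\AAa\stst\OO_\rho$ for every $\rho\in(0,\rho_\OO]$ and $\OO\in\OO_{\AAa,\Om}$.
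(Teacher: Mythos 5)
Your proof is correct and follows essentially the same route as the paper: using {\bf(H3)} to find low-action paths from $x_0$ to $x$, reversing them into controls for \eqref{IVP-control}, observing that the extended trajectory remains in a compactly-contained open set, and invoking Lemma \ref{representation-of-V} to conclude $V^{x_0}(x)\leq 0$. The only cosmetic difference is that you work with a single $\delta$-good path rather than a sequence of $1/n$-good paths, and you make explicit the observation (implicit in the paper) that the extension by the drift-only backward flow keeps the trajectory in $\AAa$.
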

\begin{proof}
Fix $x\in \AAa$. Since $\VV(x,x_0)=\VV(x_0,x)=0$ by {\bf (H3)}, there exist absolutely continuous functions $\hat\phi_n:[-t_n,0]\to \Om$ with $t_n>0$ for $n\in\N$ satisfying $\hat\phi_n(0)=x$ and $\hat\phi_n(-t_n)=x_0$ such that 
\begin{equation}\label{approximation-june-20}
\frac{1}{4}\int_{-t_n}^0 \left[b(\hat\phi_n)-\dot{\hat\phi}_n\right]^{\top}A^{-1}(\hat\phi_n ) \left[b(\phi_n)-\dot{\hat\phi}_n\right]\leq \frac{1}{n}.
\end{equation}

For each $n\in \N$, we set $\phi_n(t):=\hat\phi_n(-t)$ and $\al_n(t):=-b(\phi_n(t))-\dot{\phi}_n(t)$ for $t\in [0,t_n]$ and $\al_n(t)=0$ for $t>t_n$. Clearly, $\al_n\in \Ga$. Note that
\begin{equation*}
    \begin{cases}
        \dot{\phi}_n(t)=-b(\phi_n(t))-\al_n(t),\quad t\in(0,t_{n}],\\
        \phi_n(0)=x.
    \end{cases}
\end{equation*}
That is, $X_{\al_n,x}=\phi_n$ on $[0,t_n]$ and $X_{\al_n,x}(t_n)=x_0$. 
The absolute continuity of $X_{\al_n,x}$ yields the existence of  $\OO_{n}\in\OO_{\AAa,\Om}$ such that ${\range}(X_{\al_n,x}|_{[0,t_n]})\subset\OO_{n}$. Since $I_{\al_n, x}(t_n)\leq \frac{1}{n}$ due to \eqref{approximation-june-20}, we derive from Lemma \ref{representation-of-V} and the fact $\tau_{\al_{n},x,\OO_{n}}>t_{n}$ that
\begin{equation*}
\begin{split}
    V^{x_0}(x)&\leq \limsup_{n\to \infty}\left[I_{\al_{n},x}(t_{n})+V^{x_0}(X_{\al_n,x}(t_n))\right]
    =V^{x_0}(x_0)=0.
\end{split}
\end{equation*}
As $V^{x_0}(x)\geq0$ thanks to Proposition \ref{prop-Convergent-subsequence-V^x_0} (2), we conclude $V^{x_0}(x)=0$. The ``In particular" part follows readily from the definition of $\OO_{\rho}$.  This finishes the proof. 
\end{proof}

The following lemma establishes the variational formula for $V^{x_0}$ in the largest sublevel set $\OO_{\rho_{\OO}}$ of $V^{x_0}$ in $\OO$.

\begin{lem}\label{lem:lim-1}
Let $\OO\in\OO_{\AAa,\Om}$. Then, the following hold.
\begin{itemize}
    \item[(1)] For each $x\in \OO_{\rho_{\OO}}$,
\begin{equation}\label{formula-V^x_0-in-O}
    V^{x_0}(x)=\min_{\alpha\in\Gamma_{x,\OO}}I_{\al,x}(\infty), 
\end{equation}
where $\Gamma_{x,\OO}:=\left\{\alpha\in\Gamma:\tau_{\al,x,\OO}=\infty\,\,\text{and}\,\,\lim_{t\to \infty}{\rm dist}(X_{\alpha,x}(t),\AAa)=0\right\}$.

\item[(2)] For each $\rho\in (0,\rho_{\OO}]$, $\OO_{\rho}$ is connected.
\end{itemize}
\end{lem}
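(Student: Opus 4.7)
The plan is to prove (1) first via a direct-method argument built on the dynamic programming principle in Lemma \ref{representation-of-V}, then deduce (2) from (1). The easy inequality $V^{x_0}(x)\le \inf_{\al\in\Ga_{x,\OO}}I_{\al,x}(\infty)$ follows by fixing any $\al\in\Ga_{x,\OO}$, applying Lemma \ref{representation-of-V} with $\tau_{\al,x,\OO}=\infty$ to get $V^{x_0}(x)\le I_{\al,x}(t)+V^{x_0}(X_{\al,x}(t))$ for every $t\ge 0$, and sending $t\to\infty$ using Lemma \ref{lem-vanishing-on-M}, continuity of $V^{x_0}$, and $\dist(X_{\al,x}(t),\AAa)\to 0$.

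For the reverse inequality and the attainment of the minimum, the plan is to build approximate minimizers via DPP and extract a weak limit. Choose $\de>0$ with $V^{x_0}(x)+\de<\rho_{\OO}$, which is possible since $x\in\OO_{\rho_{\OO}}$, and for each large $n$ pick $\al_n\in\Ga$ with
$$
I_{\al_n,x}(n\wedge\tau_n)+V^{x_0}\bigl(X_{\al_n,x}(n\wedge\tau_n)\bigr)\le V^{x_0}(x)+1/n,
$$
where $\tau_n:=\tau_{\al_n,x,\OO}$. The case $\tau_n\le n$ would put $X_{\al_n,x}(\tau_n)\in\pa\OO$ and force the left side to be at least $\rho_{\OO}$, contradicting the bound; hence $\tau_n>n$ for large $n$. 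Re-applying Lemma \ref{representation-of-V} at the intermediate point $X_{\al_n,x}(t)$ with the shifted tail of $\al_n$ (whose exit time from the restart exceeds $n-t$) yields
$$
V^{x_0}(X_{\al_n,x}(t))\le I_{\al_n,x}(n)-I_{\al_n,x}(t)+V^{x_0}(X_{\al_n,x}(n))\le V^{x_0}(x)+1/n,
$$
so the trajectory stays in the compact set $K:=\{y\in\ol\OO:V^{x_0}(y)\le V^{x_0}(x)+\de\}$ for every $t\in[0,n]$. Because $V^{x_0}\ge \rho_{\OO}$ on $\pa\OO$ while $V^{x_0}(x)+\de<\rho_{\OO}$, $K$ avoids $\pa\OO$ and sits compactly inside $\OO$. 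The uniform positivity of $A^{-1}$ on $K$ combined with $I_{\al_n,x}(n)\le V^{x_0}(x)+1/n$ then gives $\{\al_n\}$ uniformly bounded in $L^2([0,T];\R^d)$ for every $T$, so a diagonal extraction produces $\al_n\rightharpoonup\al^*$ weakly in each $L^2([0,T];\R^d)$ and $X_{\al_n,x}\to X^*:=X_{\al^*,x}$ locally uniformly, with $X^*(t)\in K\subset\OO$ for all $t\ge 0$. Weak lower semicontinuity of the quadratic cost (using the uniform convergence $A^{-1}(X_{\al_n,x})\to A^{-1}(X^*)$ on compact intervals) yields $I_{\al^*,x}(\infty)\le V^{x_0}(x)$.

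To finish (1), I need $X^*(t)\to\AAa$. For any $y\in\om(X^*)$ with $t_k\to\infty$ and $X^*(t_k)\to y$, the finite total action $I_{\al^*,x}(\infty)<\infty$ forces $\int_{t_k}^{t_k+T}|\al^*|^2\to 0$, so the shifts $X^*(\cdot+t_k)$ converge locally uniformly to the solution of $\dot X=-b(X)$ with $X(0)=y$, namely $s\mapsto \vp^{-s}(y)$. Hence $\vp^{-s}(y)\in\om(X^*)$ for all $s\ge 0$, equivalently $\om(X^*)\subset\vp^{s}(\om(X^*))\subset\vp^{s}(K)$ for every $s\ge 0$; since $\vp^{s}(K)\to\AAa$ in Hausdorff distance by \textbf{(H1)}, $\om(X^*)\subset\AAa$, which delivers $X^*(t)\to\AAa$ and $\al^*\in\Ga_{x,\OO}$. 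Combined with the easy lower bound, $\al^*$ attains the minimum. For (2), given $y\in\OO_\rho\subset\OO_{\rho_{\OO}}$ and an optimizer $\al^*\in\Ga_{y,\OO}$ supplied by (1), applying the DPP in Lemma \ref{representation-of-V} at $y$ and the easy inequality of (1) at the restart point $X_{\al^*,y}(t)$ with the tail of $\al^*$ forces equality $V^{x_0}(X_{\al^*,y}(t))=V^{x_0}(y)-I_{\al^*,y}(t)\le V^{x_0}(y)<\rho$ for all $t\ge 0$; thus the optimal trajectory stays in $\OO_\rho$ and joins $y$ to $\AAa$. Since $\AAa$ is chain-transitive, hence connected, by Remark \ref{rem-indecomposibility}, $y$ lies in the component of $\OO_\rho$ containing $\AAa$. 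The main obstacle in the plan is keeping the limit trajectory $X^*$ away from $\pa\OO$; the gap $V^{x_0}(x)<\rho_{\OO}$, used through Lemma \ref{representation-of-V} to control $V^{x_0}$ along $X_{\al_n,x}$, is precisely what confines the approximate minimizers to the compact set $K\stst\OO$ and thereby drives both the extraction of $\al^*$ and the $\om$-limit argument.
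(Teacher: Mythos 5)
Your proof is correct but follows a genuinely different route from the paper at two key points. For constructing the minimizer, the paper first uses the gradient bound of Lemma \ref{prop-uniform-gradient-V_ep} to restrict controls to a fixed compact set $K_\OO$, then builds exact finite-horizon minimizer sets $\Ga_{x,n}$, shows they are closed and nested, and extracts $\al_x\in\cap_n\Ga_{x,n}$; you instead take $1/n$-suboptimal controls on $[0,n]$, derive uniform $L^2_{loc}$ bounds on $\al_n$ from the action bound together with the confinement of the controlled trajectory to the compact sublevel set $K\stst\OO$, and land directly on an infinite-horizon minimizer by weak compactness and weak lower semicontinuity, sidestepping both the compact-control reduction and the nested-intersection construction. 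For showing the minimizing trajectory approaches $\AAa$, the paper invokes Fleming's blow-up estimate \cite[Lemma 3.1]{F78} (a finite-action trajectory cannot linger in a region with no invariant set of the time-reversed flow), whereas you run a direct $\om$-limit argument: finite total action forces $\int_{t_k}^{t_k+T}|\al^*|^2\to 0$ along any $t_k\to\infty$, so shifted trajectories converge to backward $\vp$-orbits, giving $\om(X^*)\subset\vp^s(\om(X^*))\subset\vp^s(K)$ for all $s\ge 0$, and then $\om(X^*)\subset\AAa$ follows from $\dist_{H}(\vp^s(K),\AAa)\to 0$ by \textbf{(H1)}. Your route is more self-contained (no auxiliary gradient estimate or external blow-up lemma in this step) and ties the conclusion directly to the attractor hypothesis; the paper's route, in exchange, produces exact finite-horizon minimizers $\Ga_{x,t}$ with a nesting structure it exploits to obtain the monotonicity of $t\mapsto V^{x_0}(X_{\al_x,x}(t))$. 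The connectedness argument in part (2) is essentially the same in both.
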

\begin{proof}

Since $V^{x_0}|_{\AAa}=0$ by Lemma \ref{lem-vanishing-on-M} and $\AAa$ is invariant under $\vp^t$, the variational representation \eqref{formula-V^x_0-in-O} is trivial by selecting $\al\equiv 0$ if $x\in\AAa$. The rest of the proof is broken into four steps. In {\bf Steps 1-3}, we fix $x\in \mathbb \OO_{\rho_{\OO}}\setminus \AAa$ and prove \eqref{formula-V^x_0-in-O}. {\bf Step 4} is devoted to proving the connectedness of $\OO_{\rho}$.

\medskip 

\paragraph{\bf Step 1} There holds $V^{x_0}(x)\leq\inf_{\al\in\Gamma_{x,\OO}}I_{\al,x}(\infty)$.

Indeed, the fact $\Ga_{x,\OO}\subset \Ga$ and Lemma \ref{representation-of-V} yield
\begin{equation*}
\begin{split}
    V^{x_0}(x)
    &\leq \inf_{\al\in\Gamma_{x,\OO}}\left[I_{\al,x}(t\land\tau)+V^{x_0}(X_{\al,x}(t\land \tau))\right],\quad\forall t\geq0,
\end{split}
\end{equation*}
where $\tau=\tau_{\al,x,\OO}$. Letting $t\to\infty$ in the above inequality, we conclude this step from $\tau=\infty$ due to the definition of $\Gamma_{x,\OO}$ and $V^{x_0}|_{\AAa}=0$ (by Lemma \ref{lem-vanishing-on-M}).

\medskip

\paragraph{\bf Step 2} We show the existence of $\al_x\in \Gamma$ such that $\tau_{\al_x,x,\OO}=\infty$ and 
\begin{equation}\label{implicit-representation-V-june-20}
    V^{x_0}(x)=I_{\al_{x},x}(t)+V^{x_0}(X_{\al_x,x}(t)),\quad\forall t\geq0.
\end{equation}
Moreover, $t\mapsto V^{x_0}(X_{\alpha_{x},x}(t))$ is decreasing on $[0,\infty)$.

Recall that $V_{\ep}$ satisfies \eqref{eq:Aepsilon1} and the maximum is attained at $\alpha=2A\nabla V_{\epsilon}$, which is uniformly bounded in $\OO$ thanks to Lemma \ref{prop-uniform-gradient-V_ep}. Thus, there is a compact set $K_\OO\subset \R^d$ such that
\begin{equation}\label{eq:Aepsilon2}
\begin{split}
&-\frac{\epsilon^2}{2}\sum_{i,j=1}^da^{ij}\partial_{ij}^{2}V_{\epsilon}+\max_{\alpha\in\mathbb R^d}\left[(b_{\epsilon}+\alpha)\cdot \nabla V_{\epsilon}-\frac{1}{4}\alpha^{\top}A^{-1}\alpha+\frac{\epsilon^2}{2}c_{\epsilon}\right]\\
&\qquad =-\frac{\epsilon^2}{2}\sum_{i,j=1}^da^{ij}\partial_{ij}^{2}V_{\epsilon}+\max_{\alpha\in K_{\OO}}\left[(b_{\epsilon}+\alpha)\cdot\nabla V_{\epsilon}+\frac{1}{4}\alpha^{\top}A^{-1}\alpha+\frac{\epsilon^2}{2}c_{\epsilon}\right]=0,
\end{split}
\end{equation}
when $V_{\ep}$ is considered in $\OO$. Proposition \ref{prop-Convergent-subsequence-V^x_0} and standard arguments in the theory of viscosity solutions ensure that $V^{x_0}$ is a viscosity solution of $\max_{\alpha\in K_{\OO}}\left[(b+\alpha)\cdot\nabla W-\frac{1}{4}\alpha^{\top}A^{-1}\alpha\right]=0$ in $\OO$. Given this, we can follow the proof of Lemma \ref{representation-of-V} to show 
\begin{equation}\label{eqn-2023-06-24-1}
    V^{x_0}(x)=\inf_{\al\in\Gamma_{\OO}}\left[I_{\al,x}(t\land\tau)+V^{x_0}(X_{\al,x}(t\land \tau))\right],\quad\forall t\geq0,
\end{equation}
where $\Gamma_{\OO}:=L^{2}_{loc}([0,\infty);K_{\OO})$. 

For each $t\geq 0$, we define  the functional $\mathcal{F}_t:\Ga_{\OO}\to\mathbb R$ as follows: 
\begin{equation*}
  \mathcal{F}_t(\alpha):=I_{\al,x}(t\land\tau)+V^{x_0}(X_{\alpha,x}(t\land \tau)),
\end{equation*} 
and claim 
\begin{equation}\label{eqn-2023-07-07}
\exists\,\al\in \Ga_{\OO}\,\,\,\text{s.t.}\,\,\,\tau >t\,\,\,\text{and}\,\,\,V^{x_0}(x)=\FF_t(\al).
\end{equation}

To prove \eqref{eqn-2023-07-07}, we fix $t>0$ and choose a minimizing sequence $\{\al_n\}_n\subset \Ga_{\OO}$ for $\FF_t$, namely, $\lim_{n\to \infty}\FF_{t}(\al_n)=V^{x_0}(x)$. Denote $X_n:=X_{\al_n,x}$ and $\tau_n:=\tau_{\al_n,x,\OO}$ for simplicity. As $\KK_{\OO}$ is compact and $\dot{X}_n=-b(X_n)-\al_n$, it is not hard to deduce that $\{X_n\}_n$ is uniformly bounded and equicontinuous over any finite interval. By Arzel\`a–Ascoli theorem and the standard diagonal argument, we find a subsequence of $\{X_n\}_n$, still denoted by $\{X_n\}_n$, such that $X_n$ converges locally uniformly to some Lipschitz continuous function $X$ on $[0,\infty)$. 

Clearly, $\{\al_n\}_n$ is relatively compact in $L^2([0,s],K_{\OO})$ under the weak topology  for any $s\geq 0$ thanks to the compactness of $\KK_{\OO}$. Then, we apply the standard diagonal argument to find a subsequence of $\{\al_n\}_n$, still denoted by $\{\al_n\}_n$, such that $\al_n$ weakly converges to some $\al\in \Ga_{\OO}$. It is easy to verify that $\dot{X}=-b(X)-\al$, indicating $X_{\al,x}=X$. Then, $\tau\leq \liminf_{n\to \infty} \tau_n$. 

By standard programming arguments, $\{\al_n\}_n$ is also a minimizing sequence for $\FF_{t\land \tau}$. Note that 
\begin{equation*}
    \begin{split}
        \mathcal{F}_{t\land \tau}(\alpha_n)&=\frac{1}{4}\int_0^{t\land \tau\land \tau_n}\alpha_n^{\top}A^{-1}(X)\alpha_n+\frac{1}{4}\int_0^{t\land \tau\land \tau_n}\alpha_n^{\top}\left(A^{-1}(X_n)-A^{-1}(X)\right)\alpha_n\\
        &\quad +V^{x_0}(X_n(t\land \tau\land \tau_n))=:\RN{1}_n+\RN{2}_n+\RN{3}_n.
    \end{split}
\end{equation*}
The continuity of $A$ and $V^{x_0}$ and the fact $\sup_n \|\al_n\|_{L^2([0,t])}<\infty$ ensure that $\lim_{n\to \infty}(\RN{2}_n+\RN{3}_n)=V^{x_0}(X(t\land \tau))$, where we used the fact $\lim_{n\to \infty}t\land \tau\land \tau_n=t\land\tau$. Noting that $\al\mapsto\al^{\top}A^{-1}(X)\al$ is convex and $\lim_{n\to \infty}\tau\land \tau_n=\tau$, we apply the standard arguments in control theory to derive $\int_0^{t\land \tau}\al^{\top}A^{-1}(X)\al ds\leq \liminf_{n\to \infty}\RN{1}_n$. As a result, $\FF_{t\land \tau}(\al) \leq \liminf_{n\to \infty}\FF_t(\al_n)=V^{x_0}(x)$. This together with \eqref{eqn-2023-06-24-1} leads to  $V^{x_0}(x)=\FF_{t\land \tau}(\al)\geq V^{x_0}(X(t\land \tau))$. Since $V^{x_0}(x)< \rho_{\OO}$ and $V^{x_0}(X(\tau))\geq \rho_{\OO}$ (if $\tau<\infty$), we conclude that $\tau>t$ and $V^{x_0}(x)=\FF_{t}(\al)=I_{\al,x}(t)+V^{x_0}(X_{\al,x}(t))$. This proves the claim \eqref{eqn-2023-07-07}. 

Set $\Gamma_{x,t}:=\left\{\alpha\in\Gamma_{\OO}: V^{x_0}(x)=\mathcal{F}_t(\alpha)\right\}$, which is nonempty by \eqref{eqn-2023-07-07}. Moreover, the proof of \eqref{eqn-2023-07-07} indicates the closedness of $\Ga_{x,t}$ under the topology of weak convergence over closed intervals. We show that
\begin{equation}\label{claim-2023-12-05}
\Gamma_{x,t}\subset\Gamma_{x,s}\quad\text{for}\quad s<t.
\end{equation}
To see this, we fix $s<t$ and take $\al\in \Gamma_{x,t}$. Then, 
\begin{equation*}
    \begin{split}
     V^{x_0}(x)=I_{\al,x}(t)+V^{x_0}(X_{\alpha,x}(t))=I_{\al,x}(s)+\frac{1}{4}\int_{s}^{t}\alpha^{\top}A^{-1}(X_{\alpha,x})\alpha+V^{x_0}(X_{\alpha,x}(t)).
    \end{split}
\end{equation*}
A standard dynamic programming argument asserts $\frac{1}{4}\int_{s}^{t}\alpha^{\top}A^{-1}(X_{\alpha,x})\alpha+V^{x_0}(X_{\alpha,x}(t))=V^{x_0}(X_{\alpha,x}(s))$, resulting in $V^{x_0}(x)=I_{\al,x}(s)+V^{x_0}(X_{\alpha,x}(s))$. Hence, $\al\in \Gamma_{x,s}$, verifying \eqref{claim-2023-12-05}. 

Note that $\cap_{n\in \N}\Gamma_{x,n}\neq\emptyset$. Indeed, if $\al_n\in \Gamma_{x,n}$ for $n\in \N$, then the sequence $\{\al_n\}_n$ when restricted on $L^2([0,m];K_{\OO})$ is relatively compact under the weak topology for each $m\in\N$. Applying the standard diagonal argument results in the existence of $\al\in \Gamma_{\OO}$ being the limit of $\al_n|_{[0,m]}$ for any $m$. Moreover, the closedness of $\Ga_{x,n}$ for any $n\in \N$ ensures that $\al\in \cap_{n\in \N}\Gamma_{x,n}$.

Let $\al_x\in\cap_{n\in\N}\Gamma_{x,n}$. Then, $V^{x_0}(x)=\mathcal{F}_t(\alpha_{x})=I_{\al_{x},x}(t)+V^{x_0}(X_{\al_x,x}(t))$ for all $t\geq 0$, implying that $\tau_{\al_x,x,\OO}=\infty$ and $V^{x_0}(X_{\alpha_{x},x}(t))$ is decreasing in $t$. 

\medskip

\paragraph{\bf Step 3} We show $\al_x\in \Ga_{x,\OO}$. Consequently, letting $t\to \infty$ in \eqref{implicit-representation-V-june-20}, we arrive at $V^{x_0}(x)=I_{\al_{x},x}(\infty)$, which together with {\bf Step 1} yields \eqref{formula-V^x_0-in-O}.

Obviously, there is $\rho_{x}\in(0,\rho_{\OO})$ such that $x\in \OO_{\rho_x}$. The monotonicity of $t\mapsto V^{x_0}(X_{\alpha_{x},x}(t))$ (by {\bf Step 2}) implies $X_{\alpha_{x},x}(t)\in \OO_{\rho_x}$ for all $t\in [0,\tau_{\al_x,x,\OO})$. Since $\OO_{\rho_x}\stst \OO$, we see that $X_{\al_x,x}$ does not exit from $\OO$, and thus,  $\tau_{\al_x,x,\OO}=\infty$.

It remains to show $\lim_{t\to\infty}{\rm dist}(X_{\alpha_{x},x}(t),\AAa)=0$. By Proposition \ref{prop-Convergent-subsequence-V^x_0} (2) and Lemma \ref{lem-vanishing-on-M}, we only need to prove $\lim_{t\to \infty}V^{x_0}(X_{\alpha_{x},x}(t))=0$. Since $t\mapsto V^{x_0}(X_{\alpha_{x},x}(t))$ is decreasing, the limit $\underline{\rho}:=\lim_{t\to \infty}V^{x_0}(X_{\alpha_{x},x}(t))$ exists. Suppose on the contrary that $\underline{\rho}>0$.

Consider the time-reversed version of \eqref{main-ode}, that is,
\begin{equation}\label{ode-time-reversed}
\dot{x}=-b(x).    
\end{equation}
The assumption on the dynamics of \eqref{main-ode} implies that \eqref{ode-time-reversed} has no invariant set in $\OO_{\rho_{x}}\sm\OO_{\underline{\rho}}$ and all the orbits starting in $\OO_{\rho_x}\sm\OO_{\underline{\rho}}$ enter $\BB_{\AAa}\sm \OO$ after some fixed time. From this, it is easy to see that $X_{\al_x,x}$, satisfying $X_{\al_x,x}(0)\in \OO_{\rho_x}\sm\OO_{\underline{\rho}}$, is far away from solving \eqref{ode-time-reversed}. We apply \cite[Lemma 3.1]{F78} to conclude that
\begin{equation*}
    \begin{split}
        I_{\al_{x},x}(\infty)=\frac{1}{4}\int_{0}^{\infty}\left[b(X_{\al_{x},x})+\dot{X}_{\al_{x},x}\right]^{\top}A^{-1}(X_{\al_{x},x})\left[b(X_{\al_{x},x})+\dot{X}_{\al_{x},x}\right]=\infty.
    \end{split}
\end{equation*}
Letting $t\to \infty$ in \eqref{implicit-representation-V-june-20} yields $V^{x_0}(x)=\infty$, leading to a contradiction. Hence, $\underline{\rho}=0$. 

\medskip

\paragraph{\bf Step 4} Let $\rho \in (0,\rho_{\OO}]$. Since $V^{x_0}|_{\AAa}=0$ (by Lemma \ref{lem-vanishing-on-M}), there is an open and connected set $\NN$ satisfying $\AAa\subset\NN\subset \OO_{\rho}$. Let $y\in \OO_{\rho}$. Results in {\bf Steps 1-3} ensure the existence of $\al_y\in \Ga_{y,\OO}$ such that $X_{\al_y,y}(0)=y$, $\lim_{t\to\infty}{\rm dist}(X_{\al_y,y}(t),\AAa)=0$, and $t\mapsto V^{x_0}(X_{\al_y,y}(t))$ is decreasing. That is, the continuous path $X_{\al_y,y}$ starts at $y$, lies in $\OO_{\rho}$, and eventually enters $\NN$. Since $y\in \OO_{\rho_{\OO}}$ is arbitrary and $\NN$ is connected, the connectedness of $\OO_{\rho}$ follows. 
\end{proof}

Now, we prove Proposition \ref{lem:lim}.

\begin{proof}[Proof of Proposition \ref{lem:lim}]
We first show that $\Om_*=\bigcup_{\OO\in\OO_{\AAa,\Om}}\OO_{\rho_{\OO}}$. Obviously, $\bigcup_{\OO\in\OO_{\AAa,\Om}}\OO_{\rho_{\OO}}\subset \Om_*$. To show the converse inclusion, we take $y_0\in \Om_*$. Then, $V^{x_0}(y_0)<\rho_*$ and $\text{dist}(\{x\in \Om: V^{x_0}(x)\leq V^{x_0}(y_0)\},\pa\Om)>0$. Clearly, there is $y_1\in \Om_*$ such that $V^{x_0}(y_0)<V^{x_0}(y_1)<\rho_*$. It is not hard to find $\OO\in \OO_{\AAa,\Om}$ such that $\{x\in \Om: V^{x_0}(x)\leq V^{x_0}(y_0)\}\stst \OO$ and  $y_1\in \pa\OO$. Therefore, $V^{x_0}(y_0)<\rho_{\OO}\leq V^{x_0}(y_1)$, leading to $y_0\in \OO_{\rho_{\OO}}$. Since $y_0$ is arbitrary in $\Om_*$, we conclude $\Om_*\subset \bigcup_{\OO\in\OO_{\AAa,\Om}}\OO_{\rho_{\OO}}$. 

Next, we prove the variational representation. Let $x\in \Om_*$ and $\OO\in \OO_{\AAa,\Om}$ be such that $x\in \OO_{\rho_{\OO}}$. Since $\Ga_{x,\OO}\subset \Ga_x$, it follows from Lemma \ref{lem:lim-1} that $V^{x_0}(x)=\min_{\alpha\in\Gamma_{x,\OO}}I_{\al,x}(\infty)\geq \inf_{\alpha\in\Gamma_{x}}I_{\al,x}(\infty)$. Note that the desired conclusion follows immediately if there holds
\begin{equation}\label{2023-06-21-4}
V^{x_0}(x)\leq \inf_{\alpha\in\Gamma_{x}\sm \Ga_{x,\OO}}I_{\al,x}(\infty).    
\end{equation}

We establish \eqref{2023-06-21-4} to finish the proof. Let $\al\in \Gamma_{x}\sm \Ga_{x,\OO}$. This implies in particular that $X_{\al,x}$ leaves $\OO$ before eventually staying in $\OO_{\rho_{\OO}}$. Thus, the last time that $X_{\al,x}(t)$ re-enters $\OO_{\rho_{\OO}}$ is positive and finite, namely, $\tau_{\al,x}:=\sup\left\{t\geq 0: X_{\al,x}(t)\notin \OO_{\rho_{\OO}}\right\}<\infty$.
Clearly, $X_{\al,x}(\tau_{\al,x})\in \pa\OO_{\rho_{\OO}}$ and  $X_{\al,x}(t)\in \OO_{\rho_{\OO}}$ for all $t>\tau_{\al,x}$. 

Let $\tilde{\OO}\in \OO_{\AAa,\Om}$ be such that $\OO\stst \tilde{\OO}$. Set $\tilde{x}:=X_{\al,x}(\tau_{\al,x})$ and  $\tilde{\al}:=\al(\cdot +\tau_{\al,x})$. Obviously, $\tilde{x}\in \pa\OO_{\rho_{\OO}}\stst \tilde{\OO}_{\rho_{\tilde{\OO}}}$. Since
\begin{equation*}
        \frac{d}{ds}X_{\al,x}(s+\tau_{\al,x})=-b(X_{\al,x}(s+\tau_{\al,x}))-\al(s+\tau_{\al,x}),\quad s\geq 0,
\end{equation*}
we see that $X_{\tilde{\al},\tilde{x}}(s)=X_{\al,x}(s+\tau_{\al,x})\in \OO$ for $s\geq 0$, indicating $\tilde{\al}\in \Ga_{\tilde{x},\tilde{\OO}}$. Therefore, an application of Lemma \ref{lem:lim-1} yields $V^{x_0}(\tilde{x})\leq I_{\tilde{\al},\tilde{x}}(\infty)=\frac{1}{4}\int_{\tau_{\al,x}}^{\infty}\alpha^{\top}A^{-1}(X_{\alpha,x})\alpha$. Since $x\in \OO_{\rho_{\OO}}$ and $\tilde{x}\in \pa \OO_{\rho_{\OO}}$, we have $V^{x_0}(x)<\rho_{\OO}=V^{x_0}(\tilde{x})$, leading to $V^{x_0}(x)<I_{\al,x}(\infty)$. As $\al$ is arbitrary in $\Ga_{x}\sm \Ga_{x,\OO}$, \eqref{2023-06-21-4} follows. 

Finally, we establish the connectedness of the sublevel sets. For $\rho\in (0,\rho_*)$, we see from Lemma \ref{lem-vanishing-on-M} and the definition of $\Om_{\rho}$ that $\AAa\stst\Om_{\rho}\stst \Om_*\subset \Om$. Therefore, there exists  $\OO\in \OO_{\AAa,\Om}$ such that $\Om_{\rho}\stst \OO$. Obviously, $\rho<\rho_{\OO}$ and $\Om_{\rho}= \OO_{\rho}$. This together with Lemma \ref{lem:lim-1} ensures $\Om_{\rho}$ is connected. The connectedness of $\Om_*$ then follows from the fact $\Om_*=\lim_{\rho\to \rho_*^{-}}\Om_{\rho}$. This completes the proof.
\end{proof}

\subsection{Proof of Theorem \ref{thm-main-result}}\label{subsec-proof-LDP}

Recall from Proposition \ref{prop-Convergent-subsequence-V^x_0} that $\lim_{n\to\infty}V_{\ep_n}=V^{x_{0}}$ in $C^{\al}(\Om)$ for any $\al\in (0,1)$. Since the variational representation for $V^{x_0}$ in Proposition \ref{lem:lim} is independent of $x_0$, one would expect that the whole family $V_{\ep}$ converges locally uniformly to $V^{x_0}$ as $\ep\to 0$. However, this does not follow in such a straightforward way since the variational representation in Proposition \ref{lem:lim} only holds in $\Om_*$, which depends on $x_0$, and thus, the choice of $\{\ep_n\}_n$.


\begin{lem}\label{two-variational-representation-coincide}
For each $x\in\Om$, $V(x)=\inf_{\alpha\in\Gamma_x}I_{\al,x}(\infty)$.
\end{lem}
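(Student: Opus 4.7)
The plan is to establish the identity by exhibiting a cost-preserving bijection between the path space $\Phi_x$ (defined on $(-\infty,0]$, running from $\AAa$ to $x$) and the control set $\Gamma_x$ (defined on $[0,\infty)$, driving $X_{\alpha,x}$ from $x$ to $\AAa$), via the time reversal $t\mapsto -t$. Once this correspondence is in place, the infima of the two variational problems coincide on the nose, and the lemma follows without any appeal to the PDE theory developed earlier.

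First I would take $\phi\in\Phi_x$ and set $\psi(t):=\phi(-t)$ for $t\geq 0$. Then $\psi$ is absolutely continuous, $\psi(0)=x$, $\dot\psi\in L^2_{loc}([0,\infty);\R^d)$, and $\lim_{t\to\infty}\dist(\psi(t),\AAa)=0$. Define $\alpha(t):=-b(\psi(t))-\dot\psi(t)$, so that $\alpha\in L^2_{loc}([0,\infty);\R^d)=\Gamma$ and $\psi$ solves the Cauchy problem \eqref{IVP-control}, giving $\psi=X_{\alpha,x}$. In particular, since $\phi(t)\in\Om$ for every $t\in(-\infty,0]$, one has $X_{\alpha,x}(t)\in\Om$ for every $t\geq 0$, and combined with $\lim_{t\to\infty}\dist(X_{\alpha,x}(t),\AAa)=0$ this shows $\alpha\in\Gamma_x$. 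The change of variables $s=-t$ yields
\begin{equation*}
I(\phi)=\frac{1}{4}\int_0^\infty \bigl[b(\psi(t))+\dot\psi(t)\bigr]^\top A^{-1}(\psi(t))\bigl[b(\psi(t))+\dot\psi(t)\bigr]dt=\frac{1}{4}\int_0^\infty \alpha^\top A^{-1}(X_{\alpha,x})\alpha\, dt=I_{\alpha,x}(\infty),
\end{equation*}
so taking infima over $\phi\in\Phi_x$ gives $V(x)\geq\inf_{\alpha\in\Gamma_x}I_{\alpha,x}(\infty)$.

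Conversely, given $\alpha\in\Gamma_x$, I would set $\phi(t):=X_{\alpha,x}(-t)$ for $t\in(-\infty,0]$. Then $\phi(0)=x$, $\phi(t)\in\Om$ for all $t\leq 0$, $\dot\phi\in L^2_{loc}((-\infty,0];\R^d)$, and $\lim_{t\to-\infty}\dist(\phi(t),\AAa)=0$, i.e.\ $\phi\in\Phi_x$. The identity $\dot\phi(t)=-\dot X_{\alpha,x}(-t)=b(\phi(t))+\alpha(-t)$ gives $b(\phi(t))-\dot\phi(t)=-\alpha(-t)$, and the same substitution as above yields $I(\phi)=I_{\alpha,x}(\infty)$. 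Taking infima gives the reverse inequality, hence equality.

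No real obstacle is expected: everything reduces to the clean involution $\phi\leftrightarrow X_{\alpha,x}(-\cdot)$, the Lagrangian being symmetric under the simultaneous reflection of time and the sign of the drift. The only points that require care are the regularity bookkeeping ($\dot\phi\in L^2_{loc}$ translates into $\alpha\in L^2_{loc}$ and vice versa, since $A^{-1}$ is continuous and uniformly positive on compact subsets of $\Om$) and the matching of the asymptotic conditions at $\pm\infty$, both of which are immediate from the definitions of $\Phi_x$ and $\Gamma_x$. A short concluding remark connects this lemma with Theorem \ref{thm-main-result}: combined with Proposition \ref{lem:lim}, it shows every subsequential limit $V^{x_0}$ coincides with $V$ on the corresponding sublevel set, which together with the monotonicity of the $\rho$-sublevel sets and Proposition \ref{prop-Convergent-subsequence-V^x_0} forces $V_\ep\to V$ in $C^\al(\Om^V)$ without extracting subsequences.
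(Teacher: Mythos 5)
Your proof is correct and follows essentially the same route as the paper: both establish the cost-preserving time-reversal bijection between $\Gamma_x$ and $\Phi_x$ via $\phi \leftrightarrow X_{\alpha,x}(-\cdot)$, check that membership in each path space transfers across this involution, and note that the Lagrangian integrand is carried over intact. The paper phrases it slightly more compactly by defining the map $\Pi_x$ on $\Gamma_x$ and proving surjectivity, while you spell out both directions symmetrically, but the argument is the same.
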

\begin{proof}
Fix $x\in \Om$. We define $\Pi_x:\Ga_x\to\Phi_x$ by setting $(\Pi_x\al)(t)=X_{\al,x}(-t)$ for $t\in (-\infty,0]$ and $\al\in \Ga_x$. If $\al\in \Ga_x$, we readily see that $X_{\al,x}(-\cdot)\in \Phi_x$. Hence, $\Pi_x$ is well-defined. 

We show $\Pi_x\Ga_x=\Phi_x$. Let $\phi\in \Phi_x$. Since $\dot{\phi}\in L^2_{loc}((-\infty,0];\R^{d})$ and $b$ is continuous, it follows that $\al(t):=-b(\phi(-t))+\dot{\phi}(-t)$ is well-defined a.e. in $[0,\infty)$ and $\al\in \Ga$. Clearly, $X_{\al,x}=\phi(-\cdot)$ satisfies required properties in the definition $\Ga_x$. Hence, $\al\in \Ga_x$ and $\Pi_x\al=\phi$. 

Consequently,
$$
\inf_{\al\in \Ga_x}I_{\al,x}(\infty)=\inf_{\al\in \Ga_x}I(\Pi_x\al)= \inf_{\phi\in \Phi_x }I(\phi)=V(x),
$$
proving the lemma. 
\end{proof}

\begin{proof}[Proof of Theorem \ref{thm-main-result}]
Recall $V_{\ep}=-\frac{\ep^2}{2}\ln u_{\ep}$ and choose a sequence $\{\ep_n\}_n\subset (0,\infty)$ with $\lim_{n\to \infty}\ep_n=0$. By Propositions \ref{prop-Convergent-subsequence-V^x_0} and \ref{lem:lim}, there exists a subsequence, still denoted by $\{\ep_n\}_n$, and $x_0\in \AAa$ such that 
$$
\lim_{n\to\infty}V_{\ep_n}(x)=V^{x_0}(x)=\min_{\alpha\in\Gamma_x}I_{\al,x}(\infty)\,\,\text{locally uniformly in $x\in \Om_*$},
$$
where $\Om_*=\{x\in \Om: V^{x_0}(x)<\rho_*\}$ and $\rho_*=\liminf_{x\to \pa\Om }V^{x_0}(x)$. It follows from Lemma \ref{two-variational-representation-coincide} and Proposition \ref{lem:lim} that $ V^{x_0}=V$ in $\Om_*$. Moreover, Proposition \ref{prop-Convergent-subsequence-V^x_0} ensures that $\lim_{n\to\infty}V_{\ep_n}=V$ in $C^{\al}(\Om_*)$ for any $\al\in (0,1)$.

Next, we prove 
\begin{equation}\label{claim-july-09}
V\geq \rho_*\quad\text{in}\quad \Om\sm \Om_*.   
\end{equation}
Then, $\rho^V=\liminf_{x\to \pa \Om}V(x)=\rho_*$ and $\Om_*=\Om^V$. Since $\{\ep_n\}_n$ is arbitrary, we arrive at $\lim_{\ep\to 0} V_{\ep}=V$ in $C^{\al}(\Om^V)$ for any $\al\in (0,1)$. 

Suppose \eqref{claim-july-09} fails. Namely, there exists $x_1\in \Om\sm \Om_*$ such that $\rho_1:=V(x_1)\in (0,\rho_*)$. It follows from the standard dynamic programming arguments that 
$V(x_1)=\inf_{\alpha\in\Gamma_x}\left[I_{\al,x_1}(t)+V(X_{\alpha,x_1}(t))\right]$ for all $t\geq 0$. Let $\rho_2\in (\rho_1,\rho_*)$ and $\al\in \Ga_x$. Since $\lim_{t\to\infty}{\rm dist}(X_{\al,x}(t),\AAa)=0$ and $\AAa\stst \Om_{\rho_2}$, there is $t_0=t_0(\al)>0$ such that $X_{\al,x}(t_0)\in \pa\Om_{\rho_2}$, and thus, $V(X_{\al,x}(t_0))=\rho_2$. As a result, $I_{\al,x_{1}}(t_0)+V(X_{\alpha,x_1}(t_0))\geq \rho_2$. Since $\al$ is arbitrary, it follows that $V(x_1)\geq \rho_2$. This contradicts the fact that $V(x_1)=\rho_1<\rho_2$, and thus, proves \eqref{claim-july-09}.  
\end{proof}

The following corollary is needed later.

\begin{cor}\label{cor-local-variational-representation}
Let $\OO\in\OO_{\AAa,\Om}$. For $x\in\OO$, set 
$$
\Phi_{x,\OO}:=\left\{\phi\in AC((-\infty,0];\OO): \dot{\phi}\in L^2_{loc}((-\infty,0];\R^{d}),\,\,\phi(0)=x,\,\,\lim_{t\to -\infty}\dist(\phi(t),\AAa)=0\right\}.
$$ 
For each $x\in \OO^{V}:=\{x\in \OO: V(x)<\rho_{V,\OO}\}$, where $\rho_{V,\OO}:=\liminf_{x\to \pa\OO} V(x)$, there holds $V(x)=\min_{\phi\in \Phi_{x,\OO}}I(\phi)$.
\end{cor}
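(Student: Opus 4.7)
The plan is to translate the statement from paths to controls via the cost-preserving bijection $\Pi_x:\Ga_x\to\Phi_x$, $(\Pi_x\al)(t)=X_{\al,x}(-t)$, of Lemma \ref{two-variational-representation-coincide}, which sends the subset $\Ga_{x,\OO}$ introduced in Lemma \ref{lem:lim-1} bijectively onto $\Phi_{x,\OO}$. Thus the corollary reduces to showing that for each $x\in\OO^V$,
\[
V(x)=\min_{\al\in\Ga_{x,\OO}}I_{\al,x}(\infty).
\]
The inequality $V(x)\leq\inf_{\al\in\Ga_{x,\OO}}I_{\al,x}(\infty)$ is immediate from $\Ga_{x,\OO}\subset\Ga_x$ and Lemma \ref{two-variational-representation-coincide}. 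The basic tool for the reverse direction is the dynamic programming inequality
\[
I_{\al,x}(\infty)\geq I_{\al,x}(t)+V(X_{\al,x}(t)),\quad \al\in\Ga_x,\ t\geq 0,
\]
which follows by applying Lemma \ref{two-variational-representation-coincide} at $y=X_{\al,x}(t)$ to the time-shifted control $\al(\cdot+t)\in\Ga_y$.

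Using this, I would first rule out controls whose trajectories leave $\OO$. If $\al\in\Ga_x\sm\Ga_{x,\OO}$, let $\tau:=\tau_{\al,x,\OO}<\infty$ be the first exit time; then $X_{\al,x}(\tau)\in\pa\OO$, so $V(X_{\al,x}(\tau))\geq \rho_{V,\OO}$. The DP inequality at $t=\tau$ yields
\[
I_{\al,x}(\infty)\geq I_{\al,x}(\tau)+V(X_{\al,x}(\tau))\geq \rho_{V,\OO}>V(x),
\]
where the last strict inequality uses $x\in\OO^V$. Hence $\inf_{\al\in\Ga_x\sm\Ga_{x,\OO}}I_{\al,x}(\infty)\geq\rho_{V,\OO}>V(x)=\inf_{\al\in\Ga_x}I_{\al,x}(\infty)$, and consequently $V(x)=\inf_{\al\in\Ga_{x,\OO}}I_{\al,x}(\infty)$.

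Finally, I would upgrade this infimum to a minimum. For a minimizing sequence $\{\al_n\}\subset\Ga_{x,\OO}$, uniform positivity and boundedness of $A^{-1}$ on $\ol{\OO}$ together with the cost bound force $\{\al_n\}$ to be bounded in $L^2([0,T];\R^d)$ for every $T>0$. Arzel\`a--Ascoli, weak compactness and a diagonal argument (mirroring Step 2 of the proof of Lemma \ref{lem:lim-1}) yield a subsequence with $\al_n\rightharpoonup \al^*$ weakly in $L^2_{loc}$ and $X_{\al_n,x}\to X_{\al^*,x}$ locally uniformly on $[0,\infty)$. Weak lower semicontinuity of the quadratic cost combined with continuity of $A^{-1}$ gives $I_{\al^*,x}(T)\leq\liminf_n I_{\al_n,x}(T)\leq V(x)$, and monotone convergence in $T$ yields $I_{\al^*,x}(\infty)\leq V(x)$. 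Finiteness of this cost and \cite[Lemma 3.1]{F78} (used as in Step 3 of the proof of Lemma \ref{lem:lim-1}) force $\dist(X_{\al^*,x}(t),\AAa)\to 0$, so $\al^*\in\Ga_x$ and $I_{\al^*,x}(\infty)=V(x)$. The DP inequality applied to $\al^*$ then gives $V(X_{\al^*,x}(t))\leq V(x)<\rho_{V,\OO}$ for all $t\geq 0$, preventing $X_{\al^*,x}$ from touching $\pa\OO$; therefore $\al^*\in\Ga_{x,\OO}$ and $\phi^*:=\Pi_x\al^*\in\Phi_{x,\OO}$ realizes the minimum. The principal obstacle is exactly this attainment step: a weak limit of trajectories in $\OO$ only lies in $\ol{\OO}$ a priori, and the decisive closure of the gap comes from DP monotonicity, which simultaneously confirms $\al^*\in\Ga_{x,\OO}$ and certifies its minimizing property.
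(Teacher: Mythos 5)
Your translation from paths to controls via $\Pi_x$ and the two inequalities establishing $V(x)=\inf_{\al\in\Ga_{x,\OO}}I_{\al,x}(\infty)$ are correct and clean. In fact, this infimum identity is argued more directly than in the paper: the paper simply notes $\Pi_x\Ga_{x,\OO}=\Phi_{x,\OO}$ and cites Lemma~\ref{lem:lim-1}, whereas you derive the exit‐time estimate from the global formula of Lemma~\ref{two-variational-representation-coincide} together with a dynamic programming inequality. That part is fine.

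The gap is in the attainment step. You write that ``finiteness of this cost and \cite[Lemma 3.1]{F78} (used as in Step 3 of the proof of Lemma~\ref{lem:lim-1}) force $\dist(X_{\al^*,x}(t),\AAa)\to 0$.'' But Step~3 of that proof does \emph{not} deduce convergence to $\AAa$ from finite cost alone. It first uses the decreasing monotonicity of $t\mapsto V^{x_0}(X_{\al_x,x}(t))$, established in Step~2 via the dynamic programming \emph{equality} for the carefully constructed control $\al_x\in\cap_n\Ga_{x,n}$, to conclude that the trajectory stays forever in the annulus $\OO_{\rho_x}\sm\OO_{\underline\rho}$, which is a compact set where the reversed flow has no invariant set. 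Only then does \cite[Lemma 3.1]{F78} apply. Your $\al^*$ is a weak $L^2_{\rm loc}$ limit of a minimizing sequence for the infinite-horizon problem; it does not come with the DP equality at every $t$, hence no monotonicity, and you cannot confine $X_{\al^*,x}$ to a region away from $\AAa$. If the trajectory oscillates between neighbourhoods of $\AAa$ and the rest of $\ol\OO$, the hypotheses of \cite[Lemma 3.1]{F78} are simply not met. Your final step (``the DP inequality applied to $\al^*$ then gives $V(X_{\al^*,x}(t))\leq V(x)$\dots'') cannot repair this because that DP inequality presupposes $\al^*\in\Ga_x$, i.e.\ that $X_{\al^*,x}(t)\to\AAa$, which is precisely what you are trying to prove — the logic is circular.

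The gap is closable, but not by the route you indicate. One fix: once you know $X:=X_{\al^*,x}$ maps into $\ol\OO\subset\Om$ with finite cost, suppose $\dist(X(t_k),\AAa)\geq\de$ along $t_k\to\infty$. The tail cost $I_{\al^*,x}(\infty)-I_{\al^*,x}(t_k)\to 0$ forces $\al^*(\cdot+t_k)\to 0$ in $L^2([0,\infty))$, so (up to subsequence) $X(\cdot+t_k)\to\phi$ locally uniformly, where $\phi$ solves $\dot\phi=-b(\phi)$, $\phi(0)=y\notin\AAa$, $\phi:[0,\infty)\to\ol\OO$. The backward semi-orbit of $y$ thus stays in $\ol\OO$, the forward semi-orbit converges to $\AAa$ by the attractor property, so the full orbit closure is a compact invariant set in $\Om$ and hence contained in $\AAa$ — a contradiction. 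Alternatively, simply invoke the third bullet of Lemma~\ref{lem-quasi-potential-basic-results} to obtain a minimizer $\phi^*\in\Phi_x$ with $I(\phi^*)=V(x)$; your exit-time estimate (transported back to paths) then shows $\phi^*$ cannot leave $\OO$, so $\phi^*\in\Phi_{x,\OO}$ and the minimum is attained. This second route is closest in spirit to what the paper does, which is to outsource all the hard existence work to Lemma~\ref{lem:lim-1}.
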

\begin{proof}
It is shown in the proof of Theorem \ref{thm-main-result} that  $V=V^{x_0}$ in $\Om^V$. Recall $\Ga_{x,\OO}$ from Lemma \ref{lem:lim-1} and the map $\Pi_x$ from the proof of Lemma \ref{two-variational-representation-coincide}. It is easy to verify that $\Pi_x\Ga_{x,\OO}=\Phi_{x,\OO}$, leading to $\inf_{\al\in \Ga_{x,\OO}} I_{\al,x}(\infty)=\inf_{\phi\in \Phi_{x,\OO}} I(\phi)$. The desired result then follows from Lemma \ref{lem:lim-1}.
\end{proof}


\subsection{Minimizer and Hamiltonian system}

In this section, we provide some intuition for investigating the regularity of $V$. Let $L:\Om\times\R^{d}\to\R$ be the Lagrangian associated with the Hamiltonian $H$, that is,
\begin{equation}\label{Lagrangian}
L(x,v)=\sup_{p\in\R^{d}}\left\{v\cdot p-H(x,p)\right\}=\frac{1}{4}\left(b(x)-v\right)^{\top}A^{-1}(x)\left(b(x)-v\right).    
\end{equation}
The regularity and uniform convexity of $H$ ensure the following classical result.
\begin{lem}\label{Hamiltonian-Lagrangian-eqivalence}
    For each $x\in\Om$, the following statements are equivalent.
    \begin{itemize}
        \item $p=\partial_v L(x,v)$.
        \item $v=\partial_p H(x,p)$.
        \item $L(x,v)+H(x,p)=v\cdot p$.
    \end{itemize}
Whenever the above statements are true, there holds $\partial_x L(x,v)=-\partial_x H(x,p)$.
\end{lem}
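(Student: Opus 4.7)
The plan is to exploit the fact that $H(x,\cdot)$ is a smooth, uniformly convex quadratic in $p$ (since $A(x)\in \mathbf{S}^d_+$), so the inner supremum defining $L(x,v)$ is attained at a unique point characterized by a first-order condition, and the Legendre transform is involutive. This turns the lemma into a short exercise in convex duality plus an envelope-theorem argument.

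First I would compute the two gradients directly from the formulas for $H$ and $L$. One gets
\begin{equation*}
\partial_p H(x,p)=2A(x)p+b(x),\qquad \partial_v L(x,v)=\tfrac12 A^{-1}(x)\bigl(v-b(x)\bigr),
\end{equation*}
and the equation $v=2A(x)p+b(x)$ is equivalent, by invertibility of $A(x)$, to $p=\tfrac12 A^{-1}(x)(v-b(x))$. This gives the equivalence of the first two statements at once, and also identifies the unique maximizer $p=p(x,v)$ in the definition \eqref{Lagrangian}.

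For the equivalence with the third statement, I would argue from Fenchel--Young: the definition \eqref{Lagrangian} yields $L(x,v)+H(x,p)\geq v\cdot p$ for all $p$, with equality iff $p$ is a maximizer in \eqref{Lagrangian}, i.e.\ iff $\partial_p\bigl(v\cdot p-H(x,p)\bigr)=0$, which is precisely $v=\partial_p H(x,p)$. Uniqueness of the maximizer (by strict concavity of $p\mapsto v\cdot p-H(x,p)$) is what gives the ``only if'' direction. Alternatively, one can verify the identity by direct substitution: plugging $p=\tfrac12 A^{-1}(v-b)$ into $v\cdot p-H(x,p)$ and simplifying reproduces $\tfrac14(v-b)^\top A^{-1}(v-b)=L(x,v)$.

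For the final assertion $\partial_x L(x,v)=-\partial_x H(x,p)$, I would use the envelope theorem. Writing $L(x,v)=v\cdot p(x,v)-H(x,p(x,v))$ with $p(x,v)$ the unique maximizer, and differentiating in $x$, the $\partial_x p$ terms cancel because $v-\partial_p H(x,p(x,v))\equiv 0$, leaving exactly $\partial_x L(x,v)=-\partial_x H(x,p)$. The only mildly technical point, and the step I would single out as the main computational obstacle, is if one prefers the direct verification: then one needs the matrix identity $\partial_{x_i}A^{-1}=-A^{-1}(\partial_{x_i}A)A^{-1}$ to differentiate $L$ explicitly and match the resulting expression with $\partial_x H(x,p)=p^\top\nabla A(x)p+\nabla b^\top(x)p$ under the substitution $A^{-1}(v-b)=2p$; this is straightforward but requires keeping track of the quadratic form carefully.
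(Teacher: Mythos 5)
Your proof is correct, and the paper itself provides no argument here: it simply states the lemma as a ``classical result'' following from the regularity and uniform convexity of $H$, so there is nothing to compare against. Your argument is the standard one---direct computation of the two gradients plus invertibility of $A(x)$ for the first equivalence, Fenchel--Young with strict concavity for the third, and the envelope theorem (or equivalently cancellation of the $\partial_x p$ terms via the stationarity condition) for the final identity $\partial_x L = -\partial_x H$. All the computations check out, in particular $\partial_v L(x,v)=\tfrac12 A^{-1}(x)(v-b(x))$ and $\partial_p H(x,p)=2A(x)p+b(x)$, and the two first-order conditions are inverse to each other precisely because $A(x)$ is positive definite.
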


In terms of $L$, the quasi-potential function $V$ defined in \eqref{def-quasi-potential-V-FW-sense} can be written as
\begin{equation}\label{def-V-lagrange}
\begin{split}
    V(x)=\inf_{\phi\in\Phi_{x}}\int_{-\infty}^0 L(\phi, \dot{\phi}),\quad x\in\Om.
\end{split}
\end{equation}

\begin{defn}[Minimizer]\label{defn-minimizer-V}
Let $x\in \Om$. A function $\phi\in\Phi_{x}$ is called a \emph{minimizer corresponding to $V(x)$} if 
    $V(x)=\int_{-\infty}^0 L(\phi, \dot{\phi})$.
\end{defn}

Lemma \ref{lem-quasi-potential-basic-results} indicates that for each $x\in\Om^{V}$, there exist minimizers corresponding to $V(x)$. The proof of the following result is standard, and thus, omitted (see e.g. \cite[Theorem 1]{DD85}).



\begin{lem}\label{thm-extremal-orbit}
Let $x_0\in\Om$. Suppose $x(t)$ is a minimizer corresponding to $V(x_0)$ and set $p(t):=\partial_vL(x(t), \dot{x}(t))$. Then, $(x(t), p(t))$  satisfies the Hamiltonian system \eqref{hamiltonian-system-intro}.
Moreover, $H(x(t),p(t))=0$ for all $t\leq0$ and $\lim_{t\to -\infty}{\rm dist}\left((x(t),p(t)),\AAa\times\{0\}\right)=0$.
\end{lem}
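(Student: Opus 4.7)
First I would establish the Euler--Lagrange equation for the minimizer. For any smooth compactly supported variation $\eta \in C_c^\infty((-\infty,0); \R^d)$, the perturbation $x + s\eta$ lies in $\Phi_{x_0}$ for all sufficiently small $s$ (since $\eta$ vanishes near $t=0$ and outside a compact subinterval, and $x$ takes values in a compact subset of $\Om$ on that interval). Differentiating the action in $s$ at $s=0$ and using that $x$ is a minimizer yields the weak Euler--Lagrange equation $\tfrac{d}{dt}\partial_v L(x,\dot x) = \partial_x L(x, \dot x)$. Since $L$ is $C^k$ and uniformly convex in $v$, standard Tonelli regularity theory upgrades the minimizer to $C^1$ (in fact $C^k$) on $(-\infty, 0]$, so the equation holds classically. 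Setting $p(t) := \partial_v L(x(t), \dot x(t))$ and invoking Lemma~\ref{Hamiltonian-Lagrangian-eqivalence}, one obtains $\dot x = \partial_p H(x, p)$ and $\dot p = \partial_x L(x, \dot x) = -\partial_x H(x, p)$; a direct computation from $H(x,p) = p^{\top} A(x) p + b(x) \cdot p$ shows these are precisely the equations of \eqref{hamiltonian-system-intro}.

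Next I would verify that $H(x(t), p(t)) \equiv 0$ on $(-\infty, 0]$. The standard symplectic identity $\tfrac{d}{dt} H(x, p) = \partial_x H \cdot \dot x + \partial_p H \cdot \dot p = \partial_x H \cdot \partial_p H - \partial_p H \cdot \partial_x H = 0$ shows $H$ is constant along the trajectory. Since a minimizer exists only if $V(x_0) = \int_{-\infty}^0 L(x, \dot x)\,dt$ is finite, and $L \geq 0$, there is a sequence $t_n \to -\infty$ with $L(x(t_n), \dot x(t_n)) \to 0$. As $L = p^{\top} A(x) p$ and $x(t_n) \to \AAa$ (on a compact neighborhood of which $A$ is uniformly positive definite), this forces $|p(t_n)| \to 0$, and evaluating the conserved $H$ along this sequence pins down the constant value at $0$.

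Finally, to promote the subsequential convergence to the full limit $\mathrm{dist}((x(t), p(t)), \AAa \times \{0\}) \to 0$, note that $H \equiv 0$ reads $p^{\top} A(x) p = -b(x) \cdot p$, which together with uniform positive definiteness of $A$ and boundedness of $b$ in a neighborhood of $\AAa$ yields that $|p(t)|$ is bounded for $t$ sufficiently negative. The Hamiltonian vector field is then bounded along the trajectory for large negative $t$, so $L(x(t), \dot x(t)) = p^{\top} A(x) p$ is a non-negative, $L^1$, Lipschitz function of $t$ on some $(-\infty, -T]$; a classical elementary fact (a uniformly continuous $L^1$ function on a half-line converges to $0$ at infinity) forces $L(x(t), \dot x(t)) \to 0$, whence $|p(t)| \to 0$ by the same positivity argument. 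Combined with $\mathrm{dist}(x(t), \AAa) \to 0$ from $x \in \Phi_{x_0}$, the conclusion follows. The step I expect to require the most care is the Tonelli-type regularity for this half-line variational problem with the asymptotic constraint $\mathrm{dist}(\phi(t), \AAa) \to 0$, but this reduces to the standard fixed-endpoint case on each compact subinterval of $(-\infty, 0]$ once local boundedness is established via the Lipschitz nature of $V$ from Lemma~\ref{lem-quasi-potential-basic-results} and the variational representation in Proposition~\ref{lem:lim}.
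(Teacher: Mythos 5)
Your proof is correct and follows essentially the same route as the standard reference the paper cites for this lemma (Day--Darden, Theorem~1): compactly supported variations give the Euler--Lagrange equation, Tonelli-type bootstrapping gives smoothness, Lemma~\ref{Hamiltonian-Lagrangian-eqivalence} passes to the Hamiltonian system, conservation of the autonomous Hamiltonian plus finiteness of the action pins $H\equiv 0$, and the Barbalat-type uniform continuity argument upgrades the subsequential $|p(t_n)|\to 0$ to the full limit. The only remark worth making is that your Barbalat step is the part that is genuinely needed once $\AAa$ is not a single hyperbolic point — the paper deliberately states this lemma without the normal-contraction hypothesis {\bf(H4)}, so one cannot appeal to any stable/unstable structure of $\AAa\times\{0\}$ here, and your argument correctly avoids that.
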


\begin{rem}\label{rem-general}
    We comment on how Lemma \ref{thm-extremal-orbit} guides us to establish the regularity of the quasi-potential $V$ in a neighborhood of the maximal attractor $\AAa$. 

    Given the local uniqueness result Lemma \ref{lem-unique-minimizer}, it suffices to construct an open neighborhood $\OO$ of $\AAa$ and a non-negative function $W\in C^{k'+1}(\OO)$ that vanishes only on $\AAa$ and satisfies $H(x,\nabla W(x))=0$ for $x\in\OO$. 
    
    Lemma \ref{thm-extremal-orbit} suggests studying the unstable set of $\AAa\times\{0\}$, namely,
    $$
    \left\{(x,p)\in\Om\times\R^{d}:\bigcup_{t\leq0}\Phi^{t}_{H}(x,p)\subset\Om\times\R^{d}\,\,\text{and}\,\,\lim_{t\to-\infty}{\rm dist}\left(\Phi_{H}^{t}(x,p),\AAa\times\{0\}\right)=0\right\}.
    $$
    Inspired by the work in the case that $\AAa$ is a linearly stable equilibrium \cite{DD85} so that $\AAa\times\{0\}$ is a hyperbolic fixed point and those trajectories $(x(t),p(t))$ naturally lives on its unstable manifold, we wish to show that this unstable set is actually an $d$-dimensional $C^{k'}$ manifold in a neighborhood of $\AAa\times\{0\}$ and is given by the graph of the gradient field of some $C^{k'+1}$ function $W$, as expected. This often requires $\AAa\times\{0\}$ to be an invariant manifold of $\Phi_{H}^{t}$ (equivalently, $\AAa$ is an invariant manifold of $\vp^{t}$) that is normally  hyperbolic \cite{Fen71,HPS77}. 
    
    Unfortunately, this can not be the case unless $\AAa$ is a linearly stable equilibrium. In fact, if $\AAa$ is a normally contracting invariant manifold as in {\bf(H4)}, then $\AAa\times\{0\}$ admits unstable, center, and stable directions, and therefore, the unstable set is generally determined by both the unstable manifold and the center manifold associated with the unstable direction and the stable direction, respectively. However, it is a notorious fact that the dynamics on the center manifold can hardly be studied, and therefore, it is unlike to argue that $(x(t),p(t))$ lives on the unstable manifold at this point. The reader is referred to Remark \ref{rem-key-ideas} below for more details as well as our idea of addressing this issue. 
\end{rem}


\section{\bf Hamiltonian dynamics and regular solutions of HJE}\label{sec-regular-sol-HJE}

Throughout this section, we assume that $\vp^{t}$ admits a normally contracting invariant manifold $\MM$ (see Definition \ref{defn-linearly-stable-manifold}) and intend to construct a regular solution of the HJE \eqref{eqn-HJE} in a neighborhood of $\MM$ that is non-negative and vanishes only on $\MM$. The motivation for seeking such a solution of \eqref{eqn-HJE} is explained in Remark \ref{rem-general}, where we apply it to the maximal attractor $\AAa$ assumed to be a normally contracting invariant manifold. Our approach builds on analyzing the Hamiltonian system \eqref{hamiltonian-system-intro}. In Subsection \ref{subsec-Hamiltonian-system}, we construct the local unstable manifold of the invariant manifold $\MM\times\{0\}$ of \eqref{hamiltonian-system-intro}. A desired regular solution of \eqref{eqn-HJE} is then constructed in Subsection \ref{subsec-regular-sol-HJE}.

\begin{rem}
Note that we work with a general normally contracting invariant manifold $\MM$ of $\vp^{t}$, which is of course the maximal attractor of $\vp^{t}$ in its basin of attraction or in a positively invariant open set contained in its basin of attraction, but \emph{not} assumed to be an equivalence class as in {\bf(H3)} for $\AAa$. 
\end{rem}

For simplicity, set $x\cdot t:=\vp^t(x)$ for $x\in \Om$ and $t$ belonging to the maximal interval of existence. Of course, if $x\in\MM$, then the maximal interval of existence is $\R$. Consider the linearization of ODE \eqref{main-ode} along the orbit $x\cdot t$: 
\begin{equation}\label{ode-linearized}
    \dot y= \nabla b(x\cdot t) y,\quad y\in \R^d,
\end{equation}
and denote by $\Psi(x,t):=D\varphi^{t}(x)$ its principal fundamental matrix solution at initial time $0$ \cite{Chicone2006}, that is, $\Psi(x,t)$ the fundamental matrix solution of \eqref{ode-linearized} with $\Psi(x,0)=I_{d\times d}$.

\begin{defn}[Normally contracting invariant manifold]\label{defn-linearly-stable-manifold}
A $\vp^{t}$-invariant $C^{k}$ compact and connected submanifold $\MM$ with dimension $d_{\MM}<d$ is said to be a \emph{normally contracting invariant manifold} of $\vp^{t}$ if there are constants $0<\be_0<\be_*$ satisfying $\frac{\be_0}{\be_*-\be_0}<\frac{1}{k'}$ for some $1\leq k'\leq k-1$ such that for each $x\in\MM$,
\begin{itemize}
    \item $T_x\R^d=S(x)\oplus T_x\MM$,
\item $\Psi(x,t)S(x)=S(x\cdot t)$ for $t\in \R$, 

\item $\|\Psi(x,t)|_{S(x)}\|\lesssim e^{-\be_* t}$, $t\geq 0$, 
\item $\|\Psi(x,t)|_{T_x\MM}\|\lesssim e^{\be_0 |t|}$, $t\in \R$.
\end{itemize}
\end{defn}

Here and after, we use the notation $\lesssim$ (resp. $\gtrsim$) to mean that an inequality $\leq$ (resp.  $\geq$) holds up to a multiplicative constant that is independent of $x\in\MM$ and other parameters involved (e.g., $t\geq0$ or $t\in\R$ as above, or elements belonging to a subspace of $\R^{d}$). While this is not absolutely clarified, no confusion shall be caused. The notation $\approx$ means both $\lesssim$ and $\gtrsim$. 

In Definition \ref{defn-linearly-stable-manifold}, we only assume $k'\geq1$, which is sufficient for constructing the local unstable manifold of $\MM\times\{0\}$. For constructing a regular solution of the HJE \eqref{eqn-HJE}, we need $k'\geq2$ as required in {\bf(H4)}.

It is useful to point out that $\Psi$ is a cocycle on $\MM$:
\begin{equation}\label{cocycle}
\Psi(x,t+s)=\Psi(x\cdot s,t)\Psi(x,s),\quad\forall t,s\in\R,\,\, x\in\MM.    
\end{equation}
Taking the transpose and then the inverse results in
\begin{equation}\label{cocycle-transpose-inverse}
\left(\Psi^{\top}\right)^{-1}(x,t+s)=\left(\Psi^{\top}\right)^{-1}(x\cdot s,t)\left(\Psi^{\top}\right)^{-1}(x,s),\quad\forall t,s\in\R,\,\, x\in\MM,  
\end{equation}
that is, $\left(\Psi^{\top}\right)^{-1}$ is also a cocycle on $\MM$. In fact, $\left(\Psi^{\top}\right)^{-1}(x,t)$ is the principal fundamental matrix solution at initial time $0$ of $\dot{y}=-\nabla b^{\top}(x\cdot t)y$ (see \eqref{dual-equation-fundamental-solution-matrix} below).


\subsection{Local unstable manifold}\label{subsec-Hamiltonian-system}

Recall that $\Phi_{H}^{t}$ is the local flow generated by the Hamiltonian system \eqref{hamiltonian-system-intro}. For $x\in\MM$, the linearization of \eqref{hamiltonian-system-intro} along the orbit $\Phi^{t}_{H}(x,0)=(x\cdot t, 0)$ reads
\begin{equation}\label{H-ODE-linearized}
    \begin{cases}
    \dot y=\nabla b(x\cdot t)y+2 A(x\cdot t) q,\\
    \dot q=-\nabla b^{\top}(x\cdot t)q
\end{cases}
\quad\text{in}\quad\R^{d}\times\R^{d}.
\end{equation}
Denote by $\Psi_H(x,t)$ the principal fundamental matrix solution at initial time $0$ of \eqref{H-ODE-linearized}. 

\begin{rem}\label{rem-key-ideas}
We briefly comment on the invariant splitting of \eqref{H-ODE-linearized}, the associated local invariant manifolds of $\MM\times\{0\}$, and our approach including both ideas and techniques. In particular, this explains why we only look at the local unstable manifold of $\MM\times\{0\}$.
\begin{itemize}
    \item[(1)] From \eqref{H-ODE-linearized}, we see that the invariant manifold $\MM\times\{0\}$ is NOT normally hyperbolic but partially hyperbolic, which is in sharp contrast to the situation considered in \cite{DD85}, where $\MM$ is a linearly stable equilibrium of \eqref{main-ode} so that $\MM\times\{0\}$ is a hyperbolic fixed point of \eqref{hamiltonian-system-intro}. 

Indeed, in the space $\{q=0\}$, the system \eqref{H-ODE-linearized} is reduced to $\dot y=\nabla b(x\cdot t)y$, which determines the $d_{\MM}$-dimensional tangent direction and $(d-d_{\MM})$-dimensional stable direction to $\MM\times\{0\}$. Note that the $q$-equation, which is decoupled from the $y$-equation and dual to \eqref{ode-linearized},  has $d_{\MM}$-dimensional central dynamics and $(d-d_{\MM})$-dimensional unstable dynamics. They together with the $y$-equation determine the $d_{\MM}$-dimensional central direction and $(d-d_{\MM})$-dimensional unstable direction to $\MM\times\{0\}$. 

This gives rise to the invariant splitting under $\Psi_H(x,t)$: for $x\in\MM$,
\begin{equation*}
    \begin{split}
T_{(x,0)}(\R^{d}\times\R^{d})&=E^{u}_{(x,0)}\oplus E^{c}_{(x,0)} \oplus E^{s}_{(x,0)},\\
\Psi_H(x,t)E^{i}_{(x,0)}&=E^{i}_{(x\cdot,0)},\quad i=u,c,s,
    \end{split}
\end{equation*}
where $T_{(x,0)}(\MM\times\{0\})\subsetneqq E^{c}_{(x,0)}$. Moreover, $\Psi_H(x,t)$ expands $E^{u}_{(x,0)}$ sharply than $E^{c}_{(x,0)}$, and $\Psi_H(x,t)$ contracts $E^{s}_{(x,0)}$ sharply than $E^{c}_{(x,0)}$.

\item[(2)] Given the invariant splitting in (1), the classical invariant manifold theory, recalled in Theorem \ref{app-thm-Fenichel} for readers' convenience, ensures the existence of a unique $d$-dimensional local $C^{k'}$ unstable manifold $\WW^{u}$ and a unique $d$-dimensional local $C^{k'}$ stable manifold $\WW^{s}$ of $\MM\times\{0\}$. Observe that $\WW^{s}$ extends to $\Om\times\{0\}$. Applying the center manifold theory developed in \cite{CLY00}, we can find an $2d_{\MM}$-dimentional $C^{k'}$ center manifold $\WW^{c}$ of $\MM\times\{0\}$, which is not unique in general.

\item[(3)] For our purposes, it is of crucial importance to study the \emph{unstable set} of $\MM\times\{0\}$, as suggested by Lemma \ref{thm-extremal-orbit} and commented in Remark \ref{rem-general}. The unstable set is determined by the unstable manifold $\WW^{u}$ and the center manifold $\WW^{c}$ if it contains trajectories approaching $\MM\times\{0\}$ as $t\to-\infty$. However, as the asymptotic dynamics on $\WW^{c}$ is unknown and can hardly be investigated due to complexity, bifurcation, etc., the structure of the unstable set of $\MM\times\{0\}$ can barely be identified. But, this is not so necessary anyway since an $d$-dimensional structure suffices to determine what we want, while the unstable set may have dimension higher than $d$. 

Given that the unstable manifold $\WW^{u}$ is $d$-dimensional, we directly work with it and manage to show that it is given by the graph of the gradient field of some non-negative $C^{k'+1}$ function that vanishes only on $\MM$, as expected. 
\end{itemize}
\end{rem}

Recall that $\Psi(x,t)$ is the principal fundamental matrix solution at initial time $0$ of \eqref{ode-linearized}. It is well-known that $\left(\Psi^{\top}\right)^{-1}(x,t)$ is the principal fundamental matrix solution at initial time $0$ of the $q$-equation in \eqref{H-ODE-linearized}, which is the dual equation of \eqref{ode-linearized}. In fact, by the derivative of the inverse matrix formula, 
\begin{equation}\label{dual-equation-fundamental-solution-matrix}
\begin{split}
    \frac{d}{dt}\left(\Psi^{\top}\right)^{-1}(x,t)&=-\left(\Psi^{\top}\right)^{-1}(x,t)\frac{d}{dt}\Psi^{\top}(x,t)\left(\Psi^{\top}\right)^{-1}(x,t)\\
    &=-\left(\Psi^{\top}\right)^{-1}(x,t)\Psi^{\top}(x,t)\nabla b^{\top}(x\cdot t)\left(\Psi^{\top}\right)^{-1}(x,t)\\
    &=-\nabla b^{\top}(x\cdot t)\left(\Psi^{\top}\right)^{-1}(x,t).
\end{split}
\end{equation}
The variation of constants formula asserts that the unique solution $(y(t),q(t)):=\Psi_H(x,t)(y_0,q_0)$ of \eqref{H-ODE-linearized} with initial condition $(y_0,q_0)$ at $t=0$ satisfies 
\begin{equation}\label{soln-H-ODE-linearized}
\begin{cases}
    y(t)=\Psi(x,t)y_0+2\displaystyle\int_0^t \Psi(x\cdot s,t-s)A(x\cdot s) q(s) ds,\\
    q(t)=\left(\Psi^{\top}\right)^{-1}(x,t) q_0.
\end{cases}
\end{equation}

Recall the splitting $T_x\R^d=S(x)\oplus T_x\MM$ from Definition \ref{defn-linearly-stable-manifold}. Let $P(x)$ be the projection onto $S(x)$ along $T_{x}\MM$. Then, $I_{d\times d}-P(x)$ is the projection onto $T_{x}\MM$ along $S(x)$. Since $P(x\cdot t)\Psi(x,t)=\Psi(x,t)P(x)$ for all $t\in\R$ and $x\in\MM$, we deduce
\begin{equation}\label{invariance-transpose}
    \begin{split}
        P^{\top}(x\cdot t)\left(\Psi^{\top}\right)^{-1}(x,t)=\left(\Psi^{\top}\right)^{-1}(x,t)P^{\top}(x),\quad \forall t\in\R,\,\,x\in\MM.
    \end{split}
\end{equation}
Set $E(x):=\range P^{\top}(x)$ and $N(x):=\range\left(I_{d\times d}-P^{\top}(x)\right)$ for $x\in\MM$. The following lemma concerning the invariant splitting under $\left(\Psi^{\top}\right)^{-1}$ follows. 


\begin{lem}\label{lem-invariant-splitting-q}
For each $x\in\MM$, the following hold.
\begin{itemize}
    \item [(1)] $\R^d=E(x) \oplus N(x)$.
    \item [(2)] $\left(\Psi^{\top}\right)^{-1}(x,t)E(x)=E(x\cdot t)$ and $\left(\Psi^{\top}\right)^{-1}(x,t)N(x)=N(x\cdot t)$ for $t\in \R$.
    \item [(3)] There hold
\begin{subequations}\label{linear-HODE-splitting-q-direction}
\begin{align}
    \left\|\left(\Psi^{\top}\right)^{-1}(x,t)\big|_{E(x)}\right\|&\lesssim e^{\be_* t},\,\,\,\quad  t\leq 0,\label{eqn-2023-05-29-3}\\
    \left\|\left(\Psi^{\top}\right)^{-1}(x,t)\big|_{N(x)}\right\|&\lesssim e^{\be_0 |t|},\quad  t\in\R.\label{eqn-2023-05-29-4}
\end{align}
\end{subequations}
\end{itemize}
\end{lem}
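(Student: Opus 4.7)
The plan is to deduce all three parts by combining the intertwining identity \eqref{invariance-transpose} with a duality argument that transfers the growth bounds on $\Psi(x,t)$ restricted to $S(x)$ and $T_x\MM$ to bounds on $(\Psi^\top)^{-1}(x,t)$ restricted to $E(x)$ and $N(x)$. For (1), I would observe that $(P^\top)^2 = (P^2)^\top = P^\top$, so $P^\top(x)$ is itself a projection on $\R^d$, giving $\R^d = \range P^\top(x) \oplus \range(I_{d\times d} - P^\top(x)) = E(x) \oplus N(x)$. It is also convenient to record the standard orthogonal identifications $E(x) = (\ker P(x))^\perp = (T_x\MM)^\perp$ and $N(x) = (\range P(x))^\perp = S(x)^\perp$, which come from the identities $\range P^\top = (\ker P)^\perp$ and $\ker P^\top = (\range P)^\perp$.

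For (2), substituting $u = P^\top(x) w$ into \eqref{invariance-transpose} gives $(\Psi^\top)^{-1}(x,t) u = P^\top(x\cdot t)(\Psi^\top)^{-1}(x,t) w \in E(x\cdot t)$, so $(\Psi^\top)^{-1}(x,t) E(x) \subset E(x\cdot t)$. The reverse inclusion follows by running the same reasoning at $(x\cdot t, -t)$ and invoking \eqref{cocycle-transpose-inverse} to identify $(\Psi^\top)^{-1}(x\cdot t, -t) = \left[(\Psi^\top)^{-1}(x,t)\right]^{-1}$. The analogous argument with $I_{d\times d} - P^\top$ handles $N(x)$.

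For (3), I would pass to the adjoint. The cocycle \eqref{cocycle} implies $\Psi^{-1}(x,t) = \Psi(x\cdot t, -t)$, so for $u \in E(x)$ and $v \in T_{x\cdot t}\R^d$,
\[
\langle (\Psi^\top)^{-1}(x,t) u, v\rangle = \langle u, \Psi(x\cdot t, -t) v\rangle.
\]
Decomposing $v = P(x\cdot t) v + (I_{d\times d} - P(x\cdot t)) v$, the $\Psi$-invariance of the splitting $S\oplus T\MM$ sends the first summand into $S(x)$ and the second into $T_x\MM$. Since $u \in E(x) \perp T_x\MM$, only the first summand contributes, and applying Definition \ref{defn-linearly-stable-manifold} at $(y,s) = (x\cdot t, -t)$ (valid since $-t\geq 0$ when $t\leq 0$), together with the uniform boundedness of $P(\cdot)$ over the compact manifold $\MM$, yields $\|(\Psi^\top)^{-1}(x,t) u\| \lesssim e^{\beta_* t}\|u\|$. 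Analogously, for $u \in N(x) \perp S(x)$ only the $T_{x\cdot t}\MM$-component of $v$ contributes, and the two-sided estimate of Definition \ref{defn-linearly-stable-manifold} produces $e^{\beta_0 |t|}$ for all $t\in\R$.

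There is no substantial technical obstacle; the only care required is the sign bookkeeping when reindexing the hypotheses of Definition \ref{defn-linearly-stable-manifold} (stated with $s\geq 0$ on the $S$-direction and $s\in\R$ on the $T\MM$-direction) into statements about $(\Psi^\top)^{-1}(x,t)$ for $t\leq 0$ and $t\in\R$, respectively.
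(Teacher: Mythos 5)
Your argument is correct and is essentially the paper's: both proofs reduce the estimate on $(\Psi^{\top})^{-1}(x,t)$ to the estimates on $\Psi$ in Definition \ref{defn-linearly-stable-manifold} by a transpose/duality step and the cocycle identity $(\Psi^{\top})^{-1}(x,t)=\Psi^{\top}(x\cdot t,-t)$, the paper working at the operator level via $(\Psi^{\top})^{-1}(x,t)P^{\top}(x)=[P(x)\Psi(x\cdot t,-t)]^{\top}=[\Psi(x\cdot t,-t)P(x\cdot t)]^{\top}$ while you unfold the same computation through inner products and the orthogonal characterizations $E(x)=(T_x\MM)^{\perp}$, $N(x)=S(x)^{\perp}$. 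The two presentations differ only in notation, and parts (1)--(2) are handled in both cases directly from \eqref{invariance-transpose} and \eqref{cocycle-transpose-inverse}.
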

\begin{proof}
    The first two conclusions follow immediately from \eqref{invariance-transpose}. Note from \eqref{cocycle-transpose-inverse} that
    $$
    \left(\Psi^{\top}\right)^{-1}(x,t)P^{\top}(x)=\Psi^{\top}(x\cdot t,-t)P^{\top}(x)=\left[P(x)\Psi(x\cdot t,-t)\right]^{\top},
    $$
    which together with Definition \ref{defn-linearly-stable-manifold} implies \eqref{eqn-2023-05-29-3}. Similar arguments lead to \eqref{eqn-2023-05-29-4}.
\end{proof}

The next result is crucial for determining in particular the unstable direction to $\MM\times\{0\}$.

\begin{lem}\label{lem-Q(x)}
    For each $x\in\MM$, the following hold.
    \begin{itemize}
    \item[(1)] The matrix
\begin{equation*}
    Q(x):=2\int_{-\infty}^0 \Psi(x\cdot s,-s) A(x\cdot s) \left(\Psi^{\top}\right)^{-1}(x,s)P^{\top}(x)  ds
\end{equation*}
is well-defined, and satisfies $\|Q(x)\|\lesssim1$.
        
    \item[(2)] $Q(x)$ is symmetric on $E(x)$, that is, $q_{1}^{\top}Q(x)q_{2}=q_{1}^{\top}Q^{\top}(x)q_{2}$ for all $q_1,q_2\in E(x)$.

\item[(3)] $Q(x)$ is positive-definite on $E(x)$, that is, $q^{\top}Q(x)q\gtrsim|q|^{2}$ for all $q\in E(x)$. In particular, $|Q(x)q|\approx |q|$ for all $q\in E(x)$.     
\end{itemize}
\end{lem}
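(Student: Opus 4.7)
The plan is to exploit two algebraic identities to recast the integrand of $Q(x)$ into a form whose integrability, symmetry, and positivity all become transparent. First, I would invoke the cocycle identity \eqref{cocycle} at $(t,s)=(-s,s)$ to get $\Psi(x\cdot s,-s)=\Psi^{-1}(x,s)$, together with the intertwining \eqref{invariance-transpose} in the form $(\Psi^{\top})^{-1}(x,s)P^{\top}(x)=P^{\top}(x\cdot s)(\Psi^{\top})^{-1}(x,s)$. These convert the defining integral into
\begin{equation*}
Q(x) = 2\int_{-\infty}^0 \Psi^{-1}(x,s)\,A(x\cdot s)\,P^{\top}(x\cdot s)\bigl(\Psi^{-1}(x,s)\bigr)^{\top}\,ds.
\end{equation*}

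For (1), I would decompose $v(s):=A(x\cdot s)(\Psi^{\top})^{-1}(x,s)P^{\top}(x)q$ along $T_{x\cdot s}\R^d=S(x\cdot s)\oplus T_{x\cdot s}\MM$ and then apply $\Psi^{-1}(x,s)=\Psi(x\cdot s,-s)$. Estimate \eqref{eqn-2023-05-29-3} gives $|v(s)|\lesssim e^{\be_* s}|q|$ for $s\leq 0$, while Definition \ref{defn-linearly-stable-manifold} at time $t=-s\geq 0$ based at $x\cdot s\in\MM$ bounds $\|\Psi(x\cdot s,-s)|_{S(x\cdot s)}\|\lesssim e^{\be_* s}$ and $\|\Psi(x\cdot s,-s)|_{T_{x\cdot s}\MM}\|\lesssim e^{-\be_0 s}$. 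The dominant contribution $e^{(\be_*-\be_0)s}|q|$ is integrable thanks to the gap $\be_0<\be_*$, with constants uniform in $x\in\MM$. For (2), since $P^{\top}(x)q_i=q_i$ for $q_i\in E(x)$, I can insert $P(x)$ on the left and use the transpose of \eqref{invariance-transpose}, namely $P(x)\Psi^{-1}(x,s)=\Psi^{-1}(x,s)P(x\cdot s)$, to rewrite
\begin{equation*}
q_1^{\top}Q(x)q_2 = 2\int_{-\infty}^0 q_1^{\top}\Psi^{-1}(x,s)\bigl[P(x\cdot s)A(x\cdot s)P^{\top}(x\cdot s)\bigr](\Psi^{\top})^{-1}(x,s)q_2\,ds;
\end{equation*}
the bracketed factor is symmetric since $A$ is, so the integrand has the form $u^{\top}Bv$ with $B$ symmetric, giving symmetry in $q_1,q_2$. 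For (3), setting $w(s):=(\Psi^{\top})^{-1}(x,s)q$, which by Lemma \ref{lem-invariant-splitting-q}(2) lies in $E(x\cdot s)$ when $q\in E(x)$, one obtains $q^{\top}Q(x)q=2\int_{-\infty}^0 w(s)^{\top}A(x\cdot s)w(s)\,ds\geq 2c_A\int_{-\infty}^0|w(s)|^2\,ds$, with $c_A>0$ a uniform lower bound for $A$ on the compact $\MM$. A continuity argument at $s=0$, using $w(0)=q$ and uniform continuity of $(\Psi^{\top})^{-1}(x,\cdot)$ in $s$, then yields a fixed $\delta>0$ independent of $x\in\MM$ on which $|w(s)|\geq|q|/2$, so that $q^{\top}Q(x)q\gtrsim|q|^2$. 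The ``in particular'' clause follows from Cauchy--Schwarz $c|q|^2\leq q^{\top}Q(x)q\leq|q|\,|Q(x)q|$ combined with the upper bound from (1).

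The main obstacle I anticipate lies in (1): the tangential component of $v(s)$ grows like $e^{\be_0|s|}$ as $s\to-\infty$, which would ruin integrability if unchecked. The gap condition $\be_0<\be_*$ from Definition \ref{defn-linearly-stable-manifold} is the precise ingredient that rescues the argument. A subtlety worth flagging is that $Q(x)$ need not preserve $E(x)$, so the assertions in (2)--(3) are to be read as statements about the bilinear form $Q(x)$ restricted to $E(x)\times E(x)$, not about $Q(x)$ as an operator on $E(x)$.
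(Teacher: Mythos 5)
Your proof is correct and follows essentially the same strategy as the paper's; parts (1) and (2) use the same cocycle and intertwining identities, though in (1) the paper does not bother decomposing $v(s)$ along $S(x\cdot s)\oplus T_{x\cdot s}\MM$ and simply applies the crude global bound $\|\Psi(x\cdot s,-s)\|\lesssim e^{-\be_0 s}$ for $s\leq 0$, which already produces the same integrable rate $e^{(\be_*-\be_0)s}$ against $\|(\Psi^\top)^{-1}(x,s)P^\top(x)q\|\lesssim e^{\be_* s}|q|$. The one genuine point of departure is (3): the paper extracts a global exponential lower bound $|(\Psi^\top)^{-1}(x,s)q|\gtrsim e^{\be^* s}|q|$ for $s\leq 0$ from the boundedness of $\nabla b$ on $\MM$ (via $\|\Psi^\top(x,s)\|\lesssim e^{-\be^* s}$) and integrates $e^{2\be^* s}$ over $(-\infty,0]$, whereas you invoke only compactness of $\MM$ and uniform continuity of $(\Psi^\top)^{-1}(x,\cdot)$ near $s=0$ to keep $|w(s)|\geq|q|/2$ on a fixed small interval $(-\delta,0]$. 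Both arguments are valid; yours avoids introducing the auxiliary constant $\be^*$ and is a bit more elementary, while the paper's version gives a quantitative picture of the decay of $w(s)$ on all of $(-\infty,0]$. One small point to make explicit in (1): the componentwise bounds you use implicitly require that the projections onto $S(\cdot)$ and $T_\cdot\MM$ are uniformly bounded over the compact $\MM$; this holds by continuity of the splitting together with compactness, but deserves a sentence.
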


\begin{rem}
    The definition of $Q(x)$ in Lemma \ref{lem-Q(x)} is inspired by the Lyapunov-Perron method. The main idea is to identify a subspace for $(q_0,y_0)\in\R^{d}\times\R^{d}$ such that $(y(t),q(t))=\Psi_H(x,t)(y_0,q_0)$ converges to $(0,0)$ sufficiently fast as $t\to-\infty$. By Lemma \ref{lem-invariant-splitting-q}, there must hold $q_0\in E(x)$. It follows from \eqref{soln-H-ODE-linearized} that $y(t)$ satisfies  
    $$
    y(t)=\Psi(x,t)y_0+2\int_0^t \Psi(x\cdot s,t-s)A(x\cdot s)\left(\Psi^{\top}\right)^{-1}(x,s) q_0 ds,
    $$
    or
    $$
    y_0=\Psi^{-1}(x,t)y(t)+2\int^0_t \Psi(x\cdot s,-s)A(x\cdot s)\left(\Psi^{\top}\right)^{-1}(x,s) q_0 ds.
    $$
Formally, letting $t\to-\infty$ in the above equality yields $y_0=2\int^0_{-\infty} \Psi(x\cdot s,-s)A(x\cdot s)\left(\Psi^{\top}\right)^{-1}(x,s) q_0 ds$, that is, $y_0=Q(x)q_0$. 
\end{rem}

\begin{proof}[Proof of Lemma \ref{lem-Q(x)}]
(1) Definition \ref{defn-linearly-stable-manifold} implies $\|\Psi(x,t)\|\lesssim e^{\be_0 t}$ for all $t\geq 0$. This together with \eqref{eqn-2023-05-29-3} yields  that $|Q(x)q|\lesssim 2|P^{\top}(x)q|\max_{\MM}\|A\|\int_{-\infty}^0 e^{-\be_0 s}e^{\be_* s}ds\lesssim|P^{\top}(x)q|$. The results follow.

\medskip

\noindent (2) Let $q_1, q_2\in E(x)$. Note that
\begin{equation*}
    q_1^{\top}Q^{\top}(x)q_2=2\int_{-\infty}^0 q_1^{\top}\Psi^{-1}(x,s)A^{\top}(x\cdot s)\Psi^{\top}(x\cdot s,-s)q_2ds.
\end{equation*}
The equality $\Psi^{-1}(x,s)=\Psi(x\cdot s,-s)$ (by \eqref{cocycle}) and the symmetry of $A$ then give rise to $q_1^{\top}Q^{\top}(x)q_2=q_1^{\top}Q(x)q_2$. 

\medskip

\noindent (3)  Since $\Psi(x,t)$ is the principal fundamental matrix solution at initial time $0$ of \eqref{ode-linearized} and $\max_{\MM}\|\nabla b\|<\infty$, there exists $\be^*>0$ such that $\|\Psi^{\top}(x,t)\|\lesssim e^{-\be^*t}$ for all $t\leq0$, implying that $\left|\left(\Psi^{\top}\right)^{-1}(x,t)y\right|\gtrsim e^{\be^* t}|y|$ for all $y\in\R^{d}$ and $t\leq0$. This together with the equality $\Psi^{\top}(x\cdot s,-s)=\left(\Psi^{\top}\right)^{-1}(x,s)$ (by \eqref{cocycle-transpose-inverse}) yields
\begin{equation*}
\begin{split}
    q^{\top}Q(x)q&=2\int_{-\infty}^0\left[\left(\Psi^{\top}\right)^{-1}(x,s)q\right]^{\top} A(x\cdot s)\left(\Psi^{\top}\right)^{-1}(x,s)qds\\
    &\geq 2\la \int_{-\infty}^0\left|\left(\Psi^{\top}\right)^{-1}(x,s)q\right|^2ds\gtrsim \int_{-\infty}^0 e^{2\be^{*}s}|q|^2ds\gtrsim|q|^2,\quad \forall q\in E(x),
\end{split}
\end{equation*}
where $\la>0$ is such that $y^{\top}A(x)y\geq \la |y|^2$ for all $y\in \R^d$ and $x\in \MM$. The ``In particular" part then follows readily from (1) and $|q^{\top}||Q(x)q|\geq q^{\top}Q(x)q$ for all $q\in E(x)$. 
\end{proof}

Now, we can determine the unstable direction to $\MM\times\{0\}$ and establish the invariant splitting under $\Psi_{H}$. For $x\in\MM$, set 
\begin{equation}\label{def-E(x,0)-N(x,0)}
\begin{split}
\EE(x,0):&=\text{graph}\,Q(x)^{-1}=\left\{(Q(x)q,q):q\in E(x)\right\},\\
\NN(x,0):&=T_{x}\R^{d}\times N(x),    
\end{split}
\end{equation}
where $Q(x)^{-1}:{\range}\, Q(x)\to E(x)$ denotes the inverse of $Q(x)$. It is well-defined thanks to Lemma \ref{lem-Q(x)} (3), implying $\ker Q(x)=N(x)$.

\begin{lem}\label{lem-linear-Hsystem}
For each $x\in\MM$, the following hold.
\begin{enumerate}
\item   $T_{(x,0)}(\R^{d}\times\R^{d})=\EE(x,0)\oplus \NN(x,0)$.

    \item $T_{(x,0)}(\MM\times \{0\})\subset \NN(x,0)$. 
    
    \item  $\Psi_H(x,t)\EE(x,0)=\EE(x\cdot t,0)$ and $\Psi_H(x,t)\NN(x,0)=\NN(x\cdot t,0)$ for $t\in \R$.
    
    \item For any fixed $\be'_0\in (\be_0,\be_*)$ satisfying $\frac{\be'_0}{\be_*-\be'_0}<\frac{1}{k'}$,
\begin{subequations}\label{eqn-2023-05-29-111}
\begin{align}
    \left\|\Psi_H(x,t)|_{\EE(x,0)}\right\|&\lesssim e^{\be_* t},\,\,\,\quad t\leq 0,\label{eqn-2023-05-29-1}\\
    \|\Psi_H(x,t)|_{\NN(x,0)}\|&\lesssim e^{\be'_0 |t|},\quad t\in \R. \label{eqn-2023-05-29-2}
\end{align}
\end{subequations}

\end{enumerate}
\end{lem}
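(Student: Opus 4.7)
Parts (1) and (2) are essentially linear algebra, while parts (3) and (4) reduce to direct computations with the variation-of-constants formula \eqref{soln-H-ODE-linearized} and the bounds for $\Psi$ and $(\Psi^\top)^{-1}$ established in Definition \ref{defn-linearly-stable-manifold} and Lemma \ref{lem-invariant-splitting-q}.

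For (1), I would first count dimensions: since $P(x)$ projects onto $S(x)$ along $T_x\MM$, one has $\dim E(x)=\dim\range P^\top(x)=\dim S(x)=d-d_{\MM}$, hence $\dim\EE(x,0)=d-d_{\MM}$ and $\dim\NN(x,0)=d+d_{\MM}$, so the dimensions sum to $2d$. For trivial intersection, if $(Q(x)q,q)\in\NN(x,0)$ with $q\in E(x)$, then also $q\in N(x)$; but $E(x)\cap N(x)=\{0\}$ by Lemma \ref{lem-invariant-splitting-q}(1), forcing $q=0$. Part (2) is immediate from $T_{(x,0)}(\MM\times\{0\})=T_x\MM\times\{0\}\subset\R^d\times N(x)=\NN(x,0)$.

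For (3), write $(y(t),q(t))=\Psi_H(x,t)(y_0,q_0)$ via \eqref{soln-H-ODE-linearized}. The $\NN$-invariance is transparent: if $q_0\in N(x)$, then $q(t)=(\Psi^\top)^{-1}(x,t)q_0\in N(x\cdot t)$ by Lemma \ref{lem-invariant-splitting-q}(2), so $(y(t),q(t))\in\R^d\times N(x\cdot t)=\NN(x\cdot t,0)$. For $\EE$-invariance, take $q_0\in E(x)$ so that $P^\top(x)q_0=q_0$, and use the cocycle identity $(\Psi^\top)^{-1}(x\cdot t,s)(\Psi^\top)^{-1}(x,t)=(\Psi^\top)^{-1}(x,t+s)$ together with $\Psi(x,t)\Psi(x\cdot s,-s)=\Psi(x\cdot s,t-s)$ and the change of variables $\tau=t+s$ in the defining integral for $Q(x\cdot t)q(t)$; the computation then collapses to
\[
Q(x\cdot t)q(t)=2\int_{-\infty}^{t}\Psi(x\cdot s,t-s)A(x\cdot s)(\Psi^\top)^{-1}(x,s)q_0\,ds=\Psi(x,t)Q(x)q_0+2\int_0^t\Psi(x\cdot s,t-s)A(x\cdot s)q(s)\,ds=y(t),
\]
so $(y(t),q(t))=(Q(x\cdot t)q(t),q(t))\in\EE(x\cdot t,0)$. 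Since these inclusions hold for all $t\in\R$, they are equalities.

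For (4), the $\EE$-estimate is clean: for $(y_0,q_0)\in\EE(x,0)$ one has $|(y_0,q_0)|\approx|q_0|$ by Lemma \ref{lem-Q(x)}(1),(3); then $|q(t)|\lesssim e^{\be_* t}|q_0|$ for $t\leq 0$ by \eqref{eqn-2023-05-29-3}, and $|y(t)|=|Q(x\cdot t)q(t)|\lesssim|q(t)|$, giving \eqref{eqn-2023-05-29-1}. The $\NN$-estimate is where the real work is: for $q_0\in N(x)$, Lemma \ref{lem-invariant-splitting-q}(3) controls $|q(t)|\lesssim e^{\be_0|t|}|q_0|$, and I would bound $\Psi(x,t)y_0$ using the bounds on $\Psi|_{S(x)}$ and $\Psi|_{T_x\MM}$ from Definition \ref{defn-linearly-stable-manifold}, splitting $y_0\in S(x)\oplus T_x\MM$. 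Plugging these into the integral in \eqref{soln-H-ODE-linearized} and bounding $\Psi(x\cdot s,t-s)A(x\cdot s)q(s)$ by splitting onto the invariant subbundles produces at worst a factor $|t|e^{\be_0|t|}$, which is absorbed into $e^{\be'_0|t|}$ by the strict inequality $\be'_0>\be_0$.

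\textbf{Main obstacle.} The main technical difficulty is organizing the $\NN$-estimate so that every term that arises from the variation-of-constants integral is controlled by $e^{\be'_0|t|}$; in particular, keeping careful track of the interaction between the $y_0$ coming in through $\Psi(x,t)y_0$ and the driving term $2A(x\cdot s)q(s)$, splitting each along $S(x\cdot s)\oplus T_{x\cdot s}\MM$, and using the gap $\be'_0-\be_0>0$ to swallow the linear-in-$t$ prefactor coming from convolving exponentials of equal rate. All other pieces are bookkeeping once the cocycle identities \eqref{cocycle} and \eqref{cocycle-transpose-inverse} and the bounds in Lemma \ref{lem-Q(x)} and Lemma \ref{lem-invariant-splitting-q} are in hand.
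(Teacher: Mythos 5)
Your proposal reproduces the paper's argument essentially verbatim: parts (1)–(2) are dispensed with as immediate from the definitions (your dimension count and trivial-intersection check are a fine elaboration of what the paper leaves to the reader), part (3) establishes the same identity $y(t)=Q(x\cdot t)q(t)$ via the cocycle relations \eqref{cocycle}, \eqref{cocycle-transpose-inverse} and the substitution $s\mapsto s+t$ in the defining integral for $Q$, and part (4) runs the same variation-of-constants estimate, using $|(y,q)|\approx|q|$ on $\EE$ together with \eqref{eqn-2023-05-29-3}, and on $\NN$ bounding $q(t)$ by $e^{\be_0|t|}$, convolving equal-rate exponentials to produce the factor $|t|e^{\be_0|t|}$, and absorbing it into $e^{\be'_0|t|}$ via $\be'_0>\be_0$. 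The only small stylistic difference is that the paper bounds $|\Psi(x,t)y_0|$ directly by $e^{\be_0|t|}|y_0|$ rather than splitting $y_0$ along $S(x)\oplus T_x\MM$ as you propose, but the content is the same.
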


\begin{rem}
    It should be clear from the statement of Lemma \ref{lem-linear-Hsystem} (4) that both inequalities $\lesssim$ depends on $\beta_{0}'$, but we choose not to highlight this dependence as it is only used with a fixed $\beta_{0}'$.
\end{rem}

\begin{proof}[Proof of Lemma \ref{lem-linear-Hsystem}] (1) and (2) follow readily from the definition of $\EE(x,0)$ and $\NN(x,0)$. 

\medskip

\noindent (3) The invariance of $\NN$ under $\Psi_H$ is a result of \eqref{soln-H-ODE-linearized} and the invariance of $N$ under $\left(\Psi^{\top}\right)^{-1}$ (by Lemma \ref{lem-invariant-splitting-q} (2)). We show the invariance of $\EE$ under $\Psi_H$. This is equivalent to showing that if $q_0\in E(x)$ and $y_0=Q(x)q_0$, then $(y(t),q(t)):= \Psi_{H}(x,t)(y_0,q_0)$ satisfies
\begin{equation}\label{soln-Q-H-ODE-linearized}
    y(t)=Q(x\cdot t)q(t).
\end{equation}

We prove \eqref{soln-Q-H-ODE-linearized}. It follows from \eqref{soln-H-ODE-linearized} that 
\begin{equation*}
\begin{split}
    y(t)&=\Psi(x,t)Q(x)q_0+2\int_0^{t}\Psi(x\cdot s,t-s) A(x\cdot s) \left(\Psi^{\top}\right)^{-1}(x,s) q_0 ds\\
    &=\Psi(x,t)\left(Q(x)q_0+2\int_0^{t}\Psi(x\cdot s,-s) A(x\cdot s) \left(\Psi^{\top}\right)^{-1}(x,s) q_0 ds\right)\\
    &=2\Psi(x,t)\int_{-\infty}^t \Psi(x\cdot s,-s) A(x\cdot s) \left(\Psi^{\top}\right)^{-1}(x,s) q_0 ds,
\end{split}
\end{equation*}
where we used \eqref{cocycle} in the second equality and the definition of $Q$ in the last equality. 

Clearly, $q(t)=\left(\Psi^{\top}\right)^{-1}(x,t)q_0\in E(x\cdot t)$ thanks to Lemma \ref{lem-invariant-splitting-q} (2). Note that
\begin{equation*}
\begin{split}
    Q(x\cdot t)q(t)&=2\int_{-\infty}^0 \Psi(x\cdot t\cdot s,-s) A(x\cdot t\cdot s) \left(\Psi^{\top}\right)^{-1}(x\cdot t,s) \left(\Psi^{\top}\right)^{-1}(x,t) q_0 ds\\
    &=2\int_{-\infty}^0 \Psi(x,t)\Psi(x\cdot (t+s),-(t+s)) A(x\cdot (t+s))\left(\Psi^{\top}\right)^{-1}(x,t+s)q_0 ds\\
        &=2\Psi(x,t)\int_{-\infty}^t \Psi(x\cdot s,-s) A(x\cdot s) \left(\Psi^{\top}\right)^{-1}(x,s) q_0 ds,
\end{split}
\end{equation*}
where we used \eqref{cocycle-transpose-inverse} and \eqref{cocycle} in the second equality. Hence, \eqref{soln-Q-H-ODE-linearized} holds.

\medskip

\noindent (4) Note that the invariance of $\EE$ under $\Psi_{H}$ and Lemma \ref{lem-Q(x)} (3) ensure that $|(y(t),q(t))|\approx |q(t)|$ for all $t\in\R$ and $(y_0,q_0)\in \EE(x,0)$. Then, \eqref{eqn-2023-05-29-1} follows immediately from \eqref{eqn-2023-05-29-3}.

To show \eqref{eqn-2023-05-29-2}, we let $(y_0,q_0)\in \NN(x,0)$. Applying Definition \ref{defn-linearly-stable-manifold} and \eqref{eqn-2023-05-29-4}, we find $|q(t)|\lesssim e^{\be_0 |t|}|q_0|$ for all $t\in\R$ and
\begin{equation*}
\begin{split}
    |y(t)|&\leq |\Psi(x,t) y_0|+2\left|\int_0^t\Psi(x\cdot s, t-s)A(x\cdot s) q(s)ds\right|\\
    &\lesssim e^{\be_0 |t|}|y_0|+2|q_0|\max_{\MM}\|A\|\left|\int_0^t e^{\be_0|t-s|} e^{\be_0 |s|}ds\right|\lesssim e^{\be_0 |t|}|y_0|+|t|e^{\be_0 |t|}|q_0|,\quad t\in \R.
\end{split}
\end{equation*}
For fixed $\be'_0\in (\be_0,\be_*)$ satisfying $\frac{\be'_0}{\be_*-\be'_0}<\frac{1}{k'}$, we then deduce 
\begin{equation*}
    \left|\Psi_H(x,t)(y_0,q_0)\right|\leq |y(t)|+|q(t)|\lesssim  e^{\be'_0 |t|}|(y_0,q_0)|,\quad t\in \R\andd (y_0,q_0)\in \NN(x,0),
\end{equation*}
giving rise to \eqref{eqn-2023-05-29-2}. 
\end{proof}

Finally, we state and prove the local unstable manifold theorem. Set $(X_{x,p}(t),P_{x,p}(t)):=\Phi_H^t(x,p)$.

\begin{thm}\label{lem-main-Hsystem}
The invariant manifold $\MM\times\{0\}$ admits a $C^{k'}$ local unstable manifold $\WW^{u}$, which is a Lagrangian submanifold and satisfies the following properties.
\begin{enumerate}
    \item [(i)] $\MM\times \{0\}\subset \WW^{u}$ and $T_{(x,0)}\WW^{u}=\EE(x,0)\oplus T_{(x,0)}(\MM\times \{0\})$ for all $x\in\MM$.

    \item [(ii)] $\WW^{u}=\cup_{x\in\MM}\WW_{x}^{u}$, where
    \begin{itemize}
        \item $\WW_x^{u}$ is a $C^{k'}$ manifold and $(x,0)\in\WW_{x}^{u}$,
        
        \item $\WW_x^{u}$ is diffeomorphic to a ball in $\EE(x,0)$ and tangent to $\EE(x,0)$ at $(x,0)$,

        \item $\WW_x^{u}\cap \WW_{x'}^{u}=\emptyset$ if $x\neq x'$,
        
        \item $\Phi_H^t \WW_x^{u}\subset \WW_{x\cdot t}^{u}$ for $t\leq 0$.
    \end{itemize}
    
    
    
    \item [(iii)] For any $0<\be<\be_*$, there is $C>0$ such that 
    \begin{equation*}
    \text{\rm dist}\left(\Phi_H^t(x,p),(\pi_{\WW^{u}}(x,p)\cdot t, 0)\right)\leq C e^{\be t},\quad \forall(x,p)\in \WW^{u}\andd t\leq 0,
    \end{equation*}
where $\pi_{\WW^{u}}:\WW^{u}\to \MM\times \{0\}$ is the $C^{k'}$ projection defined by $\pi_{\WW^{u}}(x,p)=\tilde{x}$ if $(x,p)\in\WW_{\tilde{x}}$.
    
\end{enumerate}

Moreover, there exist an open set $\MM\subset\OO_0\stst\Om$ and a function $F\in C^{k'}(\OO_0;\R^{d})$ such that the following hold.
\begin{enumerate}
    \item $\graph F=\WW^{u}\cap \left(\OO_0\times\R^{d}\right)$ and $F= 0$ on $\MM$.

    \item For $x\in \OO_0$, let $\pi(x)\in\MM$ be such that $\pi_{\WW^{u}}(x,F(x))=(\pi(x),0)$. Then, $\pi$ is $C^{k'}$ and for any $\be\in (0,\be_*)$,
    $$
    \lim_{t\to -\infty}  \sup_{x\in \OO_0}e^{\be |t|}\left(\left| X_{x,F(x)}(t)-\pi(x)\cdot t\right|+|P_{x,F(x)}(t)|\right)=0.
    $$  

    \item There is an open set $\MM\subset \OO_1\stst \OO_0$ such that $\Phi_H^t\left(\graph F|_{\OO_{1}}\right)\subset \graph F$ for all $t\leq 0$.

    \item $H(x,F(x))=0$ for all $x\in \OO_0$.

    \item For each $x\in \MM$, $T_x\R^d=T_x\MM\oplus\range Q(x)$ and the Jacobian $\nabla F(x)$ satisfies
    \begin{equation*}
    \nabla F(x) y=
        \begin{cases}
        0,&\quad y\in T_x\MM,\\
        Q(x)^{-1}y,& \quad  y\in\range Q(x).
        \end{cases}
        \end{equation*}
    
    \item For each $y_0\in\R^{d}$,
    $$
    \Psi_H(x,t)(y_0,\nabla F(x)y_0)=(y(t), \nabla F(x\cdot t)y(t)),\quad\forall t\leq 0, 
    $$
    where $y(t)$ satisfies \eqref{soln-H-ODE-linearized} with initial condition $(y_0,q_0)=(y_0,\nabla F(x)y_0)$.
\end{enumerate}
\end{thm}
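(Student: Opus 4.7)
The plan is to carry out the proof in three stages: invoke the classical invariant manifold theorem to produce $\WW^{u}$ together with its fibration, extract the graph representation and the function $F$, and finally establish the Lagrangian property along with the remaining dynamical/algebraic identities.

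First, I would apply Theorem \ref{app-thm-Fenichel} to the compact $\Phi_H^t$-invariant $C^k$ manifold $\MM\times\{0\}$, using the invariant continuous splitting $T_{(x,0)}(\R^d\times\R^d)=\EE(x,0)\oplus\NN(x,0)$ from Lemma \ref{lem-linear-Hsystem}. The uniform rate estimates \eqref{eqn-2023-05-29-111} together with the spectral gap $\beta_0'/(\beta_*-\beta_0')<1/k'$ inherited from {\bf(H4)} are precisely the hypotheses needed to deliver the $C^{k'}$ local unstable manifold $\WW^{u}$ tangent to $\EE(x,0)\oplus T_{(x,0)}(\MM\times\{0\})$, the $C^{k'}$ unstable fiber foliation $\{\WW_x^{u}\}_{x\in\MM}$ with each $\WW_x^{u}$ diffeomorphic to a ball in $\EE(x,0)$, a $C^{k'}$ projection $\pi_{\WW^{u}}:\WW^{u}\to\MM\times\{0\}$, and the exponential convergence in (iii) at any rate $\beta\in(0,\beta_*)$. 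This hands us (i), (ii), (iii) at once.

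To construct $F$, I would observe that the projection $\pi_1:(y,p)\mapsto y$ restricted to $T_{(x,0)}\WW^{u}$ has trivial kernel: any $(0,p)\in\EE(x,0)\oplus T_x\MM\times\{0\}$ can be decomposed as $(Q(x)q,q)+(z,0)$ with $q\in E(x)$ and $z\in T_x\MM$, and $y=0$ forces $Q(x)q+z=0$ followed by $q=0$ and $z=0$. By dimension count, $\pi_1$ is a linear isomorphism onto $T_x\R^d$, which yields the splitting $T_x\R^d=T_x\MM\oplus\range Q(x)$ in property (5) and, via the inverse function theorem applied uniformly in $x\in\MM$ (compactness of $\MM$ and continuous dependence of tangent spaces), produces an open set $\OO_0\supset\MM$ and $F\in C^{k'}(\OO_0;\R^d)$ with $\graph F=\WW^{u}\cap(\OO_0\times\R^d)$ and $F|_\MM=0$. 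The Jacobian formulae in (5) are then read off by matching the two descriptions $T_{(x,0)}\WW^{u}=\{(y,\nabla F(x)y):y\in T_x\R^d\}=\EE(x,0)\oplus T_x\MM\times\{0\}$: vectors $y\in T_x\MM$ give $\nabla F(x)y=0$, and $y=Q(x)q$ gives $\nabla F(x)y=q=Q(x)^{-1}y$. A direct check using $E(x)=\range P^\top(x)$, $T_x\MM=\ker P(x)$, and the symmetry of $Q(x)$ on $E(x)$ (Lemma \ref{lem-Q(x)}(2)) shows $\nabla F(x)$ is symmetric for $x\in\MM$.

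For the Lagrangian property, at $(x,0)\in\MM\times\{0\}$ a direct computation with $\omega((y_1,q_1),(y_2,q_2))=q_1\cdot y_2-q_2\cdot y_1$ shows $\omega|_{\EE(x,0)}=0$ by symmetry of $Q(x)$, $\omega|_{T_x\MM\times\{0\}}=0$ trivially, and $\omega((Q(x)q,q),(z,0))=q\cdot z=(P^\top(x)q')\cdot z=q'\cdot P(x)z=0$ for $q=P^\top(x)q'\in E(x)$ and $z\in\ker P(x)$. To promote isotropy to all of $\WW^{u}$, I would use $\Phi_H^t$-invariance of $\omega$: for $V_i=(w_i,\nabla F(y)w_i)\in T_{(y,F(y))}\WW^{u}$ with backward pullback $V_i^t=(w_i^t,\nabla F(y_t)w_i^t)$, $\omega(V_1,V_2)=\omega(V_1^t,V_2^t)=(w_2^t)^\top(\nabla F(y_t)-\nabla F^\top(y_t))w_1^t$. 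By $C^{k'}$ regularity of $F$, symmetry of $\nabla F$ on $\MM$, and (iii), $\|\nabla F(y_t)-\nabla F^\top(y_t)\|\lesssim e^{\beta t}$, while the $\NN$-bound in Lemma \ref{lem-linear-Hsystem}(4) gives $|w_i^t|\lesssim e^{-\beta_0' t}$. The gap $\beta_*>3\beta_0$ implied by {\bf(H4)} permits a choice of $\beta_0'$ close to $\beta_0$ and $\beta\in(2\beta_0',\beta_*)$, so $|\omega(V_1^t,V_2^t)|\lesssim e^{(\beta-2\beta_0')t}\to 0$ as $t\to-\infty$, forcing $\omega(V_1,V_2)=0$.

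The remaining properties are direct consequences: $\pi(x)=\pi_{\WW^{u}}(x,F(x))$ is $C^{k'}$ as a composition and inherits the exponential rate from (iii), giving (2); shrinking to $\OO_1\stst\OO_0$ makes (iii) force backward trajectories from $\graph F|_{\OO_1}$ to remain in $\graph F|_{\OO_0}$, giving (3); $H$ is a first integral vanishing on $\MM\times\{0\}$, so letting $t\to-\infty$ in $H(\Phi_H^t(x,F(x)))=H(x,F(x))$ together with convergence from (iii) and continuity of $H$ yields $H(x,F(x))=0$, giving (4); property (6) follows from $d\Phi_H^t$-invariance of $T\WW^{u}$ for $t\leq 0$ combined with the graph representation at $(x\cdot t,0)$. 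The main technical obstacle will be the careful invocation of Theorem \ref{app-thm-Fenichel} for the partially hyperbolic (not normally hyperbolic) invariant manifold $\MM\times\{0\}$ with matched $C^{k'}$ regularity and the simultaneous production of the unstable fibration and its smooth projection; once that is in hand, the rest reduces to algebraic and asymptotic bookkeeping built on Lemmas \ref{lem-Q(x)} and \ref{lem-linear-Hsystem}.
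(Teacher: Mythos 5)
Your proof takes essentially the same route as the paper's: Theorem \ref{app-thm-Fenichel} with the splitting from Lemma \ref{lem-linear-Hsystem} yields $\WW^u$ and the fibration $\{\WW_x^u\}$ giving (i)--(iii); the graph representation is obtained by observing that the projection onto the $x$-plane is injective on $T_{(x,0)}\WW^u$ (the paper phrases this as ``no nonzero tangent vector is perpendicular to the $x$-plane''); and properties (1)--(6) are read off exactly as you describe. The one place you differ, in your favor, is the Lagrangian property: the paper simply asserts that ``$\nabla F$ is symmetric by (5)'' and concludes $\om=0$ on $\graph F$, but (5) only gives symmetry of $\nabla F(x)$ for $x\in\MM$, so the paper implicitly relies on a pullback-to-$\MM\times\{0\}$ argument that it leaves to the reader. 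You supply that argument explicitly with the rate estimate $\|\nabla F(y_t)-\nabla F^\top(y_t)\|\lesssim e^{\be t}$ against $|w_i^t|\lesssim e^{-\be_0'|t|}$, which is essentially the same estimate the paper deploys later in the proof of Theorem \ref{conservativeness-F} (Steps 2--3); this fills the gap cleanly. Two minor quibbles: your rate bound $\|\nabla F-\nabla F^\top\|\lesssim e^{\be t}$ needs $\nabla F$ locally Lipschitz, i.e.\ $k'\ge 2$ (which {\bf(H4)} ensures, though the surrounding Definition~\ref{defn-linearly-stable-manifold} allows $k'\ge 1$); and the gap you actually use is $\be_*>2\be_0$, which already follows from the $k'\ge 1$ hypothesis, not the stronger $\be_*>3\be_0$ you quote.
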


\begin{proof}
First, given Lemma \ref{lem-linear-Hsystem}, we can apply the classical invariant manifold theory (recalled in Appendix \ref{app-Fenichel-theory} for reader's convenience) to find the local unstable manifold $\WW^{u}$ of $\MM\times\{0\}$. More precisely, replacing \eqref{app-ode}, $E(x)$ and $N(x)$ in Appendix \ref{app-Fenichel-theory} by \eqref{hamiltonian-system-intro}, $\EE(x,0)$ and $\NN(x,0)$, respectively, we apply Theorem \ref{app-thm-Fenichel} to find the $C^{k'}$ local unstable manifold $\WW^{u}$ of $\MM\times\{0\}$ with properties (i)-(iii).

\medskip

Next, we claim that there exist a neighborhood $\OO_0$ of $\MM$ and a $C^{k'}$ function $F:\OO_0\to \R^d$ such that $\graph F=\WW^{u}\cap \left(\OO_0\times\R^{d}\right)$. Indeed, as $\EE(x,0)=\graph Q(x)^{-1}$, any non-zero vector in $\EE(x,0)$ is not perpendicular to the $x$-plane. Clearly, $T_{(x,0)}(\MM\times\{0\})$ lies on the $x$-plane. Then, we see from (i) that all the non-zero tangent vectors of $\WW^{u}$ along $\MM\times \{0\}$ are not perpendicular to the $x$-plane. This ensures that near $\MM\times \{0\}$, $\WW^{u}$ can be regarded as the graph of some function over the $x$-plane, which shares the same regularity as $\WW^{u}$. The claim follows.

\medskip

Now, we verify desired properties of $F$.
\begin{enumerate}
\item We have shown $\graph F=\WW^{u}\cap \left(\OO_0\times\R^{d}\right)$. From which, $F=0$ on $\MM$ follows. 

\item It follows directly from (iii).

\item By (2), there is an open set $\MM\subset\OO_1\stst\OO_0$ such that $X_{x,F(x)}(t)\in \OO_0$ for all $x\in \OO_1$ and $t\leq 0$. The conclusion then follows from the negative invariance of $\WW^{u}$.

\item Since \eqref{hamiltonian-system-intro} is a Hamiltonian system, we conclude from (2) and the fact $H(x,0)=0$ that for any $x\in \OO_0$, $H(x,F(x))=\lim_{t\to -\infty} H(X_{x,F(x)}(t), P_{x,F(x)}(t))=0$.

\item Let $x\in\MM$. It follows from (i) that 
\begin{equation}\label{eqn-2023-06-26}
\begin{split}
 T_{(x,0)}\WW^{u}&=\EE(x,0)\oplus   T_{(x,0)}(\MM\times \{0\})\\
 &=\left\{(y,Q(x)^{-1}y):y\in {\range}\, Q(x)\right\} \oplus (T_x\MM\times \{0\}).
\end{split}
\end{equation}
Projecting $\EE(x,0)$ and $T_{(x,0)}(\MM\times \{0\})$ onto the first argument, we find $T_x\MM$ and ${\range}\, Q(x)$ are complementary in $\R^d$, namely,  $T_x\R^d=T_x\MM\oplus {\range}\, Q(x)$.

Since $\graph F=\WW^{u}\cap \left(\OO_0\times\R^{d}\right)$ by (1), any tangent vector in $T_{(x,0)}\WW^{u}$ takes the form $(y,\nabla F(x)y)$ for some $y\in T_x\R^d$. Comparing with \eqref{eqn-2023-06-26}, we find 
\begin{equation*}
    \nabla F(x) y=
    \begin{cases}
        0,&\quad y\in T_x\MM,\\
        Q(x)^{-1}y,& \quad  y\in {\range}\, Q(x).
    \end{cases}
\end{equation*}

\item Recall that $\Psi_{H}^t$ is the flow of the linearized system \eqref{hamiltonian-system-intro}. Since $\WW^{u}$ is a local unstable manifold and $(y_0,\nabla F(x)y_0)\in T_{(x,0)}\WW^{u}$, we see that $\Psi^t_H(y_0,\nabla F(x)y_0)\in T_{(x\cdot t, 0)}\WW^{u}$ for all  $t\leq 0$. Thus, $\Psi^t_H(y_0,\nabla F(x)y_0)=(y(t), \nabla F(x\cdot t)y(t))$ for all $t\leq0$.
\end{enumerate}

\medskip

Finally, we show that $\WW^{u}$ is a Lagrangian submanifold. Since the symplectic form $\om:=\sum_{i=1}^d dx^i\wedge dp^i$ is invariant under $\Phi^t_H$, and each point on $\WW^{u}$ is connected to some point on $\graph F$ via $\Phi^t_H$, we only need to prove that $\graph F$ is Lagrangian. Note that
$$
\om =\sum_{i=1}^d dx^i \wedge \left(\sum_{j=1}^d \pa_j F^i(x) dx^j\right)=\sum_{i=1}^d\sum_{j\neq i} \pa_j F^i(x) dx^i\wedge dx^j\quad\text{on}\quad \graph F,
$$
where $F_j$ is the $j$-th component of $F$. Since $\nabla F$ is symmetric by (5), it follows that $\om=0$ on $\graph F$. This completes the proof.
\end{proof}


\subsection{Conservativeness and regular solutions of HJE}\label{subsec-regular-sol-HJE}

Let $F$ and $\OO_1$ be as in Theorem \ref{lem-main-Hsystem} and recall $(X_{x,p}(t),P_{x,p}(t))=\Phi_H^t(x,p)$ and the constant $k'$ from Definition \ref{defn-linearly-stable-manifold}. 

\begin{thm}\label{conservativeness-F}
    If $k'\geq2$, then $F$ is conservative in $\OO_1$, that is, $\int_{\ga} F \cdot d\ga =0$ for any closed, continuous, and piecewise $C^{1}$ curve $\ga:[0,1]\to \OO_1$. 
\end{thm}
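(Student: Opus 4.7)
The approach is dynamical: we use the backward Hamiltonian flow to transport the loop integral to a neighborhood of $\MM\times\{0\}$ where $F$ vanishes, combined with the Poincar\'e--Cartan invariance of the Liouville $1$-form.

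\emph{Step 1 (Local closedness from the Lagrangian property).} Since $\WW^{u}$ is Lagrangian by Theorem \ref{lem-main-Hsystem} and $\graph F=\WW^{u}\cap(\OO_0\times\R^d)$, the symplectic form $\om=\sum_i dx^i\wedge dp^i$ vanishes on $\graph F$. Equivalently, $\nabla F$ is symmetric on $\OO_0$, so the $1$-form $F\cdot dx$ is closed and $\int_\gamma F\cdot d\gamma$ depends only on the free homotopy class of $\gamma$ in $\OO_1$. The content of the theorem is therefore the vanishing of the integral on non-contractible loops, which can occur when, e.g., $\MM$ is a limit cycle.

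\emph{Step 2 (Lifting and Poincar\'e--Cartan invariance).} Given a closed, piecewise $C^1$ loop $\gamma:[0,1]\to\OO_1$, lift it to $\Gamma(s):=(\gamma(s),F(\gamma(s)))\in\graph F\subset \WW^{u}$, so that with $\theta:=\sum_i p^i\, dx^i$ the Liouville $1$-form,
\begin{equation*}
\int_\gamma F\cdot d\gamma=\oint_\Gamma \theta.
\end{equation*}
By Theorem \ref{lem-main-Hsystem}(3), the push-forward $\Gamma_t:=\Phi_H^t\circ \Gamma$ stays in $\graph F\subset \OO_0\times\R^d$ for every $t\le 0$, and takes the form $\Gamma_t(s)=(\gamma_t(s),F(\gamma_t(s)))$ with $\gamma_t(s):=X_{\gamma(s),F(\gamma(s))}(t)$. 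A direct computation yields $\LL_{X_H}\theta = d(p\cdot\pa_p H-H)$, which is exact, hence for the closed loop $\Gamma_t$,
\begin{equation*}
\frac{d}{dt}\oint_{\Gamma_t}\theta = \oint_{\Gamma_t}\LL_{X_H}\theta = 0,\quad t\le 0,
\end{equation*}
so $\oint_\Gamma\theta=\oint_{\Gamma_t}\theta=\int_{\gamma_t}F\cdot d\gamma_t$ for every $t\le 0$.

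\emph{Step 3 (Asymptotic vanishing as $t\to-\infty$).} It suffices to show $\int_{\gamma_t}F\cdot d\gamma_t\to 0$ as $t\to-\infty$. Since $F\in C^1$ and $F|_{\MM}=0$, one has $|F(\gamma_t(s))|\lesssim\dist(\gamma_t(s),\MM)$, and Theorem \ref{lem-main-Hsystem}(2) together with $\pi(\gamma(s))\cdot t\in\MM$ gives
\begin{equation*}
\sup_s\dist(\gamma_t(s),\MM)\le\sup_s|\gamma_t(s)-\pi(\gamma(s))\cdot t|=o(e^{\be t})\quad\text{as}\quad t\to-\infty,
\end{equation*}
for any $\be\in(0,\be_*)$. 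For the derivative, $\partial_s\gamma_t(s)$ is the $x$-component of $D\Phi_H^t|_{\Gamma(s)}$ applied to the tangent vector $(\partial_s\gamma(s),\nabla F(\gamma(s))\partial_s\gamma(s))\in T_{\Gamma(s)}\WW^{u}$. The invariant fibration of $\WW^{u}$ from Theorem \ref{lem-main-Hsystem}(ii) satisfies $\pi_{\WW^{u}}\circ\Phi_H^t=(\vp^t\times\mathrm{id})\circ\pi_{\WW^{u}}$ on $\WW^{u}$ for $t\le 0$; combining this with the linearized splitting and the sharp bounds of Lemma \ref{lem-linear-Hsystem}(4) (contraction $e^{\be_* t}$ along fibers, growth $e^{\be_0'|t|}$ along the base for any $\be_0'>\be_0$) and a standard invariant-cone perturbation transfers the estimate $\|D\Phi_H^t|_{T\WW^{u}}\|\lesssim e^{\be_0'|t|}$ from $\MM\times\{0\}$ to a neighborhood of $\MM\times\{0\}$ within $\WW^{u}$, which eventually contains $\Gamma_t(s)$. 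Hence $\sup_s|\partial_s\gamma_t(s)|\lesssim e^{-\be_0' t}$ for $t\le 0$. Fixing $\be_0<\be_0'<\be<\be_*$, which is possible because $\be_0<\be_*$, we conclude
\begin{equation*}
\left|\int_{\gamma_t}F\cdot d\gamma_t\right|\lesssim e^{(\be-\be_0')t}\to 0\quad\text{as}\quad t\to-\infty,
\end{equation*}
and combining with Step 2 gives $\int_\gamma F\cdot d\gamma=0$.

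\emph{Main obstacle.} The subtle step is the tangent-vector bound in Step 3. Although the linear estimates on $\MM\times\{0\}$ (Lemma \ref{lem-linear-Hsystem}) are sharp, one must propagate them through the smooth invariant fibration of $\WW^{u}$ to control $\|D\Phi_H^t|_{T\WW^{u}}\|$ at points only close to $\MM\times\{0\}$, with a rate arbitrarily near $\be_0$ and constant uniform on the compact loop $\gamma([0,1])$. This is standard within the invariant manifold theory behind Theorem \ref{lem-main-Hsystem}, but it is the only place where the strict spectral gap $\be_0<\be_*$ — as opposed to the mere existence of $\WW^{u}$ — is essential.
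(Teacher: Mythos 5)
Your proposal follows the same backward-transport strategy as the paper: push the loop backward under $\Phi_H^t$ along $\graph F$, show the integral is invariant, and let it vanish as $t\to-\infty$ using the rate hierarchy $\be_0<\be_*$. Your Step 2 (Poincar\'e--Cartan invariance of $\oint\theta$ on $\WW^{u}$) is an elegant repackaging of the paper's Step 1, which proves $\frac{d}{dt}\int_{\Ga_t}F\cdot d\ga_t=0$ by direct computation using the symmetry of $\nabla F$ and the closedness of the loop; the two are equivalent. Your Step 3 also matches the paper's Step 3 in structure, pairing $\sup_u|F(\ga_t(u))|\lesssim e^{\be t}$ with a growth bound $\sup_u|\pa_u\ga_t(u)|\lesssim e^{\be'_0|t|}$.

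The genuine gap is precisely the latter bound, which you attribute to a ``standard invariant-cone perturbation.'' This is the technical core of the paper's proof (its Step 2) and cannot be read off from the stated conclusions of Theorem \ref{lem-main-Hsystem} or Theorem \ref{app-thm-Fenichel}: those control $\dist(\Phi_H^t(x,p),\MM\times\{0\})$ and the structure of the fibration, but say nothing quantitative about $D\Phi_H^t$ restricted to $T\WW^{u}$ away from the core $\MM\times\{0\}$. The paper establishes it by observing that $t\mapsto\pa_u\ga_t(u)$ solves the non-autonomous linear system $\dot Z=(2\nabla AF+2A\nabla F+\nabla b)(X_{\ga(u),F(\ga(u))}(t))Z$, treating this as an exponentially decaying perturbation of the corresponding linearization along $\pi(\ga(u))\cdot t\in\MM$ (whose principal fundamental matrix $\Psi_{\ZZ}$ obeys $\|\Psi_{\ZZ}(\cdot,t)\|\lesssim e^{(\be'_0-\de)|t|}$, deduced from Lemma \ref{lem-linear-Hsystem}(4) via Theorem \ref{lem-main-Hsystem}(6)), and then closing the estimate with a variation-of-constants plus Gr\"onwall iteration. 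You correctly flag this step as the subtle one, but as written it needs to be proved rather than cited; the appeal to ``standard'' cone arguments skips exactly the estimate that requires the exponential decay of $X_{x,F(x)}(t)$ towards $\MM$ and the specific rate arithmetic.
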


Theorem \ref{conservativeness-F} ensures that $F$ is the gradient of some function that we figure out now. Define 
\begin{equation}\label{def-hatV}
    \hat{V}(x):=\int_{-\infty}^0 L(X_{x,F(x)}, \dot X_{x,F(x)}), \quad x\in\OO_1,
\end{equation}
where $L$ is the Lagrangian associated with $H$ and is given in \eqref{Lagrangian}.




\begin{cor}\label{prop-hat-V-HJE}
If $k'\geq2$, then $\hat{V}$ is well-defined, non-negative, and vanishes only on $\MM$. Moreover, the following hold.  
\begin{enumerate}
 \item $\hat{V}\in C^{k'+1}(\OO_1)$ and $\nabla \hat{V}=F$.

    \item  $H(x,\nabla\hat{V}(x))=0$ for $x\in\OO_1$.
    
    \item $\text{\rm Hess}(\hat{V})|_x:T_x\R^d \otimes T_x\R^d\to \R$ is semi-positive definite and satisfies: 
    \begin{equation*}
        \text{\rm Hess}(\hat{V})|_x(y,z)
        \begin{cases}
            =0& \text{if\quad $y$ or $z\in T_x\MM$},\\
            >0& \text{if\quad $y=z\in T^{\bot}_x\MM$}, 
        \end{cases}
    \end{equation*}
where $T^{\bot}_x\MM\subset T_x\R^d$ denotes the orthogonal complement of $T_x\MM$. 
\end{enumerate}
\end{cor}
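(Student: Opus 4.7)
The plan is to prove (1) by identifying $\hat{V}$ on $\OO_1$ with the $C^{k'+1}$ primitive of $F$ furnished by Theorem \ref{conservativeness-F}, and then to extract (2) and (3) from the structural properties in Theorem \ref{lem-main-Hsystem} together with Lemma \ref{lem-Q(x)}. For well-definedness and the basic properties, I first observe that along any trajectory $(X_{x,F(x)},P_{x,F(x)})$ of the Hamiltonian system, the equation $\dot X=2A(X)P+b(X)$ reduces the Lagrangian to $L(X,\dot X)=P^{\top}A(X)P$. Theorem \ref{lem-main-Hsystem}(iii) gives the decay $|P_{x,F(x)}(t)|\lesssim e^{\beta t}$ for any $\beta\in(0,\beta_*)$ and $t\leq 0$, so the integrand is exponentially small and $\hat{V}$ is well-defined and non-negative. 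For $x\in\MM$, $F(x)=0$ (Theorem \ref{lem-main-Hsystem}(1)) and the trajectory degenerates to $(x\cdot t,0)$, forcing $L\equiv 0$ and $\hat{V}(x)=0$.

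The central step is to prove $\nabla\hat{V}=F$. By Theorem \ref{conservativeness-F}, $F$ is conservative on $\OO_1$, so path integrals yield a primitive $V_F\in C^{k'+1}(\OO_1)$, normalized by $V_F|_{\MM}=0$ (which is admissible since $F|_\MM=0$ and $\MM$ is connected). To compare $\hat{V}$ and $V_F$ along the trajectory starting at $x\in\OO_1$, I invoke the negative invariance of $\graph F$ in Theorem \ref{lem-main-Hsystem}(3), which gives $P_{x,F(x)}(t)=F(X_{x,F(x)}(t))$ for $t\leq 0$, together with Theorem \ref{lem-main-Hsystem}(4) and the conservation of $H$ along $\Phi_H^t$, which yields $H(X,P)\equiv 0$. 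The Legendre identity in Lemma \ref{Hamiltonian-Lagrangian-eqivalence} then gives
\[
L(X,\dot X)=\dot X\cdot P=\dot X\cdot F(X)=\frac{d}{dt}V_F(X(t)).
\]
Integrating from $-T$ to $0$ and letting $T\to\infty$, and using Theorem \ref{lem-main-Hsystem}(2) so that $X(-T)\to\pi(x)\in\MM$ and $V_F(X(-T))\to 0$ by continuity and normalization, I obtain $\hat{V}(x)=V_F(x)$, which yields (1). Claim (2) is then immediate from $H(x,F(x))=0$ (Theorem \ref{lem-main-Hsystem}(4)). For the ``vanishes only on $\MM$'' clause, $\hat{V}(x)=0$ forces $P_{x,F(x)}\equiv 0$ on $(-\infty,0]$, so $F(x)=0$; combined with the normal-direction positivity of the Hessian established below, this locally isolates $\MM$ as the zero set of $F$ in $\OO_1$ (after a harmless shrinking).

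For (3), since $\nabla\hat{V}=F$, the Hessian at $x\in\MM$ equals the Jacobian $\nabla F(x)$, which is symmetric by conservativeness. Theorem \ref{lem-main-Hsystem}(5) immediately gives $\nabla F(x)y=0$ for $y\in T_x\MM$, whence $\text{Hess}(\hat{V})|_x(y,z)=0$ as soon as $y$ or $z$ lies in $T_x\MM$, by symmetry. For $y=z\in T^{\bot}_x\MM\setminus\{0\}$, I decompose $y=y_T+y_N$ along the (non-orthogonal) splitting $T_x\R^d=T_x\MM\oplus\range Q(x)$ provided by Theorem \ref{lem-main-Hsystem}(5); since $y\notin T_x\MM$, the component $y_N\neq 0$ and admits a unique representation $y_N=Q(x)q$ with $q\in E(x)\setminus\{0\}$. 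Using $\nabla F(x)y_T=0$ and $\nabla F(x)y_N=Q(x)^{-1}y_N=q$, together with the identity $E(x)=\range P^{\top}(x)=T^{\bot}_x\MM$ (so $y_T\cdot q=0$), I compute
\[
\text{Hess}(\hat{V})|_x(y,y)=y\cdot q=y_N\cdot q=(Q(x)q)^{\top}q=q^{\top}Q^{\top}(x)q=q^{\top}Q(x)q\gtrsim|q|^2>0,
\]
where Lemma \ref{lem-Q(x)}(2)--(3) supplies the final equality and inequality. The same computation for arbitrary $y$ returns $q^{\top}Q(x)q\geq 0$, giving the semi-positive definiteness on all of $T_x\R^d$.

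The main anticipated obstacle lies in the domain bookkeeping for Paragraph 2: $V_F$ is furnished on $\OO_1$, but for $x\in\OO_1$ the backward trajectory $X_{x,F(x)}(t)$ is only guaranteed by Theorem \ref{lem-main-Hsystem}(3) to remain in $\OO_0\supset\OO_1$. I will resolve this by further shrinking $\OO_1$ so that, thanks to the contraction/slow-growth estimates \eqref{eqn-2023-05-29-111} and the asymptotic phase in Theorem \ref{lem-main-Hsystem}(2), the entire backward orbit lies in $\OO_1$ for all $t\leq 0$; alternatively, since $F\in C^{k'}(\OO_0)$ and the obstruction is purely topological, one may extend $V_F$ along the orbit via the path-integral definition, using that conservativeness of $F$ on $\OO_1$ together with the contractibility of a tubular neighborhood of $\MM$ onto $\MM$ forces single-valuedness of the extended primitive.
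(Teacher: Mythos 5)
Your proof is correct and follows the same overall route as the paper: produce a $C^{k'+1}$ primitive of $F$ from Theorem~\ref{conservativeness-F}, identify it with $\hat V$ via the Legendre identity $L=\dot X\cdot P$ and the fundamental theorem of calculus along the backward orbit, and then read off the Hessian structure from Theorem~\ref{lem-main-Hsystem}~(5). Where you differ, and genuinely improve the exposition, is in part~(3). The paper's argument for positivity on $T_x^\perp\MM$ is terse — it asserts ``non-degenerate on $T_x^\perp\MM$, leading to $\text{Hess}(\hat V)|_x(y,y)>0$'' — a step that implicitly needs the semi-positive-definiteness of the Hessian (which in turn follows from $\hat V\geq 0$ attaining its minimum along $\MM$). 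Your argument replaces this by the explicit computation $\text{Hess}(\hat V)|_x(y,y)=q^\top Q(x)q$, which yields both semi-positive-definiteness for all $y$ and strict positivity for $y\in T_x^\perp\MM\setminus\{0\}$ in a single, self-contained calculation via Lemma~\ref{lem-Q(x)}~(2)--(3) and the identity $E(x)=T_x^\perp\MM$; this is tighter and removes the need to invoke the minimum property separately. You also correctly flag a domain-bookkeeping issue that the paper does not address explicitly: the backward orbit $X_{x,F(x)}(t)$ is guaranteed only to stay in $\OO_0$, not in $\OO_1$ where the primitive of $F$ lives, so a further (harmless) shrinking of $\OO_1$ is required before applying the FTC — a real but easily repaired gap. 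Your alternative justification of the ``vanishes only on $\MM$'' clause (via $L=P^\top AP$, $\hat V(x)=0\Rightarrow P\equiv 0\Rightarrow F(x)=0$, plus normal Hessian positivity) is likewise valid and arguably more transparent than the paper's one-line assertion.
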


The rest of this subsection is devoted to the proof of Theorem \ref{conservativeness-F} and Corollary \ref{prop-hat-V-HJE}.

\begin{proof}[Proof of Theorem \ref{conservativeness-F}]
Let $\Ga$ be a closed, continuous, and piecewise $C^{1}$ curve in $\OO_1$ parameterized by $\ga:[0,1]\to \OO_1$. Then, $\ga(0)=\ga(1)$. For each $t\leq0$, let $\Ga_t$ be the curve parameterized by 
\begin{equation*}
\ga_t(u):=X_{\ga(u),F(\ga(u))}(t),\quad u\in [0,1].
\end{equation*}
In particular, $\ga_0=\ga$. We break the proof into three steps.

\medskip
\paragraph{\bf Step 1}
We show $\int_{\Ga} F \cdot d\ga=\int_{\Ga_t} F\cdot d\ga_t$ for all $t\leq 0$.   

Straightforward calculations show that 
\begin{equation*}
\begin{split}
    \frac{d }{d t}\int_{\Ga_t} F\cdot d\ga_t&=\frac{d }{d t}\int_0^1 F(\ga_t(u))\cdot \frac{\pa \ga_t(u)}{\pa u}du\\
    &=\int_0^1 \left[\nabla F(\ga_t(u))\frac{\pa \ga_t(u)}{\pa t}\cdot\frac{\pa \ga_t(u)}{\pa u}+F(\ga_t(u))\frac{\pa^2 \ga_t(u)}{\pa t\pa u}\right]du\\
    &=\int_0^1\frac{\pa }{\pa u}\left[ F(\ga_t(u))\cdot \frac{\pa \ga_t(u)}{\pa t}\right]du=F(\ga_t(1))\cdot \frac{\pa \ga_t(1)}{\pa t}-F(\ga_t(0))\cdot \frac{\pa \ga_t(0)}{\pa t}=0,
\end{split}
\end{equation*}
where we used the symmetry of $\nabla F$ in the third equality. The result follows.

\medskip

\paragraph{\bf Step 2} Let $\be'_0\in (\be_0,\be_*)$ satisfy $\frac{\be'_0}{\be_*-\be'_0}<\frac{1}{k'}$ and be fixed. We prove $\sup_{u\in [0,1]} \left|\frac{\pa \ga_t(u)}{\pa u}\right|\lesssim e^{\be'_0 |t|}$ for all $t\leq 0$.

The definition of $\ga_t$ ensures that for each $u\in[0,1]$, $t\mapsto\left(\ga_t(u),F(\ga_t(u))\right)$ is a solution of \eqref{hamiltonian-system-intro}. Hence, $\frac{\partial\ga_t(u)}{\partial t}=H_p\left(\ga_t(u),F(\ga_t(u))\right)=(2AF+b)(\ga_t(u))$. Differentiating this equation with respect to $u$ results in
\begin{equation}\label{eqn-2023-06-04-1}
    \frac{\pa^{2}\ga_t(u)}{\pa t\pa u}=(2\nabla AF+2A\nabla F+\nabla b)(\ga_t(u))\frac{\pa \ga_t(u)}{\pa u},
\end{equation}
where $\nabla AF$ is a $d\times d$ matrix whose $i,j$-entry is $\sum_{k}\partial_{j}a^{ik}F_{k}$, in which, $F_{k}$ is the $k$-th component of $F$. Since $\ga_t(u)=X_{\ga(u),F(\ga(u))}(t)$, we see from \eqref{eqn-2023-06-04-1} that $t\mapsto\frac{\pa \ga_t(u)}{\pa u}$ solves the linear system
\begin{equation}\label{eqn-2022-04-26-2}
    \dot Z=(2\nabla AF+2A\nabla F+\nabla b)(X_{x,F(x)}(t)) Z,\quad t\leq0.
\end{equation}
with $x=\ga(u)\in \OO_1$ and $Z(0)=\frac{\pa\ga_0(u)}{\pa u}$.

For $x\in\OO_{1}$, denote by $Z^{x}$ the unique solution of \eqref{eqn-2022-04-26-2} with initial condition $Z^{x}(0)\in\R^{d}$. We claim that 
\begin{equation}\label{eqn-2023-07-20-1}
    \begin{split}
        |Z^{x}(t)|\lesssim e^{\be'_0 |t|}|Z^{x}(0)|,\quad\forall t\leq 0.
    \end{split}
\end{equation}
Since $\sup_{u\in[0,1]}\left|\frac{\pa\ga_0(u)}{\pa u}\right|<\infty$, the conclusion in this step follows readily from \eqref{eqn-2023-07-20-1}. 

We verify \eqref{eqn-2023-07-20-1} to finish the proof of {\bf Step 2}. Fix $x\in\OO_1$ and $\be\in (\beta_0',\be_*)$. Theorem \ref{lem-main-Hsystem} (2) yields
\begin{equation}\label{eqn-2022-04-26-5}
    \lim_{t\to -\infty}e^{\be |t|}\sup_{y\in\OO_1}\left|X_{y,F(y)}(t)-\pi(y)\cdot t\right|=0,
\end{equation}
Since $\pi(x)\cdot t\in\MM$ for all $t\leq0$ and $F=0$ on $\MM$ by Theorem \ref{lem-main-Hsystem} (1), we can regard \eqref{eqn-2022-04-26-2} as a perturbation of the following linear system: 
\begin{equation*}\label{eqn-2022-04-26-2-1}
    \dot{\ZZ}=(2A\nabla F+\nabla b)(\pi(x)\cdot t)\ZZ,\quad t\leq0.
\end{equation*}
Denoted by $\Psi_{\ZZ}(\pi(x),t)$ its principal fundamental matrix solution at initial time $0$. Fix $0<\de\ll 1$ such that $\be'_0-\de\in (\be_0,\be_*)$ and $\frac{\be'_0-\de}{\be_*-\be'_0+\de}<\frac{1}{k'}$.  Note that 
\begin{equation}\label{eqn-2022-04-26-4}
    \|\Psi_{\ZZ}(\pi(x),t)\|\lesssim e^{(\be'_0-\de) |t|},\quad t\leq 0.
\end{equation}
Indeed, we deduce from Theorem \ref{lem-main-Hsystem} (6) that 
$$
(y(t),q(t)):=\Psi_H(\pi(x),t)\left(y_0,\nabla F(\pi(x))y_0\right)=\left(y(t), \nabla F(\pi(x)\cdot t)y(t)\right).
$$
Then, the $y$-equation in \eqref{H-ODE-linearized} becomes $\dot {y}=2A(\pi(x)\cdot t) \nabla F(\pi(x)\cdot t)y+\nabla b(\pi(x)\cdot t)y$, which is just the equation for $\ZZ$. It follows from Lemma \ref{lem-linear-Hsystem} that $\|\Psi_{\ZZ}(\pi(x),t)\|\leq \|\Psi_H(\pi(x),t)\|\lesssim e^{(\be'_0 -\de)|t|}$ for all $t\leq 0$, that is, \eqref{eqn-2022-04-26-4} is true.

Setting
$$
G^{x}(t):=(2\nabla AF+2A\nabla F+\nabla b)(X_{x,F(x)}(t))-(2A\nabla F+\nabla b)(\pi(x)\cdot t),
$$
we can rewrite \eqref{eqn-2022-04-26-2} as
\begin{equation*}
    \dot Z=(2A\nabla F+\nabla b)(\pi(x)\cdot t)Z+G^{x}(t) Z,\quad t\leq0.
\end{equation*}
An application of the variation of constants formula yields
\begin{equation}\label{eqn-2022-04-26-3}
    Z^{x}(t)=\Psi_{\ZZ}(\pi(x),t)Z^{x}(0)+\int_0^t \Psi_{\ZZ}(\pi(x)\cdot s,t-s)G^{x}(s) Z^{x}(s)ds,\quad t\leq0.
\end{equation}

Since $\pi(x)\cdot t\in\MM$ for all $t\leq0$ and $F=0$ on $\MM$, we see that 
\begin{equation}\label{estimate-2024-10-02}
    \begin{split}
    \left|F(X_{x,F(x)}(t))\right|\leq \|\nabla F\|_{L^{\infty}(\OO_1)}\times \left|X_{x,F(x)}(t)-\pi(x)\cdot t\right|\lesssim e^{\be t},\quad \forall t\leq 0,
    \end{split}
\end{equation}
and thus,
\begin{equation*}
\begin{split}
    |G^{x}(t)|&\leq 2|(\nabla A F)(X_{x,F(x)}(t))|+\left\|\nabla (2A\nabla F+\nabla b)\right\|_{L^{\infty}(\OO_1)}\times |X_{x,F(x)}(t)-\pi(x)\cdot t|\\
    &\lesssim |X_{x,F(x)}(t)-\pi(x)\cdot t|
    \lesssim e^{\be t},\quad \forall t\leq 0,
\end{split}
\end{equation*}
where we used \eqref{eqn-2022-04-26-5} in the last inequality. It follows from \eqref{eqn-2022-04-26-4} and \eqref{eqn-2022-04-26-3} that there is $C>0$ such that
\begin{equation}\label{eqn-Mar-25}
    \begin{split}
        |Z^{x}(t)|&\leq Ce^{(\be'_0-\de) |t|}|Z^{x}(0)|+C\int_{t}^0 e^{(\be'_0-\de) |t-s|}e^{\be s}|Z^{x}(s)|ds\\
        & = Ce^{-(\be'_0-\de)t}|Z^{x}(0)|+C\int_t^0 e^{-(\be'_0-\de)(t-s)+\be s} |Z^{x}(s)|ds,\quad \forall t\leq 0.
    \end{split}
\end{equation}
Setting $f(t):=\int_t^0 e^{-(\be'_0-\de)(t-s)+\be s} |Z^{x}(s)|ds$, we derive from $f'(t)=-e^{\be t}|Z^{x}(t)|-(\be'_0-\de) f(t)$ and \eqref{eqn-Mar-25} that 
$$
-f'(t)-(\be'_0-\de) f(t)\leq Ce^{(\be-\be'_0+\de) t}|Z^{x}(0)|+Ce^{\be t}f(t),\quad t\leq 0.
$$
Clearly, there exists $t_0=t_0(\de)<0$ such that $e^{\be t}\leq \de$ for $t\leq t_0$, and thus, $f'(t)+\be'_0 f(t)+Ce^{(\be-\be'_0+\de) t}|Z^{x}(0)|\geq 0$. An application of Gr\"onwall's inequality results in
$$
f(t)\leq C e^{\be'_0(t_0-t)}f(t_0)+\frac{C|Z^{x}(0)|}{\be+\de} e^{(\be+\de)t_0-\be'_0t}\lesssim e^{-\be'_0 t},\quad \forall t\leq t_0,
$$
which together with \eqref{eqn-Mar-25} leads to $|Z^{x}(t)|\lesssim e^{-(\be'_0-\de) t}|Z^{x}(0)|+e^{-\be'_0 t}\lesssim e^{\be'_0 |t|}$ for all $t\leq 0$, proving \eqref{eqn-2023-07-20-1}.

\medskip

\paragraph{\bf Step 3} Since $\ga_t(u)=X_{\ga(u),F(\ga(u))}(t)$, we apply \eqref{estimate-2024-10-02} to find $\sup_{u\in [0,1]}|F(\ga_t(u))|\lesssim e^{\be t}$ for $t\leq 0$. It follows from  {\bf Step 2} that
\begin{equation*}
    \left|\int_{\Ga_t} F \cdot d\ga_t\right|=\left|\int_0^1 F(\ga_t(u))\cdot  \frac{\pa \ga_t(u)}{\pa u} du\right|\lesssim e^{(\be-\be'_0)t}, \quad t\leq 0,
\end{equation*}
which together with {\bf Step 1} yields $\int_{\Ga} F \cdot d\ga= \lim_{t\to -\infty}\int_{\Ga_t} F \cdot d\ga_t=0$. This completes the proof.
\end{proof}


Finally, we prove Corollary \ref{prop-hat-V-HJE}.

\begin{proof}[Proof of Corollary \ref{prop-hat-V-HJE}]
By Theorem \ref{conservativeness-F}, there exists $\tilde{V}\in C^{k'+1}(\OO_1)$ such that $F=\nabla \tilde{V}$ in $\OO_1$. The fact $F=0$ on $\MM$ ensures that $\tilde{V}$ is constant on $\MM$. Since $X_{x,F(x)}(t)$ approaches $\MM$ as $t\to -\infty$ thanks to Theorem \ref{lem-main-Hsystem} (2), we find $\lim_{t\to -\infty}\tilde{V}(X_{x,F(x)}(t))=\tilde{V}|_{\MM}$.

Let $x\in \OO_1$. Note that Theorem \ref{lem-main-Hsystem} (4) yields 
$$
H(X_{x,F(x)}(t),P_{x,F(x)}(t))=H(X_{x,F(x)}(t),F(X_{x,F(x)}(t)))=0,\quad\forall  t\leq 0.
$$
It then follows from Lemma \ref{Hamiltonian-Lagrangian-eqivalence} that
\begin{equation*}
\begin{split}
    L(X_{x,F(x)}(t), \dot X_{x,F(x)}(t))&=\dot X_{x,F(x)}(t)\cdot P_{x,F(x)}(t)-H(X_{x,F(x)}(t),P_{x,F(x)}(t))\\
    &=\dot X_{x,F(x)}(t) \cdot F(X_{x,F(x)}(t)),\quad \forall t\leq 0,
\end{split}
\end{equation*}
and hence, 
$$
\hat{V}(x)=\int_{-\infty}^0 \dot X_{x,F(x)} \cdot F(X_{x,F(x)})=\left.\tilde{V}(X_{x,F(x)}(t))\right|_{t=-\infty}^0=\tilde{V}(x)-\tilde{V}|_{\MM}.
$$ 
This indicates that $\hat{V}$ is well-defined and vanishes only on $\MM$. Hence, (1) holds. The non-negativeness of $\hat{V}$ follows directly from its definition in \eqref{def-hatV}. Theorem \ref{lem-main-Hsystem} (4)  yields (2) readily.




For (3), we note that $\text{Hess}(\hat{V})|_x (y,z)=\langle y, \nabla F(x)z\rangle$ for $(y,z)\in T_x\R^d\otimes T_x\R^d$. This together with Theorem \ref{lem-main-Hsystem} (5) implies that 
$\text{Hess}(\hat{V})|_x (y,z)=0$ if $y$ or $z\in T_x\MM$.  As $T^{\bot}_x\MM$ is complementary to $T_x\MM$, we deduce that $\text{Hess}(\hat{V})|_x$ is non-degenerate on $T^{\bot}_x\MM$, leading to $\text{Hess}(\hat{V})|_x (y,y)>0$ for $y\in T^{\bot}_x\MM$. This completes the proof.
\end{proof}


\section{\bf Regularity of the quasi-potential}\label{sec-regularity-quasi-potential}

In this section, we study the regularity of the quasi-potential $V$, defined in \eqref{def-quasi-potential-V-FW-sense}, under the additional assumption that $\AAa$ is a normally contracting invariant manifold (see Definition \ref{defn-linearly-stable-manifold}). In particular, we prove Theorems \ref{thm-regularity-V} and \ref{thm-global-regularity} in Subsections \ref{subsec-local-regularity} and \ref{subsec-global-regularity}, respectively.


\subsection{Local regularity}\label{subsec-local-regularity}

In this subsection, we apply the theory developed in Section \ref{sec-regular-sol-HJE} to study the regularity of $V$ in a neighborhood of the maximal attractor $\AAa$ and prove Theorem \ref{thm-regularity-V}. We first prove a local uniqueness result asserting that any non-negative $C^1$ solution of the HJE \eqref{eqn-HJE} that vanishes only on $\AAa$ must coincide with $V$ in a neighborhood of $\AAa$. 

Recall that $\OO_{\AAa,\Om}$ denotes the set of open and connected sets $\OO$ satisfying $\AAa\subset \OO\stst \Om$. Let $\OO\in\OO_{\AAa,\Om}$. For $U\in C(\OO)$, set $\rho_{U,\OO}:=\liminf_{x\to \pa\OO} U(x)$, $\OO^{U}:=\{x\in \OO: U(x)<\rho_{U,\OO}\}$, and $\OO^{U}_{\rho}:=\{x\in \OO: U(x)<\rho\}$ for $\rho\in(0,\rho_{U,\OO})$. Recall from Corollary \ref{cor-local-variational-representation} that 
\begin{equation}\label{minimize-2024-10-07}
V(x)=\min_{\phi\in\Phi_{x,\OO}}I(\phi),\quad\forall x\in \OO^{V},    
\end{equation}
where $\Phi_{x,\OO}=\left\{\phi\in AC((-\infty,0];\OO):\dot{\phi}\in L^2_{loc}((-\infty,0];\R^{d}),\,\,\phi(0)=x,\,\,\lim_{t\to -\infty}\dist(\phi(t),\AAa)=0\right\}$.

\begin{lem}\label{lem-unique-minimizer}
Assume {\bf(H1)}-{\bf(H3)}. Let $\OO\in\OO_{\AAa,\Om}$. Assume that \eqref{eqn-HJE} admits a solution $W\in C^1(\OO)$ that is non-negative and vanishes only on $\mathcal{A}$. Then, the following hold for each $x\in\OO^W\cap \OO^V$.
\begin{itemize}
    \item[(1)] $W(x)=V(x)$.
    \item[(2)] $\phi$ is a minimizer corresponding to $V(x)$ (see Definition \ref{defn-minimizer-V}) if and only if $\phi\in\Phi_{x,\OO}$ and solves
\begin{equation}\label{eqn-Aug-23}
    \dot{\phi}=2A(\phi)\nabla V(\phi)+b(\phi)\quad\text{a.e. on }\quad(-\infty,0).
\end{equation} 

\item[(3)] If, in addition, $\nabla W$ is locally Lipschitz, then there is a unique minimizer corresponding to $V(x)$, and the unique minimizer belongs to $C^{1}((-\infty,0];\OO^{W}\cap\OO^{V})$.
\end{itemize}

\end{lem}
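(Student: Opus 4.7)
The core of the proof is Fenchel--Young duality applied to the Hamiltonian together with the HJE $H(x,\nabla W(x))=0$ on $\OO$. By Lemma \ref{Hamiltonian-Lagrangian-eqivalence}, for every $(x,v)\in \OO\times\R^{d}$,
\begin{equation*}
L(x,v)\geq \nabla W(x)\cdot v-H(x,\nabla W(x))=\nabla W(x)\cdot v,
\end{equation*}
with equality if and only if $v=2A(x)\nabla W(x)+b(x)$. This single pointwise inequality drives the entire argument.

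For the lower bound, integrating Legendre along any $\phi\in\Phi_{x,\OO}$ yields
\begin{equation*}
I(\phi)\geq \int_{-\infty}^{0}\nabla W(\phi)\cdot\dot\phi\,dt=W(x)-\lim_{t\to-\infty}W(\phi(t))=W(x),
\end{equation*}
using $W=0$ on $\AAa$ (since $W$ is nonnegative and vanishes only on $\AAa$) and $\phi(t)\to\AAa$; combined with \eqref{minimize-2024-10-07} this gives $V(x)\geq W(x)$. For the reverse inequality I would construct an optimal backward characteristic $\psi$ by solving $\dot\psi=2A(\psi)\nabla W(\psi)+b(\psi)$ with $\psi(0)=x$ on $(-\infty,0]$. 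Peano's theorem supplies local existence (note $\nabla W$ is only continuous), and along such $\psi$ the Legendre bound is an equality, so
\begin{equation*}
\tfrac{d}{dt}W(\psi(t))=\nabla W(\psi)^{\top}A(\psi)\nabla W(\psi)\geq 0.
\end{equation*}
Hence $W(\psi(t))\leq W(x)<\rho_{W,\OO}$ for all $t\leq 0$, trapping $\psi$ in the sublevel set $\{y\in\OO:W(y)\leq W(x)\}$, which is compact in $\OO$ because the liminf defining $\rho_{W,\OO}$ keeps $W$ bounded below by a value strictly greater than $W(x)$ on a neighborhood of $\partial\OO$ in $\OO$. This yields global existence of $\psi$ on $(-\infty,0]$ and a monotone limit $\ell:=\lim_{t\to-\infty}W(\psi(t))\in[0,W(x)]$.

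The principal technical obstacle is then to show $\ell=0$ and $\dist(\psi(t),\AAa)\to 0$, which would give $\psi\in\Phi_{x,\OO}$ and $I(\psi)=W(x)-\ell=W(x)$, completing (1). I would run a LaSalle-type argument: the finite-energy identity $\int_{-\infty}^{0}\nabla W(\psi)^{\top}A(\psi)\nabla W(\psi)\,dt=W(x)-\ell$, continuity of the integrand, and positive definiteness of $A$ force $\nabla W\equiv 0$ on the $\alpha$-limit set $\Lambda$ of $\psi$, which is compact and lies in $\{W=\ell\}$. On $\Lambda$ the perturbed ODE collapses to $\dot\psi=b(\psi)$, making $\Lambda$ invariant under $\varphi^{t}$ (in the Peano-level sense one gets for $\omega$-limit sets of continuous vector fields). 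Since $\Lambda\subset\OO\subset\Om$ is compact and $\varphi^{t}$-invariant, hypothesis {\bf(H1)} forces $\Lambda\subset\AAa$, whence $\ell=W|_{\AAa}=0$. Promoting the energy-integrability statement to the uniform conclusion $\nabla W\equiv 0$ on $\Lambda$ despite the lack of uniqueness of the perturbed flow is the most delicate step, and is where I anticipate having to be most careful.

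Parts (2) and (3) follow from the same machinery. For ($\Leftarrow$) in (2), Legendre equality along a $\phi\in\Phi_{x,\OO}$ solving the characteristic ODE gives $I(\phi)=V(x)$ by the same telescoping computation as for $\psi$. For ($\Rightarrow$), any minimizer $\phi$ satisfies the dynamic programming identity $V(\phi(s))\leq V(x)<\rho_{V,\OO}$ for $s\leq 0$, and a continuity argument combined with (1) keeps $\phi$ inside $\OO^{W}\cap\OO^{V}$, where $\nabla V=\nabla W$; since $I(\phi)=V(x)=W(x)$, the Legendre inequality must saturate a.e.\ along $\phi$, forcing $\dot\phi=2A\nabla V(\phi)+b(\phi)$. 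Part (3) is then immediate: local Lipschitzness of $\nabla W$ renders the characteristic ODE uniquely solvable backward from $x$, so by (2) the minimizer is unique, is $C^{1}$ by the ODE, and remains in $\OO^{W}\cap\OO^{V}$ thanks to the Lyapunov monotonicity $\frac{d}{dt}W(\phi)\geq 0$ established above.
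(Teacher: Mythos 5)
Your proof follows the same architecture as the paper's: the Fenchel--Young inequality $L(x,v)\geq\nabla W(x)\cdot v$ with equality iff $v=2A\nabla W+b$ is exactly the ``completing the square'' identity \eqref{eqn-Oct-6} that the paper establishes, and the rest of the argument (Peano backward characteristic, trapping in a $W$-sublevel set, telescoping $\int\nabla W\cdot\dot\psi$) is identical. The only genuine divergence is in the final step, proving $\ell:=\lim_{t\to-\infty}W(\psi(t))=0$. The paper invokes Fleming's result \cite[Lemma 3.1]{F78}: if $\ell>0$ then $\psi$ stays in the compact set $\{y\in\OO:\ell\leq W(y)\leq W(x)\}$ which contains no $\vp^t$-invariant set, and Fleming's lemma then forces $\int_{-\infty}^0 L(\psi,\dot\psi)=\infty$, contradicting the finite-energy identity. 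Your LaSalle argument proves the same contradiction from scratch rather than citing it, which is a perfectly valid and more self-contained route. Each approach buys what you'd expect: the paper's is shorter at the cost of an external reference; yours is longer but elementary.

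The step you flag as ``most delicate'' -- promoting $\int_{-\infty}^0\nabla W(\psi)^{\top}A(\psi)\nabla W(\psi)\,dt<\infty$ to $\nabla W\equiv0$ on the $\alpha$-limit set $\Lambda$ -- is in fact not jeopardized by non-uniqueness of the perturbed flow, and can be closed cleanly. Suppose $\nabla W(y)\neq0$ for some $y\in\Lambda$; pick a ball $B_r(y)\subset\OO$ on which $\nabla W^{\top}A\nabla W\geq\delta>0$. Since $\psi$ lies in a fixed compact subset of $\OO$, its velocity $\hat b(\psi)$ is bounded by some $M$, so each passage from $\pa B_r(y)$ into $B_{r/2}(y)$ costs at least $r/(2M)$ units of time spent in $B_r(y)$, contributing at least $\delta r/(2M)$ to the integral. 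As $\psi(t_n)\to y$ along infinitely many $t_n\to-\infty$, either $\psi$ eventually stays in $B_r(y)$ (contributing $+\infty$) or it exits and re-enters infinitely often (also $+\infty$); either way the integral diverges, a contradiction. The invariance of $\Lambda$ under $\vp^t$ then follows because on $\Lambda$ the perturbed vector field $2A\nabla W+b$ collapses to $b$, whose flow is unique by the $C^k$ regularity of $b$; weak invariance of the $\alpha$-limit set under the perturbed (Peano-level) dynamics therefore becomes genuine $\vp^t$-invariance, and {\bf(H1)} forces $\Lambda\subset\AAa$. One last small point: in part (3) you invoke only the monotonicity of $W\circ\phi$ to keep $\phi$ in $\OO^W\cap\OO^V$; you should also note, as the paper does, that $V\circ\phi$ is nondecreasing by dynamic programming, which is what traps $\phi$ in $\OO^V$.
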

\begin{proof}
It is more or less a rephrase of \cite[Theorem 5.4.3]{FW12} that is adapted to meet our needs. We provide the proof for reader's convenience. Set $\ell:=b+A\nabla W$ and $\hat{b}:= A\nabla W+\ell=2A\nabla W+b$ in $\OO$. As $W$ solves the HJE \eqref{eqn-HJE}, that is, $W$ satisfies 
\begin{equation}\label{HJE-W}
    \nabla W\cdot A\nabla W+b\cdot\nabla W=0\quad \text{in} \quad \OO, 
\end{equation}
we find
\begin{equation}\label{orthogonal-decomp}
    \nabla W\cdot \ell=0\quad \text{in} \quad \OO.
\end{equation}

In the rest of the proof, we fix some $x\in\OO^V\cap \OO^W$. We first claim that
\begin{equation}\label{eqn-Oct-6}
    I(\phi)=\frac{1}{4}\int_{-\infty}^0 \left[\hat{b}(\phi)-\dot \phi\right] \cdot A^{-1}(\phi) \left[\hat{b}(\phi)-\dot\phi\right]+W(x),\quad \forall\phi\in\Phi_{x,\OO}.
\end{equation}
Indeed, for $\phi\in \Phi_{x,\OO}$, we apply the definition of $\hat{b}$ to calculate
\begin{equation*}
    \begin{split}
        I(\phi)
        &=\frac{1}{4}\int_{-\infty}^0 \left[\hat{b}(\phi)-\dot \phi\right] \cdot A^{-1}(\phi) \left[\hat{b}(\phi)-\dot\phi\right]-\int_{-\infty}^0 \nabla W(\phi)\cdot \left[\hat{b}(\phi)-\dot\phi\right]+\int_{-\infty}^0\nabla W(\phi)\cdot A(\phi)\nabla W(\phi).
    \end{split}
\end{equation*}
It follows from \eqref{HJE-W} that
\begin{equation*}
    \begin{split}
        &I(\phi)-\frac{1}{4}\int_{-\infty}^0\left[\hat{b}(\phi)-\dot \phi\right] \cdot A^{-1}(\phi) \left[\hat{b}(\phi)-\dot\phi\right]\\
        &\qquad=-\int_{-\infty}^0 \nabla W(\phi)\cdot \left[\hat{b}(\phi)-\dot \phi+b(\phi)\right]\\
        &\qquad=-\int_{-\infty}^0 \nabla W(\phi)\cdot \left[2\ell(\phi)-\dot \phi\right]=\int_{-\infty}^0 \nabla W(\phi)\cdot \dot \phi=W(\phi(0))-\lim_{t\to-\infty}W(\phi(t))=W(x),
    \end{split}
\end{equation*}
where we used \eqref{orthogonal-decomp} in the third equality and $W=0$ on $\AAa$ in the last equality.

Given \eqref{minimize-2024-10-07} and \eqref{eqn-Oct-6}, it is clear that if there exists a $\hat\phi\in\Phi_{x,\OO}$ satisfying \eqref{eqn-Aug-23} with $V$ reaplaced by $W$, then $\hat{\phi}$ is a minimizer corresponding to $V(x)$ and $V(x)=W(x)$, proving (1). 

Now, we construct such a $\hat{\phi}$. Note that $\hat{b}$ is only continuous in $\OO$ so that the classical local well-posedness result for ODEs can not be applied here. Note that for fixed $\rho_1,\rho_2\in (0,\rho_{W,\OO})$ with $\rho_1<\rho_2$, there holds $\OO_{\rho_1}^{W}\stst \OO_{\rho_2}^{W}\stst \OO$. Since $\hat{b}$ is uniformly bounded on $\ol{\OO^{W}_{\rho_2}}$, a careful examination of the proof of the classical Peano's existence theorem yields a $\de>0$ such that for each $y\in \OO_{\rho_1}^W$, \eqref{eqn-Aug-23} admits a solution $\phi_y\in C^1((-\de,0];\OO_{\rho_2}^W)$ with $\phi_{y}(0)=y$. An application of \eqref{HJE-W} results in 
\begin{equation*}
    \begin{split}
        \frac{d}{dt}W(\phi_y(t))&=(2\nabla W\cdot A\nabla W+b\cdot \nabla W)(\phi_y(t))=(\nabla W\cdot A\nabla W)(\phi_y(t))\geq0.
    \end{split}
\end{equation*}
That is, $t\mapsto W(\phi_y(t))$ is non-decreasing on $(-\de,0]$. Hence,  $\phi_y(t)\in\OO_{\rho_1}^W$ for all $t\in (-\de,0]$. Since this is true for any $y\in \OO^W_{\rho_1}$, we can extend $\phi_y$ to be $\hat{\phi}_{y}$ in $\OO_{\rho_1}^W$ over $(-\infty,0]$ as follows: 
\begin{equation*}
    \begin{split}
        \hat{\phi}_{y}(t)=\phi_{y}(t),&\quad t\in(-\de,0],\\
        \hat{\phi}_{y}(t)=\phi_{\hat{\phi}_{y}(-k\de)}(t+k\de),&\quad t\in(-(k+1)\de,-k\de], \quad k=1,2,\dots.
    \end{split}
\end{equation*}
Obviously, $\hat{\phi}_{y}:(-\infty,0]\to\OO_{\rho_1}^W$ is absolutely continuous and satisfies \eqref{eqn-Aug-23}. We make $\de$ smaller (if necessary) so that $\phi_y\in C^1([-\de,0];\OO_{\rho_2}^W)$. Then, $\dot{\hat{\phi}}_{y}\in L^{2}_{loc}((-\infty,0];\R^{d})$. 

As $\OO^W=\bigcup_{\rho\in (0,\rho_{W,\OO})} \OO^W_{\rho}$, we follow the above procedure to find a $\hat{\phi}_x\in AC((-\infty,0];\OO)$ satisfying $\dot{\hat{\phi}}_{x}\in L^{2}_{loc}((-\infty,0];\R^{d})$ and \eqref{eqn-Aug-23}. We prove $\lim_{t\to -\infty}\dist(\hat\phi_x(t),\mathcal{A})=0$ so that $\hat{\phi}_{x}\in \Phi_{x,\OO}$. Since $t\mapsto W(\hat{\phi}_{x}(t))$ is non-decreasing and $W$ is non-negative and vanishes only on $\mathcal{A}$, it suffices to prove $\underline{\rho}:=\lim_{t\to -\infty}W(\hat{\phi}_{x}(t))=0$. Suppose on the contrary that $\underline{\rho}>0$. Then, $\hat{\phi}_{x}$ stays in $\OO\sm \OO^W_{\underline{\rho}}$, which is away from $\mathcal{A}$. Noting that \eqref{main-ode} has no invariant set in $\OO\sm \OO^W_{\underline{\rho}}$, we apply \cite[Lemma 3.1]{F78} to conclude that $W(x)-\lim_{t\to -\infty}W(\hat\phi_x(t))=\int_{-\infty}^0 L(\hat{\phi}_{x},\dot{\hat{\phi}}_{x})=\infty$, leading to a contradiction. Hence, $\lim_{t\to -\infty}W(\hat{\phi}_{x}(t))=0$. Clearly, $\hat{\phi}:=\hat{\phi}_{x}$ satisfies all the requirements. 

\medskip

Now, we prove (2). The sufficiency is an immediate consequence of \eqref{minimize-2024-10-07} and \eqref{eqn-Oct-6}. To show the necessity, let $\phi$ be a minimizer corresponding to $V(x)$. It follows from the standard dynamic programming arguments that for each $t\leq 0$, $\phi(\cdot +t)$  is a minimizer corresponding to $V(\phi(t))$. 
Thus, $t\mapsto V(\phi(t))$ is increasing and $\phi(t)\in \ol{\OO^V_{\rho}}\subset \OO$ for all $t\in (-\infty,0]$ with $\rho:=V(x)$. This shows $\phi\in \Phi_{x,\OO}$. Thanks to \eqref{eqn-Oct-6} and $V(x)=I(\phi)=W(x)$, we arrive at $\int_{-\infty}^0 \left[\hat{b}(\phi)-\dot{\phi}\right] \cdot A^{-1}(\phi) \left[\hat{b}(\phi)-\dot{\phi}\right]=0$. Hence, $\phi$ satisfies \eqref{eqn-Aug-23}.

\medskip


For (3), we note that if $W$ is locally Lipschitz, so is $\hat{b}$, and therefore, the classical local well-posedness result of ODE applies, leading to the uniqueness of the minimizer corresponding to $V(x)$. If $\phi$ is the unique minimizer corresponding to $V(x)$, the monotonicity of $t\mapsto W(\phi(t))$ and $t\mapsto V(\phi(t))$ yields that $\phi\in C^{1}((-\infty,0];\OO^{W}\cap\OO^{V})$.
\end{proof}

Now, we prove Theorem \ref{thm-regularity-V}.

\begin{proof}[\bf Proof of Theorem \ref{thm-regularity-V}] 
Under the assumption of Theorem \ref{thm-regularity-V}, we can apply Theorem \ref{lem-main-Hsystem} and Corollary \ref{prop-hat-V-HJE} to find the domain $\AAa\subset\OO_1\stst\Om$ and a function $\hat{V}\in C^{k'+1}(\OO_1)$ defined in \eqref{def-hatV}. It then follows from Lemma \ref{lem-unique-minimizer} (with $W=\hat{V}$ and $\OO=\OO_1$) that $V=\hat{V}$ in $\OO_{2}:=\OO_1^{\hat V}\cap \OO_1^V$.  Other conclusions in the statement of Theorem \ref{thm-regularity-V} follow from Theorem \ref{lem-main-Hsystem}, Corollary \ref{prop-hat-V-HJE}, and Lemma \ref{lem-unique-minimizer}. 
\end{proof}

\subsection{Global regularity}\label{subsec-global-regularity}

We study the global regularity of $V$ and prove Theorem \ref{thm-global-regularity}. Theorem \ref{thm-global-regularity} actually follows from the strategies laid out in \cite{DD85} (treating the problem in the case that $\AAa$ is a linearly stable equilibrium) that build upon two key elements. One is the attainability of the infimum in the variational formula \eqref{def-quasi-potential-V-FW-sense} used to define the quasi-potential $V$ (see Lemma \ref{lem-quasi-potential-basic-results}), that is, for each $x\in\Om^{V}$, there exist minimizers corresponding to $V(x)$ (see Definition \ref{defn-minimizer-V}). The other is stated in the following result. 

\begin{lem}\label{lem-minimizer-on-unstable-manifold}
    Assume {\bf(H1)}-{\bf(H4)}. Let $x_0\in\Om^{V}$. Suppose $x(t)$ is a minimizer corresponding to $V(x_0)$ and set $p(t):=\partial_vL(x(t), \dot{x}(t))$. Then, there exists $T>0$ such that $(x(t),p(t))\in \WW^{u}$ for all $t\leq -T$, where $\WW^{u}$ is the local unstable manifold given in Theorem \ref{thm-regularity-V}. 
\end{lem}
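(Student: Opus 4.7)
\medskip

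\noindent\textbf{Proof proposal.} The plan is to combine three ingredients: (i) the asymptotic behaviour $\lim_{t\to-\infty}\dist((x(t),p(t)),\AAa\times\{0\})=0$ coming from Lemma \ref{thm-extremal-orbit}; (ii) the dynamic programming property of minimizers of $V$; and (iii) the uniqueness of the minimizer in the local regularity region $\OO_2$ from Theorem \ref{thm-regularity-V}. The argument splits into three short steps.

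\medskip

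\noindent\textbf{Step 1 (Localization of the trajectory).} By Lemma \ref{thm-extremal-orbit}, $(x(t),p(t))\to\AAa\times\{0\}$ as $t\to -\infty$. Since $\AAa\subset\OO_2$, there exists $T>0$ such that $x(t)\in\OO_2$ for all $t\leq -T$. A standard dynamic programming argument (already used in the proof of Lemma \ref{lem:lim-1} and Lemma \ref{lem-unique-minimizer}) shows that for every $s\leq 0$, the shifted curve $\phi_s(\tau):=x(\tau+s)$, $\tau\leq 0$, lies in $\Phi_{x(s)}$ and satisfies $V(x(s))=I(\phi_s)$; in particular $\phi_s$ is a minimizer corresponding to $V(x(s))$. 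Moreover, $t\mapsto V(x(t))$ is monotone non-decreasing on $(-\infty,0]$, so that $V(x(t))\leq V(x_0)<\rho_V$, giving $x(t)\in\Om^V$ for all $t\leq 0$.

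\medskip

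\noindent\textbf{Step 2 (Comparison with the canonical minimizer on $\WW^u$).} Apply Step 1 at $s=-T$: the curve $\phi_{-T}(\tau)=x(\tau-T)$ is a minimizer corresponding to $V(x(-T))$, with $x(-T)\in\OO_2$. On the other hand, Theorem \ref{thm-regularity-V} asserts that there is a \emph{unique} minimizer $\psi\in\Phi_{x(-T)}$ corresponding to $V(x(-T))$, that $\psi\in C^{1}((-\infty,0];\OO_2)$ satisfies $\dot\psi=2A(\psi)\nabla V(\psi)+b(\psi)$, and that $(\psi,F(\psi))$ solves the Hamiltonian system \eqref{hamiltonian-system-intro} with $F(\psi(\tau))\in\graph F\cap(\OO_0\times\R^d)\subset \WW^u$. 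Uniqueness forces $\phi_{-T}=\psi$, i.e.\ $x(\tau-T)=\psi(\tau)$ for all $\tau\leq 0$, and in particular $\dot x(\tau-T)=\dot\psi(\tau)=2A(\psi(\tau))F(\psi(\tau))+b(\psi(\tau))$ a.e.

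\medskip

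\noindent\textbf{Step 3 (Identification of the momentum).} The Legendre transform (Lemma \ref{Hamiltonian-Lagrangian-eqivalence}) characterizes momentum: $p=\partial_v L(x,v)$ iff $v=\partial_p H(x,p)=2A(x)p+b(x)$. Applied to the canonical minimizer, since $\dot\psi(\tau)=2A(\psi(\tau))F(\psi(\tau))+b(\psi(\tau))$, we obtain $F(\psi(\tau))=\partial_v L(\psi(\tau),\dot\psi(\tau))$. Combining with Step 2 and the definition $p(t)=\partial_v L(x(t),\dot x(t))$, this yields, for every $\tau\leq 0$,
\begin{equation*}
p(\tau-T)=\partial_v L(x(\tau-T),\dot x(\tau-T))=\partial_v L(\psi(\tau),\dot\psi(\tau))=F(\psi(\tau))=F(x(\tau-T)).
\end{equation*}
Substituting $t=\tau-T\leq -T$ gives $p(t)=F(x(t))$, hence $(x(t),p(t))\in\graph F\cap(\OO_0\times\R^d)\subset\WW^u$, as required.

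\medskip

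\noindent\textbf{Main obstacle.} The only nontrivial point is invoking the uniqueness of the minimizer on $\OO_2$ in Step 2: one must check that the shifted minimizer $\phi_{-T}$ genuinely belongs to $\Phi_{x(-T)}$ (endpoint condition, $L^2_{loc}$ regularity of $\dot\phi_{-T}$, and the convergence to $\AAa$), and that its energy indeed equals $V(x(-T))$; these are routine consequences of the standard dynamic programming argument but they are the hinge of the proof. Once uniqueness is applied, the identification $p(t)=F(x(t))$ via Legendre duality is purely algebraic.
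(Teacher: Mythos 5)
Your proof is correct and follows essentially the same route as the paper's: localize $x(t)$ inside $\OO_2$ for $t\le -T$, invoke dynamic programming to shift the minimizer, use the uniqueness of minimizers from Theorem \ref{thm-regularity-V}, and identify $p(t)$ with $F(x(t))=\nabla V(x(t))$. The only cosmetic difference is that you read off $p(t)=F(x(t))$ for all $t\le -T$ directly via Legendre duality, whereas the paper makes the identification only at $t=-T$ (by comparing the two expressions for $\dot x(-T)$ and using the invertibility of $A$) and then appeals to the negative invariance of $\WW^{u}$.
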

\begin{proof}
Lemma \ref{thm-extremal-orbit} says that $(x(t),p(t))$ satisfies \eqref{hamiltonian-system-intro} and $\lim_{t\to -\infty}{\rm dist}(x(t),\AAa)=0$. Then, there exists $T>0$ such that $x(t)\in\OO_2$ for all $t\leq-T$, where $\OO_{2}$ is given in Theorem \ref{thm-regularity-V}. By the standard dynamic programming arguments, $x(\cdot-T)$ is a minimizer corresponding to $V(x(-T))$. Since $x(-T)\in\OO_{2}$, Theorem \ref{thm-regularity-V} asserts that $x(\cdot-T)$ is the only minimizer corresponding to $V(x(-T))$ and satisfies 
$$
\dot{x}(-T)=2A(x(-T))\nabla V(x(-T))+b(x(-T)). 
$$
The fact that $(x(t),p(t))$ satisfies \eqref{hamiltonian-system-intro} yields 
$$
\dot{x}(-T)=2 A(x(-T))p(-T)+b(x(-T)). 
$$
It follows from the positive definiteness of $A$ that $p(-T)=\nabla V(x(-T))$, leading to $(x(-T),p(-T))\in \WW^{u}$, and hence, $(x(t),p(t))\in \WW^{u}$ for all $t\leq -T$. 
\end{proof}

\begin{rem}
    When the maximal attractor $\AAa$ is a linearly stable equilibrium so that $\AAa\times\{0\}$ is a hyperbolic fixed point of the Hamiltonian system \eqref{hamiltonian-system-intro}, the conclusion in Lemma \ref{lem-minimizer-on-unstable-manifold} is a direct consequence of Lemma \ref{thm-extremal-orbit}. For a general normally contracting invariant manifold $\AAa$, Lemma \ref{thm-extremal-orbit} alone can not yield Lemma \ref{lem-minimizer-on-unstable-manifold} thanks to the existence of center manifolds of $\AAa\times\{0\}$ on which the dynamics of \eqref{hamiltonian-system-intro} remains unclear (see Remark \ref{rem-key-ideas}). 
\end{rem}

\begin{proof}[Proof of Theorem \ref{thm-global-regularity}]
    We brief on how aforementioned two key elements enter into the proof and refer the reader to \cite[Section 5]{DD85} for details.

    For any given $x_0\in\Om^{V}$, let $x(t)$ be a minimizer corresponding to $V(x_0)$. The existence of minimizers is ensured by Lemma \ref{lem-quasi-potential-basic-results}. By Lemma \ref{lem-minimizer-on-unstable-manifold} or its proof, there exists $T>0$ such that $x(-T)\in\OO_{2}$, where $\OO_2$ is given in Theorem \ref{thm-regularity-V}, and $V(x_0)=V(x(-T))+\int_{-T}^{0}L(x(t),\dot{x}(t))dt$. Moreover, $p(t):=\partial_{v}L(x(t),\dot{x}(t))=\nabla V(x(t))$ for all $t\leq-T$. Given these, we can follow the proof of \cite[Theorem 6]{DD85} to show that for each $t<0$, $V$ is $C^{k'+1}$ in a neighborhood of $x(t)$.  

    Now, let $G$ be the union of all the open sets in $\Om^{V}$ where $V$ is $C^{k'+1}$. Obviously, $G$ is non-empty and open. The above arguments imply that if $x_0\in\Om^{V}$ and $x(t)$ is a minimizer corresponding to $V(x_0)$, then $x(t)\in G$ for all $t<0$. The density of $G$ in $\Om^{V}$ follows.

    Finally, for a fixed $x_0\in G$. We follow \cite[Corollary 5]{DD85} to show that there is a unique minimizer $x(t)$ corresponding to $V(x_0)$ and $p(0)=\nabla V(x_0)$. It follows that $p(t)=\nabla V(x(t))$ for all $t\leq0$, which together with Lemma \ref{thm-extremal-orbit} implies that $x(t)$ satisfies $\dot{x}=2A(x)\nabla V(x)+b(x)$. The fact $(x(t),V(x(t)))\in\WW^{u}$ for all $t\leq-T$ for some $T>0$ then follows from Lemma \ref{lem-minimizer-on-unstable-manifold}.
\end{proof}


\section{\bf Applications}\label{sec-application}

In this section, we discuss applications of Theorems \ref{thm-main-result}, \ref{thm-regularity-V} and \ref{thm-global-regularity} to stationary and quasi-stationary distributions of the following randomly perturbed dynamical system:
\begin{equation}\label{main-sde}
    dx=b(x)dt+\ep\si(x)dW_{t},\quad x\in\UU,
\end{equation}
where $0<\ep\ll1$, $\UU\subset\R^{d}$ is open and connected, $b\in C^{k}(\UU,\R^{d})$ with $k>2$, $\si\in C^{k+1}(\UU,\R^{d\times m})$ for some $m\geq d$, $A=(a^{ij}):=\si\si^{\top}$ is pointwise positive definite in $\UU$, and $W_{t}$ is the standard $m$-dimensional Wiener process.

\begin{rem}
We use $\UU$ instead of $\R^{d}$ as the whole state space for the reason that $\UU$ is typically $(0,\infty)^{d}$ for applications in chemical reactions, ecology, and epidemiology. 

In some applications, the SDE \eqref{main-sde} is naturally equipped with absorbing or reflecting boundary conditions. When it is clear whether stationary or quasi-stationary distributions should be considered in the given context, the subsequent discussions can be easily adapted accordingly. Hence, we do not bother with boundary conditions in the following discussion.
\end{rem}

Assumptions on $b$ and $\si$ guarantee the local existence and uniqueness of strong solutions of \eqref{main-sde}. Denote by $X_{t}^{\ep}$ the unique strong solution of \eqref{main-sde} and by $\P_{\mu}^{\ep}$ the law of $X_{t}^{\ep}$ with initial distribution $\mu$. The expectation associated with $\P_{\mu}^{\ep}$ is denoted by $\E_{\mu}^{\ep}$. When $\mu=\de_{x}$, the Dirac measure at $x$, $\P_{\mu}^{\ep}$ and $\E_{\mu}^{\ep}$ are written as $\P_{x}^{\ep}$ and $\E_{x}^{\ep}$, respectively. The generator and Fokker-Planck operator associated with \eqref{main-sde} are denoted by $L_{\ep}:=\frac{\ep^{2}}{2}\sum_{i,j=1}^{d}a^{ij}\partial_{ij}^{2}+b\cdot\nabla$ and $L_{\ep}^{*}u:=\frac{\ep^{2}}{2}\sum_{i,j=1}^{d}\partial_{ij}^{2}(a^{ij}u)-\nabla\cdot(bu)$, respectively.

Let $\vp^{t}$ be the local flow generated by the ODE
\begin{equation}\label{app-ode}
     \dot{x}=b(x),\quad x\in \UU,
\end{equation}
whose local well-posedness is ensured by the regularity assumption on $b$.

If $\Om\subset\UU$ is open, connected, and positively invariant under $\vp^{t}$, and $\AAa_{\Om}$ is the maximal attractor of $\vp^{t}$ in $\Om$ and is an equivalence class (see Definition \ref{def-equiv-class}), we introduce the quasi-potential
\begin{equation*}
    V_{\Om}(x):=\inf_{\phi\in\Phi_{x,\Om}}\frac{1}{4}\int_{-\infty}^0\left[b(\phi)-\dot{\phi}\right]\cdot A^{-1}(\phi)\left[b(\phi)-\dot{\phi}\right], \quad \forall x\in \Om,
\end{equation*}
where  
$$
\Phi_{x,\Om}:=\left\{\phi\in AC((-\infty,0];\Om):\dot{\phi}\in L^2_{loc}((-\infty,0];\R^{d}),\,\,\phi(0)=x,\,\,\lim_{t\to -\infty}\dist(\phi(t),\AAa)=0\right\}.
$$
Set $\rho_{\Om}:=\liminf_{x\to\partial\Om}V_{\Om}(x)$ and $\Om^{*}=\{x\in\Omega:V_{\Om}(x)<\rho_{\Om}\}$.

\subsection{Stationary distributions}\label{subsec-application-sd}

Recall that for each $\ep$, a probability measure $\mu$ on $\UU$ is called a \emph{stationary distribution} of \eqref{main-sde} if $\P_{\mu}^{\ep}[X_{t}^{\ep}\in\bullet]=\mu$ for all $t\geq0$. We point out that under the current assumptions on the regularity of $b$ and $\si$ and positive definiteness of $\si\si^{\top}$, there holds the uniqueness of stationary distributions of \eqref{main-sde} for each fixed $\ep$ (see e.g. \cite{DaZ92}). The existence of stationary distributions needs dissipative conditions often given by Lyapunov functions. 

\begin{defn}
    Suppose $U\in C^{2}(\UU)$ satisfies $U(x)\to\infty$ as $x\to\partial\UU$. $U$ is called a \emph{uniform Lyapunov function} of \eqref{main-sde} if there is a compact set $K\subset\UU$ and $\ga>0$ such that $\sup_{\ep}L_{\ep}U\leq-\ga$ in $\UU\setminus K$.
\end{defn}

\begin{rem}\label{rem-application}
If $U$ is a uniform Lyapunov function of \eqref{main-sde}, then it is also a Lyapunov function of \eqref{app-ode}, and hence, $\UU$ is positively invariant under $\vp^{t}$ and $\vp^{t}$ admits the global attractor $\AAa_{\UU}$ (see Appendix \ref{appendix-attractor}).
\end{rem}

The following result, addressing the existence, uniqueness, tightness, and concentration estimates of stationary distributions of \eqref{main-sde}, is known (see e.g. \cite{Khasminskii12,BKRS15,HJLY15,JSY19}).

\begin{prop}\label{prop-stationary-measure}
    Suppose \eqref{main-sde} admits a uniform Lyapunov function. Then, the following hold.
\begin{itemize}
    \item[(1)] For each $\ep$, \eqref{main-sde} admits a unique stationary distribution $\mu_{\ep}$ with a density $u_{\ep}\in C^{2}(\UU)$ obeying
\begin{equation*}\label{sd-june-23}
    \begin{cases}
        L_{\ep}^{*}u_{\ep}=0\quad\text{in}\quad \UU,\\
        u_{\ep}>0\quad\text{in}\quad\UU,\quad\displaystyle\int_{\UU}u_{\ep}=1.
    \end{cases}
\end{equation*}

\item[(2)] For any open neighbourhood $\OO$ of the global attractor $\AAa_{\UU}$ (see Remark \ref{rem-application}), there exist $0<\ep_\OO\ll1$ and $\ga_{\OO}>0$ such that
$\mu_{\ep}(\UU\setminus \OO)\leq e^{-\frac{\ga_{\OO}}{\ep^2}}$ for all $\ep\in(0,\ep_{\OO})$.
In particular, $\{\mu_{\ep}\}_{\ep}$ is tight. 
\end{itemize}
\end{prop}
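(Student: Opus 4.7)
The plan is to address (1) by combining the Krylov–Bogoliubov procedure with uniform ellipticity, and (2) by coupling a Freidlin–Wentzell type exit estimate with the recurrence provided by the uniform Lyapunov function.

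For existence in (1), fix $\ep>0$ and $x_0\in\UU$, and consider the time-averaged measures $\mu_{\ep,T}:=\frac{1}{T}\int_0^T\P_{x_0}^{\ep}[X_t^{\ep}\in\bullet]\,dt$. Applying Dynkin's formula to $U$ (truncated via a sequence of stopping times exhausting $\UU$, since $U\to\infty$ at $\partial\UU$) gives $\E_{x_0}^{\ep}[U(X_{t\wedge\tau_n}^{\ep})]-U(x_0)=\E_{x_0}^{\ep}\int_0^{t\wedge\tau_n}L_{\ep}U(X_s^{\ep})\,ds$. The uniform Lyapunov inequality $\sup_\ep L_\ep U\le -\gamma$ outside a compact $K$, together with the nonexplosion it enforces, yields $\mu_{\ep,T}(\UU\setminus K')\le C/(\gamma T)+C_{K,K'}/\gamma$ for any open $K'\stst\UU$ with $K\subset K'$, hence tightness of $\{\mu_{\ep,T}\}_T$ in $\UU$. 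Any weak limit $\mu_\ep$ is a stationary distribution. Uniqueness follows from strong Feller and irreducibility of $X_t^\ep$, both consequences of uniform ellipticity of $A=\si\si^\top$ and the regularity of $b,\si$ (Doob's theorem). The distributional identity $L_\ep^*\mu_\ep=0$, combined with interior elliptic regularity (Schauder estimates, since coefficients are $C^{k+1}$), gives $\mu_\ep=u_\ep\,dx$ with $u_\ep\in C^2(\UU)$; positivity follows from the Harnack inequality for $L_\ep^*$.

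For (2), I would combine two mechanisms. First, by Freidlin–Wentzell LDP for exit times, for each open neighborhood $\OO$ of $\AAa_{\UU}$ and any compact $K\subset\OO$ with $\AAa_{\UU}\subset\text{int}(K)$, the exit time $\tau_{\ep,\OO}:=\inf\{t\ge 0:X_t^\ep\notin\OO\}$ satisfies $\inf_{x\in K}\E_x^\ep[\tau_{\ep,\OO}]\gtrsim e^{c_\OO/\ep^2}$ for some $c_\OO>0$, where $c_\OO$ is bounded below by the minimum of the Freidlin–Wentzell quasi-potential on $\partial\OO$. Second, the uniform Lyapunov function provides a recurrence bound: using $L_\ep U\le -\gamma$ on $\UU\setminus K$ and Dynkin's formula, the expected hitting time $\sigma_{\ep,K}:=\inf\{t\ge 0:X_t^\ep\in K\}$ satisfies $\sup_\ep\sup_{x\in K_0}\E_x^\ep[\sigma_{\ep,K}]<\infty$ on any compact $K_0$, and from $K$ one controls excursion durations uniformly as well. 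Plugging these into the Khasminskii excursion representation of $\mu_\ep$, one obtains $\mu_\ep(\UU\setminus\OO)\lesssim \frac{\text{expected time outside }\OO}{\text{expected excursion length}}\lesssim e^{-\gamma_\OO/\ep^2}$ for any $\gamma_\OO<c_\OO$ and $\ep$ sufficiently small; this immediately implies tightness of $\{\mu_\ep\}_\ep$.

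The main obstacle is the lower exit-time estimate on a possibly unbounded domain $\UU$ with only a uniform Lyapunov function at hand: one must localize the Freidlin–Wentzell argument to a large bounded neighborhood of $\AAa_{\UU}$, verify that the excursions leaving that neighborhood contribute negligibly (here the super-exponential decay from the Lyapunov term dominates any polynomial-in-$\ep$ correction), and propagate the estimate back to $\mu_\ep$. This localization is exactly what is carried out in \cite{HJLY15,JSY19}, so the cleanest path is to invoke those references after verifying their hypotheses are implied by the uniform Lyapunov condition and uniform ellipticity of $A$.
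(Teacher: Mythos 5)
The paper does not give a proof of this proposition at all: it simply records the statement as ``known'' and cites \cite{Khasminskii12,BKRS15,HJLY15,JSY19}. Your proposal therefore goes beyond the paper by sketching the argument, and the sketch is broadly sound, but it uses a visibly different route for part (2) than the cited sources. The references the paper points to (\cite{HJLY15,JSY19}) establish the concentration bound by PDE techniques --- integral identities for the stationary Fokker--Planck equation and derived measure estimates --- whereas you propose a purely probabilistic argument via Freidlin--Wentzell exit-time lower bounds combined with a Khasminskii excursion/cycle representation. Both routes are legitimate; the PDE route meshes more naturally with the paper's overall framework (which is entirely stated at the level of the Fokker--Planck eigenvalue problem \eqref{main-problem}, not the process), while the excursion route is more elementary but requires one to set up sample-path large deviations on a possibly unbounded $\UU$, which is exactly the localization issue you flag.

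One imprecision worth fixing: the phrase ``the super-exponential decay from the Lyapunov term dominates any polynomial-in-$\ep$ correction'' is not quite what the uniform Lyapunov condition gives. The bound $\sup_\ep L_\ep U\le -\gamma$ on $\UU\setminus K$ only yields an $\ep$-uniform bound on expected return times to $K$ (of order $U(x)/\gamma$) and hence an $\ep$-uniform bound on excursion lengths; it does not by itself produce any decay in $\ep$, super-exponential or otherwise. The $e^{-\gamma_\OO/\ep^2}$ rate must come entirely from the Freidlin--Wentzell exit-time estimate for the bounded neighborhood. With that correction, your Khasminskii-cycle bookkeeping --- (time outside $\OO$)/(full cycle length) $\lesssim$ (uniform constant)/$e^{c_\OO/\ep^2}$ --- is correct. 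Also note that part (1) as you outline it (Krylov--Bogoliubov + Lyapunov tightness for existence; strong Feller + irreducibility for uniqueness; elliptic regularity for $u_\ep\in C^2$; Harnack for positivity) is exactly the argument in \cite{Khasminskii12,BKRS15}, so there is no gap there.
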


The next result follows immediately from Theorem \ref{thm-main-result}. 

\begin{thm}\label{thm-LDP-SD}
    Assume that \eqref{main-sde} admits a uniform Lyapunov function and let $\mu_{\ep}$ be the unique stationary distribution of \eqref{main-sde} with the density $u_{\ep}$. 
\begin{itemize}
    \item[(1)] If the global attractor $\AAa_{\UU}$ is an equivalence class, then
    $\lim_{\ep\to0}\frac{\ep^{2}}{2}\ln u_{\ep}=-V_{\UU}$ in $C^{\alpha}(\UU^{*})$ for any $\alpha\in(0,1)$.
    
    \item[(2)] Suppose that $\Om\subsetneqq\UU$ is open, connected, and positively invariant under $\vp^{t}$, and that $\AAa_{\Om}$ is the maximal attractor of $\vp^{t}$ in $\Om$ and is an equivalence class. If $\left\{\frac{\mu_{\ep}|_{\Om}}{\mu_{\ep}(\Om)}\right\}_{\ep}$ is tight as probability measures on $\Om$, then 
    $\lim_{\ep\to0}\frac{\ep^{2}}{2}\ln \frac{u_{\ep}}{\mu_{\ep}(\Om)}=-V_{\Om}$ in $C^{\alpha}(\Om^{*})$ for any $\alpha\in(0,1)$.
\end{itemize}
\end{thm}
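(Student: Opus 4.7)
The plan is to reduce each claim to Theorem \ref{thm-main-result} by verifying its three hypotheses in the respective setting; no further analysis beyond this bookkeeping should be required.

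For part (1), I would take the domain in Theorem \ref{thm-main-result} to be $\Om := \UU$ with maximal attractor $\AAa := \AAa_{\UU}$. By Remark \ref{rem-application}, the existence of a uniform Lyapunov function makes $\UU$ positively invariant under $\vp^t$ and forces $\vp^t$ to admit the global attractor $\AAa_{\UU}$, giving \textbf{(H1)}. Proposition \ref{prop-stationary-measure}(1) supplies a positive $C^2$ density $u_\ep$ with $L_\ep^* u_\ep = 0$, so $(u_\ep, \la_\ep) = (u_\ep, 0)$ is a classical solution pair of \eqref{main-problem}; tightness of $\{\mu_\ep\}$ is Proposition \ref{prop-stationary-measure}(2). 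This yields \textbf{(H2)}, while \textbf{(H3)} is the assumed equivalence-class property of $\AAa_{\UU}$. Since the functional $V$ of \eqref{def-quasi-potential-V-FW-sense} built from this data coincides with $V_{\UU}$ and $\Om^V = \UU^*$, Theorem \ref{thm-main-result} delivers the conclusion.

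For part (2), the idea is to replace $\UU$ by the sub-domain $\Om$ and the global stationary density by its normalized restriction. Because $L_\ep^* u_\ep = 0$ is a pointwise differential identity that holds on $\UU \supset \Om$, the function $\tilde u_\ep := u_\ep|_\Om / \mu_\ep(\Om)$ is a positive $C^2$ solution of $L_\ep^* \tilde u_\ep = 0$ on $\Om$ with $\int_\Om \tilde u_\ep = 1$, so $(\tilde u_\ep, 0)$ is a classical solution pair of \eqref{main-problem} posed on $\Om$; the denominator is positive thanks to strict positivity of $u_\ep$. The tightness of the associated probability measures $\mu_\ep|_\Om / \mu_\ep(\Om)$ on $\Om$ is the standing hypothesis of part (2), completing \textbf{(H2)} for this new data. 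Hypotheses \textbf{(H1)} and \textbf{(H3)} for $(\Om, \AAa_{\Om})$ are explicitly part of the statement. Applying Theorem \ref{thm-main-result} to $\tilde u_\ep$ then yields $\lim_{\ep \to 0} \frac{\ep^2}{2} \ln \tilde u_\ep = -V_\Om$ in $C^\alpha(\Om^*)$, which is exactly the desired statement.

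I do not anticipate any genuine obstacle: the whole content is recognizing that Theorem \ref{thm-main-result} is set up generally enough to be applied on a sub-domain, and that the renormalization of $u_\ep$ on $\Om$ produces exactly the admissible input required. The only small care needed is to note that the differential identity $L_\ep^* u_\ep = 0$ is unaffected by restriction to an open subset and that no boundary conditions for $u_\ep$ on $\partial\Om$ are imposed in the framework of Theorem \ref{thm-main-result} beyond tightness, which is precisely what is being hypothesized.
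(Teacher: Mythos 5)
Your proposal is correct and follows the same route as the paper, which simply asserts that the result follows immediately from Theorem \ref{thm-main-result}; you supply exactly the verification of \textbf{(H1)}--\textbf{(H3)} (via Remark \ref{rem-application} and Proposition \ref{prop-stationary-measure} for part (1), and via the observation that $L_\ep^*u_\ep=0$ is a local identity so $\tilde u_\ep := u_\ep|_\Om/\mu_\ep(\Om)$ is an admissible solution pair on $\Om$ for part (2)) that the paper leaves implicit.
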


The condition that ``$\left\{\frac{\mu_{\ep}|_{\Om}}{\mu_{\ep}(\Om)}\right\}_{\ep}$ is tight" in the statement of Theorem \ref{thm-LDP-SD} (2) can be verified in some interesting situations.

\begin{cor}
    Assume that \eqref{main-sde} admits a uniform Lyapunov function and let $\mu_{\ep}$ be the unique stationary distribution of \eqref{main-sde} with the density $u_{\ep}$. Suppose that $\Om\subsetneqq\UU$ is open, connected, and positively invariant under $\vp^{t}$, and that $\AAa_{\Om}$ is the maximal attractor of $\vp^{t}$ in $\Om$ and is an equivalence class.
    \begin{itemize}
        \item[(1)] If there is $\OO\stst\Om$ such that $\lim_{\ep\to0}\mu_{\ep}(\OO)=1$, then
$\lim_{\ep\to0}\frac{\ep^{2}}{2}\ln u_{\ep}=-V_{\Om}$ in $C^{\alpha}(\Om^{*})$ for any $\alpha\in(0,1)$.

        \item[(2)] If $\Om\stst\UU$, then
          $\lim_{\ep\to0}\frac{\ep^{2}}{2}\ln \frac{u_{\ep}}{\mu_{\ep}(\Om)}=-V_{\Om}$ in $C^{\alpha}(\Om^{*})$ for any $\alpha\in(0,1)$.
    \end{itemize}
\end{cor}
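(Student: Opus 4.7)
The plan is to derive both (1) and (2) from Theorem \ref{thm-LDP-SD}(2), for which the only nontrivial hypothesis to verify is tightness of $\left\{\mu_{\ep}|_{\Om}/\mu_{\ep}(\Om)\right\}_{\ep}$ as probability measures on $\Om$. Once tightness is in hand, Theorem \ref{thm-LDP-SD}(2) gives $\lim_{\ep\to 0}\tfrac{\ep^{2}}{2}\ln\tfrac{u_{\ep}}{\mu_{\ep}(\Om)}=-V_{\Om}$ in $C^{\alpha}(\Om^{*})$, which is exactly the conclusion of (2). For (1) I will then use the identity $\tfrac{\ep^{2}}{2}\ln u_{\ep}=\tfrac{\ep^{2}}{2}\ln\tfrac{u_{\ep}}{\mu_{\ep}(\Om)}+\tfrac{\ep^{2}}{2}\ln\mu_{\ep}(\Om)$ to absorb the (vanishing) normalization.

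Part (1) should be essentially immediate. From $\overline{\OO}\subset\Om$ compact and the hypothesis $\mu_{\ep}(\OO)\to 1$, I would first note that $\mu_{\ep}(\Om)\geq\mu_{\ep}(\OO)\to 1$, so $\tfrac{\ep^{2}}{2}\ln\mu_{\ep}(\Om)\to 0$ uniformly on compact subsets of $\Om^{*}$; simultaneously, $\mu_{\ep}|_{\Om}/\mu_{\ep}(\Om)$ assigns mass $\mu_{\ep}(\OO)/\mu_{\ep}(\Om)\to 1$ to the compact set $\overline{\OO}\subset\Om$, which is tightness on $\Om$. Theorem \ref{thm-LDP-SD}(2) together with the vanishing correction then finishes (1).

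For part (2), the compactness of $\overline{\Om}\subset\UU$ (from $\Om\stst\UU$) rules out escape to infinity, so by Prokhorov's theorem any subsequence of $\mu_{\ep}|_{\Om}/\mu_{\ep}(\Om)$ admits a weakly convergent sub-subsequence with limit $\nu_{0}$ on $\overline{\Om}$; the plan is to show $\nu_{0}(\partial\Om)=0$, which combined with compactness gives tightness on $\Om$. Using that $\{\mu_{\ep}\}_{\ep}$ is tight on $\UU$ (Proposition \ref{prop-stationary-measure}(2)) together with the exponential concentration $\mu_{\ep}(\UU\setminus\NN)\leq e^{-\gamma_{\NN}/\ep^{2}}$ valid for any open $\NN\supset\AAa_{\UU}$, I would pass to a further subsequence along which $\mu_{\ep_{n}}\rightharpoonup\mu_{0}$ on $\UU$ with $\supp\mu_{0}\subset\AAa_{\UU}$, and identify $\nu_{0}=\mu_{0}(\cdot\cap\overline{\Om})/\mu_{0}(\overline{\Om})$ in the nondegenerate case $\mu_{0}(\overline{\Om})>0$; the task then reduces to proving $\mu_{0}(\partial\Om\cap\AAa_{\UU})=0$.

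The main obstacle is the a priori possibility that $\AAa_{\UU}\cap\partial\Om\neq\emptyset$. I plan to resolve it using that positive invariance of $\Om$ combined with $\AAa_{\Om}$ being the maximal attractor in $\Om$ forces any point of $\AAa_{\UU}\cap\partial\Om$ to lie in a $\vp^{t}$-invariant subset of $\partial\Om$ that is \emph{unstable from $\Om$}, in the sense that no trajectory starting in $\Om$ has such a point in its $\omega$-limit (otherwise the point would belong to $\AAa_{\Om}\subset\Om$); the exponential decay estimate \cite[Theorem A(2)]{JSY19} already exploited in the proof of Proposition \ref{prop-Convergent-subsequence-V^x_0} then shows that such unstable invariant pieces carry no mass under $\mu_{0}$. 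The degenerate possibility $\mu_{0}(\overline{\Om})=0$, equivalently $\mu_{\ep_{n}}(\Om)\to 0$, will be handled directly on the subsequence by the same decay estimates to obtain $\mu_{\ep_{n}}(\Om\setminus\OO)/\mu_{\ep_{n}}(\Om)\to 0$ for an arbitrary open $\OO$ with $\AAa_{\Om}\subset\OO\stst\Om$, which gives the required tightness.
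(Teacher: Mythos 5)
Part (1) of your proposal is fine and essentially identical to the paper's argument: $\mu_\ep(\Om)\to1$ and $\mu_\ep(\Om\setminus\overline{\OO})/\mu_\ep(\Om)\to0$ give tightness of the normalized restriction, and then Theorem \ref{thm-LDP-SD}(2) applies.

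Part (2) has a genuine gap, and it is exactly at the point you compress into ``will be handled directly on the subsequence by the same decay estimates.'' The tightness you must verify is that $\mu_\ep(\Om\setminus K)/\mu_\ep(\Om)$ is small for a fixed compact $K\subset\Om$, uniformly in $\ep$. The estimate \cite[Theorem A(2)]{JSY19} that you invoke is \emph{absolute}: it gives $\mu_\ep(\Om\setminus K)\leq e^{-\ga/\ep^2}$ with $\ga>0$ depending on $K$, i.e.\ decay relative to the total mass $1$. But when $\AAa_\UU\not\subset\Om$ the denominator $\mu_\ep(\Om)$ may itself decay exponentially, say like $e^{-c/\ep^2}$, and there is no a priori relation between $\ga$ and $c$. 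Nothing in the Prokhorov/weak-limit scaffolding you set up can resolve this: in the degenerate regime $\mu_0(\overline\Om)=0$ the normalized measures $\nu_{\ep_n}$ are a rescaled version of a vanishing quantity, and neither the weak limit $\mu_0$ of $\mu_{\ep_n}$ nor \cite{JSY19} supplies the needed comparison of exponential rates. (Your identification $\nu_0=\mu_0(\cdot\cap\overline\Om)/\mu_0(\overline\Om)$ in the nondegenerate case is also not automatic -- it fails precisely when $\mu_0$ charges $\partial\Om$, which is what you are trying to rule out -- so there is some circularity there too, though that is secondary.)

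The paper circumvents this by citing a \emph{relative} estimate from the proof of \cite[Theorem 3.2]{XCJ23}: for each $x\in\partial\Om$ there are $\de^1_x,\de^2_x,\ka_x>0$ with $\mu_\ep(B_{\de^1_x}(x))\leq e^{-\ka_x/\ep^2}\,\mu_\ep(B_{\de^2_x}(\AAa_\Om))$ for small $\ep$. Since $B_{\de^2_x}(\AAa_\Om)\subset\Om$, this bounds the mass in a boundary collar by an exponentially small multiple of $\mu_\ep(\Om)$ directly, independently of whether $\mu_\ep(\Om)$ itself vanishes; compactness of $\partial\Om$ (here $\Om\stst\UU$ is used) then yields the required uniform tightness. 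This comparison-to-$\mu_\ep(\AAa_\Om\text{-neighborhood})$, rather than comparison-to-$1$, is the missing ingredient in your sketch and is not derivable from \cite{JSY19}. Your dynamical observation that points of $\AAa_\UU\cap\partial\Om$ cannot be $\omega$-limits of orbits from $\Om$ is correct but does not by itself produce the quantitative relative bound needed.
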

\begin{proof}
To apply Theorem \ref{thm-LDP-SD} (2), we need to check that $\left\{\frac{\mu_{\ep}|_{\Om}}{\mu_{\ep}(\Om)}\right\}_{\ep}$ is tight. In the case of (1), this is an immediate result of $\lim_{\ep\to0}\frac{\mu_{\ep}(\Om\setminus\overline{O})}{\mu_{\ep}(\Om)}=0$. The conclusion follows from Theorem \ref{thm-LDP-SD} (2) and the fact that $\lim_{\ep\to0}\mu_{\ep}(\Om)=1$.

(2) Given the positive invariance of $\Om$ under $\vp^{t}$, the proof of \cite[Theorem 3.2]{XCJ23} (more precisely, the fourth line on \cite[page 86]{XCJ23}) shows particularly that 
for each $x\in\partial\Om$, there are small $\de_{x}^{1}>0$, $\de_{x}^{2}>0$, $\ka_{x}>0$ and $\ep_{x}>0$ such that $\mu_{\ep}(B_{\de_{x}^{1}}(x))\leq e^{-\frac{\ka_{x}}{\ep^{2}}}\mu_{\ep}(B_{\de_{x}^{2}}(\AAa_{\Om}))$ for all $\ep\in(0,\ep_{x})$. Here, $B_{\de}(x)$ and $B_{\de}(\mathcal{A}_{\Om})$ are $\de$-neighborhoods of $x$ and $\mathcal{A}_{\Om}$, respectively. This together with the boundedness of $\Om$ yields the tightness of the family $\left\{\frac{u_{\ep}|_{\Om}}{\mu_{\ep}(\Om)}\right\}_{\ep}$. The result then follows from Theorem \ref{thm-LDP-SD} (2).
\end{proof}




\subsection{Quasi-stationary distributions}\label{subsec-application-qsd}

Let $\Om\subset\UU$ be open and connected and denote by $T_{\Om}^{\ep}$ the first time that $X_{t}^{\ep}$ exits $\Om$, namely, $T_{\Om}^{\ep}=\inf\{t>0:X_{t}^{\ep}\not\in\Om\}$. We assume that $\P_{x}^{\ep}\left[T_{\Om}^{\ep}<\infty\right]=1$ for all $x\in\Om$. Note that $\Om=\UU$ is not excluded. 

Recall that for each $\ep$, a probability measure $\mu$ on $\Om$ is called a \emph{quasi-stationary distribution} (QSD) of \eqref{main-sde} in $\Om$ if $\P^{\ep}_{\mu}[X_{t}^{\ep}\in\bullet|t<T_{\Om}^{\ep}]=\mu$ for all $t\geq0$. It is known from the general theory of QSDs (see e.g. \cite{MV12,CMS13}) that if $\mu_{\ep}$ is a QSD of \eqref{main-sde} in $\Om$, then there is $\la_{\ep}>0$ such that $\E_{\mu_{\ep}}^{\ep}[T_{\Om}^{\ep}>t]=e^{-\la_{\ep}t}$ for all $t\geq0$. The number $\la_{\ep}$ is often called the \emph{exit rate} (or escape rate, extinction rate) associated with $\mu_{\ep}$.

In the rest of this subsection, we only consider an open and connected set $\Om\stst\UU$. While it is possible to generalize our results to an open and connected $\Om\subset\UU$, achieving the tightness of QSDs $\{u_{\ep}\}_{\ep}$ often requires substantial efforts. We do not pursue this generalization here but refer the reader to Remark \ref{rem-discussion-QSD} below for relevant discussion. 

The following result about the existence and uniqueness of QSDs is particularly proven in \cite{GQZ88,CV18}.

\begin{prop}\label{prop-QSD-existence-uniqueness}
Suppose $\Om\stst\UU$ is open and connected. Then, for each $\ep$,  \eqref{main-sde} admits a unique QSD $\mu_{\ep}$ in $\Om$ with a positive density $u_{\ep}\in C^{2}(\Om)$. Moreover, $u_{\ep}$ and the associated exit rate $\la_{\ep}$ satisfy $L_{\ep}^{*}u_{\ep}=-\la_{\ep}u_{\ep}$ in $\Om$.
\end{prop}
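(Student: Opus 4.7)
The plan is to apply spectral theory to the Dirichlet realization of $L_\ep$ on $\Om$, construct $\mu_\ep$ as the normalized eigenmeasure for the principal eigenvalue, and obtain uniqueness through a Doeblin-type minorization coupled with the Champagnat--Villemonais framework.

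Since $\Om\stst\UU$ and $A=\si\si^{\top}$ is smooth and positive definite on $\UU$, the operator $L_\ep$ restricted to $\Om$ is uniformly elliptic with smooth coefficients on $\ol\Om$. First, I would consider the sub-Markovian Dirichlet semigroup
\begin{equation*}
P_t^\ep f(x):=\E_x^\ep\left[f(X_t^\ep)\mathbf{1}_{\{t<T_\Om^\ep\}}\right],\quad x\in\Om,
\end{equation*}
acting on $C_0(\ol\Om):=\{f\in C(\ol\Om):f|_{\pa\Om}=0\}$. Standard parabolic theory together with interior and boundary Schauder estimates show that $P_t^\ep$ is strongly continuous, positive, and compact for each $t>0$, while the parabolic strong maximum principle gives strong positivity: $P_t^\ep f>0$ in $\Om$ whenever $f\geq0$ with $f\not\equiv0$.

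Second, I would apply the Krein--Rutman theorem to $P_1^\ep$ to obtain a simple principal eigenvalue $e^{-\la_\ep}$ with $\la_\ep>0$, a strictly positive eigenfunction $\phi_\ep\in C^2(\Om)\cap C(\ol\Om)$ solving $L_\ep\phi_\ep=-\la_\ep\phi_\ep$ with $\phi_\ep|_{\pa\Om}=0$, and a strictly positive eigenmeasure for the adjoint. Elliptic regularity applied to $L_\ep^*\mu=-\la_\ep\mu$ produces a density $u_\ep\in C^2(\Om)$; after normalization $\int_\Om u_\ep=1$, the strong maximum principle gives $u_\ep>0$ in $\Om$. To verify that $d\mu_\ep=u_\ep\,dx$ is a QSD, I would dualize: for every bounded measurable $f$,
\begin{equation*}
\int_\Om (P_t^\ep f)\,u_\ep\,dx=e^{-\la_\ep t}\int_\Om f\,u_\ep\,dx,
\end{equation*}
which with $f=\mathbf{1}_B$ yields $\P_{\mu_\ep}^\ep[X_t^\ep\in B,\,t<T_\Om^\ep]=e^{-\la_\ep t}\mu_\ep(B)$, and in particular $\E_{\mu_\ep}^\ep[T_\Om^\ep>t]=e^{-\la_\ep t}$; dividing delivers the defining QSD identity $\P_{\mu_\ep}^\ep[X_t^\ep\in\cdot\mid t<T_\Om^\ep]=\mu_\ep$.

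For uniqueness -- the main obstacle -- I would establish a Doeblin-type minorization: since $\Om\stst\UU$ and $L_\ep$ is uniformly elliptic on $\ol\Om$, Gaussian lower bounds for killed-diffusion transition densities yield $t_0>0$, $c>0$ and a non-trivial probability measure $\nu_0$ supported on a compact subset of $\Om$ such that $P_{t_0}^\ep(x,\cdot)\geq c\,\phi_\ep(x)\,\nu_0(\cdot)$ for all $x\in\Om$. Inserting this estimate into the Champagnat--Villemonais framework \cite{CV18} produces constants $C,\ka>0$ such that
\begin{equation*}
\left\|\P_\rho^\ep[X_t^\ep\in\cdot\mid t<T_\Om^\ep]-\mu_\ep\right\|_{\rm TV}\leq C e^{-\ka t}
\end{equation*}
for every probability measure $\rho$ on $\Om$. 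Since any QSD $\rho$ satisfies $\P_\rho^\ep[X_t^\ep\in\cdot\mid t<T_\Om^\ep]=\rho$ for all $t\geq 0$, forcibly $\rho=\mu_\ep$. The delicate point is verifying the minorization uniformly up to $\pa\Om$, which requires careful handling of the vanishing of $\phi_\ep$ on the boundary; once this is in place the abstract ergodic input of \cite{CV18} delivers uniqueness essentially for free.
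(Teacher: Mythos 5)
The paper does not actually prove this proposition; it attributes it to Gong--Qian--Zhao \cite{GQZ88} and Champagnat--Villemonais \cite{CV18}, and your proposal is a reasonable reconstruction of the standard route through those works: Krein--Rutman for the principal eigenpair of the killed semigroup, duality to identify the left eigenmeasure as the QSD, and a Doeblin-type minorization fed into the \cite{CV18} machinery for uniqueness and exponential convergence.

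There is, however, one genuine caveat you should not gloss over. The hypothesis is only that $\Om\stst\UU$ is open and connected; \emph{no} regularity of $\partial\Om$ is imposed (the very next theorem even considers $\Om$ that is merely \emph{approximable} by smooth domains, signalling that $\partial\Om$ need not itself be smooth). Your argument as written works in $C_0(\ol\Om)$, appeals to boundary Schauder estimates, has $\phi_\ep\in C(\ol\Om)$ vanishing continuously on $\partial\Om$, and requires a Doeblin bound $P_{t_0}^{\ep}(x,\cdot)\geq c\,\phi_\ep(x)\,\nu_0$ uniformly up to $\partial\Om$ (essentially intrinsic ultracontractivity). All of these are standard for smooth or Lipschitz boundaries but are nontrivial, and some can fail, for an arbitrary bounded open set. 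The boundary-robust version of your plan is to run Krein--Rutman on $L^{2}(\Om)$: compactness of the Dirichlet semigroup on $L^{2}(\Om)$ is free from Rellich--Kondrachov, the principal eigenfunctions live in $H^{1}_{0}(\Om)$ and are smooth and strictly positive \emph{inside} $\Om$ by interior elliptic regularity and the strong maximum principle — which is all the proposition claims, namely $u_\ep\in C^{2}(\Om)$ and $u_\ep>0$ in $\Om$, not up to the boundary. Uniqueness is then obtained from simplicity of the principal eigenvalue combined with an interior Harnack argument showing any QSD has a locally smooth density solving the same eigenproblem, rather than from a boundary-uniform minorization. As written, your outline proves the proposition under an extra implicit smoothness hypothesis on $\partial\Om$; the references are cited precisely because they dispense with it.
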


In the next result, we establish the LDP for QSDs.

\begin{thm}
Suppose that $\Om\stst\UU$ is open, connected, and positively invariant under $\vp^{t}$, and that $\AAa_{\Om}$ is the maximal attractor of $\vp^{t}$ in $\Om$ and is an equivalence class. Denote by $\mu_{\ep}$ with density $u_{\ep}$ the unique QSD of \eqref{main-sde} in $\Om$. If $\Om$ can be approximated by smooth domains in the Hausdorff distance, then $\lim_{\ep\to0}\frac{\ep^{2}}{2}\ln u_{\ep}=-V_{\Om}$ in $C^{\alpha}(\Om^{*})$ for any $\alpha\in(0,1)$.
\end{thm}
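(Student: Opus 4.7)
The plan is to deduce the theorem as a corollary of Theorem \ref{thm-main-result}, applied with the underlying open set taken to be $\Om$. Under the current hypotheses, assumption \textbf{(H1)} is exactly the positive invariance of $\Om$ under $\vp^{t}$ together with $\AAa_{\Om}$ being the maximal attractor, and \textbf{(H3)} is the equivalence-class property. So only \textbf{(H2)} needs verification. The eigenvalue-problem portion is furnished by Proposition \ref{prop-QSD-existence-uniqueness}: the QSD density $u_{\ep} \in C^{2}(\Om)$ is positive, integrates to one, and together with the exit rate $\la_{\ep} > 0$ satisfies $L_{\ep}^{*} u_{\ep} = -\la_{\ep} u_{\ep}$. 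What remains is the tightness of $\{\mu_{\ep}\}_{\ep}$ viewed as probability measures on $\Om$, i.e., no mass escapes to $\pa \Om$ along subsequences $\ep \to 0$.

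Tightness will be proved by showing the stronger statement that $\{\mu_{\ep}\}$ concentrates in any prescribed open neighborhood of $\AAa_{\Om} \stst \Om$. The first ingredient is
\begin{equation*}
\limsup_{\ep \to 0} \tfrac{\ep^{2}}{2} \ln \la_{\ep} < 0,
\end{equation*}
reflecting the intuition that the exit time of $X_{t}^{\ep}$ from a positively invariant bounded set is exponentially large in $\ep^{-2}$. Using the identity $\la_{\ep}^{-1} = \E_{\mu_{\ep}}^{\ep}[T_{\Om}^{\ep}]$, this reduces to standard Freidlin--Wentzell lower bounds on expected exit times from a positively invariant set. The Hausdorff approximation hypothesis on $\Om$ enters here: one proves the bound first on smooth approximants $\Om_{n}$ (for which the classical exit-time machinery applies directly) and transfers it to $\Om$ using its positive invariance.

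The second ingredient is a pointwise exponential decay estimate: for every open $\OO$ with $\AAa_{\Om} \stst \OO \stst \Om$ there exist $\ga_{\OO} > 0$ and $\ep_{\OO} > 0$ such that
\begin{equation*}
\sup_{\Om \sm \OO} u_{\ep} \leq e^{-\ga_{\OO}/\ep^{2}}, \qquad \forall\, \ep \in (0, \ep_{\OO}).
\end{equation*}
This is the QSD analogue of the stationary-density estimate in \cite[Theorem A]{JSY19}. The plan is to rerun their barrier/maximum-principle argument for the equation $L_{\ep}^{*} u_{\ep} = -\la_{\ep} u_{\ep}$ instead of $L_{\ep}^{*} u_{\ep} = 0$; the only new term is $\la_{\ep} u_{\ep}$, which is harmless because of the exponentially small bound on $\la_{\ep}$ from the first ingredient. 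Smoothness of approximating domains is again used to construct clean barriers near $\pa \Om$ where the equation lacks a priori regularity. Once this bound is in hand, tightness is immediate: given $\eta > 0$, choose $\OO$ with $\AAa_{\Om} \stst \OO \stst \Om$, and then $\mu_{\ep}(\Om \sm \OO) \leq |\Om| e^{-\ga_{\OO}/\ep^{2}} < \eta$ for small $\ep$, so the mass lies in the compact set $\overline{\OO} \stst \Om$.

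The main technical obstacle is the second ingredient. In the stationary setting of \cite{JSY19} the argument uses $\la_{\ep} = 0$ in an essential way; for QSDs one must carefully track the interplay between the exponentially small $\la_{\ep}$ and the barrier estimates near $\pa \Om$, which is precisely where the smooth-approximation hypothesis is invoked. Once \textbf{(H2)} is fully established, Theorem \ref{thm-main-result} directly delivers $\lim_{\ep \to 0} \tfrac{\ep^{2}}{2} \ln u_{\ep} = -V_{\Om}$ in $C^{\al}(\Om^{*})$ for every $\al \in (0,1)$.
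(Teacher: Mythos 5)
Your high-level strategy is right and matches the paper: verify \textbf{(H2)} (the existence/eigenvalue part comes from Proposition \ref{prop-QSD-existence-uniqueness}, the real issue is tightness of $\{\mu_\ep\}$), then apply Theorem \ref{thm-main-result}. You also correctly identify $\limsup_{\ep\to0}\frac{\ep^2}{2}\ln\la_\ep<0$ as an essential ingredient. However, your route to tightness differs from the paper's and contains a genuine gap.

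You propose to prove tightness by establishing the pointwise bound $\sup_{\Om\setminus\OO}u_\ep\leq e^{-\ga_\OO/\ep^2}$ via an adaptation of the barrier argument behind \cite[Theorem A]{JSY19}, and you claim the only new wrinkle is the exponentially small term $\la_\ep u_\ep$. This misidentifies the obstruction. The estimate in \cite[Theorem A]{JSY19} is an \emph{interior} estimate: it holds on sets $\OO\stst\Om\setminus\AAa$, i.e., with $\overline{\OO}$ compactly contained in $\Om$, and the barrier construction there uses that one is bounded away from $\partial\Om$. Your second ingredient asks for a bound on all of $\Om\setminus\OO$, which includes a boundary layer touching $\partial\Om$ where the QSD density $u_\ep$ must be controlled uniformly in $\ep$ right up to the killing boundary. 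Nothing in the \cite{JSY19} machinery, adjusted by an innocuous $\la_\ep u_\ep$ term, produces such a boundary-layer estimate; that is precisely the point where the argument is nontrivial and where you have only gestured at a plan rather than produced a proof.

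The paper's actual proof is purely probabilistic and sidesteps any pointwise boundary estimate on $u_\ep$. It starts from the QSD identity $\mu_\ep(\Om\setminus\ol{\OO}) = \P^\ep_{\mu_\ep}\bigl[X^\ep_t\in\Om\setminus\ol{\OO},\,t<T^\ep_\Om\bigr]e^{\la_\ep t}$, chooses a time scale $t_\ep=e^{\frac{2}{\ep^2}\frac{V_0}{2}}$ so that $\la_\ep t_\ep\to 0$ while the survival probability stays exponentially small, and then splits the survival probability into three pieces according to where the starting point lies (inside a narrow smooth collar $\NN$ of $\partial\Om$ versus deeper inside $\Om$). It bounds the first piece using the Poisson-equation estimate for $\E_\bullet^\ep[T^\ep_\NN]$ (this is where the smooth-approximation hypothesis is invoked, to build barriers for $L_\ep w_\ep=-1$ in $\NN$, not for $u_\ep$ itself), and the other two pieces via sample-path large deviations for exit from a positively invariant sub-domain $\OO_0$ together with the strong Markov property. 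In short: the paper never bounds $u_\ep$ near $\partial\Om$ — it bounds the \emph{measure} of the boundary layer directly through exit-time and survival-probability estimates, which is exactly what circumvents the gap in your analytic approach.
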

\begin{proof}
For any open set $\OO\subset\Om$, we denote by $T^{\ep}_{\OO}$ the first time that $X_{t}^{\ep}$ exits $\OO$, that is, $T_{\OO}^{\ep}=\inf\{t>0:X_{t}^{\ep}\not\in\OO\}$. By the definition of QSDs, for any open set $\OO\subset\Om$, 
\begin{equation}\label{equality-june-26}
\begin{split}
\mu_{\ep}(\Om\setminus\ol{\OO})=\frac{\P^{\ep}_{\mu_{\ep}}\left[X_{t}^{\ep}\in\Om\setminus\ol{\OO},t<T_{\Om}^{\ep}\right]}{\P^{\ep}_{\mu_{\ep}}[t<T_{\Om}^{\ep}]}=\P^{\ep}_{\mu_{\ep}}\left[X_{t}^{\ep}\in\Om\setminus\ol{\OO},t<T_{\Om}^{\ep}\right]e^{\la_{\ep}t}.
\end{split}
\end{equation}
Given the dynamical properties of $\vp^{t}$ on $\Om$, it is known (see e.g. \cite{Friedman72/73}) that  $\limsup_{\ep\to0}\frac{\ep^{2}}{2}\ln\la_{\ep}<0$. 

We are going to find an open set $\OO_{0}$ satisfying $\AAa_{\Om}\subset\OO_0\stst\Om$ and a time scale $t_{\ep}$ such that $\lim_{\ep\to0}\la_{\ep}t_{\ep}=0$ and $\limsup_{\ep\to0}\frac{\ep^2}{2}\ln \P^{\ep}_{\mu_{\ep}}\left[X_{t_{\ep}}^{\ep}\in\Om\setminus\ol{\OO}_{0},t_{\ep}<T_{\Om}^{\ep}\right]<0$ so that setting $\OO=\OO_0$ and $t=t_\ep$ in \eqref{equality-june-26} yields
\begin{equation}\label{estimate-nov-05-2024}
\limsup_{\ep\to0}\frac{\ep^2}{2}\ln\mu_{\ep}(\Om\setminus\overline{\OO}_0)<0. 
\end{equation}
In particular, $\{\mu_{\ep}\}_{\ep}$ is tight, and hence, the result follows from Theorem \ref{thm-main-result}.

Let $\NN$ be an open, smooth, and narrow neighbourhood of $\partial\Om$ and $\OO_0$ be open, connected, and positively invariant under $\vp^{t}$ and satisfy $\Om\setminus\overline{\NN}\stst\OO_{0}\stst\Om$. Such a $\OO_0$ always exists. Indeed, let $\OO_{00}$ be open, connected, and satisfy $\Om\setminus\overline{\NN}\stst\OO_{00}\stst\Om$, and set $\OO_0:=\cup_{t\geq0}\vp^{t}(\OO_{00})$. Then, $\OO_0$ obviously satisfies all the conditions except $\ol{\OO}_{0}\subset\Om$. Suppose this is not the case. Since there exists $t_{00}>0$ such that $\cup_{t\geq t_{00}}\vp^{t}(\OO_{00})$ is contained in a small neighbourhood of $\AAa$, we find convergent sequences $\{t_{n}\}_{n}\subset(0,t_{00})$ and $\{x_{n}\}_{n}\subset\OO_{00}$ such that $\lim_{n\to\infty}\vp^{t_{n}}(x_{n})\in\partial\Om$. Setting $t_{*}:=\lim_{n\to\infty}t_{n}$ and $x_{*}:=\lim_{n\to\infty}x_{n}$, we conclude $\vp^{t_{*}}(x_{*})\in\partial\Om$, leading to a contradiction.

Split    
\begin{equation}\label{split-june-30}
        \begin{split}
\P^{\ep}_{\mu_{\ep}}\left[X_{t}^{\ep}\in\Om\setminus\ol{\OO}_0,t<T_{\Om}^{\ep}\right]
            &=\int_{\Om\cap\NN}\P^{\ep}_{\bullet}\left[X_{t}^{\ep}\in\Om\setminus\ol{\OO}_0,t<T_{\Om}^{\ep},T_{\Om\cap\NN}^{\ep}>t\right]d\mu_{\ep}\\
            &\quad+\int_{\Om\cap\NN}\P^{\ep}_{\bullet}\left[X_{t}^{\ep}\in\Om\setminus\ol{\OO}_0,t<T_{\Om}^{\ep},T_{\Om\cap\NN}^{\ep}\leq t\right]d\mu_{\ep}\\
            &\quad+\int_{\Om\setminus\NN}\P^{\ep}_{\bullet}\left[X_{t}^{\ep}\in\Om\setminus\ol{\OO}_0,t<T_{\Om}^{\ep}\right]d\mu_{\ep}\\
            &=:I_{1}(\ep,t)+I_{2}(\ep,t)+I_{3}(\ep,t).
        \end{split}
    \end{equation}

Since $w_{\ep}:=\E_{\bullet}^{\ep}[T_{\NN}^{\ep}]$ satisfies $L_{\ep}w_{\ep}=-1$ in $\NN$ and $w_{\ep}=0$ on $\partial\NN$, we can adapt the proof of the classical a priori estimate for Poisson equations (see e.g. \cite{GT01,HL97}) to show the existence of $\ga_{\NN}>0$ satisfying that $\ga_{\NN}$ approaches $0$ as $\NN$ narrows down to $\partial\Om$ such that
\begin{equation}\label{claim-july-2}
    \limsup_{\ep\to0}\frac{\ep^{2}}{2}\ln\left(\sup_{\NN}\E_{\bullet}^{\ep}[T_{\NN}^{\ep}]\right)\leq\ga_{\NN}.
\end{equation}

Define
\begin{equation*}
    V_{\ol{\OO}_{0}}(x):=\inf_{\phi\in\Phi_{x,\ol{\OO}_{0}}}\frac{1}{4}\int_{-\infty}^0\left[b(\phi)-\dot{\phi}\right]\cdot A^{-1}(\phi)\left[b(\phi)-\dot{\phi}\right], \quad \forall x\in \ol{\OO}_{0},
\end{equation*}
where $\Phi_{x,\ol{\OO}_{0}}:=\left\{\phi\in AC((-\infty,0];\ol{\OO}_{0}):\dot{\phi}\in L^2_{loc}((-\infty,0];\R^{d}),\phi(0)=x,\,\,\lim_{t\to -\infty}\dist(\phi(t),\AAa)=0\right\}$. Set $V_{0}:=\inf_{\partial\OO_{0}}V_{\ol{\OO}_{0}}>0$. Then, for any open $\OO\stst\OO_0$, there are $\ga_{\OO}>0$ and $0<\ep_{\OO}\ll1$ such that
\begin{equation}\label{claim-Nov-09-2024}
    \sup_{x\in\OO}\P_{x}^{\ep}\left[T^{\ep}_{\OO_{0}}\leq e^{\frac{2}{\ep^{2}}\frac{V_{0}}{2}}\right]\leq e^{-\frac{\ga_{\OO}}{\ep^{2}}},\quad \forall \ep\in(0,\ep_{\OO}].
\end{equation}
Indeed, following arguments in the proof of \cite[Theorem 5.7.11]{DZ98} (more precisely, arguments leading to the first inequality in \cite[page 230]{DZ98}), we find that for any open $\OO\stst\OO_0$, there exist $\ga_{\OO}>0$ and $0<\ep_{\OO}\ll1$ such that
$$
\P_{x}^{\ep}\left[T^{\ep}_{\OO_{0}}\leq e^{\frac{2}{\ep^{2}}\frac{V_{0}}{2}}\right]\leq \P_{x}^{\ep}\left[X^{\ep}_{T^{\ep}_{\OO_{0}\setminus\ol{\AAa}_{\de}}}\in\partial\OO_{0}\right]+\frac{1}{2}e^{-\frac{\ga_{\OO}}{\ep^{2}}},\quad \forall x\in \OO,\,\, \ep\in(0,\ep_{\OO}],
$$
where $\AAa_{\de}$ is the $\de$-neighbourhood of $\AAa$ for some fixed small $\de$ so that $\AAa_{\de}\stst\OO_{0}$. Making $\ga_{\OO}$ and $\ep_{\OO}$ smaller if necessary, we conclude from the sample path large deviation, the positive invariance of $\OO_{0}$ under $\vp^{t}$, and the existence of $t_{\OO}>0$ such that $\vp^{t}(\OO)\subset\AAa_{\de}$ for all $t\geq t_{\OO}$ that
$$
\sup_{x\in\OO}\P_{x}^{\ep}\left[X^{\ep}_{T^{\ep}_{\OO_{0}\setminus\ol{\AAa}_{\de}}}\in\partial\OO_{0}\right]\leq \frac{1}{2}e^{-\frac{\ga_{\OO}}{\ep^{2}}},\quad\forall \ep\in(0,\ep_{\OO}].
$$
Hence, \eqref{claim-Nov-09-2024} holds.

Let us fix $\NN$ so narrow that $\ga_{\NN}<\frac{V_0}{2}$ and set $t_\ep:=e^{\frac{2}{\ep^{2}}\frac{V_0}{2}}$. It is known that $\limsup_{\ep\to0}\frac{\ep^{2}}{2}\ln\la_{\ep}\leq-\frac{2V_0}{3}$ (see e.g. \cite{Friedman72/73}), and hence, $\lim_{\ep\to0}\la_{\ep}t_{\ep}=0$. 
It remains to treat \eqref{split-june-30} with $t=t_\ep$. It follows from Chebyshev's inequality and \eqref{claim-july-2} that
\begin{equation*}
\limsup_{\ep\to0}\frac{\ep^2}{2}\ln I_{1}(\ep,t_{\ep})\leq\limsup_{\ep\to0}\frac{\ep^2}{2}\ln\int_{\NN}\P^{\ep}_{\bullet}[T_{\NN}^{\ep}>t_\ep]d\mu_{\ep}\leq\ga_{\NN}-\frac{V_0}{2}<0.
\end{equation*}
Applying \eqref{claim-Nov-09-2024}, we find the existence of $\ga_{\Om\setminus\NN}>0$ such that
\begin{equation*}
\begin{split}
\limsup_{\ep\to0}\frac{\ep^2}{2}\ln I_{3}(\ep,t_{\ep})&\leq \limsup_{\ep\to0}\frac{\ep^2}{2}\ln\left(\sup_{x\in\Om\setminus\NN}\P^{\ep}_{x}\left[X_{t_{\ep}}^{\ep}\in\Om\setminus\ol{\OO}_0,t_{\ep}<T_{\Om}^{\ep}\right]\right)\\
&\leq\limsup_{\ep\to0}\frac{\ep^2}{2}\ln\left(\sup_{x\in\Om\setminus\NN}\P^{\ep}_{x}\left[T_{\OO_0}^{\ep}\leq t_{\ep}\right]\right)\leq-\frac{\ga_{\Om\setminus\NN}}{2}<0.
\end{split}    
\end{equation*}
For $I_{2}(\ep,t)$, we note that for each $x\in\Om\cap\NN$, the event $\left[t<T_{\Om}^{\ep},T_{\Om\cap\NN}^{\ep}\leq t\right]$ yields $X_{T_{\Om\cap\NN}^{\ep}}^{\ep}\in\partial_{1}\NN:=\partial\NN\cap\Om$. Thus,  the strong Markov property implies that
\begin{equation*}
\begin{split}
&\P^{\ep}_{x}\left[X_{t}^{\ep}\in\Om\setminus\ol{\OO}_0,t<T_{\Om}^{\ep},T_{\Om\cap\NN}^{\ep}\leq t\right]\\
&\qquad=\E_{x}\left[1_{\left[T_{\Om\cap\NN}^{\ep}\leq t\right]}\P^{\ep}_{X_{T_{\Om\cap\NN}^{\ep}}^{\ep}}\left[X_{t-T_{\Om\cap\NN}^{\ep}}^{\ep}\in\Om\setminus\ol{\OO}_0,t-T_{\Om\cap\NN}^{\ep}<T_{\Om}^{\ep}\right]\right]\leq\sup_{x\in\partial_{1}\NN}\P^{\ep}_{x}\left[T_{\OO_0}^{\ep}\leq t\right].
\end{split}    
\end{equation*}
It then follows from \eqref{claim-Nov-09-2024} that there exists $\ga_{\partial_{1}\NN}>0$ such that
$$
\limsup_{\ep\to0}\frac{\ep^2}{2}\ln I_{2}(\ep,t_{\ep})\leq\limsup_{\ep\to0}\frac{\ep^2}{2}\ln\left(\sup_{x\in\partial_{1}\NN}\P^{\ep}_{x}\left[T_{\OO_0}^{\ep}\leq t_{\ep}\right]\right)\leq-\frac{\ga_{\partial_{1}\NN}}{2}<0.
$$
Therefore, $\limsup_{\ep\to0}\frac{\ep^2}{2}\ln \P^{\ep}_{\mu_{\ep}}\left[X_{t_{\ep}}^{\ep}\in\Om\setminus\ol{\OO}_{0},t_{\ep}<T_{\Om}^{\ep}\right]<0$ holds. This completes the proof.
\end{proof}

\begin{rem}\label{rem-discussion-QSD}
When $\Om\subset\UU$ but not $\Om\stst\UU$, the existence and uniqueness of QSDs of \eqref{main-sde} in $\Om$ have been studied in various settings (see \cite{GQZ88, CCLMMS09,MV12,CMS13,HK19,CV18,GRS21,HQSY,GNW2020} and references therein). Notably, when $\Om$ is unbounded, the uniqueness typically requires strong dissipative conditions near infinity, such as coming down from infinity \cite{CCLMMS09}. Otherwise, the scenario of infinitely many QSDs could happen (see e.g. \cite{MS94,LS00,CMS13,Yamato22}).  

Even when a uniform-in-noise strong dissipative condition near infinity is satisfied, establishing the tightness of QSDs can still suffer from properties of the vector field and the noise coefficient at boundary points where trajectories may exit the domain. For instance, let us consider the stochastic logistic equation:
\begin{equation}\label{stochastic-logistic-equation}
    \dot{x}=x(1-x)+\ep\sqrt{x}\dot{W}_{t}. 
\end{equation}
It is shown in \cite{CCLMMS09} that for each $\ep$, \eqref{stochastic-logistic-equation} admits a unique QSD $\mu_{\ep}$ in $\Om=(0,\infty)$. Note that the vector field vanishes at the boundary $\partial\Om=\{0\}$ and the noise coefficient shows both degeneracy and singularity at $\partial\Om$. The tightness of $\{\mu_{\ep}\}_{\ep}$ is established in \cite{SWY2024} by leveraging the one-dimensional structure of \eqref{stochastic-logistic-equation}. 

However, extending this result remains an open problem, even for the relatively simple stochastic competitive Lotka-Volterra system:
\begin{equation}\label{stochastic-competitive-system}
    \begin{cases}
        \dot{x}_{1}=x_{1}(r_{1}-c_{11}x_{1}-c_{12}x_{2})+\ep\sqrt{x_{1}}\dot{W}_{t}^{1},\\
        \dot{x}_{2}=x_{2}(r_{2}-c_{22}x_{2}-c_{21}x_{1})+\ep\sqrt{x_{2}}\dot{W}_{t}^{2},        
    \end{cases}
        \end{equation}
where $r_{i}>0$ and $c_{ij}>0$ for $i,j=1,2$. For each $\ep$, the system \eqref{stochastic-competitive-system} admits a unique QSD $\mu_{\ep}$ in $\Om=(0,\infty)^{2}$ \cite{CV18,HQSY}, but the tightness of $\{\mu_{\ep}\}_{\ep}$ remains unknown. Note that the vector field has vanishing components on the boundary $\partial\Om=\left(\{0\}\times(0,\infty)\right)\cup(0,0)\cup\left((0,\infty)\times\{0\}\right)$ and the noise coefficient shows both degeneracy and singularity on $\partial\Om$. 
\end{rem}


\subsection{Macroscopic fluctuation theory}\label{subsec-macro-potential-fluctuation}

In this subsection, we assume {\bf(H1)}-{\bf(H4)} are satisfied and $\{u_{\ep}\}_{\ep}$ are either densities of stationary or quasi-stationary distributions of \eqref{main-sde}. For stationary distributions, $\Om=\UU$ and $\AAa$ is the global attractor.

In stochastic thermodynamics, the vector field $\ga_{\ep}$, defined in Corollary \ref{cor-macro-potential-flux}, is called \emph{Onsager's thermodynamic flux}. Note that $\nabla\cdot(u_{\ep}\ga_{\ep})=\la_{\ep}u_{\ep}$. In the case of a stationary distribution (i.e., $\la_{\ep}=0$), zero flux $\ga_{\ep}\equiv0$ is equivalent to the detailed balance (see e.g. \cite{JQQ04}). Therefore, $\ga_{\ep}\not\equiv0$ if and only if \eqref{SDE-introduction} is out of equilibrium, or irreversible. In which case, $\ga_{\ep}$ describes irreversible cyclic fluctuations in the steady state. 

The limits of $V_{\ep}$ and $\ga_{\ep}$ are respectively the quasi-potential $V$ and $\ga$, which are responsible for macroscopic dissipation and fluctuations. In fact, the relaxation dynamics to the attractor $\AAa$ is described by 
$$
\dot{x}=b(x)=-A(x)\nabla V(x)+\ga(x).
$$
Fluctuations away from the attractor $\AAa$ are rare events, which occur with overwhelming probability along paths satisfying 
$$
\dot{x}=2A(x)\nabla V(x)+b(x)=A(x)\nabla V(x)+\ga(x).
$$
The orthogonality $\nabla V\cdot\ga=0$ and Lyapunov property $b\cdot\nabla V=-\|A\nabla V\|^{2}_{A^{-1}}\leq0$ imply that $\ga$ describes cyclic transitions, while $V$ or $A\nabla V$ is responsible for both the relaxation to $\AAa$ and fluctuation away from $\AAa$. Moreover, when $\ga\not\equiv0$, fluctuation paths are not simply reversed relaxation paths.  

The potential function $V$ is often regarded as the generalized free energy or relative entropy in the classical irreversible thermodynamics. More precisely, if $x(t)$ solves $\dot{x}=b(x)$ with initial condition $x(0)=x_0$, Corollary \ref{cor-macro-potential-flux} (4)-(5) implies that
\begin{equation}\label{intro-energy-balance-eqn}
\frac{d}{dt}V(x(t))=-\|A(x(t))\nabla V(x(t))\|^{2}_{A^{-1}(x(t))}=\|\ga(x(t))\|^{2}_{A^{-1}(x(t))}-\|b(x(t))\|^{2}_{A^{-1}(x(t))},    
\end{equation}
which is an instantaneous energy balance law with free energy dissipation $\|A(x(t))\nabla V(x(t))\|^{2}_{A^{-1}(x(t))}$, house-keeping heat $\|\ga(x(t))\|^{2}_{A^{-1}(x(t))}$, and total entropy production $\|b(x(t))\|^{2}_{A^{-1}(x(t))}$ \cite{QCY2020,HQ20}. 

For reader's convenience, we include Appendix \ref{sec-app-energy-balance-equation} to brief on the formal derivation of \eqref{intro-energy-balance-eqn} from the energy balance equation for the relative entropy or Kullback–Leibler divergence $\int_{\Om}p_{\ep}(x,t)\ln\frac{p_{\ep}(x,t)}{u_{\ep}(x)}dx$ in the case of stationary distributions, where $p_{\ep}(x,t)$ is the density of the distribution of $X_{\ep}(t)$ starting at $X_{\ep}(0)=x_0$. This is closely related to de Bruijn's identity and Boltzmann's $H$-theorem. 

Recently, there have been exciting applications of nonequilibrium stochastic thermodynamics to nonequilibrium complex systems arising from neural network dynamics, ecology, cell biology, and so on (see e.g. \cite{WXW08,Wang15,FKLW2019,XPSLW21}). Built on the kinetic potential $V_{\ep}$ and Onsager's thermodynamic flux $\ga_{\ep}$ and their limits $V$ and $\ga$, the potential landscape and flux framework is formulated to address issues such as the origin of the underlying driving force for nonequilibrium systems and the underlying mechanisms for different physical and biological systems.


\subsection{Sub-exponential LDP}\label{subsec-subexponential-LDP}
In this subsection, we assume {\bf(H1)}-{\bf(H4)} are satisfied and $\{u_{\ep}\}_{\ep}$ are either densities of stationary or quasi-stationary distributions $\{\mu_{\ep}\}_{\ep}$ of \eqref{main-sde}. The LDP given in Theorem \ref{thm-main-result} captures the leading exponential asymptotic of $u_{\ep}$. Since $V$ is non-negative and vanishes only on $\AAa$, it implies that $\mu_{\ep}$ tends to concentrate on the attractor $\AAa$. To better understand the asymptotic of $u_{\ep}$, it is in general necessary to study the sub-exponential asymptotic, that is, the asymptotic of the prefactor $R_{\ep}:=K_{\ep}u_{\ep}e^{\frac{\ep^2}{2}V}$, where $K_{\ep}$ is the normalization constant. Straightforward calculations show that $R_{\ep}$ satisfies the following singularly perturbed linear equation
\begin{equation*}\label{eqn-prefactor}
    \begin{split}
        &\frac{\ep^{2}}{2}\sum_{i,j=1}^{d}a^{ij}\partial_{ij}^{2}R_{\ep}-\sum_{i=1}^d\left(b^i+2\sum_{j=1}^d a^{ij}\pa_j V-\ep^2\sum_{j=1}^d\pa_j a^{ij}\right)\pa_i R_{\ep}\\
        &\qquad -\left(\nabla\cdot b-\la_{\ep}+\sum_{i,j=1}^d(a^{ij}\pa^2_{ij}V+2\pa_i a^{ij}\pa_j V-\frac{\ep^2}{2}\pa^2_{ij}a^{ij})\right)R_{\ep}=0\quad \text{in}\quad G,
    \end{split}
\end{equation*}
where $G$ is given in Theorem \ref{thm-global-regularity}. Note that the regularity of $V$ gives rise to the regularity of coefficients, laying the solid foundation for studying the asymptotic of $R_{\ep}$.

When the attractor $\AAa$ is a linearly stable equilibrium and $\{u_{\ep}\}_{\ep}$ are stationary distributions in the whole space $\R^{d}$ instead of a general open and connected $\UU\subset\R^{d}$, it is shown in \cite[Theorem 3]{Day87} that there is a positive function $R_{0}\in C^{1}(G)$ such that $\lim_{\ep\to0}R_{\ep}=R_{0}$ holds locally uniformly in $G$. When it comes to a more general attractor, the situation is much more complicated since the asymptotic of $R_{\ep}$, when restricted to $\AAa$, must reflect invariant measures of $\vp^{t}$, which may be singular with respect to the volume measure on $\AAa$. Therefore, possible complex dynamics of $\vp^{t}$ on $\AAa$ must play a crucial role determining the asymptotic of $R_{\ep}$.




The sub-exponential LDP of $u_{\ep}$ has many applications that extend far beyond the LDP. We here mention a few. (i) When the attractor $\AAa$ is singleton set, it is used in \cite{BR2016} to formally derive the Eyring-Kramers formula in a randomly perturbed bistable system and in \cite[Theorem 4]{Day87} to prove that determining the asymptotic of the exit measure (associated with the exit problem from a smooth bounded domain with a non-characteristic boundary) is equivalent to the asymptotic evaluation of a Laplace integral on the boundary. These results are expected in a more general setting. (ii) The computation of a stationary or quasi-stationary distribution in the small noise regime on a domain of large size has to be done on a relatively smaller one. The sub-exponential LDPs (for the original stationary or quasi-stationary distribution and its restriction on a smaller domain with appropriate boundary conditions) would provide a theoretical foundation for this. (iii) As mentioned above, the asymptotic of $R_{\ep}$ must reflect invariant measures of $\vp^{t}$, and hence, will have the stability of invariant measures of $\vp^{t}$ under noise perturbations as consequences.

\appendix

\section{\bf Maximal attractors}\label{appendix-attractor}

Let $\UU\subset \R^d$ be open and connected and $b:\UU\to \R^d$ be locally Lipschitz continuous. Denote by $\vp^t$ be the local flow generated by the ODE \eqref{main-ode}. 

\begin{defn}[Maximal attractor]\label{defn-attractor}
Let $\Om\subset\UU$ be open, connected, and positively invariant under $\vp^{t}$. A compact $\vp^{t}$-invariant set $\AAa\subset\Om$ is called the \emph{maximal attractor} in $\Om$ of $\vp^{t}$ if $\lim_{t\to\infty}\dist_{H}(\vp^{t}(\OO),\AAa)=0$ for all $\OO\stst\Om$, where $\dist_{H}$ is the Hausdorff semi-distance. 
     
     When $\Om=\UU$, $\AAa$ is called the \emph{global attractor}. Otherwise, $\AAa$ is called a \emph{local attractor}. 
\end{defn}

\begin{prop}[{\cite[Proposition 5.2]{HJLY15-JDDE}}]
    Let $\Om\subset\UU$ be open, connected, and positively invariant under $\vp^{t}$. $\vp^{t}$ admits the maximal attractor in $\Om$ if and only if it is dissipative in $\Om$, that is, there is a compact set $K\subset\Om$ such that for any $x\in\Om$, there is $t_{x}\geq0$ such that $\vp^{t}(x)\in K$ for all $t\geq t_{x}$.
\end{prop}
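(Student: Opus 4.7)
The plan is to prove the two implications separately, treating the dissipativity $\Rightarrow$ existence direction as the substantial one which requires an explicit construction of $\AAa$.

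For the easy direction, assume $\vp^t$ admits the maximal attractor $\AAa$ in $\Om$. Since $\AAa\stst\Om$, I can pick an open set $\UU'$ with $\AAa\stst\UU'\stst\Om$ and set $K:=\ol{\UU'}$, which is compact and contained in $\Om$. For each $x\in\Om$, choose a bounded open $\OO_{x}$ with $x\in\OO_{x}\stst\Om$; the attraction property $\lim_{t\to\infty}\dist_{H}(\vp^{t}(\OO_{x}),\AAa)=0$ then yields $t_{x}\geq 0$ such that $\vp^{t}(\OO_{x})\subset\UU'\subset K$ for every $t\geq t_{x}$, and in particular $\vp^{t}(x)\in K$, so dissipativity holds with this single $K$.

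For the nontrivial direction, assume dissipativity with compact absorbing set $K\stst\Om$. First I would upgrade $K$ to an eventually positively invariant compact set: applying dissipativity to $K$ itself (which is a compact subset of $\Om$) and using continuity of $\vp^{t}$ together with the compactness of $K$, there exists $T\geq 0$ such that $\vp^{t}(K)\subset K$ for all $t\geq T$. The candidate maximal attractor is then the $\omega$-limit set of $K$,
\begin{equation*}
    \AAa:=\bigcap_{s\geq 0}\ol{\bigcup_{t\geq s}\vp^{t}(K)}=\bigcap_{s\geq T}\ol{\vp^{s}(K)},
\end{equation*}
which is a decreasing intersection of nonempty compact subsets of $K$, and hence is nonempty and compact. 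Invariance under $\vp^{t}$ follows from the standard $\omega$-limit set argument: for any $y\in\AAa$ there exist $t_{n}\to\infty$ and $x_{n}\in K$ with $\vp^{t_{n}}(x_{n})\to y$, and the continuity of $\vp^{s}$ together with passing to subsequences delivers both $\vp^{s}(y)\in\AAa$ and a preimage of $y$ under $\vp^{s}$ lying in $\AAa$.

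The main obstacle, and the step requiring care, is verifying the attraction property $\lim_{t\to\infty}\dist_{H}(\vp^{t}(\OO),\AAa)=0$ for every $\OO\stst\Om$. Pointwise attraction is close to immediate from the definition of $\AAa$, but I need \emph{uniform} attraction in $x\in\OO$. The plan is to argue by contradiction: if the conclusion fails for some $\OO$, there exist $\ep>0$, $t_{n}\to\infty$, and $x_{n}\in\OO$ with $\dist(\vp^{t_{n}}(x_{n}),\AAa)\geq\ep$. Since $\ol{\OO}$ is compact and $K$ absorbs $\ol{\OO}$ uniformly (by compactness of $\ol{\OO}$ together with continuity of $(t,x)\mapsto\vp^{t}(x)$ and absorption of each point into the interior region where orbits remain in $K$), we have $\vp^{t_{n}}(x_{n})\in K$ for large $n$. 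Extracting a convergent subsequence $\vp^{t_{n}}(x_{n})\to y_{*}\in K$, one checks $y_{*}\in\AAa$ directly from the definition, contradicting $\dist(y_{*},\AAa)\geq\ep$. Uniqueness of the maximal attractor is then automatic, since any other such set would be contained in $\AAa$ by attraction of $\AAa$ and would attract $\AAa$ by its own attraction property, forcing equality.
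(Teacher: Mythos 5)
The paper simply cites this proposition from \cite{HJLY15-JDDE}, so there is no in-paper proof to compare against; your argument must stand on its own. The easy direction and the overall skeleton of the hard direction (build $\AAa$ as the $\omega$-limit set of $K$ and verify compactness, invariance, attraction) are the right approach. The genuine gap lies in the assertion that ``using continuity of $\vp^t$ together with the compactness of $K$, there exists $T\geq 0$ such that $\vp^{t}(K)\subset K$ for all $t\geq T$,'' and in the analogous later claim that $K$ absorbs $\ol{\OO}$ uniformly ``by compactness \ldots\ and continuity.'' Point dissipativity gives, for each $x$, a time $t_x$ after which the orbit lies in $K$; but the set $\{x:\vp^{t}(x)\in K\text{ for all }t\geq T\}$ is \emph{closed} rather than open, so compactness of $K$ does not let you extract a finite subcover that uniformizes $T$. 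Moreover the literal conclusion $\vp^{t}(K)\subset K$ for large $t$ need not be true at all: $\omega(K)$ is only guaranteed to lie in some compact subset of $\Om$, not in $K$ itself.

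What is really needed is the upgrade from point dissipativity to bounded dissipativity: for every $\OO\stst\Om$ (including $\OO=K$) the forward orbit $\bigcup_{t\geq 0}\vp^{t}(\ol\OO)$ lies in a single compact $K_1\stst\Om$. A correct argument fixes a compact $K'$ with $K\subset\mathrm{int}(K')\subset K'\stst\Om$, uses continuity of the flow on the \emph{compact} time interval $[0,t_x]$ to produce, for each $x$, a neighborhood $U_x$ and a time $\tau_x$ with $\vp^{\tau_x}(U_x)\subset\mathrm{int}(K')$, extracts a finite subcover to obtain a uniform $T$, and then \emph{iterates}: every orbit revisits $\mathrm{int}(K')$ at intervals bounded by $T$, hence stays in the compact set $K_1:=\{\vp^{t}(y):y\in K'\cup\ol\OO,\ 0\leq t\leq T\}$. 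This covering-plus-iteration step (essentially Hale's lemma that point dissipativity plus asymptotic compactness yields bounded dissipativity) is precisely the content your proof elides; once $K_1$ is in hand, your subsequence arguments for $\vp^{t_n}(x_n)$ go through, but without it they are not justified.
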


\begin{defn}[Lyapunov function]
    A function $U\in C^{1}(\Om)$ is called a \emph{Lyapunov function} of $\vp^{t}$ in $\Om$ if $\lim_{x\to\partial\Om}U=\sup_{\Om}U$ and there exist a compact set $K\subset\Om$ and a constant $\gamma>0$ such that $b\cdot\nabla U\leq-\ga$ in $\Om\setminus K$.
\end{defn}

\begin{prop}[{\cite[Proposition 5.2]{HJLY15-JDDE}}]\label{thm-existence-ma-attractor}
Let $\Om\subset\UU$ be open and connected. Suppose $\vp^{t}$ admits a Lyapunov function in $\Om$. Then, $\Om$ is positively invariant under $\vp^{t}$ and $\vp^{t}$ admits the maximal attractor in $\Om$.    
\end{prop}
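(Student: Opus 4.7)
The plan is to exhibit a compact, positively invariant absorbing set $\Om_M\stst\Om$, from which both the positive invariance of $\Om$ itself under $\vp^t$ and the dissipativity hypothesis of the cited existence criterion follow. The natural candidate is a sublevel set $\Om_M:=\{x\in\Om:U(x)\leq M\}$ of the Lyapunov function $U$, with $M$ chosen so that $\max_K U<M<\sup_\Om U$. Such $M$ exists because $K\stst\Om$ makes $\max_K U$ a well-defined finite number, while the boundary condition $\lim_{x\to\partial\Om}U=\sup_\Om U$ forces $U$ to take values strictly above $\max_K U$ somewhere in $\Om\setminus K$ (otherwise the supremum could not be realized as a limit toward $\partial\Om$). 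The same boundary condition guarantees $\Om_M\stst\Om$, since every proper sublevel set is compactly contained in $\Om$.

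The steps I would carry out in order are the following. First, verify positive invariance of $\Om_M$: on the part of the boundary $\pa\Om_M$ lying inside $\Om$, one has $U=M>\max_K U$, hence the point lies outside $K$ and satisfies $b\cdot\nabla U\leq -\gamma<0$, so the flow points strictly inward and no outward crossing is possible. Second, for an arbitrary $x\in\Om$, use the Lyapunov inequality to conclude that as long as $\vp^t(x)\notin\Om_M$ the point is outside $K$ and $\frac{d}{dt}U(\vp^t(x))\leq -\gamma$; integrating gives $U(\vp^t(x))\leq U(x)-\gamma t$, which forces the trajectory into $\Om_M$ no later than time $(U(x)-M)/\gamma$. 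Third, observe that throughout this process $U(\vp^t(x))\leq U(x)$, so the orbit stays in the compact sublevel set $\{U\leq U(x)\}\stst\Om$, which rules out both escape toward $\pa\Om$ and finite-time blow-up, so $\vp^t(x)$ is defined for all $t\geq0$ and stays in $\Om$. This yields $\vp^t(\Om)\subset\Om$, i.e. positive invariance of $\Om$. Fourth, combine positive invariance of $\Om_M$ with the absorption estimate to conclude that $\vp^t$ is dissipative in $\Om$ with absorbing compact set $K':=\Om_M$, and invoke the preceding \cite[Proposition 5.2]{HJLY15-JDDE} to produce the maximal attractor in $\Om$.

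The one point requiring genuine thought is ruling out escape through $\pa\Om$ in finite time, since $b$ is only assumed locally Lipschitz on $\Om$ and has no prescribed behavior at the boundary. The boundary condition $\lim_{x\to\pa\Om}U=\sup_\Om U$ is precisely what is needed here: it makes every proper sublevel set $\{U\leq c\}$ compactly contained in $\Om$, so the monotonicity $U(\vp^t(x))\leq U(x)$ along forward orbits traps them inside a compact subset of $\Om$. All remaining parts of the argument are routine consequences of the Lyapunov inequality and the cited existence criterion.
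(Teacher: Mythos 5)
Your overall plan---exhibit a compact, positively invariant, absorbing sublevel set and invoke the dissipativity criterion---is the right one, and the downstream steps (monotone decay of $U$ outside $K$, trapping in a compact sublevel set, ruling out escape through $\partial\Om$) are handled correctly. However, the proof breaks at the very first step: the claimed existence of $M$ with $\max_K U<M<\sup_{\Om}U$. Your parenthetical justification (``otherwise the supremum could not be realized as a limit toward $\partial\Om$'') is a non-sequitur: the condition $\lim_{x\to\partial\Om}U=\sup_{\Om}U$ is perfectly compatible with $U$ also attaining $\sup_{\Om}U$ at an interior point of $K$, in which case $\max_K U=\sup_{\Om}U$ and no such $M$ exists. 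Concretely, take $\Om=B_3(0)\subset\R^2$, $K=\ol{B_1(0)}$, $b(x)=-x$, and $U(x)=s-h(|x|)$, where $h\in C^1([0,3))$, $h(0)=0$, $h(r)=\gamma\ln(3/r)$ for $1\le r<3$, and $h$ is interpolated on $[0,1]$ with $h>0$ on $(0,1]$. Then $b\cdot\nabla U=|x|\,h'(|x|)=-\gamma$ on $\Om\setminus K$ and $\lim_{|x|\to3}U=s=\sup_\Om U$, yet $U(0)=s$, so $\max_K U=\sup_\Om U$; the sublevel set $\{U\le M\}$ for $M<s$ is an annulus that is neither positively invariant nor a superset of $K$, and your steps (1)--(3) collapse.

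What one \emph{can} prove is $\max_{\partial K}U<\sup_\Om U$: if $U(y)=\sup_\Om U<\infty$ for some $y\in\partial K$, then $y$ is an interior global maximizer, so $\nabla U(y)=0$, and by continuity of $\nabla U$ one has $b\cdot\nabla U\to 0$ as $x\to y$ through $\Om\setminus K$, contradicting $b\cdot\nabla U\le-\gamma$ there. Choosing $M\in(\max_{\partial K}U,\sup_\Om U)$ and replacing your $\Om_M$ by the compact set $K\cup\{U\le M\}$, one checks that its boundary lies in $\{U=M\}\setminus K$, where the flow points strictly inward; the remainder of your argument then goes through as written. (Since the paper states this proposition as a citation to \cite{HJLY15-JDDE} rather than proving it, there is no in-text proof to compare against.)
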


In Definition \ref{defn-attractor}, no ``indecomposability condition" on a maximal attractor is imposed, and therefore, it may contain smaller maximal attractors. The next result discusses some indecomposability conditions and their relationships.

\begin{prop}\label{app-prop-indecomposable-attractor}
    Let $\Om\subset\UU$ be open, connected, and positively invariant under $\vp^{t}$, and $\AAa$ be the maximal attractor in $\Om$. Consider the following statements. 
    \begin{itemize}
        \item[(1)] $\AAa$ is an equivalence class in the sense of {\bf(H3)}. 

        \item[(2)] $\AAa$ is chain-transitive, that is, for any $x,y\in\AAa$, $\ep>0$, and $T>0$, there is a finite sequence $\left\{x_1=x,x_2,\dots,x_{n},x_{n+1}=y;\,\,t_{1},\dots,t_{n}\right\}$ with $x_{i}\in\Om$ and $t_{i}\geq T$ such that $\left|\vp^{t_{i}}(x_{i})-x_{i+1}\right|<\ep$ for all $i$.

        \item[(3)] $\AAa$ does not contain a smaller maximal attractor. 
    \end{itemize}
Then, (1) implies (2), which is equivalent to (3). 
\end{prop}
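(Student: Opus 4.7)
For the implication (1) $\Rightarrow$ (2), the plan is to convert low-action paths, guaranteed by the equivalence class condition, into $(\ep,T)$-chains. Fix $x,y\in\AAa$ and $\ep,T>0$. Since $\VV(x,y)=0$, for any $\eta>0$ I can choose $t_0<0$ and $\phi\in\Phi_{t_0,x,y}$ with $\frac{1}{4}\int_{t_0}^{0}[b(\phi)-\dot\phi]^{\top}A^{-1}(\phi)[b(\phi)-\dot\phi]<\eta$. By first concatenating with short segments that loiter inside $\AAa$ (using $\VV(x,x)=0$) I may assume $|t_0|$ is an integer multiple $nT$ and that $\phi$ stays inside a fixed compact neighborhood $K\stst\Om$ of $\AAa$, on which $A^{-1}$ is uniformly bounded and $b$ is Lipschitz with constant $L$. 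Setting $\al:=\dot\phi-b(\phi)$ gives $\|\al\|_{L^2(t_0,0)}^2\lesssim\eta$. Define the chain points $x_i:=\phi(t_0+(i-1)T)$ for $i=1,\dots,n+1$, so $x_1=x$ and $x_{n+1}=y$. Writing $\psi_i(t):=\phi(t)-\vp^{t-t_0-(i-1)T}(x_i)$, Gr\"onwall applied to $\dot\psi_i=b(\phi)-b(\vp^{\cdot}(x_i))+\al$ yields $|\vp^T(x_i)-x_{i+1}|\leq e^{LT}T^{1/2}\|\al\|_{L^2(t_0+(i-1)T,\,t_0+iT)}$, and Cauchy--Schwarz summed over $i$ makes the maximum link error $\lesssim e^{LT}T^{1/2}\eta^{1/2}$. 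Choosing $\eta$ small enough produces the desired $(\ep,T)$-chain.

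For (2) $\Rightarrow$ (3), argue by contrapositive. Suppose $\AAa$ contains a strictly smaller maximal attractor $\tilde\AAa\subsetneq\AAa$ with a positively invariant open neighborhood $U\subset\Om$ such that $\lim_{t\to\infty}\dist_H(\vp^{t}(U),\tilde\AAa)=0$ and $\vp^{T_{0}}(\overline U)\subset U$ for some $T_{0}>0$. Since $\AAa$ is invariant, $\AAa\subset U$ would force $\AAa=\vp^{t}(\AAa)\subset\vp^{t}(U)\to\tilde\AAa$, contradicting $\tilde\AAa\subsetneq\AAa$; hence there is $y\in\AAa\setminus U$. Pick any $x\in\tilde\AAa$ and set $\de:=\dist(\vp^{T_{0}}(\overline U),\,\Om\setminus U)>0$. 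Then any $(\de,T)$-chain with $T\geq T_{0}$ starting at $x\in U$ must remain in $U$ at every step by positive invariance plus the gap $\de$, so cannot terminate at $y$. Thus $\AAa$ fails to be chain-transitive.

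For (3) $\Rightarrow$ (2), again by contrapositive, suppose $\AAa$ is not chain-transitive. Define the chain pre-order $x\rightharpoonup y$ on $\AAa$ (chains in $\Om$ with time steps $\geq T$, for all $\ep,T>0$) and its symmetric equivalence $x\sim y$. By Conley's theorem applied to the restricted system $\vp^{t}|_{\AAa}$, non-chain-transitivity produces a proper attractor $\tilde\AAa\subsetneq\AAa$ in $\AAa$, i.e.\ a compact $\vp^{t}$-invariant set with a relatively open neighborhood $U_{\AAa}\subset\AAa$ whose $\vp^{t}$-image converges to $\tilde\AAa$ in Hausdorff distance. I then promote $\tilde\AAa$ to a maximal attractor in $\Om$: pick $\de>0$ with $\overline{B(\tilde\AAa,2\de)}\cap\AAa\subset U_{\AAa}$, and use the attraction of $\AAa$ to $\vp^{t}$ together with upper semicontinuity of orbits to find $\ga>0$ and $T_{1}>0$ such that $\vp^{T_{1}}(B(U_{\AAa},\ga))\subset B(\tilde\AAa,\de)\subset\Om$; iterating and taking $\tilde U:=\bigcup_{t\geq 0}\vp^{t}(B(\tilde\AAa,\de/2))$ produces an open positively invariant neighborhood of $\tilde\AAa$ on which $\vp^{t}$ contracts to $\tilde\AAa$, identifying $\tilde\AAa$ as a maximal attractor strictly inside $\AAa$.

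The main obstacle will be the last step of (3) $\Rightarrow$ (2), namely upgrading an attractor of the restricted dynamics on $\AAa$ to a genuine maximal attractor inside an open subset of $\Om$. The subtle point is that $\AAa$ need not be normally hyperbolic (no transversality is assumed here) and a relatively open neighborhood of $\tilde\AAa$ in $\AAa$ can behave badly when one thickens it in $\Om$: nearby orbits might shadow portions of $\AAa\setminus\tilde\AAa$ before entering the candidate trapping region. Overcoming this requires a careful compactness and semicontinuity argument leveraging that $\AAa$ itself attracts every compactly contained open subset of $\Om$, together with a standard construction of a strict trapping region via iterated forward images. All other steps, including the Gr\"onwall-based chain construction and the obstruction argument, are relatively routine once the ambient compactness and boundedness are controlled.
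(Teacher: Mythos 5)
Your proposal takes a genuinely different route from the paper. The paper's proof is almost purely citation-based: the equivalence (2)$\Leftrightarrow$(3) is attributed to Conley's theorem (citing Conley's monograph and Hurley's extension), and (1)$\Rightarrow$(2) is handled by a reduction to the discrete-time case via Kifer's results. You instead supply self-contained arguments for all three implications. This is a legitimate alternative and has pedagogical value.

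On the substance: the Gr\"onwall-based construction for (1)$\Rightarrow$(2) is the right idea and the bookkeeping is essentially correct, but one point is passed over too quickly. You assert that a near-zero-action path $\phi$ ``stays inside a fixed compact neighborhood $K\stst\Om$ of $\AAa$.'' This is not automatic from smallness of the action and needs a small argument (e.g., the Fleming-type lower bound on the rate functional on compact annuli around $\AAa$, or a compactness/lower-semicontinuity argument for minimizing sequences); without it the Lipschitz constant $L$ and the bound on $A^{-1}$ are not uniform in $\eta$. Similarly, the ``loiter to make $|t_0|$ a multiple of $T$'' step is not necessary: one can simply partition $[t_0,0]$ into $n-1$ intervals of length $T$ plus a last interval of length in $[T,2T)$, at the cost of $e^{2LT}\sqrt{2T}$ in the constant. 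For (2)$\Rightarrow$(3), the definition of the gap $\de$ via $\vp^{T_0}(\overline U)$ alone is slightly imprecise; one needs $\de$ to work uniformly for all $t\geq T_0$, which requires combining Hausdorff convergence of $\vp^t(\overline U)\to\tilde\AAa$ (for large $t$) with compactness over $t\in[T_0,T_1]$ -- a routine fix. For (3)$\Rightarrow$(2), you correctly identify the promotion of a Conley attractor of $\vp^t|_\AAa$ to a maximal attractor of $\vp^t$ in an open subset of $\Om$ as the delicate step; note that this is in fact a standard result in topological dynamics (an attractor of the restricted flow on an attractor is an attractor of the ambient flow -- roughly the content of the cited Conley/Hurley sources), so your worry about orbits shadowing $\AAa\setminus\tilde\AAa$ is resolved by choosing $T_0$ large enough that forward images of a thin tubular neighborhood of $U_\AAa$ land close to $\AAa$ and hence close to $U_\AAa$ itself; the remaining verification is a compactness argument of exactly the sort you outline. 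In short, the proposal is sound in strategy but would require the above patches to become a complete proof; what it buys over the paper's approach is self-containedness at the cost of length and technical care.
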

\begin{proof}
    The equivalence between (2) and (3) is a consequence of a celebrated result of C. Conley (see \cite{Conley1978,Hurley1995}).  For (1)$\implies$(2), we point out that the problem can be reduced to the discrete-time case (see \cite[Proposition 5.5]{Kifer88} and \cite[Theorem 5]{Hurley1995}), which then follows from \cite[Lemma 3.1]{Kifer89}.
\end{proof}

\begin{rem}\label{app-rem-indecomposable-attractor}
    (2) does not imply (1) in general, although it does not seem easy to construct a counterexample. Following \cite[Proposition 4.1]{Kifer88}, it is not hard to formulate a condition under which (2) implies (1).
\end{rem}


\section{\bf Invariant manifold theory}\label{app-Fenichel-theory}

We recall and adapt some classical results from \cite{Feni73,Feni77,HPS77} regarding the invariant manifold theory of dynamical systems that are used in Subsection \ref{subsec-Hamiltonian-system} to construct the local unstable manifold. 

Let us consider the ODE \eqref{main-ode} with $b\in C^k(\UU)$ for $k\geq2$. Denote by $x\cdot t$ the local flow, namely, $x\cdot t=\vp^t(x)$. Let $\Psi(x,t)$ be the principal fundamental matrix solution at initial time $0$ of the linearization of \eqref{main-ode} along $x\cdot t$:
\begin{equation*}\label{app-linear-ode}
    \dot{y}=\nabla b(x\cdot t) y,\quad y\in \R^d,
\end{equation*}
where $\nabla b$ denotes the Jacobian of $b$.  

\begin{itemize}
    \item[\bf (H)] We assume that \eqref{main-ode} admits a $C^k$ compact, connected, and invariant submanifold $\MM$  with dimension $d_{\MM}<d$ and there are constants $C>0$ and $0<\be_0<\be_*$ satisfying $\frac{\be_0}{\be_*-\be_0}<\frac{1}{k'}$ for some $1\leq k'\leq k-1$ such that for each $x\in\MM$,
    \begin{itemize}
    \item $T_x\R^d=E(x)\oplus N(x)$,
    \item $\Psi(x,t)E(x)=E(x\cdot t)$, $\Psi(x,t)N(x)=N(x\cdot t)$, $t\in \R$,
    \item $T_{x}\MM\subset N(x)$,
    \item $\|\Psi(x,t)|_{E(x)}\|\leq Ce^{\be_* t}$, $t\leq 0$, 
     \item $\|\Psi(x,t)|_{N(x)}\|\leq Ce^{\be_0 |t|}$, $t\in\R$.
\end{itemize}
\end{itemize}


\begin{thm}\label{app-thm-Fenichel}
Assume {\bf (H)}. Then, there exists a $C^{k'}$ local unstable manifold $\WW^{u}\subset \UU$, namely $\vp^t \WW^{u}\subset \WW^{u}$ for $t\leq 0$, with the following properties:
\begin{enumerate}
    \item $\MM\subset \WW^{u}$ and $T_x \WW^{u}=E(x)\oplus T_x \MM$ for each $x\in \MM$,
    
    \item $\WW^{u}=\cup_{x\in\MM}\WW_{x}^{u}$, where 
    \begin{itemize}
        \item [(a)] $\WW_x^{u}$ is a $C^{k'}$ manifold containing $x$,
        \item [(b)] $\WW_x^{u}$ is diffeomorphic to a ball in $E(x)$ and $T_x\WW_x^{u}=E(x)$,
        \item [(c)] $\WW_x^{u}\cap \WW_{x'}^{u}=\emptyset$ if $x\neq x'$,
        \item [(d)] $\vp^t \WW_x^{u}\subset \WW_{x\cdot t}^{u}$ for $t\leq 0$,
    \end{itemize}

    \item the projection $\pi: \WW^{u}\to \MM$ is $C^{k'}$ and for any $0<\be'<\be_*$, there exists $C>0$ such that 
    $$
        \dist(\vp^t(x),\pi(x)\cdot t)\leq C e^{\be' t},\quad\forall x\in \WW^{u}\andd t\leq 0.
    $$
\end{enumerate}
\end{thm}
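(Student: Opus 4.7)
The plan is to construct $\WW^u$ via the Lyapunov--Perron method in adapted coordinates near $\MM$, and then read off the fiber structure $\{\WW_x^u\}$ and the asymptotic-phase projection $\pi$ from the construction. This is essentially the original Fenichel / Hirsch--Pugh--Shub argument, adapted so that the fiber decomposition and the spectral gap in \textbf{(H)} are used explicitly.

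\textbf{Adapted coordinates.} Using the compactness of $\MM$ together with the $C^k$ smoothness of both $\MM$ and the subbundles $E(x)$ and $N(x)$, I would first smoothly extend $E$ and $N$ to a tubular neighborhood of $\MM$ in $\UU$, pick a smooth complement $E^{\perp}(x)$ of $T_x\MM$ inside $N(x)$, and thereby obtain coordinates $(\xi,e,n)$ with $\xi\in\MM$, $e\in E(\xi)$, $n\in E^{\perp}(\xi)$ in a small tubular neighborhood. After a smooth cutoff of $b$ outside a small neighborhood of $\MM$ (harmless since we seek only a local object), the ODE \eqref{main-ode} rewrites as a block system whose linear part along $\MM$ realizes the cocycle splitting in \textbf{(H)} and whose nonlinear remainder vanishes to first order on $\{e=0,n=0\}$.

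\textbf{Lyapunov--Perron fixed point.} Fix $\beta\in(\beta_0,\beta_*)$. For each basepoint $\xi_0\in\MM$ and $e_0\in E(\xi_0)$ small, I would seek an orbit $\gamma(t)=(\xi(t),e(t),n(t))$ on $(-\infty,0]$ with $e(0)=e_0$ such that $|(e(t),n(t))|+\dist(\xi(t),\xi_0\cdot t)\lesssim e^{-\beta|t|}$ as $t\to-\infty$. The variation of constants converts this into a system of integral equations in which the $e$-component is $\Psi|_{E}(\xi_0,t)e_0$ plus a convolution anchored at $t=0$ (the expanding direction), while the $n$- and $\xi$-components are backward convolutions from $-\infty$ against the $N$-cocycle. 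In the Banach space of continuous curves equipped with the weighted norm $\sup_{t\leq 0}e^{-\beta t}\|\gamma(t)-(\xi_0\cdot t,0,0)\|$, a standard Banach contraction on a small ball in $e_0$ yields a unique fixed point $\gamma_{\xi_0,e_0}$. Setting $\WW_{\xi_0}^u:=\{\gamma_{\xi_0,e_0}(0):|e_0|<r\}$ and $\WW^u:=\bigcup_{\xi_0\in\MM}\WW_{\xi_0}^u$ produces a negatively invariant set (by time-translating the integral equation) and gives the projection $\pi$ via $\pi(\gamma_{\xi_0,e_0}(0))=\xi_0$. Disjointness (c) of fibers follows because two orbits asymptotic to distinct orbits of $\vp^t|_\MM$ cannot coincide at $t=0$, and the tangency statements $T_{\xi_0}\WW_{\xi_0}^u=E(\xi_0)$ and $T_{\xi_0}\WW^u=E(\xi_0)\oplus T_{\xi_0}\MM$ follow by linearizing the integral equation at $e_0=0$ and at fixed $e_0=0$ varying $\xi_0$.

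\textbf{Regularity --- the main obstacle.} The delicate step is to show that $\gamma_{\xi_0,e_0}$ is jointly $C^{k'}$ in $(\xi_0,e_0)$, from which the $C^{k'}$ regularity of each $\WW_{\xi_0}^u$, of the union $\WW^u$, and of the projection $\pi$ all follow by the implicit function theorem applied to the graph representation. I would differentiate the Lyapunov--Perron equation formally $\ell$ times for $1\leq\ell\leq k'$, obtaining a linear integral equation of the same schematic form for the $\ell$-th derivative, but one whose contraction requires a weighted norm with an altered exponent proportional to $\ell\beta$; the fiber contraction theorem of Hirsch--Pugh--Shub then upgrades $C^{\ell-1}$ regularity to $C^{\ell}$ provided $\ell\beta<\beta_*$. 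Taking $\beta$ arbitrarily close to $\beta_0$ in $(\beta_0,\beta_*)$, this tower of inequalities reduces precisely to the spectral gap $\tfrac{\beta_0}{\beta_*-\beta_0}<\tfrac{1}{k'}$ assumed in \textbf{(H)}, which is exactly what is needed to iterate up through derivative order $k'$; this interplay between derivative loss and the Fenichel gap is the conceptual heart of the proof. Once joint $C^{k'}$ regularity is in hand, property (3) follows from the weighted bound with $\beta'$ arbitrarily close to $\beta_*$, property (2d) from uniqueness of the fixed point applied to the time-translated orbit, and property (2b) from the linearization at the base point.
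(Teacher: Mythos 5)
The paper does not prove Theorem~\ref{app-thm-Fenichel} at all: Appendix~\ref{app-Fenichel-theory} opens by saying it ``recalls and adapts'' classical results, states the theorem, and then attributes existence to \cite{Feni73,HPS77} and the regularity of $\WW^{u}$ and of the fibration to \cite{Feni73,Feni77} in the remark that follows. There is therefore no in-paper argument to compare with line by line. Your Lyapunov--Perron sketch is a reasonable reconstruction, but it follows a different (though classically equivalent) route from the cited sources, which work via the Hadamard graph transform together with fiber contraction on spaces of candidate foliations; the graph-transform framework is particularly well suited to the step you yourself flag as delicate, namely the smoothness of the family $\{\WW^u_x\}$ and of the asymptotic-phase projection $\pi$, which is precisely the nontrivial content Fenichel isolates.

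Two concrete imprecisions are worth flagging, though neither is fatal to a sketch. First, your adapted-coordinate step tacitly assumes that $E(x)$ and $N(x)$ are $C^k$ subbundles along $\MM$, but \textbf{(H)} grants only the $C^k$ smoothness of $\MM$ and the cocycle estimates, not the smoothness of the invariant splitting; in the classical treatments one either derives a priori H\"older (and, under a spectral gap, higher) regularity of the splitting, or replaces it in the cutoff neighborhood by a nearby smooth but noninvariant splitting and controls the error in the contraction. Second, the tower $\ell\beta<\beta_*$ for $1\leq\ell\leq k'$ with $\beta$ close to $\beta_0$ produces the gap $k'\beta_0<\beta_*$, whereas \textbf{(H)} assumes the strictly stronger $\tfrac{\beta_0}{\beta_*-\beta_0}<\tfrac{1}{k'}$, i.e.\ $(k'+1)\beta_0<\beta_*$; that extra margin is what lets one promote the foliation and hence $\pi$ in item (3) to $C^{k'}$, rather than only the manifold $\WW^u$ itself, so the claim that your tower ``reduces precisely'' to the assumed gap overstates the match. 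With those two points repaired, the architecture of the sketch is sound and does establish the stated theorem.
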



\begin{rem}


If $\MM$ is just a set instead of a manifold, then all the results in  Theorem \ref{app-thm-Fenichel} except the regularity of $\WW^{u}$ are true (see \cite{Feni73,HPS77}). The regularity of $\WW^{u}$ when $\MM$ is a manifold is established in \cite{Feni73,Feni77}.  

\end{rem}


\section{\bf Energy balance law}\label{sec-app-energy-balance-equation}

In this appendix, we present a formal derivation of the instantaneous energy balance law \eqref{intro-energy-balance-eqn} from the energy balance law, and therefore, we do not bother with matters that are involved in the rigorous proof. A rigorous justification along this line remains unavailable. See Remark \ref{rem-energy-balance-law} below for explanation.

Consider the SDE \eqref{main-sde} and denote by $X_{t}^{\ep}$ its solution. Suppose it admits a unique stationary distribution $\mu_{\ep}$ with a positive density $u_{\ep}$. For $t>0$, let $p_{\ep}(x,t)$ be the positive density of the distribution of $X_{t}^{\ep}$ with initial condition $X_{0}^{\ep}=x_0$. 

Consider
\begin{itemize}
\item relative entropy or Kullback–Leibler divergence: $F_{\ep}(t)=\int_{\UU}p_{\ep}\ln\frac{p_{\ep}}{u_{\ep}}dx$;

\item Fisher information: $I_{\ep}(t)=\frac{2}{\ep^2}\int_{\UU}\left(\frac{\ep^2}{2}A\nabla\ln\frac{p_{\ep}}{u_{\ep}}\right)\cdot A^{-1}\left(\frac{\ep^2}{2}A\nabla\ln\frac{p_{\ep}}{u_{\ep}}\right)p_{\ep}dx$;

\item entropy production rate: $e_{p}^{\ep}(t)=\frac{2}{\ep^{2}}\int_{\UU}\Ga_{\ep}\cdot A^{-1}\Ga_{\ep}p_{\ep}dx$, where $\Ga_{\ep}:=b-\frac{\ep^2}{2}\frac{\nabla\cdot (Ap_{\ep})}{p_{\ep}}$;

\item house-keeping exchange rate: $Q_{hk}^{\ep}(t)=\frac{2}{\ep^{2}}\int_{\UU}\ga_{\ep}\cdot A^{-1}\ga_{\ep}p_{\ep}dx$, where $\ga_{\ep}:=b-\frac{\ep^2}{2}\frac{\nabla\cdot (Au_{\ep})}{u_{\ep}}$ is Onsager's thermodynamic flux.
\end{itemize}
The energy balance equation \cite{VE2010,QCY2020} reads
\begin{equation}\label{energy-balance-equation}
F_{\ep}'(t)=-I_{\ep}(t)=Q_{hk}^{\ep}(t)-e_{p}^{\ep}(t),\quad t>0 
\end{equation}
The first equality in \eqref{energy-balance-equation} is known as de Bruijn's identity. Since $I_{\ep}(t)>0$, the relative entropy $F_{\ep}(t)$ decays in time. This latter property is often referred to as the Boltzmann's $H$-theorem. Moreover, since $Q_{hk}^{\ep}(t)\geq0$ and $e_{p}^{\ep}(t)>0$, they can be interpreted as the source and sink for the ``generalized free energy" $F_{\ep}(t)$. Note that the source $Q_{hk}^{\ep}(t)$ equals to $0$ if and only if $\ga_{\ep}\equiv0$ (that is, the system is an equilibrium system).

Assume that $u_{\ep}$ and $p_{\ep}$ admit sub-exponential LDPs (or zero-th order WKB expansions) as follows:
\begin{equation}\label{app-WKB}
u_{\ep}(x)=\frac{R(x)+O(\ep^2)}{K_{\ep}}e^{-\frac{2}{\ep^{2}}V(x)}\quad\text{and}\quad p_{\ep}(x,t)=\frac{\RR(x,t)+O(\ep^2)}{\KK_{\ep}}e^{-\frac{2}{\ep^{2}}\VV(x,t)},\quad t>0,   
\end{equation}
where $\VV(x(t),t)=0<\VV(x,t)$ for $x\in\UU\setminus\{x(t)\}$ and $\text{\rm Hess}(\VV)(x(t),t)$ is positive definite, where $x(t)$ is the unique solution of \eqref{app-ode} with initial condition $x(0)=x_0$. Applying Laplace's method, the following leading asymptotic can be derived:
\begin{equation*}
    \begin{split}
        F_{\ep}'(t)&=\frac{2}{\ep^2}\frac{d}{dt}V(x(t))+o\left(\frac{2}{\ep^2}\right),\\
        I_{\ep}(t)&=\frac{2}{\ep^2}\nabla V(x(t))\cdot A(x(t))\nabla V(x(t))+o\left(\frac{2}{\ep^2}\right),\\
        e_{p}^{\ep}(t)&=\frac{2}{\ep^2}b(x(t))\cdot A^{-1}(x(t))b(x(t))+o\left(\frac{2}{\ep^2}\right),\\
        Q_{hk}^{\ep}(t)&=\frac{2}{\ep^2}\ga(x(t))\cdot A^{-1}(x(t))\ga(x(t))+o\left(\frac{2}{\ep^2}\right),
    \end{split}
\end{equation*}
where $\ga=b-A\nabla V$. These together with \eqref{energy-balance-equation} give rise to the instantaneous energy balance law \eqref{intro-energy-balance-eqn}.

\begin{rem}\label{rem-energy-balance-law}
Assume that $b$ and $\si$ are sufficiently regular and $\si\si^{\top}$ is uniformly positive definite on $\ol{\UU}$. Then, the energy balance equation \eqref{energy-balance-equation} can be readily established if $\UU$ is a compact manifold or a bounded domain with reflecting boundary condition imposed on. Otherwise, dissipative conditions are needed to ensure that $p_{\ep}(x,t)$ and $u_{\ep}(x)$ have good decaying properties as $x\to\partial\UU$. 

The sub-exponential LDP for $u_{\ep}$ in \eqref{app-WKB} is only known when $b$ admits a non-degenerate and globally asymptotically stable equilibrium (see Subsection \ref{sub-application} for relevant discussions). That for $p_{\ep}$ remains unknown, except that the limit $\VV(x,t):=-\lim_{\ep\to0}\frac{\ep^{2}}{2}\ln p_{\ep}(x,t)$ has been established in \cite{Friedman197576,Sheu84}. 

\end{rem}


\bibliographystyle{amsplain}

\end{document}